\definecolor{lgray}{RGB}{240,240,240}
\definecolor{webgreen}{rgb}{0,.5,0}
\definecolor{webbrown}{rgb}{.6,0,0}
\definecolor{RoyalBlue}{cmyk}{1, 0.50, 0, 0}
\newtheorem*{pdleg}{Principle of Descent and Lower Envelope Theorem}
\newtheorem*{sdp}{Strong Domination Principle}
\newtheorem*{hst}{Harnack's Theorem}
\newtheorem*{rt}{Removability Theorem}
\newtheorem*{rrt}{Riesz Representation Theorem}
\newtheorem*{gmp}{Generalized Minimum Principle}
\newtheorem{theorem}{Theorem}[section]
\newtheorem{corollary}[theorem]{Corollary}
\newtheorem{lemma}[theorem]{Lemma}
\newtheorem{remark}[theorem]{Remark}
\renewcommand{\tocsection}[3]{%
  \indentlabel{\@ifnotempty{#2}{\bfseries\ignorespaces#1 #2\quad}}\bfseries#3}
\renewcommand{\tocsubsection}[3]{%
  \indentlabel{\@ifnotempty{#2}{\ignorespaces#1 #2\quad}}#3}
\newcommand\@dotsep{4.5}
\def\@tocline#1#2#3#4#5#6#7{\relax
  \ifnum #1>\c@tocdepth % then omit
  \else
    \par \addpenalty\@secpenalty\addvspace{#2}%
    \begingroup \hyphenpenalty\@M
    \@ifempty{#4}{%
      \@tempdima\csname r@tocindent\number#1\endcsname\relax
    }{%
      \@tempdima#4\relax
    }%
    \parindent\z@ \leftskip#3\relax \advance\leftskip\@tempdima\relax
    \rightskip\@pnumwidth plus1em \parfillskip-\@pnumwidth
    #5\leavevmode\hskip-\@tempdima{#6}\nobreak
    \leaders\hbox{$\m@th\mkern \@dotsep mu\hbox{.}\mkern \@dotsep mu$}\hfill
    \nobreak
    \hbox to\@pnumwidth{\@tocpagenum{\ifnum#1=1\bfseries\fi#7}}\par% <-- \bfseries for \section page
    \nobreak
    \endgroup
  \fi}
\renewcommand\csname r@tocindent0\endcsname{0pt}
\def\l@subsection{\@tocline{2}{0pt}{2.5pc}{5pc}{}}
\newcommand{\T}		{\mathbb{T}}
\newcommand{\D}		{\mathbb{D}}
\newcommand{\R}		{\mathbb{R}}
\newcommand{\C}		{\mathbb{C}}
\newcommand{\N}		{\mathbb{N}}
\newcommand{\cws}{\stackrel{*}{\to}}
\newcommand{\cic}  {\stackrel{\scriptsize\cp}{\rightarrow}}
\newcommand{\cp}{\mathrm{cap}}
\newcommand{\dist}{\mathrm{dist}}
\newcommand{\clos}{\mathrm{clos}}
\newcommand{\supp}{\mathrm{supp}}
\newcommand{\re}{\mathrm{Re}}
\newcommand{\qasq}{\quad \text{as} \quad}
\newcommand{\qandq}{\quad \text{and} \quad}
\newcommand{\ic}{\mathrm{i}}
\newcommand{\RS}{{\mathcal R}}
\newcommand{\K}{K_f}
\newcommand{\B}{\mathcal T}
\numberwithin{equation}{section}
\begin{document}

\title[Optimal Rational Approximants]{ n-th Root Optimal Rational Approximants \\to Functions  with Polar Singular Set}

\author{Laurent Baratchart}

\address{INRIA, Project FACTAS, 2004 route des Lucioles --- BP 93, 06902 Sophia-Antipolis, France}

\email{\href{mailto:laurent.baratchart@sophia.inria.fr}{laurent.baratchart@sophia.inria.fr}}

\author{Herbert Stahl\dag}

\author{Maxim Yattselev}

\address{Department of Mathematical Sciences, Indiana University Indianapolis, 402~North Blackford Street, Indianapolis, IN 46202}

\email{\href{mailto:maxyatts@iu.edu}{maxyatts@iu.edu}}

\thanks{The research of the last author was supported in part by a grant from the Simons Foundation, CGM-706591.}

\subjclass[2020]{30C15, 30E10, 31C40, 41A20, 41A25}

\keywords{Rational approximation, meromorphic approximation, AAK approximation, Nehari approximation, weak distribution of poles, convergence in capacity}

\begin{abstract}
  Let \( D \) be a bounded Jordan domain and \( A \) be its complement on the Riemann sphere. We investigate the $n$-th root asymptotic behavior in \( D \) of best rational approximants, in the uniform norm on \( A \), to functions holomorphic on \( A \)  having  a multi-valued continuation to quasi every point of \( D \) with finitely many branches. More precisely, we study weak$^*$ convergence of the normalized counting measures of the poles of such approximants as well as their convergence in capacity. We place best rational approximants into a larger class of \( n \)-th root optimal meromorphic approximants, whose behavior we investigate using potential-theory  on certain compact bordered Riemann surfaces.
\end{abstract}

\maketitle

\tableofcontents

\section*{List of Symbols}

\begin{flushleft}

\noindent {\bf General Point Sets:} \smallskip \\
\begin{tabular}{ p{2cm} l }
$\T,\D,\C,\overline\C$ & unit circle, open unit disk,  complex plane, and extended complex plane \\
$\T_r,\D_r$ & circle and open disk of radius \( r \) centered at the origin \\
$T,D,A$ & Jordan curve, its interior domain, and the closure of its exterior domain in $\overline\C$\\
\end{tabular}

\smallskip

\noindent {\bf Riemann Surfaces:} \smallskip \\
\begin{tabular}{ p{2cm} l }
$\RS_*,p$ &  compact Riemann surface with natural projection \( p:\RS_*\to \overline \C \) \\
$\RS$ & \( \RS := \{z\in\RS_*: p(z) \in D\} \) \\
$\B$ & a connected component of \( p^{-1}(T) \) homeomorphic to \( T \) \\
\( \mathbf{rp}(\cdot)\) & the set of ramification points of a given Riemann surface \\
\( m(z) \) & ramification order of a point \( z \) on a Riemann  surface \\
\( M\) & total number of sheets of \( \RS \)
\end{tabular}

\smallskip

\noindent {\bf Classes of Functions:} \smallskip \\
\begin{tabular}{p{2cm} l}
  $C(E)$ & continuous functions on a set $E$\\
  $\mathcal H(A)$ & functions analytic in some neighborhood of \( A \) \\
$\mathcal S(A)$ & subclass of \( \mathcal H(A) \) of functions multi-valued and quasi everywhere analytic off \( A \) \\
\(\mathcal E(A)\) & subclass of \( \mathcal H(A) \) of functions single-valued and quasi everywhere analytic off \( A \) \\
$\mathcal F(\RS)$ & class of functions quasi everywhere analytic on \( \overline\RS \) \\
$\mathcal F(A)$ & subclass of approximated functions analytic on \( A \) \smallskip \\
$\mathcal P_n$ & algebraic polynomials of degree at most \( n \) \\
$\mathcal M_n(D)$  & monic algebraic polynomials of degree $n$ with all their zeros in $D$ \\
$\RS_n(D)$ & $\mathcal P_n\mathcal M_n^{-1}(D)$ \smallskip \\
$H^\infty(D)$ & space of bounded holomorphic functions in \( D \) \\
$H_n^\infty(D)$ & $H^\infty(D)\mathcal M_n^{-1}(D)$ \smallskip \\
$L^2(\T)$ & square integrable functions on the unit circle \\ 
$L^\infty(\T)$ & essentially bounded functions on the unit circle \\ 
$H^2$ & Hardy space of functions in $L^2(\T)$ with vanishing Fourier coefficients of negative index\\
$H^2_-$ & \( L^2(\T)\ominus H^2 \) 
\end{tabular}

\smallskip

\noindent {\bf Operators:} \smallskip \\
\begin{tabular}{p{2cm} l}
$\mathbb P_+,\mathbb P_-$ & orthogonal projections from \( L^2(\T) \) onto \( H^2 \), \( H^2_- \) \\
$\Gamma_f$ & Hankel operator from \( H^2 \) to \( H^2_-\), \(  h \mapsto \mathbb P_-(fh) \) \\
$s_n(\Gamma_f)$ & the \( n \)-th singular number of \( \Gamma_f \)
\end{tabular}

\smallskip

\noindent {\bf Potential Theory:} \smallskip \\
\begin{tabular}{p{2cm} l}
$\cp(K)$ & logarithmic capacity of $K$\\
$\cp_\Omega(K)$ & Greenian capacity of \( K \) relative to \( \Omega \) \\
$\cp(E,K)$ & capacity of the condenser \( (E,K) \) \\
$g_\Omega(\cdot,w)$ & Green function for $\Omega$ with pole at $w$ \\
$g(\sigma,\Omega;z)$ & Green potential of the measure \( \sigma \) relative to  \( \Omega \) \\
$V^\sigma(z)$ & logarithmic potential of the measure \( \sigma \) \\
$\mu_{\Omega,K}$ & Green equilibrium distribution on a set \( K\subset \Omega \) relative to  \( \Omega \) \\
$\mathcal B_v^E$ & balayage function of a superharmonic function \( v \) relative to a set \( E \) \\
$\sigma^E$ & balayage measure of a measure \( \sigma \) onto a set \( E \) \\
$\widehat\sigma$ & lift of a measure \( \sigma \) on \( D \) to \( \RS \) \\
$p_*(\sigma)$ & projection (pushforward) of a measure \( \sigma \) on \( \RS \) to \( D \) \\
$\partial_\mathsf{f}A,\clos_\mathsf{f}(E)$ & fine boundary and closure of a set \( E \) \\
$b(E),i(E)$ & base and the set of finely isolated points of a set \( E \)
\end{tabular}

\smallskip

\noindent {\bf Various Symbols:} \smallskip \\
\begin{tabular}{p{2cm} l}
$\phi$ & a conformal map from \( \D \) onto \( D \) \\
$\mathcal K_f$ & collection of ``branch cuts'' for \( f \) \\
$\K$ & ``branch cut'' of minimal Greenian capacity for \( f\in \mathcal S(A) \) \\
\( \|\cdot\|_K \) & essential supremum norm on \( K \) \\
\( \rho_n(f,A) \) & error of best rational approximation of \( f \) analytic on \( A \) by functions in \( \RS_n(D) \)
\end{tabular}

\end{flushleft}

\section{Introduction}

Rational approximation to holomorphic functions of one complex variable has long been a requisite chapter of classical analysis with notable applications to number theory \cite{Herm73,Siegel,KR}, spectral theory \cite{Nikolskii,Gustafson,Peller} and numerical analysis \cite{Gr65,IserlesNorsett,GoTre,SilYuTreBe}. Over the last decades it  became a cornerstone of modeling in Engineering \cite{TeToGa,Tj_PRA77,Pozzi,AvFaRe,GuSem}, and it can also be viewed today as a technique to regularize inverse potential problems \cite{GoNoHe,Isakov,BMSW06}. Finding best rational approximants of  prescribed degree  to a specific function, say in the uniform norm on a given set, seems out of reach except in rare, particular cases. Indeed, such approximants depend in a rather convoluted manner, both on the approximated function and on the set where approximation should take place. Accordingly, the constructive side of the theory has focused on estimating optimal convergence rates as the degree grows large and devising approximation schemes coming close to meet them \cite{Walsh,Braess,GonRakh87,St97}, or else studying the behavior of natural, computationally appealing candidate approximants like Pad\'e interpolants and their variants \cite{BakerGravesMorris,St89,Nu2,Lub,AAA}.

When a function is holomorphic in some neighborhood of a continuum $A$, the optimal speed of convergence  for rational approximants on $A$ is at least geometric in the degree. Then, a coarse but manageable estimate of this speed proceeds via asymptotics of the $n$-th root of the error in approximation by rational functions of degree $n$. For functions analytically continuable off $A$ except over a polar set containing branchpoints (throughout \emph{polar} means of logarithmic capacity zero), and provided that $A$ does not divide the extended complex plane, Gonchar and Rakhmanov constructed,  using multipoint Pad\'e interpolants and dwelling on work by the second author, a sequence of rational approximants whose $n$-th root error on $A$ is asymptotically the smallest possible. They further showed that these interpolants converge in capacity on the complement of  a compact set $K$ minimizing the capacity of the condenser $(A,K)$ under the constraint that the function is analytic off $K$, and proved that the normalized counting measures of their poles converge to the condenser equilibrium distribution on $K$ \cite{GonRakh87}. It is remarkable that \( K \) solves a  geometric extremal problem from logarithmic potential theory, close in spirit to the Lavrentiev type \cite{Kuz80}, and that it depends merely on the set where approximation takes place, on the branchpoints of the approximated function and its monodromy, but nothing else. Such a structure emerges because only the $n$-th root of the error is considered, rather than the error itself.  Since then, it has been an open issue  whether any $n$-th root optimal sequence of approximants -- in particular a sequence of best approximants -- has the same behavior. The present paper answers this question in the positive, at least when the branchpoints are finite in number and order. In particular, our results connect, apparently for the first time, the singularities of best uniform approximants to those of the approximated function. We also address the case of no branchpoints, when the approximated function is analytic except over a  polar set, in which  the speed of best rational approximation on $A$ is known to be faster than geometric with the degree. We prove that $n$-th root optimal approximants converge in capacity outside the singular set of the function, and that the ``effective'' poles converge in a sense to that set.

The gist of the paper becomes transparent upon observing that the behavior of rational approximants can be surmised when the function to be approximated extends analytically to a multiply-sheeted Riemann surface over the complex plane. As rational functions are single-valued, this topological discrepancy leads the approximants to mark out their domain of approximation by accumulating poles so as to form a cut, thereby preventing single-valued continuation in the limit. In the case of (diagonal) multipoint Pad\'e interpolants, this cut has been characterized as being of smallest weighted capacity in a field that depends on the limiting distribution of the interpolation points, and poles asymptotically distribute according to the weighted equilibrium measure  of that cut; moreover, the Pad\'e interpolants converge in capacity on the extremal domain thus demarcated. This was established in \cite{GonRakh87}, dwelling on the works \cite{St85a,St85b,St86,St89,St97} that deal with classical  Pad\'e  interpolants and correspond to the zero  field and unit weight; see  also \cite{NuSi} for early developments along these lines, and \cite{BStYa} for applications to $L^2$-best rational approximation  on the circle. Subsequently, by choosing interpolation points adequately and performing surgery to eliminate spurious poles, the authors of \cite{GonRakh87} construct, on any continuum  $A$ not dividing the extended plane and contained in the analyticity domain of a function indefinitely continuable except over a closed polar set containing branchpoints, a sequence of rational approximants converging uniformly to that function on $A$ as their degree $n$ grows large and whose $n$-th root error has a {\it liminf} which is smallest possible, as well as a true limit. For this weakly optimal choice of interpolation points (meaning that the choice is optimal in the $n$-th root sense), the cut $K$ of minimum weighted capacity is also the cut minimizing the condenser capacity of $(A,K)$, as well as the cut of minimum Greenian capacity in the complement of $A$. The smallest value for the limit of the $n$-th root error is a simple, explicit function of this Greenian capacity, and the poles of the approximants thus constructed distribute asymptotically according to the Green equilibrium measure of that cut.

Now, assuming in addition that the branchpoints of the continuation off $A$ of the function to be approximated  are finite in number and of algebraic type, we shall  prove that \emph{any}  sequence of rational (or meromorphic) approximants  of increasing degree $n$ whose $n$-th root error on $A$ converges to the smallest possible limit -- {\it a fortiori} every sequence of best approximants -- has the same asymptotic distribution of poles as the particular sequence constructed in \cite{GonRakh87}. More precisely, if a function analytic  in a simply connected neighborhood of a continuum $A$  in the extended complex plane is indefinitely continuable off that neighborhood except over a closed polar set containing finitely many branchpoints, all of algebraic type, then \emph{the normalized counting measures of the poles of any sequence of rational approximants of increasing degree $n$  with asymptotically optimal $n$-th root error on $A$ do converge weak-star, as $n$ grows large, to the Green equilibrium distribution of the compact set of minimum Greenian capacity outside of which the function is single-valued; moreover, convergence holds in capacity everywhere off that compact set.} Here, Green functions are understood with respect to the complement of the continuum  $A$ where approximation takes place. Finally, if there are no branchpoints, that is, if the approximated function is single-valued and analytic on the extended complex plane except possibly over a closed polar set $E$, then there are rational approximants  converging on $A$ faster than geometrically with the degree. We shall prove that such  sequences of approximants (as well as their meromorphic analogs)  converge in capacity on the  extended plane deprived from $E$, and that retaining the singular part that comes close to $E$ generates new sequences of approximants, still converging faster than geometrically with the degree, while satisfying in addition that  any weak-star limit point of the normalized counting measures of their poles is supported on $E$.

Previously cited references, which deal with Pad\'e or multipoint Pad\'e interpolants, exploit the connection between denominators thereof and non-Hermitian orthogonal polynomials on a system of arcs encompassing the singular set of the interpolated function.  Indeed, the core of the work in \cite{St86,GonRakh87} is to derive asymptotics of such polynomials on extremal systems of arcs like those constructed in  \cite{St85a,St85b}, so as to qualify the behavior of the poles of the interpolants when the degree grows large and deduce from it the desired convergence properties. Here, we proceed in the opposite direction: we assume that the optimal rate is met in the $n$-th root sense and deduce from it the behavior of the poles. For this we cannot make use of orthogonal polynomials, and in fact it is not even known if interpolation takes place at all in the case of best approximants. However,  the construction from \cite{St85a,St85b} will  still be basic to our purposes.

\emph{This work was initiated jointly by the three authors in 2009, but the untimely passing away of the second one on April 22nd, 2013 prevented him from seeing its completion. Still, some fundamental ideas are his. }

\section{Preliminaries and Main Results}
\label{sec:2}

Given a function  \( f \) holomorphic in a neighborhood of a closed set \( A\subset\overline{\C} \), the error of approximation of $f$ on $A$ by rational functions of degree $n$ is 
\begin{equation}
\label{besterror}
\rho_n(f,A) := \inf_{r\in\RS_n(\C\setminus A)} \|f-r\|_A,
\end{equation}
where \( \|\cdot\|_A\) stands for the supremum norm on $A$ and, for  \(\Omega\subset\overline{\C}\), we let \( \RS_n(\Omega) \) be the class of rational functions of type \( (n,n) \) with all their poles in \( \Omega \). That is, if \( \mathcal P_n \)  denotes the space of algebraic polynomials of degree at most \( n \) and  \( \mathcal M_n(\Omega) \) the monic polynomials of degree \( n \) with all zeros in \( \Omega \), then \( \RS_n(\Omega) := \mathcal P_n\mathcal M_n^{-1}(\Omega) \). It was shown by Walsh \cite{Walsh,Bag69}, using interpolation techniques, that
\begin{equation}
\label{walsh}
\limsup_{n\to\infty}\rho_n^{1/n}(f,A) \leq \exp\big\{-1/\cp(A,K)\big\},
\end{equation}
where \( K \) is any closed set disjoint from $A$ in the complement of which \( f \) is holomorphic and \( \cp(A,K) \) denotes  the capacity of the condenser \( (A,K) \). A definition of  condenser capacity can be found in \cite[Chapter VIII, Section 3]{SaffTotik}; for our purposes, it is  enough to know that if $\overline{\C}\setminus A$ is connected, then  \( \cp(A,K) \) coincides with the Greenian capacity \(\cp_{\overline{\C}\setminus A}(K)  \) defined in Section~\ref{ssec_cap},  see \cite[Chapter VIII, Theorem 2.6 \& Corollary 2.7]{SaffTotik} for this equivalence.

It is known that Walsh's inequality \eqref{walsh} cannot be improved \cite{LevTikh67}. On the other hand, it was conjectured by Gonchar \cite{MR734178} and proven by Parf\"enov when $A$  is a continuum with connected complement \cite{Par86} (also  later by Prokhorov for any compact set \( A \) \cite{Pro93}) that
\begin{equation}
\label{prokhorov}
\liminf_{n\to\infty}\rho_n^{1/n}(f,A) \leq \exp\big\{-2/\cp(A,K)\big\}.
\end{equation}
Hence, $\rho_n(f,A)$ has no limit in general as $n\to\infty$, and when the limit exists it cannot exceed the right-hand side of \eqref{prokhorov}. For certain classes of functions  and certain {\it loci}  of approximation $A$, it was nevertheless shown  that $\rho_n(f,A)$ does have a limit, which is  equal to the right-hand side of \eqref{prokhorov}. More precisely, let $\mathcal H(A)$ denote the space of functions holomorphic on a  (variable) neighborhood of $A$, and  \( \mathcal S(A)\subset \mathcal H(A) \) those functions continuable analytically into the complement of \( A \) along any path that avoids some compact polar\footnote{see Section~\ref{ssec_fine} for a definition and basic properties of polar sets, that may be defined as sets of zero logarithmic capacity.} subset of \( \overline{\C}\setminus A \) (which may depend on the  function); we require in addition  that this continuation is not single-valued, namely that there are paths with the same endpoints leading to different analytic branches. Now, when   $A$ is a continuum that does not separate the plane and \( f\in\mathcal S(A) \), it follows from  work by the second author in \cite{St85a,St85b,St85c} and it was explicitly stated by Gonchar and Rakhmanov  in \cite[Theorem~1$^\prime$]{GonRakh87} that
\begin{equation}
\label{bestrate}
\lim_{n\to\infty} \rho_n^{1/n} (f,A) = \inf_K \exp\big\{-2/\cp(A,K)\big\},
\end{equation}
where the infimum is taken over all compact sets \( K \) such that \( f \) admits a single-valued  analytic continuation to \( \overline\C\setminus K \).

Hereafter, we let \( T \subset\C \) be a Jordan curve with interior domain \( D \), and we put \( A:=\overline\C\setminus D \). In this setting \(\infty\in A\), which is no loss of generality for a preliminary M\"obius transform can ensure this; in contrast, our requirement that $D$ be a Jordan domain  is a regularity assumption on the set $A$  where  approximation takes place. Given \( f\in\mathcal H(A) \),  let \( \mathcal K_f \) be the collection of all compact sets \( K \subset D\) such that \(f\), initially defined on $A$, admits a single-valued analytic continuation to \( \C\setminus K\). It follows from \cite{St85a,St85b} that there exists \( \K \in \mathcal K_f \), unique up to  addition and/or removal of a polar set, with
\begin{equation}
\label{mincapset}
\cp_D(\K) \leq \cp_D(K), \quad K\in\mathcal K_f.
\end{equation}
We  can and will normalize $K_f$ to be the smallest possible, i.e., we make it  the intersection of all $K\in\mathcal{K}_f$ for which \( \cp_D(K) \) is minimal, see \cite{St85b}. As \( \cp(K,A) = \cp_D(K) \), in light of equation \eqref{bestrate}, our main goal is to investigate the asymptotic behavior of sequences \( \{r_n\} \) of rational functions of  type \( (n,n) \) meeting this optimal $n$-th root rate:
\begin{equation}
\label{optimal}
\lim_{n\to\infty} \|f-r_n\|_A^{1/n} = \exp\big\{-2/\cp_D(\K)\big\}.
\end{equation}
We call any such  sequence \( \{r_n\} \) a sequence of \emph{$n$-th root optimal rational approximants to \( f\) on~\( A \)}. In order to study  \( \{r_n\} \), we are led to consider more general sequences of \emph{meromorphic}  approximants of the form $r_n+h_n$, where $h_n$ is holomorphic in \(D\) and continuous on \(\overline{D}\), see Section~\ref{sec:meroa} for the definitions. Even though best meromorphic approximants may look less natural than rational ones, they make contact with  both the spectral theory of Hankel  operators (through AAK theory) and  Green potentials (because they generate errors with constant modulus on $T$), while remaining essentially equivalent to rational approximants as far as $n$-th root error rates are concerned \cite{Par86}. This is why $n$-th root optimal meromorphic approximants (meeting \eqref{optimal} in place of $r_n$) are of principal importance in  our study. Yet, the potential-theoretic tools on Riemann surfaces that we use  only allow us to handle compact surfaces so far, and this induces  some finiteness conditions on the functions from the class \( \mathcal S(A) \) that we can deal with. These are set forth in the next section.

\subsection{Class of Approximated Functions}
\label{laclasse}

We  consider functions in  $\mathcal{H}(A)$  such that
\begin{itemize}
\item[(i)] they can be continued into \( D \) along any path originating on \( T \) that stays in \( \overline D \)  while avoiding a closed polar subset of \( D \) (which may depend on the function);
\item[(ii)] they are not single-valued, meaning  there are continuations along at least two paths as in (i) with the same initial and terminal points that lead to distinct function elements, but still they are finite-valued in that the number of such function elements lying above a point of $D$  is uniformly bounded (the bound  may depend on the function);
\item[(iii)] their number of \emph{branchpoints} (points in any neighborhood of which some analytic continuation along a closed path in \( D \) encircling that point while avoiding the exceptional polar set leads to a different function element) is finite.
\end{itemize}
       
In view of (i) and (ii), such functions lie in $\mathcal{S}(A)$.  Note that (iii) is not superfluous, for there are functions meeting (i) and (ii) with infinitely many branchpoints. For instance, an open Riemann surface $\mathcal{X}$ made of two copies of \(\overline{\C}\setminus\{0\}\), glued along a sequence of disjoint cuts in \(\D\)  shrinking to the point $0$, has projection \(p:\mathcal{X}\to\overline{\C}\setminus\{0\}\) a two sheeted covering with infinitely many branchpoints of order $2$. As $\mathcal{X}$  carries a holomorphic function $f$ assuming more than one value on $p^{-1}(z)$ for $z$ not a critical value of $p$ \cite[Theorem 26.7]{Forster}, we deduce  on putting $D=\D$ and $A=\overline{\C}\setminus\D$ that each branch of $f\circ p^{-1}$ is of the announced type.

We formalize (i), (ii) and (iii) as follows. Let \( \RS_* \) be an auxiliary algebraic Riemann surface, whose set of ramification points \( \mathbf{rp}(\RS_*) \) lies on top of \( D \). That is, there exists an irreducible polynomial in two variables \( P(z,a) \), of degree at least 2 in \(a\),  such that \( \RS_* = \{ (z,a): P(z,a)=0 \} \) and all branchpoints of the algebraic function \( a(z) \) lie in \( D \). We denote by \( p:\RS_*\to\overline\C \) the natural projection \( p((z,a)) = z \), and we let $\RS\subset \RS_*$ be the (open) Riemann surface defined as
\[
\RS := \{ z\in\RS_*: p(z) \in D \};
\]
here and below, whenever it causes no confusion, we use letter \( z \) to denote both points in \( \C \) and on \( \RS_* \). Clearly, the ramification points of \(\RS_*\) and \(\RS\) are identical: \( \mathbf{rp}(\RS_*) = \mathbf{rp}(\RS)\). Let us denote by \( \overline\RS \) the closure of \( \RS \) in \( \RS_* \), and define a class of functions \(\mathcal F(\RS) \) by
\begin{equation}
\label{defFs}
\begin{array}{ll}
\mathcal F(\RS) := \big\{ f: & f~\text{is holomorphic and single-valued on}~\overline\RS\setminus E_f,\smallskip\\
& E_f~\text{ is closed},~p(E_f)~\text{is polar and contained in}~D,\smallskip\\
& f(z_1)\neq f(z_2)~\text{for some}~z_1,z_2~\text{with}~p(z_1)=p(z_2)\big\}.
\end{array}
\end{equation}
In \eqref{defFs}, we wrote \( E_f \) for the singular set of \( f \) on \( \RS \) but it would have been more appropriate to write \( E_f(\RS) \), as the complete Riemann surface of \( f \) could be significantly larger than \( \RS \) and its singular set bigger than \(E_f\). Since all ramification points of \( \RS_* \) lie on top of \( D \), the simple-connectedness of $A$  implies that the preimage \( p^{-1}(T) \) consists of finitely many homeomorphic copies of \( T \) under $p^{-1}$; we generically denote by \( \B \) such a copy, so that \( p:\B\to T \) is a homeomorphism. Then, denoting with a subscript $\mathcal{b}E$ the restriction to a set $E$, the class of functions that we study is defined as
\begin{equation}
\label{defFge}
\mathcal F(A) := \bigg\{ f:~f~\text{is holomorphic on}~A~ \text{and} ~\exists\,\RS_*,\,\B,\,\widehat f\in\mathcal F(\RS)~ \text{with} ~f_{\mathcal bT}=\widehat f\circ (p_{\mathcal b\B})^{-1}  \bigg\}.
\end{equation}
From  \eqref{defFs} and \eqref{defFge}, one sees  that \( \mathcal F(A)\subset\mathcal S(A)\subset\mathcal H(A) \) and members of \( \mathcal F(A)\) meet (i), (ii), (iii). Conversely, when $f\in\mathcal{H}(A)$ satisfies (i), (ii) and (iii), one can check that \(f\in \mathcal F(A)\). Indeed, if $B$ is the closed polar subset of $D$ defined by  (i), we get from (ii) because $D\setminus B$ is connected, see Section~\ref{ssec_fine}, that the number of sheets of the Riemann surface of $f$ above $D\setminus B$ is a finite constant, say \(M\). Therefore, since the branchpoints are finitely many by (iii), the algebraic surface $\mathcal{R}_*$ can be constructed by a classical glueing process described in Section~\ref{ssec_reduc}. The fine point, when applying to the present case this familiar procedure based on glueing pairwise in a certain order the banks of $M$ copies of a system of cuts joining the branchpoints, is that any two points of $D$ can be joined by a smooth simple arc entirely contained in \(D\setminus B\), except  for its endpoints if they lie in \(B\). It is so because $D\setminus B$ is a connected open set and each point of $B$ is the center of a circle of arbitrary small radius contained in \(D\setminus B\), as well as the endpoint of a segment contained in \(D\setminus B\) (that may even be chosen to have quasi-any direction). These properties hold because \(B\) is polar, and therefore thin at each point of $\C$, see Section~\ref{ssec_thin}.

We also consider functions in \(\mathcal{H}(A)\) meeting (i) but not (ii). These are analytic and single-valued in \(\overline{\C}\setminus E\), where $E\subset D$ is  closed and polar, i.e., there are no branchpoints. This case complements the previous one on putting  $\mathcal R_*=\overline{\C}$ and omitting the last requirement in \eqref{defFs}; we denote the corresponding class of functions by \(\mathcal E(A)\). Since  \(\cp(A,E)=\cp_D(E)=0\) when $E\subset D$ is polar, see Section~\ref{ssec_cap}, we get  from  \eqref{prokhorov} and \eqref{walsh} that
\begin{equation}
\label{ftg}
\lim_{n\to\infty}\rho_n^{1/n}(f,A)=0,\qquad f\in\mathcal E(A).
\end{equation}
That is to say, some sequence $\{r_n\}_{n\in\N}$, $r_n\in\mathcal{R}_n$, of rational functions, converges \emph{faster than geometrically}  towards \(f\) on \(A\) as the degree grows large, meaning that
\begin{equation}
\label{optimalE}
\lim_{n\to\infty} \|f-r_n\|_A^{1/n} = 0.
\end{equation}
We call any such sequence a sequence of \emph{$n$-th root optimal rational approximants to \( f\in\mathcal{E}(A)\) on~\( A \)}.

\subsection{Optimal Rational Approximants}
\label{ssec_optimal}

Notions of potential theory in \( D \) and \( \RS \) play a fundamental role in what follows, and the  reader  might want to consult Appendix~\ref{sec_app} for a comprehensive account thereof. Let us here recall the definition of Green potentials and Green equilibrium distributions. 

The \emph{Green function}  \( g_D(\cdot,w) \) of the domain \( D \) with pole at \( w \in D \) is the unique non-negative harmonic function in \( D\setminus\{w\} \), with logarithmic singularity at \( w \), whose largest harmonic minorant is zero. The \emph{Green potential} of a positive Borel measure \(\nu\)   in \( D \) is  \( g(\nu,D;z) := \int g_D(z,w)d\nu(w) \). Putting \(|\nu|\) for the total mass of \(\nu\), the \emph{Greenian capacity}  (in $D$)  of a Borel set  \( B \subset D \) is defined by
\begin{equation}
  \label{dcG}
\cp_D(B) := \bigg( \inf_{\nu\geq0,|\nu|=1,\supp\,\nu\subseteq B} I_D(\nu) \bigg)^{-1},  \quad I_D(\nu) := \int\int g(z,w)d\nu(w)d\nu(z);
\end{equation}
the infimum above is taken over all probability Borel measures $\nu$ supported on \( B \). For any set \( S \subset D\), the \emph{outer Greenian capacity} of \(S\) in \(D\) is defined as \( \cp_D(S) = \inf \cp_D(U) \), where the infimum is taken over all open sets \( U \supset S \) in \(D\) (using again the symbol \(\cp_D\) causes no confusion for the outer Greenian capacity is known to coincide with the Greenian capacity on Borel sets). Polar subsets of \( D \) are those whose outer Greenian capacity is \( 0 \). If \( K \) is a  non-polar compact subset of \( D \), then there exists a unique Borel probability measure \( \mu_{D,K} \) supported on $K$, called the \emph{Green equilibrium distribution on \( K \) relative to \( D \)}, such that \( \cp_D(K) = 1/ I_D(\mu_{D,K})  \). It is characterized by the property that its Green potential is bounded on \( D \) and equal to its maximum (which is then necessarily  \( 1/\cp_D(K) \)) \emph{quasi everywhere}  (that is, up to a polar set) on \( K \). To recap:
\begin{equation}
\label{GreenEqPot}
g(\mu_{D,K},D;z) \begin{cases}
\leq 1/\cp_D(K), & z\in D, \\
= 1/\cp_D(K), & \text{for q.e. } \; z\in K, \\
< 1/\cp_D(K), & z\in D\setminus K,
\end{cases}
\end{equation}
where the last inequality follows from the (generalized) maximum principle for harmonic functions. 

We are concerned with two types of asymptotics for sequences  of rational approximants: the weak$^*$ behavior of the normalized counting measures of their poles, and the convergence in capacity of the functions themselves. More precisely, given a rational function \( r \)  of type \( (n,n) \), we define
\begin{equation}
\label{count_meas}
\mu(r) := \frac1n\sum_{z:r(z)=\infty} \delta_z,
\end{equation}
where each pole \( z \) appears in the sum as many times as its order. Equivalently $-2\pi\mu(r)$ is the distributional Laplacian $\Delta(\log|q|^{1/n})$, with $q$  the denominator of an irreducible form of $r$. One  says that a sequence \( \{\nu_n\} \) of finite Borel measures on \( D \) converges \emph{weak$^*$} to a measure \( \nu \),  denoted as
\[
\nu_n \overset{w*}{\to} \nu \qasq n\to\infty,
\]
if \( \int hd\nu_n \to \int hd\nu \) for every continuous compactly supported function \( h \) on \( D \). We further say that a sequence of functions \( h_n \) converges \emph{in Greenian capacity} to a function \( h \)  on a set \( U \subset D\) if
\begin{equation}
  \label{defconvcapG}
\lim_{n\to\infty} \cp_D\big( \{z\in F:|h_n(z)-h(z)|>\epsilon\} \big) = 0
\end{equation}
for each  \( \epsilon>0 \) and every compact \( F\subset U \); we denote this claim by \( h_n \overset{\mathrm{cap}}{\to} h \) in \( U \). The convergence \eqref{defconvcapG} is said to hold \emph{at a geometric  rate} if we can replace \( \epsilon \) in \eqref{defconvcapG} by \( a^n \) for some positive $a=a(F)<1$; the convergence rate is called \emph{faster than geometric} if \eqref{defconvcapG} holds with \( \epsilon \) replaced by \( a^n \) for any $a>0$.

\begin{theorem}
\label{thm:main1}
Let \( T\subset\C\) be a Jordan curve, \(D\) its interior and \( A \) the complement of \(D\cup T\) on the Riemann sphere. Given \( f\in\mathcal F(A) \), let \( \K\in\mathcal K_f \) be such that \eqref{mincapset} holds and \( \{r_n\} \) be  a sequence of \( n \)-th root optimal rational approximants to \( f \) on \( A \), as defined in \eqref{optimal}. Then,
\begin{equation}
  \label{wcopg}
\mu(r_n) \overset{w*}{\to} \mu_{D,\K},
\end{equation}
where \( \mu(r_n) \) is the normalized counting measure of the poles of \( r_n \) and \( \mu_{D,\K} \) is the Green equilibrium distribution on \( \K \) relative to \( D \). Furthermore, it holds that
\begin{equation}
  \label{cvicr}
\frac1{2n} \log|f-r_n| \overset{\mathrm{cap}}{\to} g(\mu_{D,\K},D;\cdot) - \frac1{\cp_D(\K)} \quad \text{in} \quad D\setminus \K\quad \text{as}\quad  n\to\infty ,
\end{equation}
where \( g(\mu_{D,\K},D;\cdot) \) is the Green potential of \( \mu_{D,\K} \) in \( D \).
\end{theorem}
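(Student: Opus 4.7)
\emph{Plan.} I would lift the problem to the bordered Riemann surface $\mathcal R$ carrying $\widehat f\in\mathcal F(\mathcal R)$, use the Nehari constant-modulus property of best meromorphic approximants to obtain an exact asymptotic for the error on $\mathcal R$, and identify the weak$^*$ limit of $\mu(r_n)$ via the equilibrium characterization of $\mu_{D,\K}$. By Parf\"enov's equivalence of the $n$-th root rates for rational and meromorphic uniform approximation \cite{Par86}, one can embed $\{r_n\}$ into an $n$-th root optimal sequence of best meromorphic approximants $m_n=r_n+h_n$, with $h_n\in H^\infty(D)\cap C(\overline D)$; since $h_n$ introduces no pole in $D$, $\mu(m_n)=\mu(r_n)$. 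The AAK theory then furnishes the decisive Nehari property $|f-m_n|\equiv\sigma_n$ on $T$, with $\sigma_n^{1/n}\to e^{-2/\cp_D(\K)}$. Setting $F_n:=\widehat f-m_n\circ p$ on $\mathcal R$, its pole divisor is the $M$-fold lift of the pole divisor of $r_n$, whose normalization is the measure $\widehat\mu(r_n)$ of total mass $M$. Since $A$ is simply connected and all ramification of $\mathcal R_*$ sits above $D$, $\partial\mathcal R$ splits into $M$ disjoint homeomorphic copies $\mathcal T_1,\dots,\mathcal T_M$ of $T$. On the distinguished $\mathcal T_1$, where $\widehat f\circ(p_{\mathcal b\mathcal T_1})^{-1}=f$, one has $|F_n|\equiv\sigma_n$, while on each $\mathcal T_j$, $j\geq 2$, the restriction equals $|f_j-m_n|$ with $f_j$ a non-main analytic branch obtained by continuation around the branchpoints in $\K$; since $\|f-m_n\|_T\to 0$ and $f_j\ne f$ off a finite set, these quantities remain uniformly bounded above and below as $n\to\infty$.

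The Jensen-Riesz representation on the bordered Riemann surface $\mathcal R$ gives, away from zeros and poles,
$$
\tfrac{1}{n}\log|F_n(\widehat z)| \;=\; \int_{\partial\mathcal R}\tfrac{1}{n}\log|F_n|\,d\omega_{\widehat z} \;+\; g(\widehat\mu(r_n),\mathcal R;\widehat z) \;-\; g(\tfrac{1}{n}\zeta_n,\mathcal R;\widehat z),
$$
where $\omega_{\widehat z}$ is harmonic measure on $\partial\mathcal R$ from $\widehat z$ and $\zeta_n$ is the zero divisor of $F_n$. Passing to a subsequence so that $\mu(r_n)\cws\mu$ and $\zeta_n/n\cws\zeta$, the right-hand side converges to $-\frac{2}{\cp_D(\K)}\,\omega_{\widehat z}(\mathcal T_1)+g(\widehat\mu,\mathcal R;\widehat z)-g(\zeta,\mathcal R;\widehat z)$, an exact asymptotic identity on $\mathcal R$. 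Projecting to $D$ via the sheet-sum identity $\sum_{\widehat z\in p^{-1}(z)}g_\mathcal R(\widehat z,\widehat w)=g_D(z,p(\widehat w))$ (which follows by matching boundary values, harmonicity, and logarithmic singularities) together with $p_*\widehat\mu=M\mu$, summation over the $M$ preimages of a generic $z\in D$ collapses the identity to a single-valued statement on $D$ involving $g(\mu,D;z)$ and $g(p_*\zeta/M,D;z)$.

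The support constraint $\supp\mu\subset\K$ is obtained by arguing that any cluster of poles of $r_n$ outside $\K$ would force $F_n$ to be large on a sheet where $\widehat f$ is analytic and single-valued, contradicting the $n$-th root rate in conjunction with the extremal $S$-property of $\K$ derived by Stahl in \cite{St85a,St85b,St85c}. Evaluating the asymptotic on the main sheet for $\widehat z$ above $z\in\K$, where the harmonic-measure weight $\omega_{\widehat z}(\mathcal T_1)$ collapses appropriately, yields the lower bound $g(\mu,D;z)\geq 1/\cp_D(\K)$ quasi everywhere on $\K$; combined with $\supp\mu\subset\K$ and unit mass, uniqueness of the Green equilibrium distribution forces $\mu=\mu_{D,\K}$, and uniqueness of the subsequential limit then gives the full weak$^*$ convergence \eqref{wcopg}. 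Restricting the exact asymptotic from the previous paragraph to the main sheet over $D\setminus\K$, where $\widehat f=f$, yields \eqref{cvicr}; the factor $\tfrac{1}{2n}$ (rather than $\tfrac{1}{n}$) reflects the Prokhorov doubling in \eqref{prokhorov} absorbed by the single-valued projected formula.

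The main obstacle is the joint identification of the zero limit $\zeta$ and the support statement $\supp\mu\subset\K$. For the multipoint Pad\'e interpolants of \cite{GonRakh87} both are obtained through non-Hermitian orthogonality of the denominators; here no orthogonality is at hand, and the argument must lean on the Nehari constant-modulus property together with the $S$-symmetry of $\K$. Careful potential-theoretic handling of the polar singular set $E_f$, the ramification points, and the interplay between harmonic measure on $\partial\mathcal R$ and the Green function of $D$ form the technical heart of the proof.
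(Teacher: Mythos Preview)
Your overall architecture matches the paper's: reduce to $\D$, replace $r_n$ by its Nehari modification $N(r_n)$ (same poles, circular error on $\T$), lift to $\RS$, and analyze $\frac1n\log|\widehat f-N(r_n)\circ p|$ via Green potentials. However, several of the steps you treat as routine are exactly where the work lies, and as written they contain genuine gaps.

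\textbf{The Jensen--Riesz identity is not available on $\RS$.} The error $F_n=\widehat f-N(r_n)\circ p$ is meromorphic only on $\RS\setminus E_f$, and its zeros can accumulate on the polar set $E_f$; thus $\zeta_n$ may be an infinite measure and your displayed formula does not make sense. The paper never writes a global Jensen formula. Instead it works on a regular exhaustion $\{\Omega_m\}$ of $\RS\setminus E_f$, strips off the (finitely many) zeros and poles in each $\Omega_m$ via Blaschke products, and passes to diagonal limits $h_{n_m,m}\to -u'$ (Lemma~\ref{lem:hr-e}). For the same reason your ``pass to a subsequence with $\zeta_n/n\overset{w*}{\to}\zeta$'' is unjustified: the masses $\nu_n(\RS)$ are not a~priori bounded. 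The paper handles this by a case split (Lemma~\ref{lem:pdlet-surface}): if $\{\nu_n\}$ is not locally bounded the limiting potential is $+\infty$, and this is later ruled out (Lemma~\ref{lem:finite}) using the structural Lemma~\ref{lemB}.

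\textbf{The identification $\mu=\mu_{D,\K}$ is the heart of the proof, and your sketch does not supply it.} Your heuristic ``poles outside $\K$ would make $F_n$ large on some sheet, contradicting the rate'' fails because nothing prevents zeros of $F_n$ from cancelling such poles; indeed, controlling the zero distribution is precisely the difficulty you yourself flag. The paper's mechanism is entirely different. The limit $ler(z)=g(\mu,\D;p(z))-g(\nu,\RS;z)-h_\RS(z)$ is shown (Lemma~\ref{lemB}) to satisfy: if $p(z_1)=p(z_2)$ with $z_1\ne z_2$ then $ler(z_i)<0$ for both $i$ forces $f(z_1)=f(z_2)$, hence only on a polar set. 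From this, fine-topological arguments (Lemmas~\ref{lem11a}--\ref{lem12a}) show $G_-:=\{ler<0\}$ lies \emph{schlicht} over $\D$ while $G_+:=\{ler>0\}$ is saturated under $p$. One then builds a maximal schlicht Euclidean domain $G_{\mathsf{max}}\supset G_-$, performs a \emph{double balayage} of $\mu$ and $\nu$ onto the complements of $D_{\mathsf{max}}=p(G_{\mathsf{max}})$ and $V_{\mathsf{max}}=p^{-1}(D_{\mathsf{max}})$ (Lemmas~\ref{lem:ler1}--\ref{lem:ler2}), projects the resulting $ler^{(2)}$ to $\D$ (Lemma~\ref{lem13b}), and only then compares with the Green equilibrium potential via the Strong Domination Principle. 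The crucial estimate $\mu_{\mathsf{pr}}\le 2\mu^{(2)}$---which is where the factor $2$ actually originates, via the Moore triod theorem bounding accessible boundary preimages (Lemma~\ref{lem:prime-ends})---combined with $\cp_\D(K_{\mathsf{max}})\ge\cp_\D(\K)$ forces all inequalities to be equalities and yields $\mu=\mu_{\D,\K}$. None of this machinery is hinted at in your outline; in particular, your explanation of the $\tfrac12$ as ``Prokhorov doubling'' is not how it arises.
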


\begin{remark}
 \label{convlogcapv}
\emph{Theorem~\ref{thm:main1} and equation \eqref{GreenEqPot} imply that $n$-th root optimal rational approximants converge to \( f \) in capacity in \( D\setminus \K \), at a geometric rate equal to $\exp\{g(\mu_{D,\K},D;\cdot)-1/\cp_D(\K)\}$  pointwise. In fact, as shown in Section~\ref{ssec:conv_cap}, convergence in Greenian capacity in \( D\setminus \K \) and uniform convergence on \( A \), along with the limiting behavior \eqref{wcopg} for the poles, together imply the seemingly stronger convergence in logarithmic capacity on \( \overline D\setminus \K \), at a geometric rate which is pointwise no slower than the previous one. Here, convergence in logarithmic capacity is defined as in \eqref{defconvcapG}, except that the logarithmic capacity replaces the Greenian one, see Section~\ref{ssec_cap} for a definition of logarithmic capacity. This remark equally applies to the forthcoming Theorem~\ref{thm:main2}, as well as to Theorems~\ref{thm:main1p} and~\ref{thm:main2p} in which \( \K \) gets replaced by a polar set \( E \) and convergence takes place at  faster than geometric rate. Moreover, if in Theorem~\ref{thm:main2} the error $f-M_n$ is asymptotically circular on $T$, namely if $\lim_n\inf_T|f-M_n|^{1/n}=  \exp\{-2/\cp_D(K_f)\}$ (in particular when $M_n$ are their own Nehari modifications, see Section~\ref{ssec_nehari}), then one can draw  the more  precise conclusion that the exact pointwise rate of geometric convergence of $M_n$ to $f$ in logarithmic capacity on \( \overline D\setminus \K \) is the same as in Greenian capacity  on \( D\setminus \K \), namely  $\exp\{g(\mu_{D,\K},D;\cdot)-1/\cp_D(\K)\}$.}
\end{remark}

\begin{figure}[ht!]

\includegraphics[scale=.8]{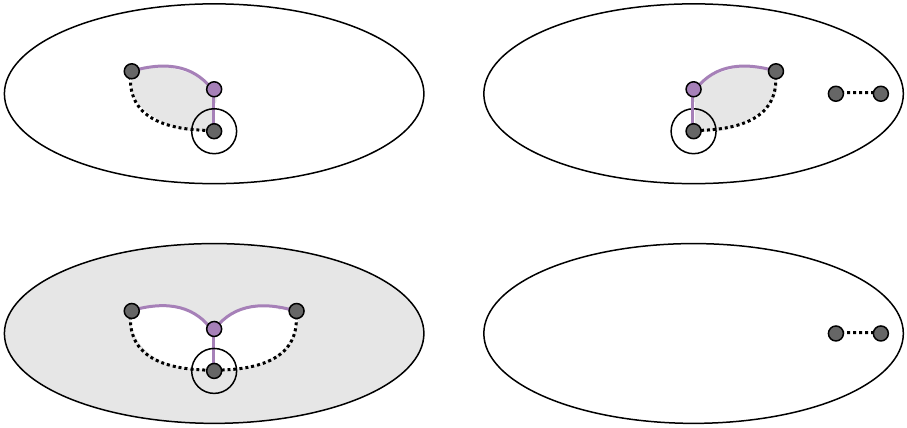}

\begin{picture}(0,0)
\put(-96,17){\ding{192}}
\put(-83,118){\ding{193}}
\put(-85,39){\ding{194}}
\put(76,118){\ding{195}}
\put(-137,57){\(a_2\)}
\put(-56,57){\(a_3\)}
\put(-137,151){\(a_2\)}
\put(129,150){\(a_3\)}
\put(143,132){\(a_4\)}
\put(160,132){\(a_5\)}
\put(143,39){\(a_4\)}
\put(160,39){\(a_5\)}
\put(-96,58){\(b_1\)}
\put(-95,149){\(b_2\)}
\put(88,149){\(b_3\)}
\put(-22,24){\(\B\)}
\put(-45,40){\(\mathcal U_f\)}
\put(100,137){\(\mathcal U_f\)}
\put(-115,137){\(\mathcal U_f\)}
\end{picture}

\caption{\small Surface \( \RS \) with five ramification points \( \mathbf{rp}(\RS) = \{a_1,a_2,a_3,a_4,a_5\}\) (\(a_1\) is not labeled) and four sheets. Ramification point \( a_1 \) has order \( 3 \) and the remaining points have order \( 2 \). The sequence \ding{192}-\ding{193}-\ding{194}-\ding{195}-\ding{192} represents the monodromy around \( a_1 \) while encircling it clockwise, where the transitions happen across the dashed curves that stand for the cuts between different sheets of \( \RS \). The  domain \( \mathcal U_f \) is depicted as a shaded region. In this example \( E_0 = \varnothing \), \( E_{11} = \{ p(a_1),p(a_2),p(a_3) \} \) are the active branch points, and \( E_{10} = \{ b \} \), where \( b=p(b_1)=p(b_2)=p(b_3) \). The set \( \K \) is a threefold equal to the natural projection of the solid (purple) curves. The solid curves also represent a different choice of the transition cuts between different sheets of \( \RS \), with the latter the domain \( \mathcal U_f \) will lie entirely on one sheet.}
\label{fig:RS}
\end{figure} 

In view of its importance, let us describe in greater detail the set \( \K \). As mentioned before, the problem of finding elements of minimal capacity in \( \mathcal K_f \) was extensively studied by the second author \cite{St85a,St85b,St85c}. The existence of such sets for \( f\in \mathcal H(A) \) was proven in \cite{St85a}, and  one can choose \( \K \) so that \( \K\subseteq \tilde K \) for any \( \tilde K\in\mathcal K_f \) with \( \cp_D(\tilde K)=\inf_K \cp_D(K) \), which makes \( \K \) unique \cite{St85b}. The topological structure of \( \K \) was investigated in \cite{St85c}, where it is shown  that
\begin{equation}
\label{Kf}
\K = E_0 \cup E_1 \cup \bigcup_i J_i
\end{equation}
where the \( J_i \) are open analytic arcs, \( E_1 \)  comprises the endpoints of the arcs \( J_i \), and \( E_0 \) is a subset of the singular set of \( f \) in \( D \) (the singular set consists of those points in \( D \) to which some continuation of \( f \) from \( T \) has a singularity). As soon as \( f\in \mathcal S(A) \), the set \( E_0 \) is polar by definition. To understand this decomposition better when \( f\in\mathcal F(A) \), let \( J_f := \cup_i \overline J_i \) so that \( E_1\subset J_f \). Then \( f\) possesses a single-valued continuation into \( D\setminus J_f \) with singular set \( E_0 \), consisting  of polar and essential singularities (but no branching singularities). If \( \B \), \( \widehat f \) are as in \eqref{defFge} and \(  \mathcal U_f \) is the connected component of \( p^{-1}(D\setminus J_f) \) containing \( \B \) in its boundary, then we can further decompose:
\[
E_0 = p(E_{\widehat f}\cap \mathcal U_f), \quad E_1 = E_{10}\cup E_{11}, \quad E_{11} := p\big(\mathbf{rp}(\RS)\cap\overline {\mathcal U}_f\big), \quad E_{10} := E_1\setminus E_{11}.
\]
That is, \( E_{11} \) is the set of ``active'' branchpoints of \( f \), i.e., branchpoints that can be  reached by continuation of \( f \) from \( T \) into \( D\setminus J_f \), as opposed to those points in \(\mathbf{rp}(\RS) \) that cannot be so reached. Also, each \( e\in E_{10} \) is an endpoint of at least three arcs \( J_i \), and generically \( f \) possesses analytic continuations to \( e \) from any direction within \( D\setminus J_f \) (unless by chance \( e \) is a singularity of \( f \) as well), see Figure~\ref{fig:RS}. As \( \mathbf{rp}(\RS) \) is finite,  so is the collection \( \{ J_i \} \) in cases that we consider.

For $f\in\mathcal{F}(A)$, Theorem \ref{thm:main1} asserts two things: (i) the weak$^*$ convergence of $\mu(r_n)$ to $\mu_{D,K_f}$ whenever $\{r_n\}$ is an $n$-th root optimal sequence of rational approximants to $f$ on $A$, and (ii) the convergence in capacity of $r_n$ to \(f\)  at a geometric rate on $D\setminus K_f$. If now $f\in\mathcal{E}(A)$ and \( \{r_n\} \) is a sequence of $n$-th root optimal rational approximants to $f$ on $A$, i.e., a sequence  meeting \eqref{optimalE} and thus converging faster than geometrically to $f$ on $A$, then we shall see that  \(r_n\) converges in capacity  to $f$ at faster than geometric rate in $D$. However, one can no longer expect a specific behavior of $\mu(r_n)$ in this case, for if $R_n$ is a sequence in $\mathcal{R}_n(D)$ that converges to zero faster than geometrically on $T$, then $r_{n/2}+R_{n/2}$ is again a sequence of  $n$-th root optimal rational approximants to $f$ on $A$, whereas the weak$^*$ limit points of $\mu(R_n)$ can be arbitrary amongst positive measures of mass at most $1$ on $D$. Thus, faster than geometric convergence does not qualify rational approximants enough to imply much on the behavior of their poles. Still, those poles of $r_n$ that stay away from the singular set of $f$ cannot account for the rate of convergence. This is made precise in the following result, which  complements Theorem \ref{thm:main1} in the case of no branchpoints.

\begin{theorem}
\label{thm:main1p}
Let \( T \), \(A\) and $D$ be as in Theorem~\ref{thm:main1}. Given \( f\in\mathcal E(A) \), let \( \{r_n\} \) be a sequence of rational functions of type \((n,n) \) meeting \eqref{optimalE}. Then, it holds that
\[
r_n \overset{\mathrm{cap}}{\to} f \quad \text{in} \quad D \setminus E
\]
at faster than geometric rate, where $E\subset D$ a closed polar set outside of which $f$ is analytic and single-valued. Moreover, for any neighborhood \( V \) of \( E \) there is a sequence  $\{R_{k_n}\}$, $R_{k_n}\in\mathcal{R}_{k_n}(V)$, $k_n\leq n$, such that the poles of $R_{k_n}$ are among the poles of $r_n$ lying in $V$ and
 \begin{equation}
\label{optimalEs}
\lim_{n\to\infty} \|f-R_{k_n}\|_A^{1/n} = 0.
\end{equation}
\end{theorem}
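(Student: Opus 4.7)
For the convergence-in-capacity statement, I would follow the classical strategy via an auxiliary function. Writing $r_n = p_n/q_n$ in irreducible form with $q_n$ monic of degree at most $n$ (its zeros in $\overline D$), I set $H_n := q_n(f-r_n) = q_n f - p_n$, which is holomorphic on $\overline\C\setminus E$. The elementary polynomial bound $\|q_n\|_T \leq C^n$ (with $C$ depending only on $T$) combined with the hypothesis $\|f-r_n\|_A^{1/n}\to 0$ yields $\|H_n\|_T^{1/n}\to 0$ at faster than geometric rate. For any open $U\supset E$ with $\overline U\subset D$ and smooth boundary disjoint from $E$, one similarly obtains $\|H_n\|_{\partial U}\leq C_U^n$ since $f$ is bounded on $\partial U$. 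Applying the two-constants theorem to the subharmonic function $\frac1n\log|H_n|$ in $D\setminus\overline U$ then gives $|H_n(z)|^{1/n}\to 0$ uniformly on compact subsets of $D\setminus\overline U$ at faster than geometric rate. Because $E$ is polar, I would let $U$ shrink along open sets of arbitrarily small outer Greenian capacity to recover the same bound quasi everywhere in $D\setminus E$. Finally, a Frostman--Cartan estimate of the form $\cp_D\{z:|q_n(z)|^{1/n}<\delta\}\leq C\delta$ shows that $|q_n|^{1/n}$ is bounded below outside small-capacity sets; dividing, $|f-r_n|^{1/n}=|H_n|^{1/n}/|q_n|^{1/n}$ tends to $0$ in Greenian capacity on $D\setminus E$ at faster than geometric rate.

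For the extraction statement, I would take $R_{k_n}$ to be the sum of the principal parts of $r_n$ at each of its poles lying in $V$, augmented by the constant $r_n(\infty)$. This places $R_{k_n}\in\mathcal R_{k_n}(V)$ with $k_n\leq n$ equal to the total multiplicity of the poles of $r_n$ in $V$, and all poles of $R_{k_n}$ are precisely those of $r_n$ lying in $V$. The remainder $s_n := r_n - R_{k_n}$ is a rational function with all poles in $D\setminus V$ and with $s_n(\infty) = 0$; hence it is holomorphic on $A$, and the maximum principle gives $\|s_n\|_A = \|s_n\|_T$. Since $\|f-R_{k_n}\|_A \leq \|f-r_n\|_A + \|s_n\|_T$ and the first summand is by hypothesis faster than geometrically small, the goal reduces to establishing $\|s_n\|_T^{1/n}\to 0$ at faster than geometric rate.

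The hardest step is this last one. I plan to recover each residue $c_\alpha$ (and more generally each principal-part coefficient) of $r_n$ at a pole $\alpha\in D\setminus V$ via the Cauchy integral $c_\alpha = \frac{1}{2\pi\ic}\oint_{|w-\alpha|=\eta}(r_n-f)(w)\,dw$, valid for any admissible radius $\eta$ for which the closed disk lies in $D\setminus E$ and contains no other pole of $r_n$. The obstacle is that part~1 provides only capacity-smallness of $|r_n-f|^{1/n}$, not the $L^\infty$-control required on the integration circle. I would bridge this via a Fubini-type selection: the exceptional set from part~1 has vanishing outer Greenian capacity, and hence---by a P\'olya-type estimate applied component-by-component---vanishing two-dimensional Lebesgue area, so that for a generic radius $\eta$ in an admissible range the integration circle meets the exceptional set in arc-length negligibly. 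This yields uniform pointwise smallness of $r_n-f$ on the circle at faster than geometric rate, whence $|c_\alpha|^{1/n}\to 0$ uniformly over all (at most $n$) such poles. Summing the principal parts and invoking the cancellations forced among them by the capacity-convergence of $r_n$ to $f$ (the Froissart-doublet phenomenon) then gives $\|s_n\|_T^{1/n}\to 0$ at faster than geometric rate. The main obstacle is this passage from capacity-convergence to pointwise control on specific small circles, together with the combinatorial bookkeeping of cancellations among the discarded poles.
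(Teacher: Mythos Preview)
Your approach to the first assertion (convergence in capacity) is correct and in fact more elementary than the paper's. The paper proves the more general meromorphic statement (Theorem~\ref{thm:main2p}) and treats Theorem~\ref{thm:main1p} as a special case; to handle meromorphic approximants it passes through Nehari modifications, Hankel operators, singular vectors, Fekete polynomials for $E$, and a lemma of Pommerenke to locate the set where the relevant Blaschke product is small. For rational $r_n=p_n/q_n$ your route via $H_n=q_nf-p_n$, the two-constants theorem on $D\setminus\overline U$, and Cartan's lemma for $\{|q_n|^{1/n}<\delta\}$ is perfectly adequate and avoids all of that machinery. One cosmetic point: the step ``let $U$ shrink to sets of arbitrarily small capacity'' is not needed; for a fixed compact $K\subset D\setminus E$ you simply choose $U$ with $\overline U\cap K=\varnothing$ once and for all. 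The rate on $K$ then depends on $K$, which is exactly what ``faster than geometric convergence in capacity'' requires.

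Your approach to the second assertion has a genuine gap. Bounding the residues of $r_n$ pole-by-pole and then appealing to ``cancellations forced by the Froissart-doublet phenomenon'' does not constitute a proof: nearby poles make the admissible radii $\eta$ arbitrarily small (so the capacity/area selection may fail uniformly over $n$ poles), and even with residue bounds, poles of $s_n$ accumulating on $T$ can make $\|s_n\|_T$ large unless the cancellation is established---which you have not done. The paper sidesteps this entirely by choosing $R_{k_n}$ differently: it does \emph{not} take the full singular part of $r_n$ at all poles in $V$, but only at the poles lying inside a single contour $\gamma\subset V$ that separates $E$ from $T$. This still satisfies the hypothesis ``poles of $R_{k_n}$ among the poles of $r_n$ in $V$''. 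The contour is a level set $\gamma(t_n)=G^{-1}(t_n)$ of a Green equilibrium potential $G$, and the key point is that the convergence in capacity from the first part, combined with the fact that the map $z\mapsto G(z)$ is Lipschitz (hence capacity-nonincreasing up to a constant), guarantees a value $t_n$ in a fixed interval with $|f-r_n|\le a^n$ everywhere on $\gamma(t_n)$. Then the Cauchy formula gives directly
\[
f(z)-R_{k_n}(z)=\frac1{2\pi\ic}\int_{\gamma(t_n)}\frac{(f-r_n)(\xi)}{z-\xi}\,d\xi,\qquad z\in T,
\]
and the bound $\|f-R_{k_n}\|_T\le C a^n$ follows with no residue bookkeeping or cancellation argument.
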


Using  neighborhoods $V$ shrinking to $E$ and  a diagonal argument, Theorem~\ref{thm:main1p} yields a corollary of independent interest.

\begin{corollary}
If \( f\in\mathcal E(A) \) and $E\subset D$ a closed polar set outside of which $f$ is analytic and single-valued, then there is a sequence of rational functions $\{r_n\}$, $r_n\in\mathcal{R}_n(D)$, converging faster than geometrically to $f$ on $A$, and such that every weak$^*$ cluster point of the sequence $\{\mu(r_n)\}$ of normalized counting measures of the poles is supported on $E$.
\end{corollary}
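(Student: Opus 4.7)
The plan is to apply Theorem~\ref{thm:main1p} along a nested sequence of open neighborhoods of \(E\) that shrinks to \(E\), and then to extract a diagonal sequence. Since \(E\) is a closed polar subset of the bounded Jordan domain \(D\), it is compact in \(\overline\C\), so one can first choose a decreasing family of open sets \(\{V_m\}_{m\geq 1}\) with \(E\subset V_{m+1}\subset\overline{V_{m+1}}\subset V_m\subset\overline{V_m}\subset D\) and \(\bigcap_{m\geq 1}\overline{V_m}=E\).

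For each fixed \(m\), I would apply Theorem~\ref{thm:main1p} with \(V=V_m\) to some sequence of \(n\)-th root optimal rational approximants to \(f\) on \(A\), the existence of which is guaranteed by \eqref{ftg}. This produces a sequence \(\{R_n^{(m)}\}_{n\in\N}\) of rational functions with \(R_n^{(m)}\in\RS_{k_n}(V_m)\subset\RS_n(D)\), \(k_n\leq n\), such that
\[
\lim_{n\to\infty}\|f-R_n^{(m)}\|_A^{1/n}=0.
\]
Consequently, there exists an integer \(N_m\) so that \(\|f-R_n^{(m)}\|_A\leq m^{-n}\) for every \(n\geq N_m\); after relabeling, the sequence \(\{N_m\}\) may be taken strictly increasing with \(N_1=1\).

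The diagonal step then defines \(r_n:=R_n^{(m)}\) whenever \(N_m\leq n<N_{m+1}\), with \(m(n)\) denoting this index. Since \(\RS_{k_n}(V_m)\subset\RS_n(D)\), we have \(r_n\in\RS_n(D)\), and
\[
\|f-r_n\|_A^{1/n}\leq m(n)^{-1}\longrightarrow 0\qasq n\to\infty,
\]
so \(\{r_n\}\) converges faster than geometrically to \(f\) on \(A\). Moreover, the poles of \(r_n\) all lie in \(V_{m(n)}\); thus, for any fixed \(m_0\geq 1\), the measure \(\mu(r_n)\) is supported in \(\overline{V_{m_0}}\) as soon as \(n\geq N_{m_0}\). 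If \(\nu\) is any weak\(^*\) cluster point of \(\{\mu(r_n)\}\) and \(h\) is a continuous function compactly supported in \(D\setminus\overline{V_{m_0}}\), then \(\int h\,d\mu(r_n)=0\) for all sufficiently large \(n\), whence \(\int h\,d\nu=0\). This forces \(\supp\nu\subset\overline{V_{m_0}}\) for every \(m_0\geq 1\), and therefore \(\supp\nu\subset\bigcap_{m_0\geq 1}\overline{V_{m_0}}=E\).

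There is no genuine obstacle beyond Theorem~\ref{thm:main1p} itself: the argument is a textbook Cantor diagonal extraction, the only additional ingredient being the existence of a nested family of open neighborhoods of a compact set whose intersection equals that set. The slight subtlety worth flagging is that the type of \(r_n\) is controlled (\(\RS_{k_n}\hookrightarrow\RS_n\)) precisely because Theorem~\ref{thm:main1p} produces approximants of degree \emph{at most} \(n\), so the diagonal scheme does not inflate degrees beyond what the corollary requires.
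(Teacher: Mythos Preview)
Your proof is correct and follows exactly the approach the paper indicates in the sentence preceding the corollary, namely applying Theorem~\ref{thm:main1p} to a shrinking sequence of neighborhoods of \(E\) and extracting a diagonal sequence. The only point worth noting is that the paper gives no more detail than that single sentence, so your write-up is a faithful expansion of what the authors intended.
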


\subsection{Optimal Meromorphic Approximants}
\label{sec:meroa}

Let \( H^\infty(D) \)  denote the space of bounded analytic functions on \( D \) and \(\mathcal{A}(D)\) the subspace of those extending continuously to $\overline{D}$.  When $T$ is rectifiable each $h\in H^\infty(D)$ has a non-tangential limit   almost everywhere on $T$ with respect to arclength, that we still call $h$, and putting \( \|\cdot\|_T \) for the essential supremum norm on \( T \) (with respect to arclength) it holds that $\|h\|_T=\|h\|_D$ \cite[Theorems 10.3 \& 10.5]{Duren}. When $T$ is a non-rectifiable Jordan curve, however, limiting values of $H^\infty(D)$-functions on \(T\) generally exist at sectorially accessible points only, and such points may reduce to a set of zero linear measure \cite{McMPi73}. This will force us into a somewhat careful discussion of meromorphic approximants.   Remember the set \( \mathcal M_n(D) \)  of monic polynomials of degree \( n \) whose zeros belong to \( D \), and put
\[
\begin{cases}
H_n^\infty(D) & := \big\{h/q:~h\in H^\infty(D),\; q\in\mathcal M_n(D) \big\}, \smallskip \\
\mathcal{A}_n(D) &:= \big\{h/q:~h\in\mathcal{A}(D),\; q\in\mathcal M_n(D) \big\}.
\end{cases}
\]
That is, \( H_n^\infty(D) \) is the set of meromorphic function with at most \( n \) poles in \( D \) that are bounded near \( T \), and \(\mathcal{A}_n(D)\) is the subset of those extending continuously to $T$. We shall say that \( \{M_n\} \) is a sequence of \emph{\( n \)-th root optimal meromorphic approximants to \( f\in\mathcal S(A) \) on \( T \)} if \( M_n\in \mathcal{A}_n(D) \) and
\begin{equation}
\label{opt_merom}
\limsup_{n\to\infty} \|f-M_n\|_T^{1/n} \leq \exp\big\{-2/\cp_D(\K)\big\}.
\end{equation} 
Any sequence of \( n \)-th root optimal rational approximants is  a particular sequence of \( n \)-th root optimal meromorphic ones, by  \eqref{optimal}. When $T$ is rectifiable, Corollary~\ref{cor:main1} to come will entail that in \eqref{opt_merom} the condition $M_n\in\mathcal{A}_n(D)$ can be traded for the  seemingly weaker requirement  $M_n\in H^\infty_n(D)$. However, it is important to require that $M_n\in\mathcal{A}_n(D)$ when $T$ is not rectifiable, for otherwise the left-hand side of \eqref{opt_merom} may no longer make sense for the essential supremum  norm (with respect to arclength) and the considerations below would not apply. In his proof of \eqref{prokhorov}, Parf\"enov \cite{Par86} has shown that the limit inferior of \( \rho_n^{1/n}(f,A) \) is the same as the one of the \( n \)-th root of the error in best meromorphic approximation, see \eqref{nehari_takagiT} for a definition of best (not just $n$-th root optimal) meromorphic approximants. Hence, in light of \eqref{optimal}, replacing rational approximants with meromorphic ones does not improve the rate of decay of the \( n \)-th root error,  and  \( n \)-th root optimal meromorphic approximants to functions in \( \mathcal S(A) \) could be defined with the limit superior and the inequality sign replaced by a full limit and the equality sign in \eqref{opt_merom}. 

\begin{theorem}
\label{thm:main2}
With  the notation of Theorem~\ref{thm:main1},  let \( \{M_n\} \) be a sequence of \( n \)-th root optimal meromorphic approximants to\( f\in\mathcal F(A) \) on \( T \), see \eqref{opt_merom}. Then, the conclusions of Theorem~\ref{thm:main1} hold with \( r_n \) replaced by \( M_n \).
\end{theorem}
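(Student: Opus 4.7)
The strategy is to extend the potential-theoretic argument behind Theorem~\ref{thm:main1} to the meromorphic setting by absorbing the bounded holomorphic part of $M_n$ into an auxiliary analytic function, and then running the same subharmonic/balayage machinery on the Riemann surface $\RS$ that underlies the proof of Theorem~\ref{thm:main1}.

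First, I would write $M_n = h_n/q_n$ with $h_n \in \mathcal{A}(D)$ and $q_n \in \mathcal{M}_n(D)$, and form the auxiliary function $F_n := q_n f - h_n$. On $T$ the two expressions $F_n = q_n(f - M_n)$ (valid from the $A$-side, where $f$ is analytic near $T$) and $F_n = q_n f - h_n$ (valid from the $D$-side) agree continuously, and since $h_n$ is holomorphic in $D$ while $f$ is holomorphic in a neighborhood of $A$, the function $F_n$ extends analytically across $T$. In $D$ it inherits the multi-valued continuation of $f$, so lifting by $p\colon\RS\to\overline\C$ produces a single-valued analytic function on $\overline\RS\setminus p^{-1}(\K\cup p(E_{\widehat f}))$, built from the lift $\widehat f$ and from $h_n\circ p$.

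Next, the subharmonic analysis proceeds as in the rational case. The function $u_n := \frac{1}{n}\log|F_n\circ p|$ is subharmonic on $\RS$ away from its singular set, and its boundary values on $\B$ satisfy
$$u_n\big|_{\B} \leq \frac{1}{n}\log|q_n\circ p| + \frac{1}{n}\log\|f-M_n\|_T.$$
In the $n$-th root sense, the first term is (the lift of) the negative logarithmic potential $-V^{\mu(M_n)}$, while the second tends to $-2/\cp_D(\K)$ by \eqref{opt_merom}. Passing to a weak$^*$ subsequential limit $\mu(M_n)\overset{w*}{\to}\sigma$, taking balayage of $\sigma$ onto $T$ through $D$, and comparing with the mandatory growth of $\widehat f$ near $p^{-1}(\K)$ (exactly as in the proof of Theorem~\ref{thm:main1}) produces the inequality $g(\sigma,D;\cdot) \geq 1/\cp_D(\K)$ quasi-everywhere on $\K$. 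By the extremal characterization \eqref{GreenEqPot}, this forces $\sigma = \mu_{D,\K}$, whence \eqref{wcopg} follows by uniqueness of the limit; \eqref{cvicr} is then obtained from the same subharmonic estimates on compact subsets of $D\setminus\K$ combined with the Principle of Descent and the Lower Envelope Theorem.

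The main obstacle is ensuring that the holomorphic perturbation $h_n$, which is absent in the rational case, does not disrupt the potential-theoretic lower bounds. The key observation is that $h_n$ is single-valued and bounded in $D$ with $\|h_n\|_D = \|h_n\|_T$ (since $h_n\in\mathcal{A}(D)$), so it contributes no poles, no branching, and no $n$-th root growth beyond what is already encoded in $\|q_n\|_T$ via the identity $h_n = M_n q_n$ on $T$. Consequently, the lifted subharmonic function $u_n$ obeys on $\B$ the same boundary bound as in the rational case up to a factor that is negligible in the $n$-th root sense by Parf\'enov's equivalence of rational and meromorphic best approximation rates; and the remainder of the argument, including the identification of $\sigma$ with $\mu_{D,\K}$ and the convergence in capacity, transfers verbatim from the proof of Theorem~\ref{thm:main1}.
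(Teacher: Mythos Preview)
Your proposal has the right flavor but misses several essential ideas and misrepresents the logical structure.

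\textbf{On the logical structure.} In the paper, Theorem~\ref{thm:main1} is not proved first and then extended; it is a \emph{special case} of Theorem~\ref{thm:main2}. The entire argument of Section~\ref{TheProof} is carried out directly for $n$-th root optimal meromorphic approximants, so there is no separate ``proof of Theorem~\ref{thm:main1}'' to transfer verbatim. Your repeated appeals to it are circular.

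\textbf{The Nehari modification is missing.} The paper's first substantive step (Section~\ref{ssec_nehari}) replaces $M_n$ by its Nehari modification $N(M_n)=M_n+h_{f-M_n}$, where $h_{f-M_n}$ is the best analytic approximant to $f-M_n$. The crucial gain is that $|f-N(M_n)|$ is \emph{constant} on $T$ (equation~\eqref{circ_error}). This circular-error property is indispensable for Lemma~\ref{lem:hr}, where the boundary values of $h'$ on $\partial\RS$ are computed: on $\B$ one needs the exact value $2/\cp_\D(\K)$, not merely an upper bound, and on $\partial\RS\setminus\B$ one must show $h'=0$ via the delicate estimate~\eqref{Vitali-aux}--\eqref{bii2}, which again uses that the error has known modulus on $\T$. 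Your one-sided inequality $u_n|_{\B}\le\cdots$ does not suffice.

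\textbf{The mechanism forcing $\sigma=\mu_{D,\K}$ is misidentified.} You invoke ``mandatory growth of $\widehat f$ near $p^{-1}(\K)$'', but there is no such growth: $\K$ consists of analytic arcs across which $f$ extends holomorphically (they are cuts, not singularities). The actual mechanism is Lemma~\ref{lemB}: if the logarithmic error $ler$ is negative at two distinct points over the same base point, then the approximants converge to two different values of $f$ simultaneously, forcing $f(z_1)=f(z_2)$. This, combined with the fine-topological machinery of Lemmas~\ref{lem11a}--\ref{lem13b} (schlichtness of $G_-$, construction of the maximal domain $G_{\mathsf{max}}$, balayage to $\K$, the Moore triod theorem to bound multiplicities), is what ultimately pins down $\mu$. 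Your sentence ``by the extremal characterization~\eqref{GreenEqPot}, this forces $\sigma=\mu_{D,\K}$'' skips Sections~\ref{ssec:ler}--\ref{ssec_concl}, which constitute the heart of the proof and are far from a routine application of equilibrium properties.
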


Let us reiterate that $n$-th root optimal rational approximants are just an instance of $n$-th root optimal meromorphic approximants, and therefore Theorem~\ref{thm:main1} is a special case of Theorem~\ref{thm:main2}. 

When $f\in\mathcal{E}(A)$, we define meromorphic approximants in a manner similar to \eqref{opt_merom} but this time with an eye on \eqref{ftg}. Namely, we say that \( \{M_n\} \) is a sequence of \emph{\( n \)-th root optimal meromorphic approximants to \(f\in \mathcal{E}(A)\) on \( T \)} if \( M_n\in \mathcal{A}_n(D) \) and
\begin{equation}
\label{opt_meromp}
\lim_{n\to\infty} \|f-M_n\|_T^{1/n} =0.
\end{equation} 
The following complements Theorem \ref{thm:main2} in the case of no branchpoints, and subsumes Theorem~\ref{thm:main1p}.

\begin{theorem}
\label{thm:main2p}
With  the notation of Theorem~\ref{thm:main1p},  let \( \{M_n\} \) be a sequence of \( n \)-th root optimal meromorphic approximants on \( T \) to \( f\in\mathcal E(A) \), see \eqref{opt_meromp}. Then, the conclusions of Theorem~\ref{thm:main1p} hold with \( r_n \) replaced by \( M_n \).
\end{theorem}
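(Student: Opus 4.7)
\emph{Plan.} The statement requires proving the two conclusions of Theorem~\ref{thm:main1p} with $r_n$ replaced by $M_n$: (i) $M_n\to f$ in capacity on $D\setminus E$ at faster-than-geometric rate, and (ii) for each open neighborhood $V$ of $E$, the existence of $R_{k_n}\in\mathcal{R}_{k_n}(V)$ with poles among those of $M_n$ in $V$ and $\|f-R_{k_n}\|_A^{1/n}\to 0$.

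For (i), I would write $M_n=P_n/q_n$ with $P_n\in\mathcal{A}(D)$ and $q_n\in\mathcal{M}_n(D)$ monic of degree $n$. The bounds $\|q_n\|_T\leq(\diam D)^n$ and $\|M_n\|_T\leq\|f\|_T+1$ combine via the maximum modulus principle to give $\|P_n\|_{\overline D}\leq C^n$ for some $C>0$. Then $v_n:=n^{-1}\log|P_n-q_nf|$ is subharmonic on $D\setminus E$, bounded above on $T$ by $\log C+n^{-1}\log\|f-M_n\|_T\to -\infty$, and uniformly bounded above on $\overline D\setminus V$ for any open neighborhood $V$ of $E$ with $\overline V\subset D$ and $\partial V\cap E=\emptyset$. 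The two-constants theorem on $D\setminus\overline V$ yields $v_n(z)\leq\omega_F\cdot n^{-1}\log\|f-M_n\|_T+K_F$ on each compact $F\subset D\setminus\overline V$, with $\omega_F>0$ a lower bound for the harmonic measure of $T$ from $F$ in $D\setminus\overline V$. Combined with Cartan's estimate $\cp(\{|q_n|<\delta^n\})\leq 2e\delta$ and the comparison of logarithmic and Greenian capacities on compact subsets of $D$, this yields $\cp_D(\{z\in F:|f(z)-M_n(z)|>a^n\})\to 0$ for every $a>0$, establishing (i).

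For (ii), let $q_n^V\in\mathcal{M}_{k_n}(V)$ denote the monic polynomial of degree $k_n\leq n$ whose zeros are the poles of $M_n$ in $V$, and take $R_{k_n}=p_n/q_n^V$ defined by multipoint Hermite interpolation of $f$ at $2k_n+1$ suitably chosen nodes on $A$ (for instance, Fekete-type points for the condenser $(A,V)$). The Hermite-Walsh integral formula then represents $f-R_{k_n}$ as a contour integral whose modulus, through standard potential-theoretic estimates, depends on the distribution of the zeros of $q_n^V$ and of the interpolation nodes. Part (i) combined with an analysis of the Hankel/Nehari distance of $q_n^V f$ from $\mathcal{A}(D)$ forces the zeros of $q_n^V$ to accumulate weak-$*$ on $E$ with sufficient density, for otherwise the rate $\|f-M_n\|_T^{1/n}\to 0$ could not hold because the best such $H^\infty$-distance would be bounded below by a geometric term in $n$ with strictly positive base. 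Plugging this accumulation into the Hermite-Walsh estimate delivers $\|f-R_{k_n}\|_T^{1/n}\to 0$, and maximum modulus in the exterior of $T$ gives $\|f-R_{k_n}\|_A=\|f-R_{k_n}\|_T$.

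\emph{Principal difficulty.} The demanding step is (ii). While (i) is a meromorphic analogue of the subharmonic analysis already used in Theorem~\ref{thm:main1p}, the passage from (i) to (ii) requires showing that the poles of $M_n$ in $V$ must accumulate on $E$ at the right rate. This follows from the observation that if the poles of $M_n$ were bounded away from $E$, the Hankel-operator lower bound on the best meromorphic approximant would prevent the $n$-th root rate from tending to zero; making this quantitative and reconciling it with the Hermite-Walsh estimate to obtain the right rate is the technical core of the proof.
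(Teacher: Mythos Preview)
Your sketch for (i) is essentially sound and in fact simpler than the paper's route. The paper reduces to $D=\D$, passes to Nehari modifications, and uses the singular-vector representation of the error together with Fekete polynomials and Pommerenke's lemma to control $|e_n|$ off a small-capacity set. Your direct subharmonic argument via the two-constants theorem on $D\setminus\overline V$ (applied to $n^{-1}\log|P_n-q_nf|$) combined with the lemniscate/Cartan estimate on $\{|q_n|<\delta^n\}$ achieves the same conclusion with less machinery. Both approaches ultimately hinge on the same capacity estimate for $q_n$, but yours bypasses the Hankel-operator apparatus entirely for this part.

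Part (ii), however, has a genuine gap. Your plan rests on showing that the zeros of $q_n^V$ ``accumulate weak-$*$ on $E$ with sufficient density,'' arguing that otherwise a Hankel lower bound obstructs $\|f-M_n\|_T^{1/n}\to0$. This inference is not available: the paper itself observes (just before Theorem~\ref{thm:main1p}) that one can add to any optimal sequence a rational perturbation $R_{n/2}$ converging faster than geometrically with poles placed \emph{anywhere} in $D$, so the weak-$*$ limits of pole distributions are completely unconstrained. Even restricted to poles inside $V$, you have no control over $k_n$ or the distribution of zeros of $q_n^V$, and the Hermite--Walsh estimate requires precisely such control to make $|q_n^V/\omega_n|$ small on a contour near $E$.

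The paper sidesteps this entirely. Having established (i), it chooses a level curve $\gamma(t_n)\subset V$ of a Green equilibrium potential on which $|f-M_n|\leq a^n$ (such a level exists because convergence in capacity holds on the compact annular set $G^{-1}([t_1,t_2])$ and a contractive-projection argument excludes the possibility that every level line meets the exceptional set). Then one simply sets
\[
R_{k_n}(z):=\int_{\gamma(t_n)}\frac{M_n(\xi)}{z-\xi}\,\frac{d\xi}{2\pi\ic},
\]
which is exactly the sum of principal parts of $M_n$ at its poles interior to $\gamma(t_n)$, hence a rational function with poles among those of $M_n$ in $V$. Since $f(z)=\int_{\gamma(t_n)}\frac{f(\xi)}{z-\xi}\frac{d\xi}{2\pi\ic}$ for $z\in T$ by Cauchy's formula, the bound $\|f-R_{k_n}\|_T\leq C a^n$ is immediate from the smallness of $f-M_n$ on the contour. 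No information about where or how many poles $M_n$ has in $V$ is needed; the Cauchy integral does the selection automatically.
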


Besides best rational approximants, a noteworthy  instance of $n$-th root optimal meromorphic approximants are the AAK (short for Adamyan-Arov-Krein) approximants. These are best meromorphic approximants with at most $n$ poles. Specifically, consider the following (Nehari-Takagi) problem: given \(f\in L^\infty(\T)\), find \( M_n^\infty \in H_n^\infty(\D) \) such that
\begin{equation}
\label{nehari_takagi}
\|f-M_n^\infty\|_\T = \inf_{M\in H_n^\infty(\D)}\|f-M\|_\T.
\end{equation}
If \( n=0 \), then \eqref{nehari_takagi} reduces to the question of best analytic approximation of bounded functions on the unit circle by elements of \( H^\infty(\D) \), which is the so-called \emph{Nehari problem} named after \cite{Neh57} (that deals with an equivalent issue). It is known that \( M_n^\infty \)  always exists and that it is unique when \( f \) lies in $C(\T)+H^\infty(\D)$, see \cite[Chapter~4]{Peller}. Moreover, if $f$ is Dini-continuous on $\T$, then $M_n^\infty$ is continuous on $\T$. Indeed, if we write $M_n^\infty=r_n+g$ where $r_n\in\mathcal{R}_n(\D)$ and $g\in H^\infty(\D)$, we see that $g$ must be the best Nehari approximant to $f-r_n$, which is Dini-continuous on $\T$, and the claim follows from \cite{CarJac72}. When \( T \) is a rectifiable Jordan curve, one can readily replace \( \D \) by \( D \) in \eqref{nehari_takagi} and carry over to $T$ all the properties of  best meromorphic approximants on \(\T\) by conformal mapping.  When \( T \) is non-rectifiable, the very existence of approximants depends on the analyticity of $f$ on $T$, and follows from the  proof of the next corollary to Theorem~\ref{thm:main2}.

\begin{corollary}
\label{cor:main1}
Let \( T \), \(A\) and $D$ be as in Theorem \ref{thm:main1}. Given \( f\in\mathcal F(A) \) or \(\mathcal{E}(A)\), to each integer \(n \) there exists a unique \( M_n^\infty \in \mathcal A_n(D) \) such that
\begin{equation}
\label{nehari_takagiT}
\|f-M_n^\infty\|_T = \inf_{M\in \mathcal A_n(D)}\|f-M\|_T,
\end{equation}
and if \( T \) is rectifiable  then \( \mathcal A_n(D) \) can be replaced by \( H_n^\infty(D) \) in \eqref{nehari_takagiT} without changing \( M_n^\infty \). Of necessity, the conclusions of Theorem~\ref{thm:main2} and~\ref{thm:main2p}  hold with \( M_n \) replaced by \( M_n^\infty \).
\end{corollary}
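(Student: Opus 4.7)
The plan is to treat the rectifiable and non-rectifiable cases of $T$ separately, and then read off the asymptotic conclusions from Theorems~\ref{thm:main2} and~\ref{thm:main2p}.

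\textbf{Reduction and rectifiable case.} Let $\phi:\D\to D$ be conformal, extended to a homeomorphism $\overline\D\to\overline D$ by Carath\'eodory's theorem. Pull-back $\widetilde M:=M\circ\phi$ gives a bijection $\mathcal A_n(D)\leftrightarrow\mathcal A_n(\D)$ preserving the sup norm on the boundary, and when $T$ is rectifiable the F.\ and M.\ Riesz theorem upgrades it to an isometry $\bigl(H_n^\infty(D),\|\cdot\|_T\bigr)\cong\bigl(H_n^\infty(\D),\|\cdot\|_\T\bigr)$ as well. Setting $F:=f\circ\phi|_\T\in \mathcal A(\overline\D)\subset H^\infty+C(\T)$, the classical Adamyan--Arov--Krein theorem produces a unique $\widetilde M_n^\infty\in H_n^\infty(\D)$ minimizing $\|F-M\|_{L^\infty(\T)}$, with $|F-\widetilde M_n^\infty|\equiv s_{n+1}(\Gamma_F)$ a.e.\ on $\T$. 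Since $f$ is holomorphic in an open two-sided neighborhood of $T$ and its finitely many branchpoints are bounded away from $T$, $F$ is the boundary value of a function holomorphic on an inner annulus $\{r<|z|<1\}\subset\D$. Consequently the error $g:=F-\widetilde M_n^\infty$ is meromorphic in that annulus with at most $n$ poles strictly inside, and has a.e.\ constant modulus $s_{n+1}(\Gamma_F)$ on $\T$; reflecting across $\T$ via $z\mapsto 1/\bar z$ with the constant-modulus factor glues $g$ to a meromorphic function on a full annular neighborhood of $\T$ in $\C$. In particular $g$ is continuous on $\T$ from inside $\D$, so $\widetilde M_n^\infty=F-g\in\mathcal A_n(\D)$, and pulling back via $\phi$ yields $M_n^\infty\in\mathcal A_n(D)$ with the stated existence, uniqueness, and coincidence of the two minimization problems.

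\textbf{Non-rectifiable case.} Here boundary values of $H_n^\infty(D)$-elements on $T$ are not canonically defined and the $L^\infty$-isometry is unavailable, so I work directly with continuous functions. I exhaust $D$ from inside by analytic Jordan curves $T_\epsilon\subset D$ with $T_\epsilon\to T$ and $f$ holomorphic in the closed annulus between $T$ and $T_\epsilon$; on each such $T_\epsilon$ the rectifiable case gives a unique minimizer $M_{n,\epsilon}^\infty\in\mathcal A_n(D_\epsilon)$, where $D_\epsilon$ is the interior of $T_\epsilon$. For existence on $T$ I take a minimizing sequence $M_k=h_k/q_k\in\mathcal A_n(D)$ with $q_k\in\mathcal M_n(D)$ monic and $h_k\in\mathcal A(D)$. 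Boundedness of $D$ gives $q_k\to q^*\in\mathcal M_n(\overline D)$ along a subsequence; the estimate $\|h_k\|_T\leq\|q_k\|_T(\|f\|_T+\|f-M_k\|_T)$ together with the maximum principle for $\mathcal A(D)$ yields a further subsequence with $h_k\to h^*$ uniformly on $\overline D$. The key step is to exclude zeros of $q^*$ on $T$: if $q^*(z_0)=0$ for some $z_0\in T$, the two-sided analyticity of $f$ at $z_0$ implies that a pole of $M_k$ approaching $z_0$ would force $|f-M_k|$ to blow up locally on $T$, contradicting the uniform bound, unless $h^*$ has a compensating zero of sufficient order at $z_0$. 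Thus $M^*:=h^*/q^*$ remains continuous across such points and lies in $\mathcal A_m(D)\subset\mathcal A_n(D)$ for some $m\leq n$, and lower semicontinuity yields $\|f-M^*\|_T\leq\liminf_k\|f-M_k\|_T$, so $M^*$ is a minimizer. Uniqueness is obtained by restricting two hypothetical minimizers to $T_\epsilon$ and invoking uniqueness of $M_{n,\epsilon}^\infty$ as $\epsilon\to 0$, using holomorphy of $f$ on the annulus between $T$ and $T_\epsilon$ to control the restriction.

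\textbf{Asymptotics.} Since $\mathcal R_n(D)\subset\mathcal A_n(D)$ and $T\subset A$, any $n$-th root optimal rational approximant $r_n$ to $f$ on $A$ satisfies $\|f-M_n^\infty\|_T\leq\|f-r_n\|_T\leq\|f-r_n\|_A$, which forces $\{M_n^\infty\}$ to be $n$-th root optimal in the meromorphic sense \eqref{opt_merom} (when $f\in\mathcal F(A)$) or \eqref{opt_meromp} (when $f\in\mathcal E(A)$). Theorems~\ref{thm:main2} and~\ref{thm:main2p} then apply to $M_n^\infty$ and deliver the claimed weak-$*$ convergence of the pole-counting measures and the convergence in capacity. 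The hardest step will be the compactness-with-boundary-pole control in the non-rectifiable case, where the two-sided holomorphy of $f$ on an open neighborhood of $T$ is essential to prevent the limiting denominator $q^*$ from carrying uncompensated zeros on $T$.
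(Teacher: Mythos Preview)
Your first paragraph already proves the full existence and uniqueness claim for \emph{arbitrary} Jordan $T$, and you do not seem to notice it. The bijection $\mathcal A_n(D)\leftrightarrow\mathcal A_n(\D)$ via $M\mapsto M\circ\phi$, preserving the sup norm on the boundary, holds by Carath\'eodory alone with no rectifiability needed. You then produce the AAK minimizer $\widetilde M_n^\infty$ over $H_n^\infty(\D)$ and argue it lies in $\mathcal A_n(\D)$; since $\mathcal A_n(\D)\subset H_n^\infty(\D)$, this $\widetilde M_n^\infty$ is automatically the unique minimizer over $\mathcal A_n(\D)$ as well, and pulling back through $\phi$ gives the unique minimizer in $\mathcal A_n(D)$. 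Rectifiability is only used where you say it is used, namely to identify the $\mathcal A_n(D)$ and $H_n^\infty(D)$ problems. This is exactly the paper's route; the paper obtains continuity of $\widetilde M_n^\infty$ by splitting $f\circ\phi=G+F$ with $F=\mathbb P_-(f\circ\phi)$ analytic across $\T$ (hence Dini-continuous) and invoking Carleson--Jacobs, whereas your reflection argument from constant modulus of the error is a perfectly good alternative.

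Your second paragraph is therefore unnecessary, and as written it has genuine gaps. The compactness argument hand-waves the exclusion of boundary zeros of $q^*$: the dichotomy ``either $|f-M_k|$ blows up or $h^*$ has a compensating zero'' is asserted but not proved, and in fact requires care (nothing prevents $h_k$ from developing a zero near $z_0$ at a slightly different rate). More seriously, your uniqueness argument is broken: two minimizers $M_1,M_2$ for $\|f-\cdot\|_T$ have no reason to minimize $\|f-\cdot\|_{T_\epsilon}$, so restricting to $T_\epsilon$ and invoking uniqueness there proves nothing. Simply delete this paragraph and observe that your first argument already covers the non-rectifiable case. The asymptotic paragraph is fine.
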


Much interest in best meromorphic approximants stems from their striking connection to operator theory. Denote by \( L^2(\T) \) the space of square integrable functions on \( \T \), and let \( H^2\subset L^2(\T) \) be the Hardy space of functions whose Fourier coefficients with negative index do vanish. It is known that \( H^2 \) can be identified with  (non-tangential limits  on $\T$ of) analytic functions in \( \D \) whose $L^2$-means on circles centered at the origin are uniformly bounded,  see \cite[Theorem 3.4]{Duren}. Set \( H^2_- := L^2(\T)\ominus H^2 \) to be the orthogonal complement of \( H^2 \), which is the Hardy space of \(L^2\)-functions whose Fourier coefficients with nonnegative index are equal to zero. The latter can be identified with functions analytic in $\overline{\C}\setminus\overline{\D}$ that vanish at infinity, and whose $L^2$-means with respect to normalized arclength on circles centered at the origin are uniformly bounded. Let  $\mathbb{P}_-:L^2(\T)\to H^2_-$ be the orthogonal projection. Given \( f\in L^\infty(\T) \), one defines the \emph{Hankel operator with symbol} \( f \) to be
\begin{equation}
\label{Hankel}
\Gamma_f :~H^2 \to H^2_-, \quad \quad \Gamma_f(g) := \mathbb P_-(gf).
\end{equation}
For \(n\)  a non-negative integer, let \( s_n(\Gamma_f) \) be the \( (n+1) \)-th singular number of the operator \( \Gamma_f \), that is
\( s_n(\Gamma_f):=\inf_{\mathrm{rank}\, R\leq n}\|\Gamma_f-R\|\), where the infimum is taken over all operators $R:H^2\to  H^2_-$ of rank at most $n$ and $\|\cdot\|$ stands for the operator norm. Then, one has that
\begin{equation}
\label{arsv}
\|f-M_n^\infty\|_\T = s_n(\Gamma_f).
\end{equation}
If, in addition, $f\in C(\T)+H^\infty(\D)$, then $\Gamma_f$ is compact so that  $s_n^2(\Gamma_f)$ is the $(n+1)$-st eigenvalue of  $\Gamma_f^*\Gamma_f$ when these are arranged in non-increasing order, and \eqref{arsv} becomes a pointwise equality:
\begin{equation}
\label{circ_error2}
|(f-M_n^\infty)(z)| = s_n(\Gamma_f)\qquad \text{a.e.\ on}\ \T.
\end{equation}
Moreover, if $v_n$ is an $(n+1)$-st singular vector of $\Gamma_f$, i.e., an eigenvector of $\Gamma_f^*\Gamma_f$ with eigenvalue $s_n^2(\Gamma_f)$, then \(M_n^\infty\) is explicitly given in terms of $f$ and $v_n$ by the formula
\begin{equation}
\label{forerreur}
f-M_n^\infty=\frac{\Gamma_f(v_n)}{v_n}.
\end{equation}
Though not obvious at first glance, the right-hand side of \eqref{forerreur} is independent of which $n$-th singular vector $v_n$  is chosen and it has constant modulus on $\T$, in accordance with \eqref{circ_error2}. The next corollary, of independent interest, follows from \eqref{circ_error2}, \eqref{opt_merom}, \eqref{ftg}, and  Theorems~\ref{thm:main2}--\ref{thm:main2p}.

\begin{corollary}
\label{cor:main2}
Let \( \Gamma_f \) be the Hankel operator with symbol \( f \in\mathcal F\big(\overline\C\setminus \D\big) \). Then, it holds that
\[
\lim_{n\to\infty} s_n^{1/n}(\Gamma_f) = \exp\big\{-2/\cp_\D(\K)\big\}.
\]
Moreover, if \(f\in\mathcal{E}(\overline\C\setminus \D)\) , then the above limit is equal to \( 0 \).
\end{corollary}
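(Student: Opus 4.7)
The plan is to invoke Corollary~\ref{cor:main1} to produce the Nehari--Takagi approximants $M_n^\infty\in\mathcal A_n(\D)$ (with $T=\T$, $D=\D$, $A=\overline\C\setminus\D$), and translate their error asymptotics into asymptotics for $s_n(\Gamma_f)$ via AAK theory. Since $f$ is analytic in a neighborhood of $A$ it is continuous on $\T$, hence in $C(\T)+H^\infty(\D)$; so \eqref{arsv} gives $s_n(\Gamma_f)=\|f-M_n^\infty\|_\T$ and \eqref{circ_error2} upgrades this to the pointwise constant-modulus relation $|f-M_n^\infty|(z)=s_n(\Gamma_f)$ a.e.\ on $\T$.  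The latter is the bridge between the scalar quantity $s_n(\Gamma_f)$ and the interior capacitary asymptotics supplied by Theorems~\ref{thm:main2}--\ref{thm:main2p}.

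Next I would verify that $\{M_n^\infty\}$ qualifies as $n$-th root optimal, so that Corollary~\ref{cor:main1} transports those theorems to it.  Any rational $r\in\RS_n(\D)$ lies in $\mathcal A_n(\D)$ and is analytic on $A$, so the maximum principle gives $\|f-r\|_A=\|f-r\|_\T$, whence by extremality $\|f-M_n^\infty\|_\T\le\rho_n(f,A)$.  For $f\in\mathcal F(A)\subset\mathcal S(A)$, \eqref{bestrate} supplies $\rho_n^{1/n}(f,A)\to\exp\{-2/\cp_\D(\K)\}$, verifying \eqref{opt_merom}; for $f\in\mathcal E(A)$, \eqref{ftg} supplies $\rho_n^{1/n}(f,A)\to 0$, verifying \eqref{opt_meromp}.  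Already this yields the upper bounds $\limsup s_n^{1/n}(\Gamma_f)\le\exp\{-2/\cp_\D(\K)\}$ and $\lim s_n^{1/n}(\Gamma_f)=0$, respectively.

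To obtain the matching lower bound in the $\mathcal F(A)$ case, I would apply Theorem~\ref{thm:main2} to $\{M_n^\infty\}$ through Corollary~\ref{cor:main1}, obtaining $\tfrac{1}{2n}\log|f-M_n^\infty|\overset{\mathrm{cap}}{\to}g(\mu_{\D,\K},\D;\cdot)-1/\cp_\D(\K)$ in $\D\setminus\K$ together with $\mu(M_n^\infty)\overset{w*}{\to}\mu_{\D,\K}$.  By the boundary-extension part of Remark~\ref{convlogcapv}, which applies to Nehari modifications, this propagates in logarithmic capacity to $\overline\D\setminus\K$ at the exact pointwise rate $\exp\{g(\mu_{\D,\K},\D;\cdot)-1/\cp_\D(\K)\}$; on $\T$ the Green potential vanishes, so the capacitary limit there is the constant $-1/\cp_\D(\K)$.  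However, by \eqref{circ_error2} the sequence $\tfrac{1}{2n}\log|f-M_n^\infty|$ is itself the constant $\tfrac{1}{2n}\log s_n(\Gamma_f)$ on $\T$, and convergence in capacity of a constant sequence to a constant forces numerical convergence.  This gives $\lim s_n^{1/n}(\Gamma_f)=\exp\{-2/\cp_\D(\K)\}$.  The polar case is handled identically with $\K$ replaced by the polar set $E$ and $g\equiv 0$, producing $\lim s_n^{1/n}(\Gamma_f)=0$ in accordance with the $\mathcal E(A)$ upper bound already at hand.

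The main obstacle is this last step: pushing the Greenian-capacity convergence from the interior $\D\setminus\K$ to the boundary curve $\T$ while keeping the exact pointwise rate — not merely an upper envelope.  Remark~\ref{convlogcapv} does the heavy lifting, and the Nehari rigidity encoded in \eqref{circ_error2} is essential, for it is what converts the capacitary boundary value into a numerical limit for the constant sequence $s_n^{1/n}(\Gamma_f)$ and simultaneously ensures the asymptotic-circularity hypothesis under which the sharp rate in Remark~\ref{convlogcapv} holds.
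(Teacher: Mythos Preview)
Your upper bound and the $\mathcal{E}(A)$ case are handled correctly. The problem lies in the lower-bound argument for $f\in\mathcal F(A)$. To invoke the sharp two-sided version of Remark~\ref{convlogcapv}, you need the asymptotic-circularity hypothesis~\eqref{asce}, namely $\lim_n\inf_\T|f-M_n^\infty|^{1/n}=\exp\{-2/\cp_\D(\K)\}$. But \eqref{circ_error2} gives $|f-M_n^\infty|\equiv s_n(\Gamma_f)$ on $\T$, so this hypothesis becomes precisely $\lim_n s_n^{1/n}(\Gamma_f)=\exp\{-2/\cp_\D(\K)\}$ --- the very conclusion you are after. Your claim that \eqref{circ_error2} ``simultaneously ensures the asymptotic-circularity hypothesis'' is therefore false: \eqref{circ_error2} only says the modulus is \emph{constant} on $\T$, not what the limiting value of that constant is. You could instead black-box the parenthetical ``in particular when $M_n$ are their own Nehari modifications'' in Remark~\ref{convlogcapv}; but if you trace why the paper is entitled to that parenthetical, you arrive at~\eqref{f6a}, whose justification is the discussion after~\eqref{opt_merom} --- and that discussion already delivers the corollary in one line.

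The direct argument is: by~\eqref{arsv}, $s_n(\Gamma_f)=\|f-M_n^\infty\|_\T$. You have verified that $\{M_n^\infty\}$ satisfies~\eqref{opt_merom}. The paragraph following~\eqref{opt_merom} (via Parf\"enov's theorem that best meromorphic and best rational approximation share the same $n$-th root $\liminf$, combined with~\eqref{bestrate}) asserts that for $f\in\mathcal S(A)$ the $\limsup$ inequality in~\eqref{opt_merom} is automatically a full limit with equality. Hence $\lim_n s_n^{1/n}(\Gamma_f)=\exp\{-2/\cp_\D(\K)\}$ immediately. The $\mathcal E(A)$ case is just~\eqref{ftg} together with $s_n(\Gamma_f)\le\rho_n(f,A)$. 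No appeal to Theorem~\ref{thm:main2} or Remark~\ref{convlogcapv} is needed.
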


\section{Proof of Theorem~\ref{thm:main2}}
\label{TheProof}

\subsection{Existence of Best Meromorphic Approximants}

Below, we establish the existence and uniqueness part of Corollary~\ref{cor:main1}, along with the assertion that \( H_n^\infty(D) \) may replace \( \mathcal A_n(D) \)  when \( T \) is rectifiable. The rest of the corollary  will follow from Theorem~\ref{thm:main2} upon conclusion of its proof.

Let \( \phi:\D\to D \) be a conformal map. Since \( D \) is a Jordan domain, \( \phi \) extends to a homeomorphism from \( \overline\D \) to \( \overline D \) by Carath\'eodory theorem \cite[Section~2]{Pommerenke}. Let \( L^2(\T) \), \( H^2 \), and \( H^2_- \) be as defined after Corollary~\ref{cor:main1} and $\mathbb{P}_+:L^2(\T)\to H^2$, $\mathbb{P}_-:L^2(\T)\to H^2_-$ be the orthogonal projections. If we pick a continuous function $f$ on $T$, then \( f\circ\phi \) is continuous on \(\T\) and, in particular, it lies in \(L^2(\T) \). Set \( F:=\mathbb P_-(f\circ \phi) \) and \( G:=\mathbb P_+(f\circ\phi) \), so that \( F\in H^2_- \) and  \( G\in H^2 \). Since $\mathbb{P}_++\mathbb{P}_-$ is the identity operator,  it holds that
\begin{equation}
\label{reflectH2}
F(z) = (f\circ\phi)(z) - G(z), \quad \mathrm{a.e.\ }z\in\T.
\end{equation}
Now, if $f\in \mathcal{H}(A)$, then $f\circ\phi(z)$ is holomorphic in $r<|z|<1$ and continuous in $r\leq|z|\leq1$,  for \(r\) close enough to \( 1 \). Hence, the right-hand side of \eqref{reflectH2} is holomorphic in  $r<|z|<1$ with uniformly bounded $L^2$-means on circles centered at the origin, while the left-hand side lies in $H^2_-$ and both sides have the same non-tangential limit on \(\T\). By an easy variant of Morera's theorem \cite[Chapter II, Exercise 12]{Garnett}, the function equal to \(F(z)\) for $|z|>1$ and to $ (f\circ\phi)(z) - G(z)$ for $r<|z|<1$ is holomorphic across $\T$, in particular $F$ extends analytically across $\T$ and $G$ extends continuously to $\overline{\D}$.  Then, as described after \eqref{nehari_takagi}, the best meromorphic approximant \( M_n^\infty \in H_n^\infty(\D) \) to \( f\circ\phi \) exists and is unique, moreover it is readily checked that \( M_n^\infty \) is equal to the sum of \( G \) (a member of \(\mathcal{A}(\D)\)) and of the best approximant to  \( F \) from $H_n^\infty(\D)$,   which lies in $\mathcal{A}_n(\D)$ because $F$ is analytic across \( \T \) and so is {\it a fortiori} Dini-continuous on \( \T \); hence, we get that \( M_n^\infty\in\mathcal A_n(\D) \). If now \( M \in \mathcal A_n(D) \), then \( M\circ\phi \in \mathcal A_n(\D) \) and
\[
\|f-M\|_T = \|f\circ\phi - M\circ \phi\|_\T \geq \|f\circ\phi - M_n^\infty\|_\T
\]
by definition of \( M_n^\infty \). As $M_n^\infty\circ\phi^{-1}\in\mathcal A_n(D)$, it is the unique best meromorphic approximant to \( f \) we are looking for. The previous argument also shows that, when \( f\in\mathcal H(A) \), the best meromorphic approximant to $f\circ\phi$ necessarily belongs to \( \mathcal A_n(\D) \). Because composition with $\phi$ is an isometric isomorphism $L^\infty(T)\to L^\infty(\T)$ (understood with respect to arclength measure) when $T$ is rectifiable,  one can equivalently use \( H_n^\infty(D) \) instead of \( \mathcal A_n(D) \) in definition \eqref{nehari_takagiT}.

\subsection{Reduction to the Unit Disk}
\label{ssec_reduc}

Let \( K \) be a compact subset of \( \D \). Since Green potentials are non-negative superharmonic functions whose largest harmonic minorant is zero, while a characteristic property of Green equilibrium potentials is to be constant quasi everywhere on the support of their defining measure while being no greater than this constant everywhere in the domain, it holds that
\[
g(\mu_{\D,K},\D;z) = g(\mu_{D,\phi(K)},D;\phi(z)) \qandq \cp_\D(K) = \cp_D(\phi(K))
\]
where \( \phi:\D\to D \) is a conformal map, see \eqref{GreenEqPot} as well as Sections~\ref{ssec_pot} and~\ref{ssec_cap}. One has in this case that \( \mu_{D,\phi(K)}=\phi_*(\mu_{\D,K}) \), the pushforward of \(\mu_{\D,K}\) under $\phi$. Hence, we get that
\[
\frac1n \sum_{i=1}^n \delta_{z_i} \cws  \mu_{\D,K} \quad \text{if and only if} \quad \frac1n \sum_{i=1}^n \delta_{\phi(z_i)} \cws  \mu_{D,\phi(K)},
\]
where the weak$^*$ convergence is understood for \( n\to\infty \). Furthermore, by conformal invariance, a sequence of functions \( h_n \) converges in Greenian capacity to a function \( h \) in \( D \) if and only if the functions \( h_n\circ \phi \) converge in Greenian capacity to the function \( h\circ\phi \) in \( \D \).

Let now \( \RS_* \) and \( \RS \) be as in  Section~\ref{laclasse}. Denote by \( M \) the number of sheets of \( \RS  \), so that a generic point in \( D \) has \( M \) preimages under the natural projection \( p:\RS\to D\). Let \( J\subset D \) be a smooth oriented Jordan arc joining two points of \( p({\bf rp}(\RS)) \) while passing through all others exactly once. Constructing \( J \) is tantamount to enumerating the points of \( p({\bf rp}(\RS)) \). Write \( J=\cup_l J_l \) where each \( J_l \) connects exactly two points in  \( p({\bf rp}(\RS)) \). The surface \( \RS \) can be realized as \( M \) copies \( U_1,\ldots,U_M \) of \( D\setminus J \), suitably glued to each other along  the banks of the  cuts  \( J_l \) in each copy  \( U_i \) (the glueing  rule can be encoded, for example, as a collection of 4-tuples \( (l,i,j,k) \) telling one that the left and right banks of the cut \( J_l \) in \( U_i \) need to be glued to the right bank of the cut \( J_l \) in \( U_j \) and the left bank of the cut \( J_l \) in \( U_k \) respectively, see \cite{Magnus} for a discussion of Hurwitz's theorem). Using the same gluing rule, we can construct another  \( M \)-sheeted surface \( \mathcal S \) out of the domains \( \phi^{-1}(U_1),\ldots,\phi^{-1}(U_M) \). The map \( \phi \) can then be lifted to a conformal map \( \Phi:\mathcal S \to \RS \) for which \( \phi(\pi(z)) = p(\Phi(z)) \), \( z\in\mathcal S \), where \( \pi:\mathcal S\to \D \) is the natural projection.

Let \( f\in\mathcal F(A) \) and \( \widehat f \in \mathcal F(\RS) \) be as in \eqref{defFge} and \eqref{defFs}. Clearly \( \widehat f\circ\Phi \in\mathcal F(\mathcal S) \),  and the surface \( \mathcal S_* \) can be constructed by gluing \( M \) copies of \( \overline\C\setminus \D \) to \( \mathcal S\) along the \( M \) homeomorphic copies of \( \T \) that comprise the boundary of \( \mathcal S \). Argueing as we did after \eqref{reflectH2}, we find that \( F:=\mathbb P_-(f\circ \phi) \) lies in \(\mathcal F(\overline\C\setminus \D) \) with the corresponding \( \widehat F\in\mathcal F(\mathcal S) \) given by \( \widehat f\circ\Phi-G\circ\pi \), where \( G:=\mathbb P_+(f\circ\phi) \). Note that the conformal equivalence of Greenian capacities implies that \( K_F = \phi^{-1}(\K) \). Hence, if \( \{ M_n \} \) is an \( n \)-th root optimal sequence of meromorphic approximants of \( f \) in \( D \) as defined in \eqref{opt_merom}, then \( \{ \widetilde M_n:=M_n\circ\phi - G\} \) is an \( n \)-th root optimal sequence of meromorphic approximants to \( F \) on \( \T \) and
\begin{equation}
  \label{correspmaf}
(f-M_n)(\phi(z)) = (F-\widetilde M_n)(z), \quad z\in \overline\D\setminus K_F.
\end{equation}
Therefore, it is sufficient to study the asymptotic behavior of \( F-\widetilde M_n \) as well as the limit distribution of poles of \( \widetilde M_n \). That is, it is enough to prove Theorem~\ref{thm:main2} on the unit disk.

\subsection{Nehari Modifications}
\label{ssec_nehari}

For $f\in \mathcal{H}(\overline\C\setminus\D)$ and \( M_n\in \mathcal{A}_n(\D) \), let \( h_{f-M_n} \) be the best holomorphic (Nehari) approximant of \( f-M_n \) in \( H^\infty(\D)\). That is, \( \|f-M_n-h_{f-M_n}\|_\T=\inf_{h\in H^\infty(\D)}\|f-M_n-h\|_T \), and  \( h_{f-M_n} \in H^\infty(\D)\). Let us  set
\begin{equation}
\label{opt_Nehari}
N(M_n)(z) := (M_n+h_{f-M_n})(z),
\end{equation}
and call  \( N(M_n) \) the \emph{Nehari modification} of \( M_n \).  The discussion after \eqref{nehari_takagi} shows that \( N(M_n)\) lies in \(\mathcal A_n(\D) \). Indeed, we may write \( M_n=g_n + r_n \) with \( r_n\in\mathcal R_n(\D) \) and \( g_n\in\mathcal A(\D) \). Then, one can readily check that \( h_{f-M_n} = g_n + h_{f-r_n} \). As \( f-r_n \) is analytic across \( \T \) and in particular Dini-continuous there, it follows that \( h_{f-r_n} \) belongs to \( \mathcal A(\D) \) and so does \( h_{f-M_n} \).

Since \( \|f- N(M_n)\|_\T \leq \|f-M_n\|_\T \)  and $N(M_n)$ lies in $\mathcal{A}_n(\D)$, the sequence \( \{N(M_n)\} \) is one of \( n \)-th root optimal meromorphic approximants to $f$, whenever  \( \{M_n\} \) is such a sequence. It is beneficial for us to consider Nehari modifications because they enjoy the additional property that the error they generate  has  constant modulus on $\T$,  i.e., it follows from \eqref{arsv} that
\begin{equation}
\label{circ_error}
|(f-N(M_n))(z)| = \|\Gamma_{f-M_n}\| \quad \text{ for a.e.} \quad z\in \T.
\end{equation}
\emph{We claim} that it is enough to prove Theorem~\ref{thm:main2} for Nehari modifications only, as we now show.

Assume that Theorem~\ref{thm:main2} holds for \( \{ N(M_n) \} \). As the poles of \(M_n \) and \( N(M_n) \) are the same, this automatically yields the statement about weak$^*$ convergence of the counting measures of the poles. Moreover, given \( \epsilon>0 \) and \(K\subset \D\setminus\K\) a compact set, let us put
\[
E(K,\epsilon,N(M_n)):= \left\{z\in K : \left| \frac1{2n} \log|(f-N(M_n))(z)| - g(\mu_{\D,\K},\D;z) + \frac1{\cp_\D(\K)} \right|>\epsilon \right\}.
\]
Define \( E(K,\epsilon,M_n)\) analogously. According to our assumption it holds that
\begin{equation}
\label{assumeNehari}
\lim_{n\to\infty} \cp_\D\big( E(K,\epsilon,N(M_n)) \big) =0,
\end{equation}
and we need to show that \eqref{assumeNehari} holds with \( N(M_n) \) replaced by \( M_n \). Given $\varepsilon\in(0,1)$, define
\[
F_{n,\varepsilon}:=\left\{z\in K:\,\,\frac1{2n}\log | (f-N(M_n))(z) | > g\big(\mu_{\D,\K},\D;\cdot\big) - \frac1{\cp_\D(\K)}-\varepsilon    m_K\right\},
\]
where $m_K:=\min_K g\big(\mu_{\D,\K},\D;\cdot\big)>0$. Since $N(M_n)-M_n$ is analytic in \( \D \), we get from the maximum modulus principle and the triangle inequality that \(|N(M_n)(z)-M_n(z)|\leq 2\|f-M_n\|_\T \) for $z\in\D$. Since \( \exp\{(1-\varepsilon) m_K\}>1 \), relation \eqref{opt_merom} implies that
\[
2\|f-M_n\|_\T< \frac{1}{2}\exp\left\{(1-\varepsilon)2n\,m_K-\frac{2n}{\cp_\D(\K)}\right\}
\]
for all \( n \) large enough. Hence, by the previous estimates, it holds for all such \( n \) and \(z\in  F_{n,\varepsilon} \)  that
\[
\left| \frac{N(M_n)(z)-M_n(z)}{f(z)-N(M_n)(z)} \right| < \frac 12 e^{2n(m_K-g(\mu_{\D,\K},\D;z))} \leq \frac12.
\]
In particular, for any \( 0<\varepsilon<\varepsilon^\prime \), there exists \( n_0 \) depending on \( K \) and \( \varepsilon^\prime-\varepsilon \) such that
\begin{equation}
\label{incupl}
\frac{1}{2n}\left|\log\left|\frac{f(z)-M_n(z)}{f(z)-N(M_n)(z)}\right|\right|<(\varepsilon^\prime-\varepsilon) m_K
\end{equation}
for all $z\in F_{n,\varepsilon}$ and \( n\geq n_0 \). Therefore, we get from the triangle inequality that
\[
E(K,\varepsilon^\prime m_K,M_n) \subseteq ( K\setminus F_{n,\varepsilon}) \cup E(K,\varepsilon m_K,N(M_n))  =E(K,\varepsilon m_K,N(M_n))
\]
for all \( n\geq n_0 \). The above inclusion clearly yields that \eqref{assumeNehari} holds with \( N(M_n) \) replaced by \( M_n \) for \( \epsilon=\varepsilon^\prime m_K \). As \( \varepsilon \), \( \varepsilon^\prime \), and \( K \) were arbitrary, \emph{the claim follows}.

\subsection{Notation}
\label{ssec:notation}

We fix \( f\in \mathcal F(\overline\C\setminus\D) \) and, with a slight abuse of notation, we keep denoting by \( f \) the corresponding function in \( \mathcal F(\RS) \) (that was denoted by \( \widehat f \) in \eqref{defFge}). Take \( \{M_n\} \) to be a sequence of \( n \)-th root optimal meromorphic approximants of \( f \) and let \( \{ N(M_n) \} \) be the sequence of corresponding Nehari modifications. Since \( \{ N(M_n) \} \) is also \( n \)-th root optimal, it holds that
\begin{equation}
\label{f6a}
\lim_{n\to\infty} \frac1n\log\|f-N(M_n)\|_\T = -\frac 2{\cp_\D(\K)},
\end{equation}
see the discussion after \eqref{opt_merom}. We shall need an exhaustion  of \( \RS\setminus E_f \) by open sets with ``nice'' boundaries. That is, we consider a sequence \( \{ \Omega_m \}_{m\geq1} \) of open sets such that 
\begin{equation}
\label{exhaustion}
\Omega_m\subset\RS\setminus E_f,\quad\overline\Omega_m\subset\Omega_{m+1}, \quad \partial\RS\subset\partial\Omega_m, \quad \RS\setminus E_f = \cup_m\Omega_m,
\end{equation}
and each \( \Omega_m \), when viewed as an open subset with compact closure of \( \RS_* \), is regular for the Dirichlet problem, see Section~\ref{ssec_reg}. We will require in addition that \( \lambda( \partial\Omega_m \setminus\partial\RS)=0\) for some Radon measure $\lambda$ on $\RS$ that will be specified at the beginning of Section~\ref{sec_sripped_error} (a Radon measure is a positive Borel measure which is finite on compact sets). To design regular $\Omega_m$  that meet \eqref{exhaustion} is possible because $E_f$, being compact in $\RS$, is a countable intersection of compact sets $K_l\subset \RS$ with smooth boundary. Indeed, there is a smooth function $h\geq0$ on $\RS$ such that $E_f$ is the zero set of $h$  and $h\geq c>0$ outside a  compact neighborhood of $E_f$, hence we can pick $K_l$ to be the sublevel set $\{z:\,h(z)\leq t_l\}$, where $\{t_l\}$ a sequence of regular values of $h$ tending to 0 (almost every positive number is a regular value by Sard's theorem). In fact, using smooth partitions of unity and local coordinates, existence of such $h$ quickly  reduces to the corresponding issue in Euclidean space, where it follows easily from a combination of  \cite[Chapter~VI, Theorem~2]{Stein} and  \cite[Theorem~I]{Whitney} (this result is named after H. Whitney). Furthermore, since the sets $C_{a,b}:=\{z:\,a\leq h(z)\leq b\}$ are compact for  $0<a<b<\varepsilon$ and  \( \varepsilon>0 \) small enough,  $\lambda(C_{a,b})<\infty$. So, $\lambda(\{z:\, h(z)=t\})\neq0$ for at most countably many positive $t<\varepsilon$, and we can assume that $t_l$ chosen above are not such values. The domains $\Omega_m$ thus constructed satisfy all our requirements.

Let \( \Omega \) be a subdomain of \( \D \) or \( \RS \). Given a Borel measure $\sigma$ on \( \Omega \), we denote the Green potential of $\sigma$ relative to $\Omega$ by $g(\sigma,\Omega;\cdot)$, see  Section~\ref{ssec_green}. Hereafter, every measure is Borel unless otherwise stated. If $\sigma$ is a measure on a Borel set containing $\Omega$, we write for simplicity $g(\sigma,\Omega;\cdot)$ to mean  $g(\sigma_{\mathcal{b}\Omega},\Omega;\cdot)$. Conversely, for a measure $\sigma$ on a Borel set $B_0\subset\Omega$, we still denote by $\sigma$ the measure on $\Omega$ mapping a Borel set $B$ to $\sigma(B\cap B_0)$, and  write  $g(\sigma,\Omega;\cdot)$ for its potential. When \( \sigma=\sum_j \delta_{z_j}\) is a (possibly infinite) sum  of Dirac delta measures, we put
\begin{equation}
\label{gen-bl}
b(\sigma,\Omega;z) = \exp\big\{ -g^*(\sigma,\Omega;z) \big\}
\end{equation}
to stand for the corresponding generalized Blaschke product, where $g^{\ast}(\sigma,\Omega;\cdot)$ is a complexified Green potential, i.e., it is locally holomorphic in $\Omega\setminus\supp(\sigma)$ and $\operatorname{Re}g^{\ast}(\sigma,\Omega;\cdot)=g(\sigma,\Omega;\cdot)$.  If \( g(\sigma,\Omega;\cdot) \equiv+\infty\), which can happen when the points \( z_j\) accumulate in \( \Omega \) or to the boundary \( \partial\Omega \) sufficiently slowly, then \( b(\sigma,\Omega;\cdot) \) is identically zero. Otherwise,  \( b(\sigma,\Omega;\cdot) \) is well defined up to a unimodular constant (because the periods of a conjugate function of $g(\sigma,\Omega;\cdot)$ are integral multiples of $2\pi$ by Gauss' theorem), holomorphic in \( \Omega \), unimodular quasi everywhere on \( \partial \Omega \) (everywhere if the latter is regular and the points $z_j$ are finite in number), and it vanishes only at the points $z_j$ (with multiplicities represented by repetition).

\subsection{Stripped Error of Approximation}
\label{sec_sripped_error}

We shall study the asymptotics of  the error functions \( |f-N(M_n)\circ p| \). In this section, we strip off their poles and zeros to take logarithms and obtain harmonic functions whose limiting behavior we then investigate. To this end, we set
\begin{equation}
\label{f2a}
\widetilde\mu_{n}:=\sum\delta_{v_{n,j}}, \;\; \mu_n:=\widetilde\mu_n/n, \qandq \widetilde\nu_n:=\sum\delta_{u_{n,j}}, \;\; \nu_n := \widetilde \nu_n/n,
\end{equation}
where $\{v_{n,j}\}\subset \D$ are the poles of $N(M_n)$ and $\{u_{n,j}\}\subset\RS\setminus E_f$  the zeros of $f-N(M_n)\circ p$, with multiplicities counted by repetition. Before we proceed, let us specify the measure \( \lambda \) appearing in the definition of the exhaustion \( \{\Omega_m\} \) in the previous subsection. To this end, recall that on a locally compact space $X$, a sequence of Radon measures \(\{\sigma_n\}\) converges \emph{vaguely} to a Radon measure $\sigma$ if \( \int gd\sigma_n \to \int gd\sigma \) for every \( g\) in \( C_c(X) \), the space  of continuous functions with compact support on $X$ endowed with the $\sup$-norm. Moreover, if the measures \( \sigma_n \) are locally bounded on \( X \), then they do contain a vaguely convergent subsequence,  see for example \cite[Theorem 1.41]{EvansGariepy} for an argument on $\R^n$ which is applicable to any $\sigma$-compact locally convex space\footnote{Vague convergence is called weak convergence in \cite{EvansGariepy}.}. Hence, since the measures \( \mu_n \) have mass at most 1, we get if \(\nu_n\) is locally bounded along some sequence of integers that there exists a subsequence \( \mathcal N\subseteq \N \), a measure \( \nu^* \) on \( \RS\setminus E_f \) and a measure \( \mu \) on \( \D \) such that \( \nu_n \) and \( \mu_n \) converge vaguely to \( \nu^* \)  and  \( \mu \), respectively, along \( \mathcal N \); if the measures \( \nu_n \) have no locally bounded subsequence, i.e., if there exists a compact set \( K\subset\RS \setminus E_f \) such that \( \nu_n(K)\to\infty \) as \( \N\ni n\to\infty \), then we put $\nu^*=0$ and we only require the vague convergence  $\mu_n\to\mu$ along $\mathcal{N}$. In any case we take \( \lambda \) to be \( \nu^*+\widehat\mu \), where \( \widehat\mu \) is the lift of \( \mu \) to \( \RS \) defined via \eqref{lifted-measureA}.

Using \eqref{gen-bl}, we define Blaschke products vanishing at the poles of \( N(M_n) \) and the zeros of \( f-N(M_n)\circ p \). Namely, we put
\[
b_n^{pole}(z) := b(\widetilde\mu_n,\D;z) \quad \mbox{and} \quad  b_{n,m}^{zero}(z) := b(\widetilde\nu_n,\Omega_m;z).
 \] 
These functions are not identically zero, as the number of poles of \( N(M_n) \) is at most \( n \)   while the number of zeros of \( f-N(M_n)\circ p \) in each \( \Omega_m \) is finite. To see the latter point, recall from \eqref{circ_error} that \( |f-N(M_n)| \) is constant on \( \T \),  hence the error \( f-N(M_n) \) can be meromorphically continued across  \( \T \) by reflection. It implies that \( N(M_n) \) can be meromorphically continued across \( \T \) and this continuation is necessarily analytic in some neighborhood of \( \T \). Thus, $f-N(M_n)\circ p$ is analytic in a neighborhood of $\partial\RS$ by the analyticity of $f$ there, so the zeros of \( f-N(M_n)\circ p \) can only accumulate on  $E_f$ and not on $\partial\RS$. Thus, there are at most finitely many of them in each  $\Omega_m$. 

Using these Blaschke products, we define
\begin{equation}
\label{hnm}
h_{n,m}(z) := \frac1n\log\left\vert \left(f-N(M_n)\circ p\right)(z) (b_{n}^{pole}\circ p)(z)/b_{n,m}^{zero}(z)\right\vert, \quad z\in\Omega_m,
\end{equation}
which is harmonic in \( \Omega_m \). Recall that superharmonic functions on a hyperbolic Riemann surface are either identically \( +\infty \) or finite quasi everywhere, and any two of them that coincide almost everywhere (with respect to Lebesgue measure in local coordinates) are in fact equal (the weak identity principle), see Section~\ref{ssec_sub}.

\begin{lemma}
\label{lem:hr-e}
There exist a subsequence \( \mathcal N^\prime \subseteq \mathcal N \) and a non-negative superharmonic function \( u^\prime(z) \) on \( \RS \) such that
\begin{equation}
\label{lef-1}
-h_{n_m,m}(z) \to u^\prime(z) \quad \text{as} \quad m\to\infty,
\end{equation}
locally uniformly in \(z\in \RS\setminus E_f \), for any subsequence \( \{n_m\}_{m=1}^\infty\subseteq\mathcal N^\prime \). If \( u^\prime \) is finite quasi everywhere, then one has a decomposition
\begin{equation}
\label{uprime}
u^\prime = g(\nu^{\prime},\RS;\cdot) + h^\prime,
\end{equation}
where \( \nu^\prime \) is a finite positive Borel measure supported on \( E_f \) and \( h^\prime \) is a non-negative harmonic function on \( \RS \).
\end{lemma}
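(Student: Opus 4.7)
The plan is to identify $-h_{n,m}$ as the greatest harmonic minorant in a Riesz decomposition of a superharmonic function, extract a locally uniformly convergent subsequence using uniform Harnack bounds, then apply the Riesz Representation Theorem to the limit.

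First, using $|b(\sigma,\Omega;\cdot)|=e^{-g(\sigma,\Omega;\cdot)}$, I rewrite $-h_{n,m}=v_n-g(\nu_n,\Omega_m;\cdot)$, where
\[
v_n(z) \;:=\; g(\mu_n,\D;p(z)) \;-\; \tfrac{1}{n}\log|f-N(M_n)\!\circ\! p|(z).
\]
A distributional Laplacian computation --- the pole divisor of the meromorphic function $f-N(M_n)\!\circ\! p$ on $\RS\setminus E_f$ is the lift $\widehat{\widetilde\mu}_n$ of $\widetilde\mu_n$ to $\RS$, and this exactly cancels the singular contribution of $g(\widetilde\mu_n,\D;p(\cdot))$ --- shows $v_n$ is superharmonic on $\RS\setminus E_f$ with Riesz measure $\nu_n$. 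Consequently, on $\Omega_m$ the identity $v_n=g(\nu_n,\Omega_m;\cdot)+(-h_{n,m})$ is precisely the Riesz decomposition of $v_n$, and $-h_{n,m}$ is the greatest harmonic minorant of $v_n|_{\Omega_m}$.

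Next I establish uniform bounds. The potential $g(\mu_n,\D;p(\cdot))$ vanishes quasi-everywhere on $\partial\RS$, so by the circular error property \eqref{circ_error} and \eqref{f6a},
\[
v_n\big|_{\partial\RS} \;=\; -\tfrac{1}{n}\log\|f-N(M_n)\|_T \;\longrightarrow\; c_0 \;:=\; 2/\cp_\D(\K) \;>\; 0.
\]
Since $p(E_f)$ is polar in $\D$, the set $E_f$ is polar in $\RS$; the Generalized Minimum Principle applied to $v_n$ on $\RS\setminus E_f$ therefore gives $v_n\ge c_0/2$ everywhere for $n$ large. Because the constant $c_0/2$ is then a harmonic minorant of $v_n|_{\Omega_m}$, the greatest harmonic minorant satisfies $-h_{n,m}\ge c_0/2>0$ on $\Omega_m$. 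Combined with the boundary values $\to c_0$ on $\partial\RS$, a Harnack chain argument staying inside any fixed $\Omega_{m_0}$ supplies a uniform upper bound $-h_{n,m}\le C(K,m_0)$ on each compact $K\subset\Omega_{m_0}$, valid for all $m\ge m_0$ and all large $n\in\mathcal N$.

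For the extraction, I use that $g(\nu_n,\Omega_m;\cdot)$ increases in $m$, so $\{-h_{n,m}\}_m$ is monotone decreasing to the non-negative harmonic function $\widetilde h_n:=v_n-g(\nu_n,\RS;\cdot)$ on $\RS\setminus E_f$, and Dini's theorem upgrades this to uniform convergence on compact subsets. Harnack's principle applied to $\{\widetilde h_n\}$ yields a subsequence $\mathcal N'\subseteq\mathcal N$ along which $\widetilde h_n\to u'$ locally uniformly, with $u'\ge c_0/2$; the monotonicity in $m$ (uniform on compacts) combined with the locally uniform convergence in $n$ then gives $-h_{n_m,m}\to u'$ locally uniformly on $\RS\setminus E_f$ for any subsequence $\{n_m\}\subseteq\mathcal N'$ with $n_m\to\infty$. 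The lower-semicontinuous regularization of $u'$ across the polar compact $E_f$ is superharmonic and non-negative on all of $\RS$ by the Removability Theorem (polar singularities are removable for superharmonic functions bounded below); when it is finite quasi-everywhere, the Riesz Representation Theorem on $\RS$ yields $u'=g(\nu',\RS;\cdot)+h'$, where the Riesz measure $\nu'$ is necessarily finite and supported on $E_f$ (because $u'$ is harmonic on $\RS\setminus E_f$) and the greatest harmonic minorant $h'\ge 0$ (because $u'\ge 0$ makes the constant $0$ a harmonic minorant).

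The main obstacle is ensuring independence of the limit $u'$ from the diagonal $\{n_m\}$: this hinges on a uniform-in-$n$ bound on the total mass $|\nu_n|$, obtained via an argument-principle count of zeros of $f-N(M_n)\!\circ\! p$ against their poles (at most $Mn$), and on a Dini-type uniformization of $g(\nu_n,\Omega_m;\cdot)\uparrow g(\nu_n,\RS;\cdot)$ as $m\to\infty$, for which the chosen property $\lambda(\partial\Omega_m\setminus\partial\RS)=0$ of the exhaustion plays a role.
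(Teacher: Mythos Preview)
Your approach has several genuine gaps that prevent it from going through.

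First, the boundary claim $v_n|_{\partial\RS}\to c_0$ is only correct on the distinguished component $\B$. On any other component $\B'\neq\B$, the lift $f$ is a \emph{different} branch from the one being approximated, so $|f-N(M_n)\!\circ\! p|(\zeta)\to|f(\zeta)-f(\xi)|$ (where $\xi\in\B$ satisfies $p(\xi)=p(\zeta)$), which is generically nonzero, and hence $v_n\to0$ there rather than to $c_0$. Compare the delicate treatment of $\partial\RS\setminus\B$ in the proof of Lemma~\ref{lem:hr}.

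Second, your application of the Generalized Minimum Principle to $v_n$ on $\RS\setminus E_f$ is invalid: the principle requires $v_n$ to be bounded below, but $f$ may have poles or essential singularities on $E_f$, and near such points $|f-N(M_n)\!\circ\!p|\to\infty$ so that $v_n\to-\infty$. The polarity of $E_f$ excuses the $\liminf$ boundary condition at those points, but it does \emph{not} excuse the boundedness-below hypothesis. Consequently your claimed lower bound $-h_{n,m}\ge c_0/2$ fails, and with it the Harnack-chain upper bound that relies on positivity.

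Third---and this is the structural issue you flag yourself but do not resolve---your function $\widetilde h_n=v_n-g(\nu_n,\RS;\cdot)$ is only well-defined when $g(\nu_n,\RS;\cdot)\not\equiv+\infty$, which requires $\nu_n$ to be admissible on $\RS$. The zeros of $f-N(M_n)\!\circ\!p$ can accumulate on $E_f$, and the argument-principle bound you propose is unavailable when $f$ has essential singularities there (the winding contribution near $E_f$ is uncontrolled). The paper never forms $g(\nu_n,\RS;\cdot)$ for exactly this reason. Instead it works entirely on $\Omega_m$, where $\nu_n|_{\Omega_m}$ is finite: it first bounds $h_{n,m}$ from above via the maximum principle on $\partial\Omega_m$, then for each fixed $m$ extracts a subsequence $\mathcal N_m$ along which $h_{n,m}\to h_m$ by Harnack, observes the monotonicity $h_{m_1}\le h_{m_2}\le0$ in $m$ from $|b_{n,m_1}^{zero}|>|b_{n,m_2}^{zero}|$, and finally takes the diagonal of the table $\{\mathcal N_m\}$. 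The limit $u'=-\lim_m h_m$ is then obtained as a monotone limit of harmonic functions, with no appeal to a global Riesz decomposition of $v_n$.
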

\begin{proof}
The regularity of \( \partial \Omega_m \) implies that \( |b_{n,m}^{zero}(z)|\equiv 1 \) for \( z\in\partial \Omega_m \), and likewise \(|b_n^{pole}(z)|=1\) for \( z \) on $\T$. In particular we get that
\[
\big|N(M_n)(z) b_n^{pole}(z)\big|\leq \|N(M_n)\|_\T,  \quad z\in \D,
\]
by the maximum modulus principle for $H^\infty$-functions. Set \( \Gamma_n:=\Gamma_{f-M_n} \) be the Hankel operator with symbol \( f-M_n \), see \eqref{Hankel}. It follows from the definition of the singular values, \eqref{arsv}, and \eqref{opt_Nehari} that \( \|\Gamma_n\|=s_0(\Gamma_n) = \|f-N(M_n)\| _\T \leq \|f-M_n\|_\T\). Since the norms \( \|f-M_n\|_\T \) tend to zero by assumption, we get that
\begin{equation}
\label{nmn-bound}
\|N(M_n)\|_\T \leq \|f\|_\T + \|\Gamma_n\| \leq C_f
\end{equation}
for some constant \( C_f \) that depends only on \( f \) and the sequence $\{M_n\}$. Thus, we get from the maximum principle for harmonic functions that
\begin{equation}
\label{hnm-bound}
h_{n,m}(z) \leq \frac1n\log\big( \|f\|_{\partial \Omega_m}  +  C_f \big),\qquad z\in \Omega_m.
\end{equation}

Set \( \mathcal N_0:=\mathcal N \). Proceeding  inductively on \( m\geq1 \), we deduce from \eqref{hnm-bound} that the sequence \( \{h_{n,m}\}_{n\in \mathcal N_{m-1}} \) is uniformly bounded above in \( \Omega_m \) and therefore, by \hyperref[hst]{Harnack's theorem}, see Section~\ref{ssec_sub},  there exists a subsequence \( \mathcal N_m\subseteq \mathcal N_{m-1} \) such that
\begin{equation}
\label{hnmhm}
h_{n,m}(z) \to h_m(z) \quad \text{as} \quad \mathcal N_m \ni n\to\infty
\end{equation}
locally uniformly in \( \Omega_m \), where \( h_m(z) \) is either identically \( - \infty \) or a non-positive harmonic function.  Define \( \mathcal N_* \) to be the diagonal of the table \( \{ \mathcal N_m\}_{m=1}^\infty \); that is, the \( m \)-th element of \( \mathcal N_* \) is the \( m \)-th element of \( \mathcal N_m \). Then, \eqref{hnmhm} holds along \( n\in \mathcal N_* \) for each $m$.

 Additionally, it follows from the maximum modulus principle for holomorphic functions that
\[
|b_{n,m_1}^{zero}(z)|>|b_{n,m_2}^{zero}(z)|, \quad z\in\Omega_{m_1}, \quad m_1<m_2.
\]
Therefore, we get from \eqref{hnm} and \eqref{hnm-bound} that \( h_{m_1}(z)\leq h_{m_2}(z)\leq0 \) for \( z\in\Omega_{m_1} \) when  \( m_1<m_2 \). Thus, if a finite limit \( h_{m_*}(z) \) exists for some index \( m_* \), then it exists for all \(m>m_* \). Hence,  either the functions \( h_{n,m} \) converge to \( -\infty \) as \( \mathcal N_*\ni n\to\infty \) locally uniformly in each \( \Omega_m \), in which case $h_m\equiv-\infty$ for all $m$, or else the functions \( h_m \) are finite and harmonic  for all \( m \) large enough.

If \( h_m\equiv-\infty\) for all $m$, select for each $m$ some \( n_m\in\mathcal N_* \) such that \( h_{n,m}(z)<-m \) for \( z\in\overline\Omega_{m-1} \) and all \( n\geq n_m \); we may require in addition that $n_m> n_{m-1}$ for $m\geq1$. Then \eqref{lef-1} holds with \( \mathcal N^\prime := \{n_m\}_{m=1}^\infty \) and \( u^\prime\equiv+\infty \).

If on the contrary the functions \( h_m \) are finite, they form an increasing sequence on $\Omega_{\ell}$ for $m\geq\ell$ and fixed $\ell$. As they are non-positive, they converge locally uniformly in \( \RS\setminus E_f \) to a non-positive harmonic function, say \( -u^\prime \), again by \hyperref[hst]{Harnack's theorem}. Since \( E_f \) is a closed polar set and \( u^\prime \) is non-negative, it follows from the \hyperref[rt]{Removability theorem}, see Section~\ref{ssec_fine}, that \( u^\prime \) extends to a superharmonic function on \( \RS \) that we keep denoting by \( u^\prime \).  Because \( u^\prime \) is superharmonic on $\RS$ and harmonic on $\RS\setminus E_f$, its Laplacian is a negative Radon measure $-\nu^\prime$ supported on $E_f$, which is necessarily finite since $E_f$ is compact. Thus, by the \hyperref[rrt]{Riesz representation theorem}, see Section~\ref{ssec_pot}, equation \eqref{uprime} takes place with \( h^\prime \) the largest harmonic minorant of \( u^\prime \).

Given \( m \), choose \( \tilde{n}_m \in \mathcal N_* \) such that \( | h_{n,m}(z) - h_m(z) | \leq 1/m \) for \( z\in\overline\Omega_{m-1} \) and all \( n\geq \tilde{n}_m \). Define \( \mathcal N^\prime := \{\tilde{n}_m \}_{m=1}^\infty \), where we again additionally require that \( \tilde{n}_m>\tilde{n}_{m-1} \). Given a compact set \( K\subset\RS\setminus E_f \) and \( \epsilon>0 \), we can pick \( m \) large enough that
\[
K\subseteq\overline\Omega_{m-1}, \quad 1/m\leq \epsilon/2, \quad \text{and} \quad |u^\prime(z) + h_m(z)| \leq \epsilon/2, \quad z\in K.
\]
Then, it follows from the last two inequalities that \( |h_{n,m}(z) + u^\prime(z)| \leq \epsilon \) for \( z\in K \) and any \( \mathcal N^\prime  \ni n\geq \tilde{n}_m \). Since \( \epsilon \) and \( K \) were arbitrary, this finishes the proof of \eqref{lef-1}.
\end{proof}

\begin{lemma}
\label{lem:hr}
If $u'\not\equiv+\infty$ in Lemma \ref{lem:hr-e}, then \( h^\prime \) in
\eqref{uprime} continuously extends to \( \partial\RS \) and
\begin{equation}
\label{hr-gamma}
h^\prime(z) = 
\begin{cases}
\displaystyle\frac{2}{\cp_\D(\K)}, & z\in\B, \smallskip \\
0, & z\in\partial\RS\setminus\B.
\end{cases}
\end{equation}
\end{lemma}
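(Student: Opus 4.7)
The approach is to read the boundary values of $h^\prime$ off the boundary values of $h_{n,m}$ on $\partial\RS\subset\partial\Omega_m$, and then to pass to the limits $n\to\infty$ and $m\to\infty$ using the Poisson integral representation of $h_{n,m}$ in the regular domain $\Omega_m$.

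First, since $\partial\Omega_m$ is regular one has $|b_{n,m}^{zero}|\equiv 1$ on $\partial\Omega_m$, and since $p(\partial\RS)\subset\T$ one also has $|b_n^{pole}\circ p|\equiv 1$ on $\partial\RS$. Thus \eqref{hnm} simplifies on $\partial\RS$ to $h_{n,m}(z)=\tfrac{1}{n}\log|(f-N(M_n)\circ p)(z)|$. On $\B$, via the homeomorphism $p|_{\B}:\B\to\T$ and the Nehari identity \eqref{circ_error}, this equals $\tfrac{1}{n}\log\|\Gamma_n\|$ almost everywhere, which by \eqref{f6a} converges uniformly to $-2/\cp_\D(\K)$.

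On any other component of $\partial\RS\setminus\B$, the restriction of $f$ pulls back via $p^{-1}$ to an analytic branch $f_j$ of $f$ in an annular neighborhood of $\T$, different from the branch $f_1$ on $\B$ by the multi-valuedness clause of \eqref{defFs} (two branches coinciding on $\T$ would coincide in their common neighborhood by the identity principle). Hence $f_j-f_1$ is analytic and not identically zero there, with only finitely many zeros on $\T$. Since $|f_1-N(M_n)|=\|\Gamma_n\|\to 0$ a.e.\ on $\T$, the triangle inequality yields $|f_j-N(M_n)|\geq\tfrac12|f_j-f_1|$ a.e.\ on $\T$ for $n$ large, so $h_{n,m}\to 0$ pointwise a.e.\ on $\partial\RS\setminus\B$. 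The uniform upper bound $h_{n,m}\leq\tfrac{1}{n}\log(\|f\|_{\partial\RS}+C_f)\to 0$ from \eqref{nmn-bound} and the $n$-uniform lower majorant $-\tfrac{C}{n}|\log|f_j-f_1||$, which is integrable on $\T$, provide the domination needed for dominated convergence.

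Since $\Omega_m$ is regular, $h_{n,m}$ is the Poisson integral of its boundary values against the harmonic measure of $\Omega_m$. I would split this integral into the contributions from $\B$, from $\partial\RS\setminus\B$, and from the inner boundary $\partial\Omega_m\setminus\partial\RS$, and pass to the limit $n\to\infty$ along the subsequence $\mathcal N^\prime$ of Lemma~\ref{lem:hr-e}. By the estimates above, dominated convergence shows that the first contribution tends to $-\tfrac{2}{\cp_\D(\K)}$ times the harmonic measure of $\B$, the second vanishes, and the third is controlled by $(C_m/n)$ times the harmonic measure of $\partial\Omega_m\setminus\partial\RS$, where the bound $|h_{n,m}|\leq C_m/n$ on the inner boundary follows from $|b_n^{pole}\circ p|\leq 1$, the boundedness of $f$ on the compact set $\overline{\partial\Omega_m\setminus\partial\RS}\subset\RS\setminus E_f$, and \eqref{nmn-bound}. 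Letting then $m\to\infty$, the polarity of $E_f$ implies that the harmonic measure of $\partial\Omega_m\setminus\partial\RS$ tends to $0$ at every $z\in\RS\setminus E_f$, while the harmonic measure of $\B$ in $\Omega_m$ tends to that of $\B$ in $\RS$. Hence $u^\prime(z)$ equals $\tfrac{2}{\cp_\D(\K)}$ times the harmonic measure of $\B$ in $\RS$, i.e.\ the bounded Dirichlet solution on $\RS$ with boundary data $2/\cp_\D(\K)$ on $\B$ and $0$ on $\partial\RS\setminus\B$, which is continuous up to $\partial\RS$ since those data are locally constant on each of the finitely many components of $\partial\RS$.

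Finally, since $\supp\nu^\prime\subset E_f\subset\RS\setminus\partial\RS$, the Green potential $g(\nu^\prime,\RS;\cdot)$ extends continuously by $0$ to $\partial\RS$, so $h^\prime=u^\prime-g(\nu^\prime,\RS;\cdot)$ extends continuously to $\partial\RS$ with the same boundary values as $u^\prime$, proving \eqref{hr-gamma}. The principal obstacles are (i) securing the $n$-uniform integrable majorant for $|h_{n,m}|$ on $\partial\RS\setminus\B$ to legitimize dominated convergence, which rests on the nonvanishing of the branch differences $f_j-f_1$, and (ii) showing that the harmonic measure of the shrinking inner boundary $\partial\Omega_m\setminus\partial\RS$ vanishes in the limit $m\to\infty$, which rests on the polarity of $E_f$.
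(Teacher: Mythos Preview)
Your overall strategy matches the paper's, but your choice of domain for the Poisson representation creates a genuine gap. You represent $h_{n,m}$ via harmonic measure on $\partial\Omega_m$ and assert that on the inner boundary $\partial\Omega_m\setminus\partial\RS$ one has $|h_{n,m}|\leq C_m/n$. Only the \emph{upper} bound follows from $|b_n^{pole}\circ p|\leq 1$ and \eqref{nmn-bound}; there is no corresponding lower bound, because $(f-N(M_n)\circ p)\,(b_n^{pole}\circ p)$ may vanish on $\partial\Omega_m\setminus\partial\RS$ (either at a pole of $N(M_n)$ where $b_n^{pole}\circ p=0$, or at a zero of the error itself). Hence $h_{n,m}$ can equal $-\infty$ there, and its integral against harmonic measure cannot be controlled as $n\to\infty$. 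The paper sidesteps this by taking the Poisson representation not on $\Omega_m$ but on a \emph{fixed} annular collar $\Omega_*=p^{-1}(\{r<|z|<1\})$. Its inner boundary $p^{-1}(\T_r)$ lies in the interior of every $\Omega_m$, so the locally uniform convergence $h_{n,m}\to h_m$ from Lemma~\ref{lem:hr-e} already controls that piece; one then lets $m\to\infty$ in the interior term via the known monotone convergence $h_m\uparrow -u'$.

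There is a second, smaller gap on $\partial\RS\setminus\B$. Your claimed lower bound $h_{n,m}\geq -\tfrac{C}{n}|\log|f_j-f_1||$ requires $|f_j-N(M_n)|\geq\tfrac12|f_j-f_1|$, which fails on the set $\{|f_j-f_1|<2\|\Gamma_n\|\}$; there $f_j-N(M_n)$ can in fact vanish (its zeros on $\T$ approach but do not coincide with those of $f_j-f_1$), so $h_{n,m}=-\infty$ while your majorant stays finite. A fixed dominating function cannot cover these moving singularities. The paper instead invokes Vitali's theorem and proves uniform integrability: writing $|d(\eta)|^2-\|\Gamma_n\|^2$ as the trace of an analytic function $\mathcal D-\|\Gamma_n\|^2$ whose zeros are captured by a polynomial $\ell_n\to\ell$, the reciprocal-polynomial estimate \eqref{deftilde}--\eqref{Vitali3} bounds $\int_E\tfrac1n\log|f-N(M_n)\circ p|\,d\delta_z^{\RS\setminus\Omega}$ from below uniformly in $n$ whenever $\delta_z^{\RS\setminus\Omega}(E)$ is small. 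You correctly identify this as obstacle (i), but the remedy you propose does not work.
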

\begin{proof}
Let \( \Omega_* := p^{-1}(\{z:r<|z|<1\}) \), with \( r >0\) close enough to \( 1 \) that \( \overline\Omega_* \setminus \partial\RS\subset \Omega_m \) for each \( m \). It follows from the proof of Lemma~\ref{lem:hr-e} that \( h_{n,m}(z) \to h_m(z) \) as \( \mathcal N^\prime \ni n\to\infty \), locally uniformly in \( \overline\Omega_*\setminus\partial\RS \). Given a connected component \( \Omega \) of \( \Omega_* \), let \( \delta_z^{\RS\setminus\Omega} \) be its harmonic measure, see Section~\ref{ssec_bal}. Then
\begin{equation}
\label{log_sing}
\log|p(z)-(1+\epsilon)p(\xi)| = \int  \log|p(\zeta)-(1+\epsilon)p(\xi)|d\delta_z^{\RS\setminus\Omega}(\zeta), \quad z\in\Omega,
\end{equation}
for any \( \xi\in\partial \Omega\cap\partial\RS \) and \( \epsilon>0 \), see \eqref{balcont1}. It then follows from the monotone convergence theorem that we can take \( \epsilon=0 \) in \eqref{log_sing}. Recall that \( f-N(M_n)\circ p \) is analytic across \( \partial \RS\cap\partial \Omega \) and hence is non-vanishing there except perhaps for finitely many zeros counting multiplicities. Since $h_{n,m}$ is harmonic in $\Omega$, is continuous on $\partial\Omega\setminus\partial \RS$, and is equal to \( \frac1n\log|f-N(M_n)\circ p| \) on \( \partial \RS\cap\partial \Omega \) (i.e., it is continuous on $\partial\Omega\setminus\partial \RS$ except perhaps for finitely many logarithmic singularities), we get from  \eqref{log_sing} and \eqref{balcont1} that
\[
h_{n,m}(z) = \int_{\partial \Omega\setminus\partial \RS}h_{n,m}d\delta_z^{\RS\setminus\Omega} + \int_{\partial \RS\cap\partial \Omega}\frac1n\log|f-N(M_n)\circ p|d\delta_z^{\RS\setminus\Omega}, \quad  z\in\Omega.
\]
By \eqref{f6a} the pointwise limit of \( \frac1n\log|f-N(M_n)\circ p| \)  on \( \partial \RS \) is minus the right-hand side of \eqref{hr-gamma} except perhaps for a  finite subset of $\partial\mathcal{R}\setminus\mathcal{T}$, where $|f-N(M_n)\circ p|$ may go to zero, contained in
\[
\big\{\zeta|~\exists\,\eta\neq\zeta:~p(\zeta)=p(\eta), \; f(\zeta)=f(\eta)\big\}\setminus\B.
\]
As \( \delta_z^{\RS\setminus\Omega} \) does not charge polar, thus finite sets, the convergence in fact holds almost everywhere with respect to \( \delta_z^{\RS\setminus\Omega} \) for each fixed \( z \). So, if we can justify  the second equality in the following relation:
\begin{multline}
h_m(z) = \lim_{\mathcal N^\prime\ni n\rightarrow\infty}\left(\int_{\partial \Omega\setminus\partial \RS}h_{n,m}d\delta_z^{\RS\setminus\Omega} +  \int_{\partial \RS \cap \partial\Omega} \frac1n \log|f-N(M_n)\circ p|d\delta_z^{\RS\setminus\Omega} \right) \\ = \int_{\partial \Omega\setminus\partial \RS} h_md \delta_z^{\RS\setminus\Omega} + \int_{\partial \RS \cap \partial\Omega} \lim_{\mathcal N \ni n\rightarrow\infty} \frac1n \log|f-N(M_n)\circ p|d\delta_z^{\RS\setminus\Omega},
\label{exchange}
\end{multline}
we shall get that \( h_m(z) \) solves  a Dirichlet problem on \( \Omega \) with boundary data equal to \( h_{m\mathcal b\partial \Omega\setminus\partial\RS} \) on  \( \partial \Omega\setminus\partial\RS \), and to the negative of the right-hand side of \eqref{hr-gamma} on \( \partial \RS\cap\partial\Omega \). As such, $h_m$ must extend continuously to \( \partial\Omega \) where it is equal to the boundary data, since \( \partial\Omega \) is non-thin at any of its points. Subsequently, as $h_m$ converges to $-u^\prime$ locally uniformly on  $\RS\setminus E_f$ (see the proof of Lemma~\ref{lem:hr-e}), passing to the limit in the leftmost and rightmost sides of \eqref{exchange}  when $m\to\infty$  yields that $u^\prime$ extends continuously to $\partial\Omega\cap\partial\RS$ with values given there by the right-hand side of \eqref{hr-gamma}. This will give us the desired conclusion, because \( \nu^\prime \) is compactly supported in \( \RS \) and therefore \( g(\nu^\prime,\RS;\cdot) \) continuously extends by zero to \( \partial \RS \)  since $\RS$ is regular.

Altogether, it only remains to justify the swapping of the limit and integration signs in \eqref{exchange}. On \( \partial\Omega\setminus \partial \RS \), one can invoke the dominated convergence theorem. Thus, we only need to consider the integral over \( \partial \RS\cap\partial\Omega \). According to Vitali's convergence theorem, it is enough to show that for every \( \epsilon>0 \) there exists \( \delta>0 \) and $n_\varepsilon\in\N$ for which
\begin{equation}
\label{Vitali}
\int_E\left|\frac1n\log|f-N(M_n)\circ p|\right| d\delta_z^{\RS\setminus\Omega} < \epsilon \quad \mathrm{as\ soon\ as} \quad |\delta_z^{\RS\setminus\Omega}(E)|<\delta\quad\mathrm{and}\quad n\geq n_\varepsilon.
\end{equation}
For this, we first deduce from \eqref{nmn-bound} that 
\begin{equation}
\label{Vitali1}
\int_{\partial \RS}\frac1n\log|f-N(M_n)\circ p|d\delta_z^{\RS\setminus\Omega} \leq \frac1n\log\big(\|f\|_{\partial \RS}+C_f\big)
\end{equation}
as \( \delta_z^{\RS\setminus\Omega} \) is a probability measure. Now, if \( \partial \RS\cap\partial \Omega = \B \), then it follows from \eqref{f6a} that
\begin{equation}
\label{Vitali2}
\int_E\frac1n\log|f-N(M_n)\circ p|d\delta_z^{\RS\setminus\Omega} > -c\delta_z^{\RS\setminus\Omega}(E),
\end{equation}
for any \( E \subseteq\B \) and some positive constant \( c \). If \( \partial \RS\cap\partial \Omega = \B^\prime\neq\B \), set \( d(\eta):=f(\eta)-f(\zeta) \) where \( \eta\in\B^\prime \), \( \zeta\in\B \), and \( p(\eta) = p(\zeta) \). Then, we get from \eqref{nmn-bound} that
\begin{multline}
\label{Vitali-aux}
\log |(f-N(M_n)\circ p)(\eta)| \geq  \log \big||d(\eta)| - \|\Gamma_n\|\big| = \log \left| \frac{|d(\eta)|^2-\|\Gamma_n\|^2}{|d(\eta)|+\|\Gamma_n\|} \right| \\  \geq  \log \big||d(\eta)|^2-\|\Gamma_n\|^2\big| - \log(\|f\|_{\B^\prime}+C_f).
\end{multline}
Since \( E_f \), the singular set of \( f \), and \( {\bf rp}(\RS) \), the ramification set of \( \RS \), are closed and lie on top of \( \D \), \( d(\eta) \) extends to a holomorphic function, non-identically zero in a neighborhood of \( \B^\prime \). Then, 
\[
\mathcal D(\eta) :=d(\eta)\overline{d(p^{-1}(1/\overline{ p(\eta)})} \quad \text{satisfies} \quad \mathcal D(\eta) = |d(\eta)|^2, \quad \eta\in  \B^\prime,
\]
and is  holomorphic about \( \B^\prime \). Pick an open set $W\supset \B^\prime$ such that $\overline{W}\cap {\bf rp}(\RS) = \varnothing$ and $\mathcal D$ is holomorphic in $\overline{W}$ with no zero on $\partial W$; then, so is $\mathcal D-\|\Gamma_n\|^2$ for $n$ large as \( \|\Gamma_n\|^2 \to 0 \). Let \( \ell \) and \(\ell_n \) be minimal degree polynomials, normalized by
imposing $\|\ell\|_\T=\|\ell_n\|_\T=1$,   such that
\[
\frac{\mathcal D(\eta)}{\ell(p(\eta))} \quad \text{and} \quad \frac{\mathcal D(\eta)-\|\Gamma_n\|^2}{\ell_n(p(\eta))}
\]
are holomorphic and non-vanishing in $W$. Since \( \|\Gamma_n\|^2 \to 0 \), the zeros of $\mathcal D-\|\Gamma_n\|^2$ in $W$ tend to those of $\mathcal D$ by Rouch\'e's theorem, and so  our normalization implies that \( \ell_n\to\ell \) uniformly in $\overline{W}$ as \( n\to\infty \). Hence, $(\mathcal D-\|\Gamma_n\|^2)/(\ell_n\circ p)$ converges to $\mathcal D/(\ell_n\circ p)$ uniformly in $\overline{W}$, in particular it is uniformly bounded away from zero there.  Consequently,  it follows from \eqref{Vitali-aux} that
\begin{equation}
\label{bii2}
\log |(f-N(M_n)\circ p)(\eta)| \geq C + \log|\ell_n(p(\eta))|,\quad \eta\in  \B^\prime,
\end{equation}
for some finite constant  \( C \). Note that $\log|\ell_n|\leq0$ in $\D$ according to our normalization. Let us write  $\ell_n(x) = a_n\prod_i(x-x_{i,n})$ and define the reciprocal polynomial $\tilde\ell_n$ of $\ell_n$ by 
\[
\tilde\ell_n(x) := a_n\prod_i\left\{ \begin{array}{ll} x-x_{i,n} & \mathrm{if\ }|x_{i,n}|\geq 1, \smallskip \\ 1-x\bar x_{i,n} & \mathrm{if\ }|x_{i,n}|<1. \end{array} \right. 
\]
Clearly, \( |\ell_n(\xi)| = |\tilde\ell_n(\xi)| \) for \( |\xi|=1 \), and the maximum principle for harmonic functions implies that
\begin{equation}
\label{deftilde}
\int_{\B^\prime}\log|\tilde{\ell}_n(p(\eta))|d\delta_z^{\RS\setminus\Omega} \geq \log \big|\tilde \ell_n(p(z))\big|, \quad z\in \Omega,
\end{equation}
since both sides of \eqref{deftilde} are harmonic in \( \Omega \) and have the trace \( \log|\ell_n(p(\eta))| \) on \( \B^\prime \) while the left-hand side has zero trace on \( \partial\Omega\setminus\B^\prime \) and the right-hand side satisfies   $\log|\ell_n\circ p|\leq 0$ there. Thus, we get from \eqref{bii2} and \eqref{deftilde} that
\begin{eqnarray}
\int_{\B^\prime}\frac1n\log|f-N(M_n)\circ p|d\delta_z^{\RS\setminus\Omega} & \geq & \frac Cn + \frac1n\int_{\B^\prime}\log|\tilde{\ell_n}(p(\eta))|d\delta_z^{\RS\setminus\Omega} \nonumber \\
\label{Vitali3}
& \geq & \frac Cn + \frac{1}{n} \log \big|\tilde{\ell}_n(p(z))\big|.
\end{eqnarray}
As the last term goes to zero uniformly on $\Omega$ and since \( \tilde \ell_n \to \tilde\ell \), where \( \tilde\ell \) is the reciprocal polynomial of \( \ell \) defined similarly to \( \tilde\ell_n \), the estimate \eqref{Vitali} now follows from \eqref{Vitali1}, \eqref{Vitali2}, and \eqref{Vitali3}.
\end{proof}

\subsection{Asymptotic Distributions of Poles and Zeros}
\label{sec_poles_zeros}

Recall the measures \( \mu_n \) introduced in \eqref{f2a}. Since these measures have mass at most \( 1 \), it follows from Helly's selection theorem \cite[Theorem~0.1.3]{SaffTotik} that there exists a Borel measure \( \mu^\prime \), supported in \( \overline \D \) with mass at most \( 1 \), such that
\begin{equation}
\label{mun-muprime}
\mu_n \overset{w*}{\to}\mu^\prime \qasq \mathcal N^\prime \ni n\to\infty,
\end{equation}
perhaps at a cost of further restricting \( \mathcal N^\prime \), where $\overset{w*}{\to}$ stands for weak$^*$ convergence of finite (signed) measures (a sequence of Borel measures \( \{ \sigma_n \} \) on a locally compact space \( X \) converges weak$^*$ to a measure \( \sigma \) if  \( \int gd\sigma_n \to \int gd\sigma \) for every continuous function \( g \) in \( C_0(X) \), the completion of \( C_c(X) \) in the supremum norm). Observe that \( \mu^\prime_{\mathcal b \D} = \mu \), where \( \mu \) was defined as the vague limit \( \mu_n \) in \( \D \) along \( \mathcal N \supseteq \mathcal N^\prime\). In particular, \( \mu_n \overset{w*}{\to}\mu \) in \( \D \).

\begin{lemma}
\label{lem:pdlet-disk}
For any subsequence \( \{n_m\}_{m=1}^\infty\subseteq \mathcal N^\prime \),  it holds that
\begin{equation}
\label{lef-2}
\liminf_{m\to\infty}\frac1{n_m}\log \big| b_{n_m}^{pole}(z) \big| ^{-1} \left\{\begin{array}{c} \geq \\ = \end{array}\right\} g(\mu,\D;z),
\end{equation}
where the inequality holds for every  \(z\in \D \) and equality holds for quasi every $z\in \D$. 
\end{lemma}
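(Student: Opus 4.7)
The key observation is that, by the very definition~\eqref{gen-bl} of the generalized Blaschke product, one has the pointwise identity
\[
\frac{1}{n}\log\big|b_{n}^{pole}(z)\big|^{-1} \;=\; \frac{1}{n}\,g(\widetilde\mu_n,\D;z) \;=\; g(\mu_n,\D;z), \qquad z\in\D,
\]
understood in the extended sense $[0,+\infty]$. Hence both assertions of~\eqref{lef-2} translate into statements about the asymptotic behavior of the Green potentials $g(\mu_{n_m},\D;\cdot)$ when $\mu_{n_m}\overset{w*}{\to}\mu'$ on $\overline\D$, where $\mu'_{\mathcal b\D}=\mu$. The plan is simply to invoke the \hyperref[pdleg]{Principle of Descent and Lower Envelope Theorem}, after first dealing with the small amount of mass that $\mu'$ may carry on~$\T$.

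To see why the boundary mass is harmless, recall the explicit formula $g_\D(z,w) = -\log|z-w| + \log|1-\bar w z|$ for $z,w\in\D$, and note that this expression extends continuously to $w\in\T$ with value $0$ there (since $|1-\bar w z|=|w-z|$ when $|w|=1$). For a fixed $z\in\D$ the second summand is a bounded continuous function of $w$ on~$\overline\D$, so that
\[
g(\mu_{n_m},\D;z) \;=\; V^{\mu_{n_m}}(z) \;+\; \int_{\overline\D}\log|1-\bar w z|\,d\mu_{n_m}(w),
\]
and the last integral converges by weak$^*$ convergence to $\int_{\overline\D}\log|1-\bar w z|\,d\mu'(w)$. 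In particular, summing the two pieces shows $g(\mu',\D;z)=g(\mu,\D;z)$ for every $z\in\D$, the $\T$-part of $\mu'$ dropping out.

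Second, I would apply the classical \hyperref[pdleg]{Principle of Descent and Lower Envelope Theorem} to the logarithmic potentials on the compact set $\overline\D$: since $\mu_{n_m}\overset{w*}{\to}\mu'$, one has
\[
\liminf_{m\to\infty} V^{\mu_{n_m}}(z) \;\geq\; V^{\mu'}(z) \quad\text{for every } z\in\C, \qquad \text{with equality for q.e.\ } z\in\C.
\]
Combined with the convergence of the continuous remainder, this yields $\liminf g(\mu_{n_m},\D;z)\geq g(\mu,\D;z)$ for every $z\in\D$ and equality for quasi every $z\in\D$, which is exactly~\eqref{lef-2}. Alternatively, if the version of PDLET recorded in the appendix is formulated directly for Green potentials on a bounded regular domain, the decomposition step becomes unnecessary and the lemma is an immediate citation. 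The only point that requires care, and is the principal conceptual obstacle, is the possible loss of mass of $\mu_n$ to the boundary $\T$; the identity $g_\D(z,w)=0$ for $w\in\T$ shows that this escaping mass is invisible to the Green potential, which is why $\mu$ rather than $\mu'$ appears on the right-hand side of~\eqref{lef-2}.
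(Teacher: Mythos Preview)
Your proof is correct and matches the paper's approach. The paper's version of the Principle of Descent and Lower Envelope Theorem (Section~\ref{ssec_reg}) is indeed stated directly for Green potentials on a regular open set with weak$^*$ convergence of the measures on that set, so the paper's proof is precisely the one-line citation you anticipate in your ``alternatively'' clause; your explicit decomposition $g_\D(z,w)=-\log|z-w|+\log|1-\bar w z|$ and the discussion of mass escaping to~$\T$ are correct but superfluous once one notes that $\mu_n\overset{w*}{\to}\mu$ on~$\D$ (not just on~$\overline\D$) and that the truncated Green kernel $\min\{M,g_\D(z,\cdot)\}$ lies in the closure of $C_c(\D)$.
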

\begin{proof}
Observe that \( \frac1n\log\big| b_{n}^{pole}(z)\big|^{-1} = g(\mu_n,\D;z) \), see \eqref{gen-bl}. Since $\mu_n\overset{w*}{\to}\mu$ on $\D$, the conclusion follows from the \hyperref[PDLEG]{Principle of Descent} and the \hyperref[PDLEG]{Lower Envelope Theorem}, see Section~\ref{ssec_reg}.
  \end{proof}

We cannot immediately get an analog of the previous lemma for the measures \( \nu_n =\widetilde\nu_n/n \), because we do not know  if these Radon  measures on $\RS\setminus E_f$ have uniformly bounded masses. Instead, we shall study the asymptotic behavior of their Green potentials in the style  of Lemmas \ref{lem:hr-e} and \ref{lem:hr}. 

\begin{lemma}
\label{lem:vague}
If the Radon measures \(\sigma_n\) converge vaguely to $\sigma$, $K\subset E$ is compact, and $\sigma(\partial K)=0$, then the restrictions $\sigma_{n\mathcal{b}K}$ have uniformly bounded masses, $\sigma_{n\mathcal{b}K}\overset{w*}{\to}\sigma_{\mathcal{b}K}$ on $K$, and \( \sigma_n(\partial K)\to 0 \).
\end{lemma}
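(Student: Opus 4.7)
The strategy is to sandwich the indicator $\mathbf{1}_K$ between continuous compactly supported test functions and then exploit $\sigma(\partial K)=0$ to control the approximation error. The underlying space (a Riemann surface) is locally compact, Hausdorff and second countable, hence metrizable, so for each $\varepsilon>0$ small enough that the closed $\varepsilon$-neighborhood $K_\varepsilon:=\{x:\dist(x,K)\leq\varepsilon\}$ is compact and contained in $E$, Urysohn's lemma furnishes $\phi_\varepsilon^\pm\in C_c(E)$ with $0\leq\phi_\varepsilon^-\leq\mathbf{1}_K\leq\phi_\varepsilon^+\leq 1$, where $\phi_\varepsilon^+$ is supported in $K_\varepsilon$ and $\phi_\varepsilon^-=1$ on the inner shrinkage $K^{-\varepsilon}:=\{x\in K:\dist(x,E\setminus K)\geq\varepsilon\}$.

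To establish claim (3), namely $\sigma_n(\partial K)\to 0$, I would use outer regularity of the Radon measure $\sigma$: for any $\eta>0$ pick a relatively compact open $U\supset\partial K$ with $\sigma(U)<\eta$, then apply Urysohn to select $\varphi\in C_c(E)$ with $\mathbf{1}_{\partial K}\leq\varphi\leq\mathbf{1}_U$. Vague convergence gives $\limsup_n\sigma_n(\partial K)\leq\lim_n\int\varphi\,d\sigma_n=\int\varphi\,d\sigma\leq\sigma(U)<\eta$, and letting $\eta\to 0$ yields the claim.

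For claim (1) and the auxiliary fact $\sigma_n(K)\to\sigma(K)$, I would integrate $\phi_\varepsilon^\pm$ against $\sigma_n$ to sandwich $\sigma_n(K)$ and invoke vague convergence to obtain $\int\phi_\varepsilon^-\,d\sigma\leq\liminf_n\sigma_n(K)\leq\limsup_n\sigma_n(K)\leq\int\phi_\varepsilon^+\,d\sigma$. Letting $\varepsilon\to 0$, the lower bound tends to $\sigma(\mathrm{int}\,K)$ and the upper bound to $\sigma(K)$ by monotone convergence (using finiteness of $\sigma$ on compact sets). The hypothesis $\sigma(\partial K)=0$ forces both limits to equal $\sigma(K)$, so $\sigma_n(K)\to\sigma(K)$, and in particular $\{\sigma_n(K)\}$ is bounded.

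Finally, for claim (2), let $g\in C(K)$ and extend it to $\tilde g\in C_c(E)$ via Tietze's theorem. Comparing $\int_K g\,d\sigma_n$ with $\int\tilde g\,\phi_\varepsilon^-\,d\sigma_n$, the difference is bounded in modulus by $\|g\|_\infty\,\sigma_n(K\setminus K^{-\varepsilon})$. The same sandwich argument applied to the shell $K\setminus\mathrm{int}\,K^{-\varepsilon}$, whose $\sigma$-mass tends to $\sigma(\partial K)=0$ as $\varepsilon\to 0$, shows that this error is uniformly small in $n$ for small $\varepsilon$. Combined with the vague convergence $\int\tilde g\,\phi_\varepsilon^-\,d\sigma_n\to\int\tilde g\,\phi_\varepsilon^-\,d\sigma$ and a routine double-limit argument (first $n\to\infty$, then $\varepsilon\to 0$), we obtain $\int_K g\,d\sigma_n\to\int_K g\,d\sigma$. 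The only genuine subtlety is that vague convergence alone is blind to boundary behavior, but the hypothesis $\sigma(\partial K)=0$ exactly neutralizes this defect through the sandwich.
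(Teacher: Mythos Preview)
Your proof is correct and follows essentially the same approach as the paper: both use outer regularity of $\sigma$ at $\partial K$ combined with Urysohn's lemma to show $\sigma_n(\partial K)\to0$ and then sandwich indicators by $C_c$ functions to deduce weak$^*$ convergence of the restrictions. The only difference is that where the paper invokes an external reference (Demengel, Proposition~6.18) for the step from boundary control to weak$^*$ convergence on $K$, you carry this out directly via a Tietze extension and an explicit shell estimate, which makes your argument slightly more self-contained.
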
 
\begin{proof}
For each $\varepsilon>0$ there is an open set $V\supset \partial K$ such that $\sigma(V)<\varepsilon$ (by outer regularity of $\sigma$), and an open set $W$ with compact closure satisfying $V\supset\overline{W}\supset W\supset\partial K$ together with a continuous function $\varphi\geq0$, supported in $V$, which is $1$ on $\overline{W}$ (by Urysohn's lemma). Thus, $K_\varepsilon:=K\setminus W$ is a compact subset of $\mathrm{int}\, K$ (the interior of $K$) such that, for $n$ large enough that $|\int\varphi d\sigma_n-\int\varphi d\sigma|<\varepsilon$,
\[
\sigma_n(\partial K) \leq \sigma_n(K\setminus K_\varepsilon)\leq \sigma_n(W)\leq\int\varphi d\sigma_n<\varepsilon+\int\varphi d\sigma\leq2\varepsilon.
\]
Therefore $\int g d\sigma_n\to\int gd\sigma$ as $n\to\infty$ for any bounded continuous function $g$ on $\mathrm{int}\, K$ \cite[Proposition~6.18]{Demengel}, and since $\sigma_n(\partial K)\to 0$ while $\sigma(\partial K)=0$ it implies the weak$^*$ convergence of $\sigma_{n\mathcal{b}K}$  to $\sigma_{\mathcal{b}K}$. The uniform boundedness of the masses $\sigma_n(K)=\int_K1d\sigma_n$ now follows. 
\end{proof}

From now on we employ standard notation \( \D_r:=\{z:|z|<r \} \) and \( \T_r := \partial \D_r \). 

\begin{lemma}
\label{lem:pdlet-surface}
There exist a subsequence \( \mathcal N^{\prime\prime}\subseteq \mathcal N^\prime \) and a non-negative superharmonic function \( u^{\prime\prime}(z) \) on \( \RS \) such that
\begin{equation}
\label{lef-3}
\liminf_{m\to\infty}\frac1n\log\left\vert b_{n_m,m}^{zero}(z)\right\vert ^{-1} \left\{\begin{array}{c} \geq \\ = \end{array}\right\} u^{\prime\prime}(z)
\end{equation}
for any \( \{n_m\}_{m=1}^\infty\subseteq\mathcal N^{\prime\prime} \), where the inequality in \eqref{lef-3} holds everywhere on \( \RS\setminus E_f \) while the equality needs only to hold quasi everywhere. When \( u^{\prime\prime} \not\equiv+\infty\),  it holds that
\begin{equation}
\label{uprime2}
u^{\prime\prime} = g(\nu^{\prime\prime},\RS;\cdot) + h^{\prime\prime}
\end{equation}
for some Radon measure \( \nu^{\prime\prime} \) carried  by \( \RS \) and some  non-negative function \( h^{\prime\prime} \) harmonic on \( \RS \). 
\end{lemma}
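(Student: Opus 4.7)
The plan is to mirror the structure of the proof of Lemma \ref{lem:hr-e}, with the \hyperref[pdleg]{Principle of Descent and Lower Envelope Theorem} (PDLET) playing the role of Harnack's theorem; this substitution is forced on us because, in contrast to the functions $h_{n,m}$, the Green potentials $g(\nu_n,\Omega_m;\cdot)$ are only guaranteed to be nonnegative and superharmonic, with no a priori uniform upper bound (the total mass of $\nu_n$ may well be unbounded in $n$). By the definition of the generalized Blaschke product \eqref{gen-bl} we have $\frac{1}{n}\log|b_{n,m}^{zero}(z)|^{-1}=g(\nu_n,\Omega_m;z)$, which vanishes on $\partial\Omega_m$ thanks to the regularity built into the exhaustion \eqref{exhaustion}.

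First I fix $m$ and work on the compact set $\overline\Omega_m\subset\RS_*$. The exhaustion was designed so that $\lambda(\partial\Omega_m\setminus\partial\RS)=0$; since $\lambda=\nu^*+\widehat\mu$ by the construction of Section \ref{sec_sripped_error} and $\nu^*$ is supported in $\RS\setminus E_f$, we obtain $\nu^*(\partial\Omega_m)=0$. Lemma \ref{lem:vague} applied with $K=\overline\Omega_m$ then yields $\nu_{n\mathcal{b}\overline\Omega_m}\overset{w*}{\to}\nu^*_{\mathcal{b}\overline\Omega_m}$ along $\mathcal{N}'$ with uniformly bounded masses, which is precisely the hypothesis for PDLET on the regular domain $\Omega_m$. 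PDLET yields
\[
\liminf_{\mathcal{N}'\ni n\to\infty} g(\nu_n,\Omega_m;z) \geq g(\nu^*,\Omega_m;z), \qquad z\in\Omega_m,
\]
with equality off a polar exceptional set $E_m^\circ\subset\Omega_m$. A diagonal extraction from $\mathcal{N}'$, carried out as in the proof of Lemma \ref{lem:hr-e}, then produces $\mathcal{N}''\subseteq\mathcal{N}'$ such that for \emph{any} subsequence $\{n_m\}\subseteq\mathcal{N}''$ and any $z\in\RS\setminus E_f$, fixing $m_0$ with $z\in\Omega_{m_0}$ and invoking the monotonicity of Green functions in the domain, namely $g(\nu_{n_m},\Omega_m;z)\geq g(\nu_{n_m},\Omega_{m_0};z)$ for $m\geq m_0$, combined with the previous lower bound on $\Omega_{m_0}$, gives $\liminf_m g(\nu_{n_m},\Omega_m;z)\geq g(\nu^*,\Omega_{m_0};z)$; sending $m_0\to\infty$ and invoking the monotone convergence $g(\nu^*,\Omega_{m_0};z)\nearrow g(\nu^*,\RS;z)$ produces the desired liminf inequality everywhere on $\RS\setminus E_f$. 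The quasi-everywhere equality follows analogously by taking the countable union $\bigcup_{m_0} E_{m_0}^\circ$, which remains polar.

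Finally, to obtain $u''$ together with its Riesz decomposition, I take the lower semicontinuous regularization of the above liminf. When it is not identically $+\infty$, this regularization is a nonnegative superharmonic function on $\RS\setminus E_f$, and the \hyperref[rt]{Removability Theorem} allows it to be extended across the polar set $E_f$ to a nonnegative superharmonic function $u''$ on $\RS$. Applying the \hyperref[rrt]{Riesz Representation Theorem} on $\RS$ then yields $u''=g(\nu'',\RS;\cdot)+h''$ with $\nu''=-(2\pi)^{-1}\Delta u''$ a nonnegative Radon measure on $\RS$ and $h''$ the largest harmonic minorant of $u''$, hence nonnegative harmonic on $\RS$. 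I expect the main obstacle to be tracking the mass of $\nu''$: when $\nu_n$ does not have locally bounded masses (the fallback case $\nu^*=0$), $u''$ may be $\equiv+\infty$; and even in the locally bounded case, zeros of $f-N(M_n)\circ p$ may accumulate on $E_f$, so $\nu''$ can carry singular mass on $E_f$ that is invisible to the vague limit $\nu^*$ on $\RS\setminus E_f$. Handling these phenomena cleanly via the combined removability/Riesz machinery on the full surface $\RS$, rather than merely on $\RS\setminus E_f$, is the most delicate point of the argument.
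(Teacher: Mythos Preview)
There is a genuine gap in your application of Lemma~\ref{lem:vague} and the \hyperref[pdleg]{PDLET}. You take $K=\overline\Omega_m$, but $\overline\Omega_m$ is \emph{not} a compact subset of $\RS\setminus E_f$: by construction $\partial\RS\subset\partial\Omega_m$, so $\overline\Omega_m$ meets $\partial\RS$. The vague convergence $\nu_n\to\nu^*$ is established only on the locally compact space $\RS\setminus E_f$, and the local boundedness hypothesis of case~(i) gives uniform mass bounds only on compact subsets of $\RS\setminus E_f$. Hence neither Lemma~\ref{lem:vague} nor PDLET applies to $\nu_{n\mathcal b\overline\Omega_m}$ as you claim: the masses $\nu_n(\overline\Omega_m)$ may well be unbounded in $n$ because zeros of $f-N(M_n)\circ p$ can drift toward $\partial\RS$ as $n\to\infty$ (for fixed $n$ they do not accumulate there, but across $n$ nothing prevents it). Consequently your liminf lower bound $g(\nu^*,\Omega_{m_0};z)$ and the quasi-everywhere equality are both unjustified, and the function you would obtain is essentially $g(\nu^*,\RS;\cdot)$, which misses the contribution of zeros escaping to $\partial\RS$.

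The paper repairs this by truncating away from $\partial\RS$: it introduces $\Omega_{m,l}:=\Omega_m\cap p^{-1}(\D_{r_l})$ with $r_l\nearrow1$, so that $\overline\Omega_{m,l}$ \emph{is} compact in $\RS\setminus E_f$ and Lemma~\ref{lem:vague} plus PDLET do apply to $\nu_{n\mathcal b\overline\Omega_{m,l}}$. The remainder $h_{n,m,l}(z):=g(\nu_n,\Omega_m;z)-g(\nu_{n\mathcal b\overline\Omega_{m,l}},\Omega_m;z)$ is nonnegative harmonic in $\Omega_{m,l}$ and is handled by Harnack's theorem, converging along a subsequence either to a finite harmonic $h_{m,l}$ or to $+\infty$. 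Piecing these together over $l$ and then over $m$ gives $u_m=g(\nu^*,\Omega_m;\cdot)+h_m$ and in the limit $u''=g(\nu^*,\RS\setminus E_f;\cdot)+\tilde h$; the harmonic part $\tilde h$ (and ultimately $h''$ in \eqref{uprime2}) is precisely what records the mass of $\nu_n$ leaking to $\partial\RS$, and it is invisible to your direct PDLET argument. Your closing paragraph correctly anticipates the $E_f$ issue but not this $\partial\RS$ issue, which is the one that forces the extra machinery.
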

\begin{proof}
Observe that \( \frac1n\log\big| b_{n,m}^{zero}(z)\big|^{-1} = g(\nu_n,\Omega_m;z) \) according to \eqref{gen-bl}. Below, we distinguish two cases: (i) when the measures \( \nu_n \) possess a subsequence which is locally bounded in \( \RS \setminus E_f \), i.e., having uniformly bounded masses on each compact subset of \( \RS \setminus E_f \), and (ii) when  there exists a compact set \( K\subset\RS \setminus E_f \) such that \( \nu_n(K)\to\infty \) as \(\mathcal N^\prime\ni n\to\infty \).

In case (ii) relation \eqref{lef-3} holds with \( u^{\prime\prime}\equiv\infty \) and \( \mathcal N^{\prime\prime} = \mathcal N^\prime \) because \( \min_{w\in K} g_{\Omega_\ell}(z,w)>0 \) for \( z\in\Omega_\ell \) and every \( \ell\) such that \( K\subset\Omega_\ell \), and therefore
\[
g(\nu_{n_m^\prime},\Omega_m;z) \geq g(\nu_{n^\prime_m},\Omega_\ell;z) \geq \nu_{n_m^\prime}(K)\min_{w\in K} g_{\Omega_\ell}(z,w) \to \infty
\]
as \( m\to\infty \) and \( \{n_m^\prime\}_{m=1}^\infty\subseteq\mathcal N^\prime \), where the first inequality holds for $m\geq\ell$.

In case (i), the measures \( \nu_n \) converge vaguely to \( \nu^* \) in \( \RS \setminus E_f \) along  \( \mathcal N^\prime \).  Let \( \{r_l\}_{l=1}^\infty \) be a positive real sequence  increasing to \( 1 \) with $r_1$ large enough that  \( p(\partial\Omega_m\setminus\partial\RS)\subset \D_{r_1} \) and $\nu^*(p^{-1}(\T_{r_l}))=0$ for each $l$. This is possible, because for $0<a<b<1$ the set $\cup_{a\leq r\leq b}p^{-1}(\T_r)$ is compact, so there are at most countably many $r\in[a,1)$ with $\nu^*(p^{-1}(\T_r))\neq0$.

We now argue by double induction over $m$ and  $l$:  the reasoning below should be applied inductively in \( m\geq1 \), so as to define a sequence of integers $\mathcal{N}_m$ for each $m$. Let \( \Omega_{m,l} :=\Omega_m \cap p^{-1}(\D_{r_l}) \) and proceed inductively in \( l\geq 1 \), starting with \( \mathcal N_{m,0} := \mathcal N_{m-1} \) where $\mathcal{N}_0=\mathcal N^\prime$. Since $\nu_n(\overline{\Omega}_m)<\infty$ for each $n$, $m$ by definition of $\nu_n$,  we can define
\begin{equation}
\label{pdlet-0}
h_{n,m,l}(z) := g(\nu_n,\Omega_m;z) -  g\big(\nu_{n\mathcal{b}\overline\Omega_{m,l}},\Omega_m;z\big),\quad z\in\Omega_{m,l},
 \end{equation}
which is a non-negative harmonic function in \( \Omega_{m,l} \). By \hyperref[hst]{Harnack's theorem}, either there is a subsequence $\mathcal{N}_{m,l}\subseteq \mathcal{N}_{m,l-1}$ of indices $n$ along which \( h_{n,m,l} \to h_{m,l} \), locally uniformly in \( \Omega_{m,l} \), for some   non-negative harmonic function \( h_{m,l} \), or else $h_{n,m,l}$ tends to infinity with $n\in\mathcal{N}_{m,l-1}$, locally uniformly in \( \Omega_{m,l} \).  In the latter  case, we set \( \mathcal N_{m,l}:=\mathcal N_{m,l-1} \) and $h_{m,l}\equiv+\infty$. Clearly, $h_{n,m,l}\geq h_{n,m,l+1}$ and  so, for fixed $m$, either $h_{m,l}\equiv+\infty$ for all $l$ or the $h_{m,l}$ are finite for $l$ large enough.  Let \( \mathcal N_m \) be the diagonal of the table \( \{\mathcal N_{m,l}\}_{l=1}^\infty \). Since \( \mathcal N_m \) is eventually a subsequence of every \( \mathcal N_{m,l} \), it holds that \( h_{n,m,l} \to h_{m,l} \) as \( \mathcal N_m\ni n\to\infty \) for every \( l\geq1 \), locally uniformly in $\Omega_{m,l}$. 

In another connection, since $\nu^*(\partial \Omega_{m,l})=0$ by construction and the \( \nu_{n\mathcal{b}\overline{\Omega}_{m,l}}\) have uniformly bounded mass over $n$ by the assumptions of the considered case, we deduce from Lemma~\ref{lem:vague} that
\[
\nu_{n\mathcal{b}\overline{\Omega}_{m,l}} \overset{w*}{\to} \nu^*_{\mathcal{b}\overline{\Omega}_{m,l}} \quad \text{on} \quad \overline{\Omega}_{m,l} \qasq \mathcal N_0\ni n\to\infty.
\]
Now, $\nu_{n\mathcal{b}\overline{\Omega}_{m,l}}$ defines a measure on $\Omega_m$ in a natural way, and the weak$^*$ convergence above implies weak$^*$ convergence on $\Omega_m$, because every continuous function with compact support in $\Omega_m$  restricts to a continuous function on $\Omega_m\cap \overline{p^{-1}(\D_{r_l})}$ which itself extends to $\overline{\Omega}_{m,l}=\overline{\Omega}_m\cap \overline{p^{-1}(\D_{r_l})}$ continuously by zero. As $\Omega_m$ is a regular open set with compact closure on the  surface \( \RS_* \), the \hyperref[PDLEG]{Principle of Descent} and the \hyperref[PDLEG]{Lower Envelope Theorem} yield for any subsequence \( \mathcal N^* \subseteq \mathcal N_0 \) that
\[
\liminf_{\mathcal N^*\ni n\to\infty} g\big(\nu_{n\mathcal{b}\overline{\Omega}_{m,l}},\Omega_m;z\big) \left\{\begin{array}{c} \geq \\ = \end{array}\right\} g\big(\nu^*_{\mathcal{b}\overline{\Omega}_{m,l}},\Omega_m;z\big),
\]
where the inequality holds everywhere in \( \Omega_m \) and the equality may only hold quasi everywhere. 

In view of \eqref{pdlet-0}, the above inequality and the very definition of \( \mathcal N_m \) imply that
\begin{equation}
\label{pdlet-1}
\liminf_{\mathcal N^*\ni n\to\infty} g\big(\nu_n,\Omega_m;z\big) \left\{\begin{array}{c} \geq \\ = \end{array}\right\} g\big(\nu^*_{\mathcal{b}\overline\Omega_{m,l}},\Omega_m;z\big) + h_{m,l}(z),\qquad z\in \Omega_{m,l},
\end{equation}
along any subsequence \( \mathcal N^* \subseteq \mathcal N_m \), where the inequality holds everywhere in \( \Omega_{m,l} \) while the equality needs only to hold quasi everywhere. As the left-hand side of \eqref{pdlet-1} does not depend on \( l \) and the right-hand side is superharmonic, we get from the weak identity principle that successive right-hand sides  are superharmonic continuations of each other when $l$ increases. Let \( u_m(z) \) be the superharmonic function in \( \Omega_m \) given on each \( \Omega_{m,l} \) by the right-hand side of \eqref{pdlet-1}. Since a smooth function with compact support in $\Omega_m$ is eventually supported in $\Omega_{m,l}$ for large $l$, we get from the definition that  either $u_m\equiv+\infty$ or \( \Delta u_m =-\nu^*\). In the latter case,
the \hyperref[rrt]{Riesz representation theorem} yields that
\begin{equation}
\label{pdlet-2}
u_m(z) = g(\nu^*,\Omega_m;z) + h_m(z),\qquad z\in\Omega_m,
\end{equation}
for some non-negative harmonic function \( h_m \), which is the largest harmonic minorant of $u_m$. 

Let \( \widetilde{\mathcal N} \)  be the diagonal of the table \( \{ \mathcal N_m \}_{m=1}^\infty \). As  \( \widetilde{\mathcal N} \) is eventually a subsequence of each \( \mathcal N_m \), we get from \eqref{pdlet-1} that for each $m$ and  any subsequence \( \mathcal N^* \subseteq \widetilde{\mathcal N} \) it holds that
\begin{equation}
\label{pdlet-3}
\liminf_{\mathcal N^*\ni n\to\infty} g(\nu_n,\Omega_m;z) \left\{\begin{array}{c} \geq \\ = \end{array}\right\} u_m(z),
\end{equation}
where the inequality takes place everywhere in \( \Omega_m \) and equality at least quasi everywhere. Because the left-hand side of \eqref{pdlet-3} increases with $m$, we have that \(  u_m(z) \leq u_{m+1}(z) \) for quasi every \(z\in \Omega_m \). Thus, either  $u_m\equiv+\infty$ for all $m$ large enough or else $u_m$ is finite quasi everywhere on $\Omega_m$ for all $m$. In the latter case, since  $\Delta u_m=\Delta u_{m+1\mathcal{b}\Omega_m}$ ($=-\nu^*_{\mathcal{b}\Omega_m}$), we get  that $u_{m+1}-u_m$ is harmonic on $\Omega_m$. Hence,  \( 0\leq u_m \leq u_{m+1} \) everywhere on $\Omega_m$, and so $u^{\prime\prime}:=\lim_m u_m$ is positive and superharmonic on $\RS\setminus E_f$. If $u^{\prime\prime}\equiv+\infty$ we are done, for we get \eqref{lef-3} from \eqref{pdlet-3} with \(\mathcal{N}^{\prime\prime} = \widetilde{\mathcal N} \). Otherwise  $u^{\prime\prime}$ is locally integrable and therefore \eqref{pdlet-2}, together with the \hyperref[rrt]{Riesz representation theorem}, imply that 
\begin{equation}
\label{pdlet-5}
u^{\prime\prime}(z) = g(\nu^*,\RS\setminus E_f;z) + \tilde h(z), \quad z\in\RS\setminus E_f,
\end{equation}
where \( \tilde h(z) \) is a non-negative function that is the largest harmonic minorant of $u^{\prime\prime}$ on $\RS\setminus E_f$.

As \( E_f \) is  polar and compact in $\RS$, we deduce from the \hyperref[rt]{Removability theorem} and the \hyperref[rrt]{Riesz representation theorem} that
\begin{equation}
\label{pdlet-6}
\tilde h(z) = h^{\prime\prime}(z) + g(\tilde\nu,\RS;z), \quad z\in \RS\setminus E_f,
\end{equation}
where \( h^{\prime\prime} \) is a non-negative harmonic function on \( \RS \) and \( \tilde\nu \)  a finite positive measure supported on \( E_f \). Moreover, since for  \( z\in \RS\setminus E_f \) the function \( g_\RS(z,\cdot) -g_{\RS\setminus E_f}(z,\cdot) \) is non-negative harmonic on $\RS\setminus E_f$ and bounded above near $E_f$, the \hyperref[rt]{Removability theorem} for harmonic functions yields that \( g_\RS(z,\cdot) -g_{\RS\setminus E_f}(z,\cdot) \equiv 0\) as it extends to a non-negative harmonic minorant of $g_\RS(z,\cdot)$ on $\RS$. Hence,
\begin{equation}
\label{pdlet-7}
g(\nu^*,\RS\setminus E_f;z) = g(\nu^*,\RS;z), \quad z\in \RS\setminus E_f,
\end{equation}
and equations \eqref{pdlet-5}--\eqref{pdlet-7} imply that \( u^{\prime\prime}(z) \) extends superharmonically to the entire surface \( \RS \) by
\begin{equation}
\label{pdlet-8}
u^{\prime\prime}(z) = g(\nu^{\prime\prime},\RS;z) + h^{\prime\prime}(z), \quad \nu^{\prime\prime} = \nu^* + \tilde\nu.
\end{equation}
Now, for any subsequence \( \{\tilde n_m\}_{m=1}^\infty\subseteq \widetilde{\mathcal N} \), it holds in view of \eqref{pdlet-3} that for each $m_0\in\mathbb{N}$ and \( z\in\Omega_{m_0} \)
\begin{equation}
  \label{pdlet-9}
\liminf_{m\to\infty} g\big(\nu_{\tilde n_m},\Omega_m;z\big) \geq \liminf_{m\to\infty} g\big(\nu_{\tilde n_m},\Omega_{m_0};z\big) \geq u_{m_0}(z),
\end{equation}
and we obtain the inequality in \eqref{lef-3} by letting $m_0$ tend to infinity.

Thus, it only remains to prove the equality quasi everywhere in \eqref{lef-3} when \( u^{\prime \prime}(z) \) is quasi everywhere finite. As before, the argument should be applied inductively on \( m \) with \( \widetilde{\mathcal N}_0 := \widetilde{\mathcal N} \). The functions \( g_{n,l,m} := g(\nu_n,\Omega_l;\cdot)-g(\nu_n,\Omega_m;\cdot) \) are  non-negative and harmonic in \( \Omega_ m \) for  \( l>m \). Therefore, by \hyperref[hst]{Harnack's theorem}, there are subsequences \( \widetilde{\mathcal N}_{m,l}\subseteq \widetilde{\mathcal N}_{m,l-1} \), \( \widetilde{\mathcal N}_{m,0} := \widetilde{\mathcal N}_{m-1} \) such that  $g_{n,l,m}$  converges locally uniformly to some function $H_{l,m}$ harmonic in \( \Omega_m \)  as \( \widetilde{\mathcal N}_{m,l}\ni n\to \infty \) (note that \(g_{n,l,m}\) cannot go to $+\infty$  otherwise so would $g(\nu_n,\Omega_l;\cdot)$, and in view of \eqref{pdlet-3} $u_l(z)$ would be infinite for quasi every $z$, contradicting that $u_l\leq u^{\prime\prime}<\infty$). Of necessity, \(H_{l,m}= u_l-u_m \) by \eqref{pdlet-3}, and a diagonal argument gives us a single subsequence \( \widetilde{\mathcal N}_m^* \subseteq \widetilde{\mathcal N} \) along which the convergence $g_{n,l,m}\to H_{l,m}$ takes place for any \( l>0 \). Now, for fixed \( m \) and each \( l>m \), select \( \tilde n_l \in \widetilde{\mathcal N}_m^* \) such that
\begin{equation}
  \label{triagnlm}
\big| g_{n,l,m}(z) - \big(u_l(z)-u_m(z)\big) \big| \leq 1/l, \quad z\in\overline\Omega_{m-1}, \quad \tilde n_l\leq n\in \widetilde{\mathcal N}_m^*.
\end{equation}
Since the functions \( u_l-u_m \) are harmonic in \( \Omega_m \) and increase with $l$, they converge locally uniformly to \( u^{\prime\prime}-u_m \) there by \hyperref[hst]{Harnack's theorem} and the definition of $u^{\prime\prime}$. Thus,  taking \eqref{triagnlm} into account, for any  \( \epsilon>0 \) there exists \( L>0 \) such that
\begin{equation}
  \label{limgnlm}
\big| g_{n,l,m}(z) - \big(u^{\prime\prime}(z)-u_m(z)\big) \big| \leq \epsilon, \quad z\in\overline\Omega_{m-1}, \quad l\geq L,\quad \tilde n_l\leq n\in \widetilde{\mathcal N}_m^*.
\end{equation}
Define \( \widetilde{\mathcal N}_m := \{\tilde n_l\}_{l=1}^\infty \). Then it follows from \eqref{limgnlm} and \eqref{pdlet-3} that
\begin{equation}
\label{pdlet-4}
\liminf_{l\to\infty} g\big(\nu_{n_l},\Omega_l;z\big) = \liminf_{l\to\infty} g\big(\nu_{n_l},\Omega_m;z\big) + \lim_{l\to\infty} g_{n_l,l,m}(z) = u^{\prime\prime}(z)
\end{equation}
for quasi every \( z\in \overline \Omega_{m-1} \), whenever \( \{n_l\}_{l=1}^\infty\subseteq \widetilde{\mathcal N}_m \). Finally, let \( \mathcal N^{\prime\prime} \) be the diagonal sequence of the table \( \big\{\widetilde{\mathcal N}_m\big\}_{m=1}^\infty \). Since \( \mathcal N^{\prime\prime} \) is eventually a subsequence of every \( \widetilde{\mathcal N}_m \) it follows from \eqref{pdlet-4} that 
\begin{equation}
\label{pdlet-4bis}
\liminf_{m\to\infty} g\big(\nu_{n_m},\Omega_m;z\big) = u^{\prime\prime}(z), \quad \{n_m\}_{m=1}^\infty\subseteq \widetilde{\mathcal N}^{\prime\prime},\quad \text{for q.e.} \quad z\in\RS\setminus E_f,
\end{equation}
which is the equality case  in \eqref{lef-3}.
\end{proof}

\subsection{Logarithmic Error Function}
\label{ssec:ler}

Hereafter, we redefine \( \mathcal N \) to be \( \mathcal N^{\prime\prime} \) constructed in Lemma~\ref{lem:pdlet-surface}. By Lemmas~\ref{lem:hr-e},~\ref{lem:pdlet-disk} and~\ref{lem:pdlet-surface},  the limits \eqref{f6a}, \eqref{lef-1}, \eqref{lef-2}, and \eqref{lef-3} hold along this new sequence.

Since $\mu$ is finite, there is a  $G_\delta$ polar set $\widetilde{N}_0\subset\D$ such that $g(\mu,\D;x)<+\infty$ for $x\in\D\setminus \widetilde{N}_0$, see Sections \ref{ssec_pot} and \ref{ssec_fine}. Let us put $N_0:=p^{-1}(\widetilde{N}_0)$, which is a $G_\delta$ polar subset of $\RS$, see Section~\ref{ssec_fine}. We now introduce the  function $ler:\RS\setminus N_0 \to [-\infty,+\infty)$ (``ler''  for ``logarithmic error''), by putting
\begin{equation}
\label{ler}
ler(z) := g(\mu,\D;p(z)) - u^\prime(z) - u^{\prime\prime}(z), \qquad z\in\RS\setminus N_0,
\end{equation}
where $u^\prime$ and $u^{\prime\prime}$ are as in Lemmas \ref{lem:hr-e} and \ref{lem:pdlet-surface}. Clearly, $ler(z)$ is a \( \delta\)-subharmonic function (the difference of two subharmonic functions), and it is well defined for $z\notin N_0$ since $g(\mu,\D;p(z))$ is finite there. As introduced, $ler$ depends on the choice of the subsequence \( \mathcal N \), but later we shall see that it is in fact unique.

\begin{lemma}
\label{lemB}
There exists a polar set $A_0\subset\RS\setminus N_0$ such that, whenever $z_1,z_2$ are distinct points in $\RS\setminus N_0$ with $p(z_1)=p(z_2)$ and $ler(z_i)<0$ for $i=1,2$, then $z_1,z_2\in A_0$.
\end{lemma}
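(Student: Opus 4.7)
The plan is to exploit the single-valuedness of the approximants: if $ler(z_i)<0$ at two distinct points $z_1,z_2$ above the same base point, then along a suitable subsequence $|(f-N(M_n)\circ p)(z_i)|\to0$, and since $N(M_n)\circ p(z_1)=N(M_n)\circ p(z_2)$ the triangle inequality must force $f(z_1)=f(z_2)$. Because the construction of $\RS$ in Section~\ref{laclasse} makes distinct sheets carry distinct values of $f$, such $f$-coincidences are confined to a polar set, which will form the heart of $A_0$.

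First I would record the ``log-error formula'' quasi everywhere. Directly from the definition \eqref{hnm},
\[
\frac{1}{n}\log|(f-N(M_n)\circ p)(z)|=h_{n,m}(z)+g(\mu_n,\D;p(z))-g(\nu_n,\Omega_m;z).
\]
Combining the locally uniform convergence $h_{n_m,m}\to-u'$ from Lemma~\ref{lem:hr-e} with the equality-quasi-everywhere cases of Lemmas~\ref{lem:pdlet-disk} and~\ref{lem:pdlet-surface}, I obtain a polar set $A_1\subset\RS\setminus N_0$ such that, for any $z\notin A_1$, some subsequence of $\mathcal N$ realizes the full limit $\frac{1}{n_m}\log|(f-N(M_n)\circ p)(z)|\to ler(z)$. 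A standard diagonal extraction produces, for any two given points $z_1,z_2\notin A_1$, a common subsequence $\mathcal N'\subset\mathcal N$ along which both limits hold simultaneously. Applied to $z_1,z_2$ from the hypothesis of the lemma, this yields $|(f-N(M_n)\circ p)(z_i)|\to 0$ along $\mathcal N'$, so $|f(z_1)-f(z_2)|\le\sum_i|(f-N(M_n)\circ p)(z_i)|\to 0$ and therefore $f(z_1)=f(z_2)$.

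Next I would verify the polarity of the sheet-coincidence set
\[
A_2:=\bigl\{z\in\RS\setminus E_f:\,\exists\,z'\neq z,\,p(z')=p(z),\,f(z')=f(z)\bigr\}\cup E_f\cup\mathbf{rp}(\RS).
\]
By the setup of Section~\ref{laclasse}, the number of sheets $M$ equals the number of \emph{distinct} function elements of $f$ above a generic base point, so on any chart in $D\setminus p(E_f\cup\mathbf{rp}(\RS))$ the restrictions of $f$ to two different local sheets are distinct holomorphic functions, and their equality set is discrete. Covering the base countably, summing over the finitely many sheet pairs, and appending the finite set $\mathbf{rp}(\RS)$ and the polar set $E_f$ (recall $p(E_f)$ is polar in $D$ and $p$ is a finite branched analytic covering, so $E_f\subset p^{-1}(p(E_f))$ is polar in $\RS$), exhibits $A_2$ as a countable union of polar sets.

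Finally I would set
\[
A_0:=p^{-1}\bigl(p(A_1\cup A_2)\bigr)\cap(\RS\setminus N_0),
\]
which is polar because $p$ preserves polarity in both directions as a proper analytic map between Riemann surfaces. The conclusion then splits into two cases: either both $z_1,z_2$ lie outside $A_1$, in which case the coincidence argument above places them in $A_2\subset A_0$; or at least one of them lies in $A_1\subset A_1\cup A_2$, and then $p(z_1)=p(z_2)\in p(A_1\cup A_2)$ drags both $z_1,z_2$ into $A_0$. The main obstacle, in my view, is the third step: polarity of $A_2$ rests on the fact that $\RS$ is minimal in the sense that distinct sheets carry distinct $f$-values, which is implicit in how $\RS_*$ was built but will need to be extracted cleanly. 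The simultaneous realization of liminfs along a common subsequence is a routine diagonal argument, and preservation of polarity under the finite branched covering $p$ reduces to a local computation near regular and branch points.
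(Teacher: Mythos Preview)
Your overall strategy matches the paper's, but the step you call ``diagonal extraction'' contains a genuine gap. You claim that for any two points $z_1,z_2\notin A_1$ there is a common subsequence of $\mathcal N$ along which the \emph{full} limit
\(\tfrac{1}{n_m}\log|(f-N(M_{n_m})\circ p)(z_i)|\to ler(z_i)\)
holds for both $i$. But Lemmas~\ref{lem:pdlet-disk} and~\ref{lem:pdlet-surface} only say that the \emph{liminf} over (any subsequence of) $\mathcal N$ equals the target quasi everywhere; they do not hand you compatible subsequences. Even at a \emph{single} point, merging the two liminfs into one true limit is precisely the nontrivial content of Lemma~\ref{lem8a}, which is proved only later. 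At two points there is no diagonal rescue: if $a_n$ and $b_n$ each have liminf $0$ (say $a_n=0$ for $n$ even, $1$ for $n$ odd, and $b_n$ the opposite), there is no subsequence along which both tend to $0$.

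The paper circumvents this by noticing that the pole term $g(\mu_n,\D;p(z))$ depends only on the base point $x=p(z_1)=p(z_2)$. One picks a single subsequence $\mathcal N_x$ along which $g(\mu_n,\D;x)\to g(\mu,\D;x)$ (possible for $x$ outside a polar subset $A_1$ of $\D$), and for the zero term uses only the \emph{inequality} $\liminf g(\nu_{n_m},\Omega_m;z)\ge u''(z)$ from \eqref{lef-3}, which holds at \emph{every} $z\in\RS\setminus E_f$ and along every subsequence. This yields
\[
\limsup_{\mathcal N_x\ni n\to\infty}\frac{1}{n}\log\bigl|(f-N(M_n)\circ p)(z_i)\bigr|\le ler(z_i)<0
\]
simultaneously for $i=1,2$, after which your triangle-inequality step goes through unchanged. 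So the fix is simply to drop the attempt at a full limit and use the one-sided bound; the relevant exceptional set then lives in $\D$, not in $\RS$, and depends only on the common base point. Incidentally, what you flagged as the ``main obstacle'' --- the polarity of the sheet-coincidence set $A_2$ --- is the easy part; the paper disposes of it in one line (claiming $A_2$ is finite), and your argument that it is at worst discrete-plus-polar is perfectly adequate.
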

\begin{proof}
By the equality quasi everywhere in \eqref{lef-2},  there exists a polar set $A_1\subset\mathbb D$ such that, for every $x\in\mathbb D\setminus A_1$, one can find a sequence $\mathcal N_x\subseteq\mathcal N$ along which
  \[
\lim_{\mathcal N_x\ni n\rightarrow\infty}\frac1n\log \big| b_n^{pole}(x) \big|^{-1}=g(\mu,\mathbb{D};x).
\]
Together with \eqref{hnm}, \eqref{lef-1}, the inequality in \eqref{lef-3} and \eqref{ler}, this gives us 
\begin{equation}
\label{ineq}
\limsup_{\mathcal N_x\ni n\rightarrow\infty}\frac1n\log\left\vert \left(  f-N(M_n)\circ p\right)  (z)\right\vert \, \leq \, ler(z)
\end{equation}
for every $z\notin N_0$ such that $z\in p^{-1}(x)$ with $x\not\in A_1\cup p(E_f)$. Assume now that $z_1,z_2\in \RS\setminus N_0$ satisfy $z_1\neq z_2$ and $p(z_1)=p(z_2)$, as well as $ler(z_i)<0$ for $i=1,2$. Let $x\in\D$ be such that $z_1,z_2\in p^{-1}(x)$. If $x\not\in A_1\cup p(E_f)$, it follows from \eqref{ineq} that
\[
f(z_1) = \lim_{\mathcal N_x\ni n\to\infty}N(M_n)(p(z_1)) = \lim_{\mathcal N_x\ni n\to\infty}N(M_n)(p(z_2)) = f(z_2)
\]
and necessarily $z_1,z_2\in A_2:=\big\{\zeta|~\exists\,\eta\neq\zeta:~p(\zeta)=p(\eta), \; f(\zeta)=f(\eta)\big\}$. Now, the conditions placed on the class \( \mathcal F(\RS) \) imply that the set $A_2$ is finite, and therefore the lemma holds with $A_0:=p^{-1}\big(A_1\cup p(E_f) \cup p(A_2)\big)$ which is polar, as inverse image under $p$ of a polar set.
\end{proof}

\begin{lemma}
\label{lem:finite}
The inequality $ler(z) > -\infty$ holds for quasi every $z\in\RS\setminus N_0$. In particular \( u^\prime\not\equiv+\infty\) and \(u^{\prime\prime}\not\equiv+\infty\) in Lemmas \ref{lem:hr-e} and \ref{lem:pdlet-surface}, so \( h^\prime\) and \(h^{\prime\prime} \) are finite non-negative harmonic functions on \( \RS \). 
\end{lemma}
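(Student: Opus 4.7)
The plan is to reduce the claim that $ler>-\infty$ quasi everywhere on $\RS\setminus N_0$ to the single assertion that $u'+u''$ is not identically $+\infty$ on $\RS$. Since $g(\mu,\D;p(\cdot))$ is finite outside $N_0$ by the very construction of $N_0$, the inequality $ler(z)>-\infty$ is equivalent to $u'(z)+u''(z)<+\infty$. Now $u'+u''$ is a sum of two non-negative superharmonic functions on the connected Riemann surface $\RS$ (with the convention that $+\infty$ is admissible), hence itself superharmonic. By the usual dichotomy for superharmonic functions on a connected domain, $u'+u''$ is either identically $+\infty$ or finite quasi everywhere; ruling out the former is all we need. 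Moreover, the ``in particular'' clause then follows automatically, because non-negativity forces $u'$ and $u''$ to be finite wherever their sum is, so by the individual dichotomies of Lemmas~\ref{lem:hr-e} and~\ref{lem:pdlet-surface} neither is identically $+\infty$, and the decompositions \eqref{uprime}, \eqref{uprime2} hold with $h'$ and $h''$ finite non-negative harmonic on $\RS$.

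I would rule out $u'+u''\equiv+\infty$ by contradiction, leveraging Lemma~\ref{lemB}. Under that assumption, $ler(z)=-\infty$ at \emph{every} $z\in\RS\setminus N_0$. The set $N_0\cup A_0$ is polar in $\RS$: $N_0=p^{-1}(\widetilde N_0)$ is polar since $\widetilde N_0$ is polar, while $A_0$ is polar by Lemma~\ref{lemB}. Because $p:\RS\to D$ is a finite holomorphic map, it sends polar sets to polar sets (a local statement that in charts reduces to either a biholomorphism or $z\mapsto z^k$, both of which preserve polarity in the plane), so the set $S:=\widetilde N_0\cup p(N_0\cup A_0)\cup p(\mathbf{rp}(\RS))$ is polar in $D$. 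For any $x\in D\setminus S$, the fiber $p^{-1}(x)$ consists of exactly $M$ distinct points of $\RS$, each of which lies outside $N_0\cup A_0$. The multi-valuedness clause in \eqref{defFs} forces $M\geq 2$, so I can pick two distinct points $z_1,z_2\in p^{-1}(x)\setminus A_0$, both satisfying $ler(z_i)=-\infty<0$. Lemma~\ref{lemB} then asserts that $z_1,z_2\in A_0$, which contradicts the choice of $x$.

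The only mildly delicate ingredient is the assertion that the holomorphic projection $p$ preserves polarity, which I would dispose of in a single sentence via local charts. The rest of the argument is a clean application of the dichotomy for non-negative superharmonic functions together with Lemma~\ref{lemB}, with the multi-valuedness of $f$ (i.e.\ $M\geq 2$) playing the crucial role: were $\RS$ merely a single-sheeted copy of $D$, Lemma~\ref{lemB} would produce no contradiction, which is exactly why functions in $\mathcal E(A)$ require the separate treatment of Theorem~\ref{thm:main2p}.
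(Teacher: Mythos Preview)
Your proof is correct and follows essentially the same approach as the paper's own argument: both reduce to the dichotomy that $ler$ is either identically $-\infty$ on $\RS\setminus N_0$ or finite quasi everywhere there, and both rule out the former by invoking Lemma~\ref{lemB}. The paper is more terse, simply stating that the identically $-\infty$ case ``contradicts Lemma~\ref{lemB}'' without spelling out the details, whereas you make explicit the polar exceptional set $S\subset D$, the choice of $x\in D\setminus S$, and the use of $M\geq 2$ to produce two distinct fiber points outside $A_0$ with $ler<0$; your framing via the single superharmonic function $u'+u''$ is equivalent to the paper's separate dichotomies for $u'$ and $u''$, and the polarity of $p(E)$ for polar $E$ that you invoke is indeed recorded in the paper's appendix (Section~\ref{ssec_fine}).
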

\begin{proof}
  Since \( g(\mu;\D;p(\cdot)) \) is finite on $\RS\setminus N_0$ while \( u^\prime,u^{\prime\prime} \) are non-negative and either identically \(+\infty\) or finite quasi everywhere, $ler$ is in turn either identically $-\infty$ or finite quasi everywhere on $\RS\setminus N_0$. The former possibility contradicts Lemma~\ref{lemB}, therefore the latter prevails so that  $u^\prime\not\equiv+\infty$ and $u^{\prime\prime}\not\equiv+\infty$. Hence, Lemmas  \ref{lem:hr-e} and \ref{lem:pdlet-surface} imply that \( h^\prime\) and \(h^{\prime\prime} \) are finite  on \( \RS \).
\end{proof}

Due to the previous  lemma, we can rewrite \eqref{ler} as
\begin{equation}
\label{ler0}
ler(z) = g(\mu,\D;p(z)) - g(\nu,\RS;z) - h_\RS(z), \quad z\in\RS\setminus N_0,
\end{equation}
where we have set \( \nu:=\nu^\prime+\nu^{\prime\prime} \), which is a locally finite measure on \( \RS \) with quasi everywhere finite potential, and \( h_\RS := h^\prime + h^{\prime\prime} \) which is a positive harmonic function on  \( \RS\).

\begin{lemma}
\label{lem:hprime2}
It holds that \( \lim_{z\to\zeta}h_\RS(z) = 0 \) for every \( \zeta\in\partial\RS \setminus \B \).
\end{lemma}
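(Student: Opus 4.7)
Write $h_\RS = h' + h''$. Lemma~\ref{lem:hr} yields $h'(z) \to 0$ as $z \to \zeta \in \partial\RS \setminus \B$, so it suffices to show $h''(z) \to 0$ under the same limit. My strategy is to mirror the argument of Lemma~\ref{lem:hr}, replacing $h_{n,m}$ by the Green potentials $g(\nu_n,\Omega_m;\cdot)$ and deriving a Poisson-type representation for $u''$ in a neighborhood of $\partial\RS \setminus \B$.

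Fix $\zeta_0 \in \partial\RS \setminus \B$ with $p(\zeta_0)=x_0\in T$. Since $\zeta_0$ sits on a sheet different from $\B$ and coincidences $f(\zeta_0)=f(x_0)$ occur at only finitely many points by the conditions defining $\mathcal F(\RS)$, we have $f(\zeta_0) \neq f(x_0)$ (barring finitely many $\zeta_0$, which we may avoid by density for the final argument). Because $\|f-N(M_n)\|_T \to 0$, continuity yields that $|f-N(M_n)\circ p|$ is uniformly bounded below by a positive constant on a full neighborhood of $\zeta_0$ for $n$ large. Let $\Omega$ be the connected component of $p^{-1}(\{r<|z|<1\})$ containing $\zeta_0$ in its closure, for $r$ sufficiently close to $1$; then, for $n$ large, $f-N(M_n)\circ p$ has no zeros in $\overline\Omega$, hence $\nu_n \equiv 0$ there. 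Passing to the vague limit, $\nu''$ has no mass on $\overline\Omega \setminus \partial\RS$, so $g(\nu'',\RS;\cdot)$ is harmonic on $\Omega$ and extends continuously by $0$ to $\overline\Omega \cap \partial\RS$ (by regularity of $\RS$ and positive distance of $\supp\,\nu''$ to $\overline\Omega\setminus\partial\RS$).

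For $z \in \Omega$, $m$ large enough that $\overline\Omega\setminus\partial\RS \subset \Omega_m$, and $n$ large, the function $g(\nu_n,\Omega_m;\cdot)$ is harmonic on $\Omega$ and vanishes continuously on $\partial\Omega \cap \partial\RS \subset \partial\Omega_m$ by regularity of $\Omega_m$. The mean value property thus yields
\[
g(\nu_n,\Omega_m;z) = \int_{\partial\Omega \setminus \partial\RS} g(\nu_n,\Omega_m;\zeta)\,d\delta_z^{\RS\setminus\Omega}(\zeta),\qquad z\in\Omega.
\]
Taking $\liminf$ as $\mathcal N \ni n \to \infty$ with the equality quasi everywhere in Lemma~\ref{lem:pdlet-surface} (noting that $\delta_z^{\RS\setminus\Omega}$ does not charge polar sets), then letting $m \to \infty$ via monotone convergence on the left-hand side and swapping $\liminf$ with integration on the right-hand side by a Vitali-type argument modelled on the one in Lemma~\ref{lem:hr}, I arrive at
\[
u''(z) = \int_{\partial\Omega \setminus \partial\RS} u''(\zeta)\,d\delta_z^{\RS\setminus\Omega}(\zeta),\qquad z\in\Omega.
\]
As $z\to \zeta_0 \in \partial\Omega\cap\partial\RS$, regularity of $\Omega$ (non-thinness of $\partial\Omega$ at $\zeta_0$) forces $\delta_z^{\RS\setminus\Omega}$ to concentrate at $\zeta_0$, so $\delta_z^{\RS\setminus\Omega}(\partial\Omega \setminus \partial\RS) \to 0$. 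Since $u''$ is bounded on the compact set $\partial\Omega \setminus \partial\RS$ (being harmonic on a neighborhood thereof, as $\supp\,\nu''$ avoids $\overline\Omega\setminus\partial\RS$), we conclude $u''(z) \to 0$. Combined with $g(\nu'',\RS;z)\to 0$, the decomposition $u''=g(\nu'',\RS;\cdot)+h''$ with $h''\geq 0$ gives $h''(z)\to 0$, finishing the proof.

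The principal technical obstacle is the swap of $\liminf$ with integration in the Poisson representation for $u''$: we need a $\delta_z^{\RS\setminus\Omega}$-integrable dominant for $g(\nu_n,\Omega_m;\cdot)$ on $\partial\Omega\setminus\partial\RS$. This is feasible because all mass of $\nu_n$ contributing to the integrand sits at positive distance from $\overline\Omega$, so the corresponding Green functions $g_{\Omega_m}(\zeta,\cdot)$ are uniformly bounded on $\partial\Omega\setminus\partial\RS$ by Harnack's comparison, and the total mass of $\nu_n$ near $\overline\Omega$ is uniformly controlled by $1$ coming from $\mu_n$ via the identity underlying \eqref{hnm}, exactly as in the Vitali argument deployed in Lemma~\ref{lem:hr}.
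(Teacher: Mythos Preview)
Your argument has a genuine gap at its very first step. You claim that because $\|f-N(M_n)\|_T\to 0$ and $f(\zeta_0)\neq f(x_0)$, continuity forces $|f-N(M_n)\circ p|$ to be bounded below on a \emph{full neighborhood} of $\zeta_0$ for large $n$. This is false as stated: uniform convergence on $\T$ gives no control on $N(M_n)$ in the open annulus $\{r<|p(z)|<1\}$. At this point in the paper nothing prevents poles of $N(M_n)$ (there are up to $n$ of them) from accumulating arbitrarily close to $\T$, and hence nothing prevents $N(M_n)(w)$ from taking the value $f_{\B'}(w)$ for $w$ just inside $\T$. Consequently you cannot conclude that $\nu_n\equiv 0$ on $\overline\Omega$ for a fixed $r$ and all large $n$, and your Poisson representation (which needs harmonicity of $g(\nu_n,\Omega_m;\cdot)$ in $\Omega$) is unavailable.

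The Vitali step is also not justified. You appeal to ``total mass of $\nu_n$ near $\overline\Omega$ is uniformly controlled by $1$ coming from $\mu_n$'', but no such bound exists: $\nu_n$ counts normalized zeros of $f-N(M_n)\circ p$, and only \emph{local} boundedness of these measures is known (and only because Lemma~\ref{lem:finite} rules out $u''\equiv+\infty$). Since $\nu_n(\Omega_m)$ may blow up with $n$ and $m$ simultaneously, uniform integrability of $g(\nu_n,\Omega_m;\cdot)$ on $\partial\Omega\setminus\partial\RS$ is genuinely open, and the swap of $\liminf$ with the integral cannot be modelled on the argument of Lemma~\ref{lem:hr} (which dealt with $h_{n,m}$, bounded above via \eqref{hnm-bound}, not with unbounded Green potentials).

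The paper's proof avoids both obstacles by abandoning any direct analysis of $h''$ or $u''$. Instead it argues by contradiction on $h_\RS$ itself: if $\liminf_{z\to\zeta}h_\RS(z)>0$, then along a radial segment $S$ approaching $p(\zeta)$ the function $h_\RS$ is bounded below on \emph{both} the sheet of $\zeta$ and the sheet $\B$ (the latter by Lemma~\ref{lem:hr}). Lemma~\ref{lemB} then forces $g(\mu,\D;\cdot)\geq\epsilon$ on $S$ minus a polar set, contradicting Lemma~\ref{lem:thin-sets}. The $\limsup$ is handled by a similar level-set and capacity argument. The essential input you are missing is this two-sheet comparison via Lemma~\ref{lemB}, which converts the problem into a statement about $g(\mu,\D;\cdot)$ near $\T$---a finite-mass potential whose boundary behavior is under control.
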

\begin{proof}
Fix \( \zeta\in\partial\RS \setminus \B \), and let \( \xi\in\B \) be such that \( p(\zeta)=p(\xi) \). \emph{We claim}  that
\begin{equation}
\label{continuum0} 
\liminf_{z\to\zeta}h_\RS(z) = 0.
\end{equation}
Indeed, if \( \liminf_{z\to\zeta}h_\RS(z) = l>0 \), take \(0< 2\epsilon :=\min\{l,2/\cp_\D(\K)\} \). Let \( S \subset\D\) be the radial segment \( \{z:z=rp(\zeta),~r\in[1-\delta,1)\} \) and \( S_\zeta \) (resp.  \( S_\xi \)) be the connected component of \( p^{-1}(S) \) accumulating on $\partial\RS$ to \( \zeta \) (resp.  \( \xi \)). If \( \delta>0 \) is small enough, then
\begin{equation}
\label{continuum1}
h_\RS(z) \geq \epsilon, \quad z\in S_\zeta\cup S_\xi,
\end{equation}
by Lemma~\ref{lem:hr}. Furthermore, Lemma~\ref{lemB} yields that either \( ler(z_1)\geq0 \) or \( ler(z_2)\geq0 \) if \( p(z_1)=p(z_2) \in S\setminus p(A_0\cup N_0) \). In particular, we get from \eqref{ler0} and \eqref{continuum1} that
\begin{equation}
\label{continuum2}
g(\mu,\D;z) \geq \epsilon, \quad z \in S \setminus p(A_0\cup N_0),
\end{equation}
where we notice that $A_0\cup N_0$ as well as $p(A_0\cup N_0)$ are polar. This contradicts Lemma~\ref{lem:thin-sets}, applied with \( g(\sigma,D;\cdot)=g(\mu,\D;\cdot) \) and \( \xi \) being \( p(\zeta) \), since \( R_{\epsilon/2} \) from that lemma would necessarily be a subset of $p(A_0\cup N_0)$. \emph{This proves our claim} \eqref{continuum0}.

Next, assume for a contradiction that \( \limsup_{z\to\zeta}h_\RS(z) = l^\prime> 0 \), and pick  \( 0< 2\epsilon \leq \min\{l^\prime,2/\cp_\D(\K)\} \) such that the level line \( L_\epsilon:=\{ z:h_{\RS}(z)=\epsilon\} \) is a smooth  1-dimensional  submanifold of $\RS$ (this can be achieved according to Sard's theorem). Notice that \( \zeta \) must be a limit point of \( L_\epsilon \) because any neighborhood of \( \zeta \) in \( \RS_* \) contains a connected open set $U\ni\zeta$ with $U\cap\RS$ connected ($p$ is a local homeomorphism at $\zeta$ and we may take $p(U)$ to be a disk) in which $h_{\RS}$ assumes values arbitrary close to $0$ and $l^\prime$ by definition of $\liminf$ and $\limsup$; hence, as $h_{\RS}(U\cap\RS)$ is connected, it contains the value $\epsilon$.

Let \( D_0 \) be a disk centered at \( p(\zeta) \) of small enough radius so that \( D_\zeta \) and \( D_\xi \), the components of \( p^{-1}(D_0) \) in \( \RS_* \) that contain respectively \( \zeta \) and \( \xi \), are in one-to-one correspondence with \( D_0 \) under \( p \). Decreasing the radius of \( D_0 \) if necessary, we can assume that  \( h_{\RS}(z) \geq \epsilon/2 \) for \( z\in D_\xi \) by Lemma~\ref{lem:hr}. Let us redefine
\begin{equation}
  \label{nds}
S_\zeta := L_\epsilon\cap \overline D_\zeta, \quad S:=p(S_\zeta), \quad \text{and} \quad S_\xi:=p^{-1}(S)\cap \overline D_\xi.
\end{equation}
Observe that \( h_{\RS} \) cannot be constant in view of \eqref{continuum0} and \eqref{hr-gamma}. Therefore, no connected component of \( S \) can be a closed curve in \( D_0 \) by the maximum principle for harmonic functions. In addition, if \( S \) has  a connected component, say \( S_* \), accumulating at \( z_*\in \T \), then
\[
g(\mu,\D;z) \geq \epsilon/2, \quad z \in S_* \setminus p(A_0\cup N_0),
\]
exactly as in \eqref{continuum2}. Let \( R_{\epsilon/3} \) be as in Lemma~\ref{lem:thin-sets}, applied with \( g(\sigma,D;\cdot)=g(\mu,\D;\cdot) \) and \( \xi \) being \( z_* \). Then \( S_* \) must intersect every circle \( \{z\in\D:|z-z_*|=1-r\} \), \( r\in R_{\epsilon/3} \), by connectedness. Necessarily, the intersection must be a subset of \( p(A_0\cup N_0) \) and therefore polar. Since contractive maps do not increase the logarithmic capacity \cite[Theorem 5.3.1]{Ransford}, this means that \( R_{\epsilon/3} \) is polar, which contradicts Lemma~\ref{lem:thin-sets} (as we explain in Section~\ref{ssec_cap}, polar subsets of \( \D \) have Greenian and logarithmic outer capacity zero). Thus, \( S \) is a system of smooth curves, each connected component of which has  at least one limit point on \( \partial D_0\cap \D \). Consequently, if \( T_0\subset D_0 \) is a circle centered at \( p(\zeta) \), then any connected component of \( S \) intersecting the interior of \( T_0 \) must intersect \( T_0 \) as well since it accumulates at a point of $\partial D_0$. That is, \( S \) must intersect any such circle and we arrive at a contradiction exactly as above.
\end{proof}

The exceptional set where inequality is strict in Lemmas~\ref{lem:pdlet-disk} and~\ref{lem:pdlet-surface} a priori depends on the subsequence $\{n_m\}$ under consideration. The next lemma shows that there exists a polar set outside of which equality holds, both in \eqref{lef-2} and \eqref{lef-3}, for one and the same subsequence, see \eqref{lem8a-1}.

\begin{lemma}
\label{lem8a}
For quasi every \( z\in\RS \), there is a sequence \( \mathcal N_z=\{n_m^z\}_{m=1}^\infty \subseteq \mathcal N \) such that 
\[
\left\{
\begin{array}{lcl}
\displaystyle\lim_{m\to\infty}g(\nu_{n_m^z},\Omega_m;z) &=& u^{\prime\prime}(z), \smallskip\\
\displaystyle\lim_{m\to\infty}g(\mu_{n_m^z},\D;p(z)) &=& g(\mu,\D;p(z)).
\end{array}
\right.
\]
\end{lemma}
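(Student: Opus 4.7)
The plan is to construct $\mathcal{N}_z$ by a two-step refinement of $\mathcal{N}$: first extract a subsequence along which the disk-side potential converges, then refine so the diagonal surface-side potential also converges, and finally bridge a small mismatch between the position in the extracted sequence and the index of the exhaustion $\{\Omega_m\}$ by domain-monotonicity of the Green function.

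First, I would fix a polar exceptional set outside of which the equality parts of both lemmas apply. Let $A_1\subset\D$ be the polar set supplied by the Lower Envelope Theorem in Lemma~\ref{lem:pdlet-disk}, and let $A_2\subset\RS\setminus E_f$ be the polar set on whose complement the equality case of Lemma~\ref{lem:pdlet-surface} holds. Then $N:=p^{-1}(A_1)\cup A_2\cup E_f$ is polar in $\RS$, see Section~\ref{ssec_fine}. Fix $z\in\RS\setminus N$. Since $\mathcal{N}=\mathcal{N}^{\prime\prime}\subseteq\mathcal{N}^\prime$, Lemma~\ref{lem:pdlet-disk} gives $\liminf_{\mathcal{N}\ni n\to\infty}g(\mu_n,\D;p(z))=g(\mu,\D;p(z))$, so I can extract a subsequence $\{n_k\}_{k\geq1}\subseteq\mathcal{N}$ such that $g(\mu_{n_k},\D;p(z))\to g(\mu,\D;p(z))$ as $k\to\infty$.

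Next, as $\{n_k\}$ is itself a subsequence of $\mathcal{N}^{\prime\prime}$, the diagonal equality in Lemma~\ref{lem:pdlet-surface} yields $\liminf_k g(\nu_{n_k},\Omega_k;z)=u^{\prime\prime}(z)$. I then pick indices $k_1<k_2<\cdots$ with $g(\nu_{n_{k_m}},\Omega_{k_m};z)\to u^{\prime\prime}(z)$ and set $n_m^z:=n_{k_m}$. The disk-side convergence $g(\mu_{n_m^z},\D;p(z))\to g(\mu,\D;p(z))$ is then automatic, because $\{n_m^z\}$ is a sub-subsequence of the already convergent sequence $\{g(\mu_{n_k},\D;p(z))\}_k$.

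The main obstacle is the mismatched index: the construction delivers convergence of $g(\nu_{n_m^z},\Omega_{k_m};z)$ to $u^{\prime\prime}(z)$, whereas the lemma demands convergence of $g(\nu_{n_m^z},\Omega_m;z)$. Since $k_m\geq m$, the exhaustion yields $\Omega_m\subseteq\Omega_{k_m}$, and the standard domain-monotonicity inequality $g_{\Omega_m}(\cdot,\cdot)\leq g_{\Omega_{k_m}}(\cdot,\cdot)$ on $\Omega_m\times\Omega_m$ (a consequence of the minimum principle applied to the harmonic difference of the two Green functions) gives
\[
g(\nu_{n_m^z},\Omega_m;z)\leq g(\nu_{n_m^z},\Omega_{k_m};z),
\]
whence $\limsup_m g(\nu_{n_m^z},\Omega_m;z)\leq u^{\prime\prime}(z)$. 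The reverse inequality $\liminf_m g(\nu_{n_m^z},\Omega_m;z)\geq u^{\prime\prime}(z)$ follows at once from the inequality part of Lemma~\ref{lem:pdlet-surface}, which holds everywhere on $\RS\setminus E_f$ along any subsequence of $\mathcal{N}^{\prime\prime}$ and hence along $\{n_m^z\}$. Combining the two bounds completes the argument.
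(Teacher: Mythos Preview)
There is a genuine gap in Step~2. You write ``let $A_2\subset\RS\setminus E_f$ be the polar set on whose complement the equality case of Lemma~\ref{lem:pdlet-surface} holds,'' and then, having extracted the $z$-dependent subsequence $\{n_k\}$, you assert that $\liminf_k g(\nu_{n_k},\Omega_k;z)=u^{\prime\prime}(z)$. But the exceptional polar set in Lemma~\ref{lem:pdlet-surface} (and likewise in Lemma~\ref{lem:pdlet-disk}) is not universal: it depends on the subsequence to which the Lower Envelope Theorem is applied. The paper makes this explicit in the paragraph just before the lemma: ``The exceptional set where inequality is strict in Lemmas~\ref{lem:pdlet-disk} and~\ref{lem:pdlet-surface} a priori depends on the subsequence $\{n_m\}$ under consideration.'' Since your $\{n_k\}$ was chosen after fixing $z$ (to force convergence of the disk potential), you cannot invoke the equality case of Lemma~\ref{lem:pdlet-surface} along $\{n_k\}$ at that very point $z$; the polar set it returns may well contain $z$. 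Your argument is therefore circular: the exceptional set depends on the subsequence, which depends on $z$, which was supposed to lie outside the exceptional set.

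This is not a cosmetic defect. Having $\liminf$ equal to the right limit separately for the two potentials along the full sequence $\mathcal N$ does \emph{not} yield a common subsequence achieving both limits (think of $a_m=\mathbf{1}_{m\ \mathrm{even}}$, $b_m=1-a_m$). The paper overcomes this by rewriting the sum $g(\nu_n,\Omega_m;z)+g(\mu_n,\D;p(z))$ as a single Green potential on $\RS$, via the lift $\widehat\mu_n$ of $\mu_n$ (equations \eqref{lem8a-2}--\eqref{lem8a-3}), and then running the Lower Envelope argument once on the combined measure $\nu_n+\widehat\mu_n$. This produces a single subsequence $\{n_m^*\}\subseteq\mathcal N$, independent of $z$, for which $\liminf_m\bigl(g(\nu_{n_m^*},\Omega_m;z)+g(\mu_{n_m^*},\D;p(z))\bigr)=u^{\prime\prime}(z)+g(\mu,\D;p(z))$ holds for quasi every $z$ (equation \eqref{lem8a-1}). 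Only then, using the everywhere-valid inequalities \eqref{lef-2} and \eqref{lef-3}, can one pass to a $z$-dependent sub-subsequence along which each summand converges individually. Your domain-monotonicity bridge in the last paragraph is fine, but it only becomes relevant once the simultaneous-convergence subsequence exists, and that is precisely what your Step~2 fails to deliver.
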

\begin{proof}
Our goal is to show that there exists a  subsequence \( \{ n_m^*\} \subseteq \mathcal N \) such that
\begin{equation}
\label{lem8a-1}
\liminf_{m\to\infty}\big( g(\nu_{n_m^*},\Omega_m;z) + g(\mu_{n_m^*},\D;p(z)) \big) =  g(\mu,\D;p(z)) + u^{\prime\prime}(z) 
\end{equation}
for quasi every \( z\in \RS \). Since the inequalities in \eqref{lef-2} and \eqref{lef-3} hold for every \( z\in \RS \), this will indeed imply the claim of the lemma.

To prove \eqref{lem8a-1}, we shall rewrite the sum of two potentials in the left-hand side as a single potential on \( \RS \). To this end, we lift \( \mu_n \) and \( \mu \) to \( \RS \) via the construction described in \eqref{lifted-measureA}. Specifically, with the notation introduced there, it follows from \eqref{remontp} that
\begin{equation}
\label{lem8a-2}
g({\widehat\mu}_n,\mathcal R;z) = g(\mu_n,\D;p(z)) \qandq g({\widehat\mu},\RS;z) = g(\mu,\D;p(z)), \quad z\in \RS.
\end{equation}
Now, we can write \( g(\nu_n,\Omega_m;z) + g(\mu_n,\D;p(z)) \) as a sum of three terms:
\begin{equation}
\label{lem8a-3}
g(\nu_n+{\widehat\mu}_n,\Omega_m;z) + \big( g({\widehat\mu}_{n\mathcal b\Omega_m},\RS,z) - g({\widehat\mu}_n,\Omega_m;z)\big) + g({\widehat\mu}_{n\mathcal b\RS\setminus \Omega_m},\RS;z),
\end{equation}
and we shall study their behavior separately.

To start, recall that \( \mu=\mu_{\mathcal b\D}^\prime \), where \( \mu^\prime \) is the weak$^*$ limit of \( \mu_n \) along \( \mathcal N \) in $\overline{\D}$. Thus, by the discussion after \eqref{remontp}, an analogous relation holds between \( {\widehat\mu} \) and \( {\widehat\mu}_n \). Namely,  since \( \mu_n \) has total mass \( 1 \),  the total mass of \( {\widehat\mu}_n \) is equal to \( M \), the number of sheets of \( \overline\RS \). In particular, the sequence ${\widehat\mu}_n$ converges weak$^*$ on $\overline\RS$ to ${\widehat\mu}^\prime $, and on $\mathcal{R}$ to ${\widehat\mu}^\prime_{\mathcal{b}\RS}={\widehat\mu}$. 

Since the sets \( \Omega_m \) exhaust \( \RS\setminus E_f \), it holds that \( {\widehat\mu}(\RS\setminus \Omega_m) - {\widehat\mu}(E_f) \to 0 \) as \( m \to \infty \). Moreover, as each \( \RS\setminus \Omega_m \) is compact with boundary of \({\widehat\mu}\)-measure zero, it follows from Lemma~\ref{lem:vague} that the measures ${\widehat\mu}_{n\mathcal b\RS\setminus \Omega_m}$ converge weak$^*$ to ${\widehat\mu}_{\mathcal b\RS\setminus \Omega_m}$ along $\mathcal{N}$. In particular, \( {\widehat\mu}_n(\RS\setminus \Omega_m)-{\widehat\mu}(\RS\setminus \Omega_m) \to 0  \) as \( n\to\infty \). Hence, for each \( m \) there exists \( n_m^\prime \in \mathcal N \) such that 
\[
|{\widehat\mu}_{n}(\mathcal R\setminus \Omega_m) - {\widehat\mu}(E_f)| \leq 2|{\widehat\mu}(\RS\setminus \Omega_m) - {\widehat\mu}(E_f)| \quad  \text{as soon as} \quad n\geq n_m^\prime.
\]
Let \( \sigma \) be a weak$^*$ limit point on $\RS$ of  the family \( \{{\widehat\mu}_{n_m^\prime\mathcal b\RS\setminus \Omega_m}\}_{m\in\N} \). Clearly \( \supp(\sigma)\subseteq E_f \) and $\sigma$ is also a weak$^*$ limit point of this family on the fixed compact set $\RS\setminus \Omega_1$. Thus, integrating against any function which is identically $1$ on $\RS\setminus \Omega_1$ and passing to the limit gives us $\sigma(E_f)={\widehat\mu}(E_f)$ by our very choice of $n^\prime_m$. In another connection, if $\varphi:\RS\to[0,1]$ is a continuous function with compact support which is $1$ on a compact set $K\subseteq E_f$, then
\[
\sigma(K)\leq \int\varphi d\sigma \leq\limsup_m\int\varphi d{\widehat\mu}_{n_m^\prime\mathcal b\RS\setminus \Omega_m}\leq\lim_m \int\varphi d{\widehat\mu}_{n_m^\prime}=\int\varphi d{\widehat\mu}.
\]
As the infimum over $\varphi$ of the rightmost term above is ${\widehat\mu}(K)$ by Urysohn's lemma and the outer regularity of Radon measures, we get that  \( \sigma \leq {\widehat\mu}_{\mathcal b E_f} \) by  the inner regularity of Radon measures on $\RS$, see \cite[Theorem~2.18]{Rudin}. Altogether, we deduce  that \( \sigma = {\widehat\mu}_{\mathcal b E_f} \), and consequently the measures \( {\widehat\mu}_{n_m^\prime\mathcal b\RS\setminus \Omega_m} \) converge  weak$^*$ to \( {\widehat\mu}_{\mathcal b E_f} \) along \( \mathcal N^\prime := \{n_m^\prime\} \). In particular,
\begin{equation}
  \label{lem8a-4}
\lim_{m\to\infty}g({\widehat\mu}_{n_m^\prime\mathcal b\RS\setminus \Omega_m},\RS;z) = g({\widehat\mu}_{\mathcal b E_f},\RS;z), \quad z\in \RS\setminus E_f,
\end{equation}
which settles the asymptotic behavior of the last term in \eqref{lem8a-3} along the sequence of indices $\{n_m^\prime\}_m$. Notice that by the definition of $n^\prime_m$, if $\mathcal{N}^\prime$ is replaced by an arbitrary subsequence $\{n^{\prime\prime}_m\}_m$ thereof, then  \eqref{lem8a-4} continues to hold along this new subsequence of indices. This will be used in the forthcoming steps.

Next, let \( f_m(z,w) := g_\RS(z,w) - g_{\Omega_m}(z,w) \) for  $z,w\in\Omega_m$. Clearly, \( f_m(z,\cdot)\) is harmonic in \( \Omega_m \) and continuous on $\overline{\Omega}_m$, by the regularity of $\Omega_m$. Its boundary values are equal to \( g_\RS(z,\cdot) \) on \( \partial \Omega_m \), in particular they are identically zero on \( \partial \RS \). Fix \( z\in\RS\setminus E_f \), and let \( k \) be an integer such that \( z\in\Omega_k \). Then, we get for all \( m > k \) that
\begin{equation}
\label{lem8a-5}
0 \leq f_m(z,w) \leq \max_{w\in\partial\Omega_m\setminus  \partial \RS} g_\mathcal R(z,w) \leq \max_{w\in\partial\Omega_{k+1}} g_\RS(z,w) =: C_k, \quad w\in\overline\Omega_m,
\end{equation}
where the constant \( C_k \) is finite, independent of \( m \), and we used the maximum principle for harmonic functions twice (once for \( f_m(z,\cdot) \) on \(\Omega_m \) and once for \( g_\RS(z,\cdot) \) on \( \RS\setminus\Omega_{k+1} \)). Observe further that the functions \( f_m(z,\cdot) \), \(m>k\), are not only positive harmonic in each \( \Omega_l \) for fixed $l$ satisfying $k<l\leq m$, but form a decreasing sequence there. Therefore they converge in \( \Omega_l \) to a non-negative harmonic function, say \( f^{\{l\}}(z,\cdot) \), by \hyperref[hst]{Harnack's theorem}. As this claim is true for all large \( l \), the \( f^{\{l\}}(z,\cdot) \) inductively define a harmonic function \( f(z,\cdot) \) on \( \RS\setminus E_f \) that satisfies \( 0\leq f(z,\cdot)\leq C_k \) when \( z\in\Omega_k \). Thus, it extends harmonically to the entire surface \( \RS \) by the \hyperref[rt]{Removability theorem} for harmonic functions, and as its trace on \( \partial\RS \) is zero we conclude that \( f(z,\cdot)\equiv 0 \).

Observe  now that \( f_m(z,w) = f_m(w,z) \) for $m$ large enough so that $z,w\in\Omega_m$. Thus, it is jointly harmonic in both variables \cite[p. 561]{Lelong}. Hence,  by \hyperref[hst]{Harnack's theorem} and the diagonal argument, any subsequence of $\{f_m(\cdot,\cdot)\}$ has a further subsequence converging locally uniformly in $\RS\setminus E_f\times \RS\setminus E_f$, and we know from what precedes that the limit function can only be zero. In particular, it holds that
\begin{equation}
  \label{etalm}
  \eta_{l,m} := \max_{z,w\in\overline\Omega_l} f_m(z,w) \to 0 \quad \text{as} \quad m\to\infty,
\end{equation}
where we used that each \( f_m(\cdot,\cdot) \) extends continuously by zero to \( \partial\RS\times\partial\RS \), and therefore the maximum principle for harmonic functions can be applied to show that the convergence is indeed uniform on \( \overline{\Omega}_l \). Given \( m \), define
\[
l_{m}:=\max\big\{l\,:\,1\leq l\leq m,\,\eta_{l,m}\leq1/l\big\}.
\]
It follows from \eqref{etalm} that $l_m\to\infty$ and $\eta_{l_m,m}\to0$ when $m\to\infty$, whence for each fixed \( z\in \Omega_k \) and all \( m>k \) it holds that
\begin{eqnarray}
0 \leq \int_{\Omega_m} f_m(z,w) d{\widehat\mu}_n(w) &=& \left(\int_{\Omega_m\setminus\Omega_{l_m}} + \int_{\Omega_{l_m}}\right) f_m(z,w) d{\widehat\mu}_n(w) \nonumber \\
\label{lem8a-6}
&\leq& C_k{\widehat\mu}_n(\overline\Omega_m\setminus\Omega_{l_m}) + M \eta_{l_m,m},
\end{eqnarray}
where we used \eqref{lem8a-5} while recalling that \( M \) is the number of sheets of \( \RS \), which is a  bound for the total mass of each \( {\widehat\mu}_n \). Now, we obtain from Lemma~\ref{lem:vague} and our choice of $\Omega_m$ that the measures ${\widehat\mu}_n$ converge weak$^*$ to ${\widehat\mu}$ along $\mathcal{N}$,  not only on \( \RS \), but also on every compact set of the form $\overline{\Omega}_m\setminus \Omega_{l_m}$. Hence, taking into account that \( {\widehat\mu}\big( (\RS\setminus E_f)\setminus\Omega_{l_m} \big) \to 0 \) as \( m\to\infty \), we can associate to each \( m \) an integer \( n_m^{\prime\prime}\in\mathcal N^\prime \) such that \( n_m^{\prime\prime}\geq  n_m^{\prime} \) and
\[
{\widehat\mu}_{n_m^{\prime\prime}} \big(\overline\Omega_m\setminus\Omega_{l_m}\big) \to 0 \quad \text{as} \quad m\to\infty.
\]
Clearly, the choice of \( \mathcal N^{\prime\prime} :=\{ n_m^{\prime\prime} \}  \) is independent of \( z \). Thus, we get from \eqref{lem8a-6} together with the choices of \( \{l_m\} \) and \( \mathcal N^{\prime\prime} \) that for every $z\in\RS\setminus E_f$ it holds
\begin{equation}
\label{lem8a-7}
\lim_{m\to\infty} g({\widehat\mu}_{n_m^{\prime\prime}\mathcal b\Omega_m},\RS;z) - g({\widehat\mu}_{n_m^{\prime\prime}},\Omega_m;z) = \lim_{m\to\infty} \int_{\Omega_m} f_m(z,w)d{\widehat\mu}_{n_m^{\prime\prime}}(w) = 0,
\end{equation}
which settles the asymptotic behavior of the middle term in \eqref{lem8a-3} along \( \mathcal N^{\prime\prime} \).

Lastly, to describe asymptotics of the first term in \eqref{lem8a-3}, we need to repeat some steps of the proof of Lemma~\ref{lem:pdlet-surface} with \( \nu_n \) replaced by \( \nu_n + {\widehat\mu}_n \). Recall the definition of the sets \( \Omega_{m,l} \) given just before \eqref{pdlet-0}. We may adjust it so that \( (\nu^*+{\widehat\mu})(\partial\Omega_{m,l}) = 0 \). Indeed, $\partial\Omega_{m,l}=(\partial\Omega_m\setminus\partial\RS)\cup p^{-1}(\T_{r_l})$. We already know that $(\nu^*+{\widehat\mu})(\partial\Omega_m\setminus\partial\RS)=0$, so we only need to ensure that $(\nu^*+{\widehat\mu})(p^{-1}(\T_{r_l}))=0$ for each $l$. This can be achieved as before since ${\widehat\mu}$ is finite and therefore $\nu^*+{\widehat\mu} $ is still a Radon measure. Since ${\widehat\mu}^\prime(\partial\Omega_{m,l})={\widehat\mu}(\partial\Omega_{m,l})=0$ by construction, it follows from Lemma \ref{lem:vague} that
\[ 
{\widehat\mu}_{n\mathcal{b}\overline\Omega_m\setminus\Omega_{m,l}} \overset{w^*}{\to }{\widehat\mu}^\prime_{\mathcal{b}\overline\Omega_m\setminus\Omega_{m,l}} \qandq {\widehat\mu}_n(p^{-1}(\T_{r_l}))\to0
\]
for all $m,l$. So, when $z\in\Omega_{m,l}$, we get that 
\begin{multline}
\label{lem8a-8}
\lim_{\mathcal N^{\prime\prime}\ni n\to\infty} g\big({\widehat\mu}_{n\mathcal{b}\Omega_m\setminus\overline\Omega_{m,l}},\Omega_m;z\big) = \lim_{\mathcal N^{\prime\prime}\ni n\to\infty} g\big({\widehat\mu}_{n\mathcal{b}\overline\Omega_m\setminus\Omega_{m,l}},\Omega_m;z\big) \\ = g\big({\widehat\mu}^\prime_{\mathcal{b}\overline\Omega_m\setminus\Omega_{m,l}},\Omega_m;z\big) = g\big({\widehat\mu}_{\mathcal{b}\Omega_m\setminus\overline\Omega_{m,l}},\Omega_m;z\big)
\end{multline}
since $g_{\Omega_m}(\cdot,z)$ is continuous on $\overline\Omega_m\setminus\Omega_{m,l}$ and vanishes on $\partial\RS$ by regularity of $\Omega_m$.  Moreover, the convergence is locally uniform in \( z\in\Omega_{m,l} \) by the continuity of Green functions with respect to both variables off the diagonal. From \eqref{lem8a-8} and the reasoning used  after \eqref{pdlet-0}, it follows that
\[
  \lim_{\mathcal N^{\prime\prime}\ni n\to\infty} g\big((\nu_n+{\widehat\mu}_n)_{\mathcal{b}\Omega_m\setminus\overline\Omega_{m,l}},\Omega_m;z\big) = h_{m,l}(z) + g\big({\widehat\mu}_{\mathcal{b}\Omega_m\setminus\overline\Omega_{m,l}},\Omega_m;z\big),
\]
locally uniformly in \( \Omega_{m,l} \) for each $l$ and some subsequence $\mathcal N^{\prime\prime}_m$ of $\mathcal N^{\prime\prime}$. Arguing as we did to obtain \eqref{pdlet-1}, but applying this time the \hyperref[PDLEG]{Lower Envelope theorem} to $(\nu_n+{\widehat\mu}_n)_{\mathcal{b}\overline{\Omega}_{m,l}}$, we get in view of the above limit that
\begin{align*}
\liminf_{\widetilde{\mathcal N}^{\prime\prime}\ni n\to\infty} g(\nu_n+{\widehat\mu}_n,\Omega_m;z) &=  g((\nu^*+{\widehat\mu})_{\mathcal b \overline \Omega_{m,l}},\Omega_m;z) + h_{m,l}(z) + g\big({\widehat\mu}_{\mathcal{b}\Omega_m\setminus\overline\Omega_{m,l}},\Omega_m;z\big) \\ &= g(\nu^*+{\widehat\mu},\Omega_m;z) + h_m(z)
\end{align*}
for quasi every \( z\in\Omega_m \), where \( \widetilde{\mathcal N}^{\prime\prime} \) is the diagonal of the table \( \{ {\mathcal N}_m^{\prime\prime} \}_{m=1}^\infty \) and to get the second equality we used \eqref{pdlet-2} along with the explanation preceding it on the inductive definition of  $u_m$ by the right-hand side of \eqref{pdlet-1}. The previous equation stands analog to \eqref{pdlet-3},  and continues to hold if $\widetilde{\mathcal N}^{\prime\prime}$ is replaced by any subsequence thereof. Now, the last part of the proof in Lemma~\ref{lem:pdlet-surface} was predicated on the limit
\[
u_m(z) = g(\nu^*,\Omega_m;z) + h_m(z) \to u^{\prime\prime}(z) \quad\mathrm{as}\quad m\to\infty,\quad z\in \RS\setminus E_f,
\]
where $u_m$ were initially  defined inductively by the right-hand side of \eqref{pdlet-1} and their limit  $u^{\prime\prime}$ assumed the form \eqref{pdlet-5}. In the present case, \(u_m\) is replaced by \( u_m + g({\widehat\mu},\Omega_m;\cdot)\) and the monotone convergence theorem together with the polar character of $E_f$ imply that
\[
\lim_mg({\widehat\mu},\Omega_m;z)=g({\widehat\mu},\RS\setminus E_f;z)=g({\widehat\mu}_{\mathcal b \RS\setminus E_f},\RS;z), \quad z\in \RS\setminus E_f.
\]
Hence, arguing as we did after \eqref{pdlet-9}, we obtain similarly to \eqref{pdlet-4bis} that
\begin{equation}
\label{lem8a-9}
\liminf_{m\to\infty} g(\nu_{n_m^*}+{\widehat\mu}_{n_m^*},\Omega_m;z) = u^{\prime\prime}(z) + g({\widehat\mu}_{\mathcal b \RS\setminus E_f},\RS;z),
\end{equation}
for quasi every \( z\in\RS\setminus E_f \) and some subsequence \( \{n_m^*\}\subseteq \widetilde{\mathcal N}^{\prime\prime} \). The desired limit \eqref{lem8a-1} now follows from \eqref{lem8a-2}, \eqref{lem8a-3}, \eqref{lem8a-4}, \eqref{lem8a-7}, and \eqref{lem8a-9}.
\end{proof}

Hereafter we shall deal with the \emph{fine topology}, which is the coarsest topology for which superharmonic and therefore \( \delta \)-subharmonic functions are continuous. In this connection, the reader may want to consult  the definitions and properties collected in Section~\ref{ssec_fine}. 

Recall the definition of $N_0\subset\RS$ before \eqref{ler}  as being the $+\infty$-set of $g(\mu,\D;p(\cdot))$, i.e., $N_0=p^{-1}(\widetilde{N}_0)$ where $\widetilde{N}_0\subset\D$ is the $+\infty$-set of $g(\mu,\D;\cdot)$. Clearly, $N_0$ is a finely closed and polar $G_\delta$-set. Let us define
\begin{equation}
\label{f10a}
G_\pm := \big\{  z\in\RS\setminus N_0:\pm ler(z)>0\big\}.
\end{equation}
It is easily seen from \eqref{ler} that $ler:\mathcal{R}\setminus N_0\to[-\infty,+\infty)$ is finely continuous and that $G_-$ and $G_+$ are finely open in $\mathcal{R}\setminus N_0$. Since \( N_0 \) is polar, if the complement of either \( G_+ \) or \( G_- \) in $\RS\setminus N_0$ is thin at a point \( z \), then the respective complement in $\RS$ is also thin at \( z \). Hence, \( G_+ \) and \( G_- \) are in fact finely open in \( \RS \). Hereafter, we put $D_\pm:=p(G_\pm)$.

\begin{lemma}
\label{lem11a} 
For $z\in G_+$ and any $\zeta\in \RS$ with  $p(\zeta)=p(z)$, it holds that $\zeta\in G_+$ and $ler(z)=ler(\zeta)$. In particular $G_+=p^{-1}(D_+)$. Moreover, $D_+$ is finely open.
\end{lemma}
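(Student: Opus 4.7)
My proof hinges on the following pointwise limit, valid for quasi every $z\in\RS$ along the subsequence $\mathcal{N}_z=\{n_m^z\}$ supplied by Lemma~\ref{lem8a}:
\[
\lim_{m\to\infty}\frac{1}{n_m^z}\log\bigl|(f-N(M_{n_m^z})\circ p)(z)\bigr|=ler(z).
\]
This follows directly from rewriting \eqref{hnm} as $\tfrac1n\log|f-N(M_n)\circ p|(z)=h_{n,m}(z)+g(\mu_n,\D;p(z))-g(\nu_n,\Omega_m;z)$, and then passing to the limit along $\mathcal{N}_z$ using \eqref{lef-1} together with the two conclusions of Lemma~\ref{lem8a}; the three limits add up precisely to the decomposition \eqref{ler} of $ler(z)$.

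I would next make the relevant polar exceptional set $p$-saturated by replacing it with $p^{-1}(p(\cdot))$ and further adjoining $N_0$, $E_f$, $p^{-1}(p(E_f))$, $\mathbf{rp}(\RS)$ and the polar $+\infty$-sets of $u'$ and $u''$; call the resulting polar $p$-saturated set $\mathcal{E}$. For $z\in G_+\setminus\mathcal{E}$ and any $\zeta\in p^{-1}(p(z))$, $p$-saturation gives $\zeta\notin\mathcal{E}$, so in particular $f(\zeta)$ is finite. Since $ler(z)>0$ and $f(z)$ is finite, the key limit above forces $|N(M_{n_m^z})(p(z))|\to\infty$ with $\frac{1}{n_m^z}\log|N(M_{n_m^z})(p(z))|\to ler(z)$, and the finiteness of $f(\zeta)$ propagates the same logarithmic growth rate to $|(f-N(M_{n_m^z})\circ p)(\zeta)|$. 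Substituting this into \eqref{hnm} at $\zeta$, and using $p(\zeta)=p(z)$, the locally uniform limit \eqref{lef-1}, and $g(\mu_{n_m^z},\D;p(\zeta))=g(\mu_{n_m^z},\D;p(z))\to g(\mu,\D;p(z))$ from Lemma~\ref{lem8a}, the sequence $g(\nu_{n_m^z},\Omega_m;\zeta)$ must converge, with limit equal to $g(\mu,\D;p(\zeta))-u'(\zeta)-ler(z)$. The inequality in \eqref{lef-3} requires this limit to be at least $u''(\zeta)$, yielding $ler(\zeta)\geq ler(z)>0$; in particular $\zeta\in G_+$, and swapping $z$ and $\zeta$ (legitimate because $\mathcal{E}$ is $p$-saturated) gives $ler(z)=ler(\zeta)$.

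Propagation to all of $G_+$ and fine openness of $D_+$ will then come from fine continuity. Near any $x_0\in\D$ lying off the polar set $p(\mathcal{E})$ (which already contains the ramification image), $p$ admits $M$ local biholomorphic sections $\sigma_1,\dots,\sigma_M$, and the functions $L_i:=ler\circ\sigma_i$ are finely continuous wherever finite. The previous step gives $L_i\equiv L_j$ quasi everywhere on each section patch, and since a non-empty finely open set cannot be polar, the finely continuous difference $L_i-L_j$ vanishes pointwise wherever both $L_i$ and $L_j$ are finite, in particular on $D_+$ restricted to the patch. Hence $ler$ is constant along each fiber $p^{-1}(y)$ for $y$ outside $p(\mathcal{E})$; the local symmetry $w\mapsto w^k$ at ramification points combined with fine continuity of $ler$ on $\RS$ then closes the identity on the remaining polar fibers, giving $ler(\zeta)=ler(z)$ for every $z\in G_+$ and $\zeta\in p^{-1}(p(z))$, which is equivalent to $G_+=p^{-1}(D_+)$. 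Finally, the common value $L$ of $L_1,\dots,L_M$ is finely continuous on each section patch, so $D_+\cap U=\{L>0\}$ is finely open there; since fine openness is a local property and the complementary polar set has empty fine interior, $D_+$ is finely open on all of $\D$. I expect the main obstacle to lie in bookkeeping the polar exceptional sets and in extending the fiberwise identity across the ramification locus, the decisive tool being that a non-empty finely open set cannot be polar.
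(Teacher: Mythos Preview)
Your proposal is correct and follows essentially the same approach as the paper: both first establish the fiber identity for $z\in G_+$ outside a $p$-saturated polar set via Lemma~\ref{lem8a}, the growth of $|N(M_n)(p(z))|$, and the inequality in \eqref{lef-3}, then propagate to all of $G_+$ using fine continuity together with the fact that a nonempty finely open set cannot be polar.

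The one packaging difference lies in this propagation step. You work with local sections $\sigma_i$ and argue that $ler\circ\sigma_i-ler\circ\sigma_j$ vanishes identically, handling ramification points by a separate appeal to ``local symmetry $w\mapsto w^k$''. The paper instead forms, for given $z,\zeta$ with $p(z)=p(\zeta)$, the multiplicity-weighted fiber difference
\[
g(x)=m(\zeta)\sum_{\xi\in p^{-1}(x)\cap D_z}m(\xi)\,ler(\xi)\;-\;m(z)\sum_{\xi\in p^{-1}(x)\cap D_\zeta}m(\xi)\,ler(\xi),
\]
shows via the push-down formula \eqref{push-down} that $g$ is finely continuous on all of $D_0\setminus\widetilde N_0$ (including at ramification values), and reads off $g(p(z))=m(z)m(\zeta)\bigl(ler(z)-ler(\zeta)\bigr)$. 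This device absorbs the ramification bookkeeping automatically, whereas your sketch at ramification and your implicit treatment of the case $ler(\zeta)=-\infty$ would need a few more lines to be airtight. For the fine openness of $D_+$ the paper simply invokes Lemma~\ref{lem:fineopen} (since $D_+=p(G_+)$ with $G_+$ finely open and $p$ finely open), which is quicker than going through your common value $L$.
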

\begin{proof}
By the definition of $ler$ given in \eqref{ler0}, we get from equation \eqref{hnm} together with Lemmas~\ref{lem:hr-e},~\ref{lem:finite}, and  \ref{lem8a} (recall $\mathcal{N}^{\prime\prime}$ was renamed as $\mathcal{N}$  at the top of Section~\ref{ssec:ler}) that there exists a polar set $B\subset \RS$ with the following property: for each $z\in G_+\setminus B$  there is a  subsequence $\mathcal N_z\subset \mathcal{N}$ such that
  \begin{equation}
    \label{convbsGp}
    \left\{
\begin{array}{lcl}
  \displaystyle
  \lim_{\mathcal N_z\ni n\rightarrow\infty}\frac1n\log\left\vert\left(  f-N(M_n)\circ p\right)(z)\right\vert & = & ler(z), \\
\displaystyle\lim_{\mathcal N_z\ni n\rightarrow\infty}g(\mu_{n},\D;p(z)) &=& g(\mu,\D;p(z)),
\end{array}
\right.
\end{equation}
where we note that $g(\mu,\D;p(z))<+\infty$ since $z\notin N_0$. Without loss of generality, we may assume that $B$ contains \(p^{-1}(p( E_f))\) and \( A_0 \), since both sets are polar, see Lemma~\ref{lemB} for the definition of $A_0$. As $ler(z)>0$ and \( f(z) \) is finite on $G_+\setminus B$, the above limit implies that
\[
\lim_{\mathcal N_z\ni n\rightarrow\infty}\frac1n\log\big|N(M_n)(p(z))\big| = ler(z).
\]
Now, if $\zeta\in\RS\setminus B$ is such that $p(\zeta)=p(z)$, obviously $\zeta\notin N_0$  and in addition $\zeta\notin E_f$, by definition of $B$. Thus, on account of the finiteness of $f(\zeta)$, we get that
\begin{equation}
  \label{echf}
  \lim_{\mathcal N_z\ni n\rightarrow\infty}\frac1n\log\big|\left(f-N(M_n)\circ p\right)(\zeta)\big| = \lim_{\mathcal N_z\ni n\rightarrow\infty}\frac1n\log\big|N(M_n)(p(z))\big| = ler(z).
\end{equation}
On the other hand, from the second equation in \eqref{convbsGp}, we deduce as in \eqref{ineq} that the first limit in \eqref{echf} is at most $ler(\zeta)$, whence $0<ler(z) \leq ler(\zeta)$. In particular, $\zeta\in G_+$, and reversing the roles of $z$ and $\zeta$ gives $ler(z) = ler(\zeta)$. This proves the first assertion of the lemma when $z\in G_+\setminus B$.  

To prove it on all of $G_+$, pick \( z,\zeta \in\RS\) such that \( p(z) = p(\zeta) \) and let \( D_0 \subset\D\) be a disk centered at \( p(z) \). Denote by \( D_z \), \( D_\zeta \) the connected components of \( p^{-1}(D_0) \) that contain \( z \), \( \zeta \) respectively, and make \( D_0 \)  small enough so that \( D_z\cap{\bf rp}(\RS) \subseteq \{ z \} \) and \( D_\zeta\cap{\bf rp}(\RS) \subseteq \{ \zeta \} \). Let as before \( m(\xi) \) be the ramification order of \( \xi \), so that \( D_z \) (resp. \( D_\zeta \)) is (isomorphic to) an \( m(z) \)-sheeted cyclic covering of \( D_0 \). For \( x\in D_0\setminus\widetilde N_0 \), define
\[
g(x) := m(\zeta)\sum_{\xi\in p^{-1}(x)\cap D_z} m(\xi) ler(\xi) - m(z)\sum_{\xi\in p^{-1}(x)\cap D_\zeta} m(\xi) ler(\xi).
\]
It follows from  \eqref{ler0} and \eqref{push-down} that
\begin{multline*}
\sum_{\xi\in p^{-1}(x)\cap D_z} m(\xi) ler(\xi) = m(z)g(\mu,\D;x)- g\big(p_*(\nu_{\mathcal b D_z}),\D;x\big) \\-\sum_{\xi\in p^{-1}(x)\cap D_z} m(\xi) \left( g\big(\nu_{\mathcal b\RS\setminus D_z},\RS;\xi\big) + h_\RS(\xi) \right).
\end{multline*}
Notice that the last summand above is a continuous, even harmonic function in \( D_0 \).  Hence, the sum itself is finely continuous in \( D_0\setminus\widetilde N_0 \). Likewise, the second sum in the definition of \( g \) is finely continuous in \( D_0\setminus\widetilde N_0 \) and so is \( g \). In particular, \( U:=\{ x\in D_0\setminus\widetilde N_0:g(x)\neq0 \} \) is finely open, and since \( p(G_+) \) is finely open by Lemma~\ref{lem:fineopen} the set \( U\cap p(G_+) \) is in turn finely open. From the first part of the proof, it follows that if \( x\in (D_0\cap p(G_+))\setminus p(B) \) then \( g(x) =0 \), whence \( U\cap p(G_+)\subseteq p(B) \). Thus,  \( U\cap p(G_+) \) must be empty  as \( p(B) \) is polar, that is, $g\equiv0$ on $D_0\cap p(G_+)$. Now, it can be readily checked that
\[
g(p(z)) = m(z)m(\zeta)\big( ler(z) - ler(\zeta) \big),
\]
and therefore \( ler(z) = ler(\zeta) \) if \( z \in G_+ \), thereby  proving the first assertion of the lemma. The second is then obvious, and the third follows from Lemma~\ref{lem:fineopen}.
\end{proof}

\begin{lemma}
\label{lem12a}
The set $G_-$ lies schlicht over $\D$. That is, \( p: G_-\to D_- \) is a bijection. Moreover, $G_-\cap {\bf rp}(\RS)=\varnothing$, and for each $z\in G_{-}$ we have that
\begin{equation}
\label{f12a}
ler(\zeta) =0 \text{ \ \ for all \ }\zeta\in p^{-1}(p(z))\setminus\{z\}.
\end{equation}
\end{lemma}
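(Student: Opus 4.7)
My plan is to mirror the proof of Lemma~\ref{lem11a}, but applied to $G_-$ in place of $G_+$, and to combine it with Lemma~\ref{lemB}, which already forces that any two $\RS\setminus N_0$-preimages of a common point cannot both lie in $G_-$ except on the polar set $A_0$. The central device will be, given $z_0\in G_-$ and a second preimage $\zeta_0\in p^{-1}(p(z_0))\setminus\{z_0\}$, a finely continuous test function on a small disk $D_0$ around $p(z_0)$ that vanishes on a finely dense subset of a fine neighborhood of $p(z_0)$, so that fine continuity forces $ler(\zeta_0)=0$. The three conclusions of the lemma will then follow, once a separate symmetry argument rules out ramification points inside $G_-$.

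First I would treat the case $z_0\in G_-\setminus\mathbf{rp}(\RS)$. Picking $D_0\subset\D$ small enough that the connected components $D_{z_0}$ and $D_{\zeta_0}$ of $p^{-1}(D_0)$ are disjoint, with $D_{z_0}$ schlicht and $D_{\zeta_0}\cap\mathbf{rp}(\RS)\subseteq\{\zeta_0\}$, I would define
\[
g_{\zeta_0}(x) := \sum_{\xi\in p^{-1}(x)\cap D_{\zeta_0}} m(\xi)\,ler(\xi),\qquad x\in D_0\setminus\widetilde N_0,
\]
which is finely continuous by the push-down representation employed in Lemma~\ref{lem11a}. For each $x$ in the finely open set $p(G_-\cap D_{z_0})$ lying outside the polar set $p(A_0)$, the unique preimage $z_x\in D_{z_0}$ belongs to $G_-\setminus A_0$; Lemma~\ref{lemB} then yields $ler(\xi)\geq 0$ at every other preimage, and if some $\xi\in p^{-1}(x)\cap D_{\zeta_0}$ had $ler(\xi)>0$, Lemma~\ref{lem11a} would force $z_x\in G_+$, contradicting $z_x\in G_-$; hence $g_{\zeta_0}(x)=0$ there. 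Since $p(A_0)$ is polar, hence finely nowhere dense, and since $p|_{D_{z_0}}$ is a biholomorphism preserving the fine topology, the finely closed zero set of $g_{\zeta_0}$ contains a finely dense subset of the fine neighborhood $p(G_-\cap D_{z_0})$ of $p(z_0)$, and therefore contains $p(z_0)$ itself. This gives $m(\zeta_0)\,ler(\zeta_0)=g_{\zeta_0}(p(z_0))=0$, so $ler(\zeta_0)=0$; in particular $\zeta_0\notin G_-$, so $p$ is injective on $G_-\setminus\mathbf{rp}(\RS)$.

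To show $G_-\cap\mathbf{rp}(\RS)=\varnothing$, I would argue by contradiction: suppose $z_0\in G_-\cap\mathbf{rp}(\RS)$ with $m:=m(z_0)\geq 2$. In a local coordinate $\eta$ on $D_{z_0}$ with $\eta(z_0)=0$ and $p(\eta)=p(z_0)+\eta^m$, the set $G_-\cap D_{z_0}$ is a fine neighborhood $V$ of $0$. Since rotation $\eta\mapsto\omega\eta$ with $\omega:=e^{2\pi\ic/m}$ is a biholomorphism fixing $0$, each $\omega^k V$ is again a fine neighborhood of $0$, and the finite intersection $V':=\bigcap_{k=0}^{m-1}\omega^k V$ is therefore a fine neighborhood of $0$; as singletons are not finely open in $\C$, $V'$ must contain some $\eta_0\neq 0$. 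The points $\eta_0,\omega\eta_0,\ldots,\omega^{m-1}\eta_0$ are then $m\geq 2$ distinct non-ramification points of $G_-\cap D_{z_0}$ sharing the common projection $p(z_0)+\eta_0^m$, which contradicts the injectivity established above. Hence $G_-\subset\RS\setminus\mathbf{rp}(\RS)$, the preceding paragraph then applies to every $z\in G_-$, and the lemma follows. The main technical point, inherited from Lemma~\ref{lem11a}, is the verification that $g_{\zeta_0}$ is finely continuous across $p(z_0)$ even when $\zeta_0$ itself lies in $\mathbf{rp}(\RS)$; this relies on the push-down representation of the symmetric sum.
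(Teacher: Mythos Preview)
Your proof is correct, but it is more laborious than the paper's. For schlichtness the paper observes directly that, with \(U:=G_-\cap D_0\) (where \(D_0\) is a small conformal disk about \(z_0\)) and \(V:=(p^{-1}(p(D_0))\setminus D_0)\cap p^{-1}(p(U))\), the set \(V\cap G_-\) is finely open and, by Lemma~\ref{lemB}, contained in the polar set \(A_0\); a finely open subset of a polar set is empty, so \(p\) is injective on \(G_-\setminus\mathbf{rp}(\RS)\). This bypasses your finely continuous test function \(g_{\zeta_0}\) entirely. For the ramification claim the paper simply quotes Lemma~\ref{lem:compthicks} (the complement of a schlicht set is non-thin at every ramification point), which is exactly the abstract form of your rotation argument. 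Finally, once \(G_-\) is schlicht, \eqref{f12a} follows in one line: for \(\zeta\in p^{-1}(p(z))\setminus\{z\}\) schlichtness gives \(ler(\zeta)\geq 0\), and \(ler(\zeta)>0\) is ruled out because Lemma~\ref{lem11a} would then force \(z\in G_+\). So your approach re-runs the fine-continuity machinery of Lemma~\ref{lem11a} to obtain \(ler(\zeta_0)=0\) and schlichtness simultaneously, whereas the paper decouples the two steps and gets a shorter argument by invoking Lemma~\ref{lem11a} as a black box; both routes are valid, and yours has the mild advantage of being self-contained at the level of the fine-topology mechanism.
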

\begin{proof}
Pick \( z\in G_-\setminus{\bf rp}(\RS) \) and let $D_0\subset\RS\setminus{\bf rp}(\RS)$ be a  conformal disk centered at $z$, homeomorphic under $p$ to a Euclidean disk $p(D_0)$; note that \( p^{-1}(D_0)\setminus D_0 \) is open. Define \( U := G_-\cap D_0 \) and \( V := \big( p^{-1}(D_0)\setminus D_0 \big) \cap p^{-1}(p(U)) \). Clearly \( U \) and \( V \) are disjoint  finely open subsets of $\RS$, by Lemma~\ref{lem:fineopen}. In view of Lemma~\ref{lemB}, there is a polar set $A_0\subset\RS\setminus N_0$  such that $V\cap G_-\subseteq A_0$.  Thus, $V\cap G_-$ must be empty as otherwise it is finely open. This shows that $G_-\setminus {\bf rp}(\RS)$ lies schlicht over $\D$. However, the complement of a schlicht set is always non-thin at any ramification point by Lemma~\ref{lem:compthicks}. Hence,  $G_-$ cannot be a fine neighborhood of  a ramification point, and since it is finely open $G_-\cap {\bf rp}(\RS)=\varnothing$. Altogether, $G_-$ lies schlicht over $\D$ and  \eqref{f12a} now follows  from this and Lemma~\ref{lem11a}.
\end{proof}

\subsection{Modified Logarithmic Error Function}

In this subsection we modify the function \( ler(z) \) by clearing out parts of \( \D \) and \( \RS \) from the support of \( \mu \) and \( \nu \), respectively. We shall accomplish this via the technique of balayage, described in Section~\ref{ssec_bal}. Let us start with some preliminary geometric considerations. Recall that any connected (topological) $1$-manifold embedded in $\RS$ is a Jordan curve.

\begin{lemma}
\label{lem:boundary}
For each \( \epsilon>0 \) there exists a Jordan curve \( J\subset G_- \) such that \( p(J)\) is a Jordan curve included in \( \{z:1-\epsilon<|z|<1\} \) and \( p({ \bf rp}(\RS)) \) belongs to the interior domain of \( p(J) \). Moreover, there exists a finely connected component of \( G_- \), say \( G_J \), such that \( J\subset G_J \).
\end{lemma}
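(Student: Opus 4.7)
The plan is to produce $J$ as a (perturbed) lift to $\RS$ of a circle $\T_r$ with $r$ sufficiently close to $1$. First I reduce the geometric setup: since $\mathbf{rp}(\RS)$ is a finite subset of $p^{-1}(D)$, by shrinking $\epsilon$ I may assume $p(\mathbf{rp}(\RS)) \subset \{|z| < 1 - \epsilon\}$. Let $V_\B$ denote the connected component of $p^{-1}(\{1-\epsilon < |z| < 1\})$ accumulating at $\B$; since $V_\B$ contains no ramification points, the restriction $p \colon V_\B \to \{1-\epsilon < |z| < 1\}$ is a homeomorphism, and any Jordan curve $J \subset V_\B$ whose projection winds once around the origin automatically encloses $p(\mathbf{rp}(\RS))$ in its interior domain.

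Next I arrange $ler < 0$ along a suitable lifted circle. By Lemma~\ref{lem:hr}, $h^\prime$ extends continuously to $\B$ with value $2/\cp_\D(\K)$, so by further shrinking $\epsilon$ I may assume $h_\RS = h^\prime + h^{\prime\prime} \geq 2/\cp_\D(\K) - \eta$ throughout $V_\B$ for any preselected $\eta > 0$ (using $h^{\prime\prime} \geq 0$). The Green potentials $g(\mu,\D;p(\cdot))$ and $g(\nu,\RS;\cdot)$ are non-negative superharmonic functions whose greatest harmonic minorants are zero, hence they vanish quasi everywhere on $\T$ and $\partial\RS$ respectively. Invoking Lemma~\ref{lem:thin-sets} (as in the proof of Lemma~\ref{lem:hprime2}) simultaneously for both potentials and for boundary points on $\T$ and $\B$, I obtain $r \in (1-\epsilon,1)$ such that on the lifted Jordan curve $J_0 := p^{-1}(\T_r) \cap V_\B$ one has $g(\mu,\D;p(\cdot)) + g(\nu,\RS;\cdot) < \eta$ outside a polar exceptional set $P \subset J_0$. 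On $J_0 \setminus P$ this yields $ler < 3\eta - 2/\cp_\D(\K) < 0$, so $J_0 \setminus P \subset G_-$. A small detour inside $V_\B$ around each point of $P$ (possible since $V_\B$ is a topological annulus and finely open sets are Euclidean-dense off polar sets) then produces a Jordan curve $J \subset G_-$ whose projection $p(J)$ is still a Jordan curve contained in $\{1-\epsilon < |z| < 1\}$.

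Finally, $J$ is Euclidean-connected and hence finely connected, because the fine topology refines the Euclidean one (a finely-clopen subset of a Euclidean-connected set is a fortiori Euclidean-clopen, hence trivial). Therefore $J$ lies in a single finely connected component $G_J$ of $G_-$. The main obstacle is the passage from the merely quasi-everywhere boundary behavior of the Green potentials to genuinely pointwise smallness along an actual Jordan curve in $\RS$; this is where Lemma~\ref{lem:thin-sets} is essential, together with the topological flexibility of $V_\B$ that permits the perturbation around the polar exceptional set.
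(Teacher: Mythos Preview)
Your invocation of Lemma~\ref{lem:thin-sets} does not deliver what you claim. That lemma fixes a single boundary point $\xi$ and a single potential, and produces radii $r$ for which the \emph{entire} arc $\{z\in\D:|1-z|=1-r\}$ (centered at $\xi$, not at the origin) avoids the superlevel set $U_\epsilon$; there is no polar exceptional set in its conclusion, and it says nothing about origin-centered circles $\T_r$. Applying it ``simultaneously for both potentials and for boundary points'' does not manufacture a single $r$ with both $g(\mu,\D;p(\cdot))$ and $g(\nu,\RS;\cdot)$ small on $J_0$ outside a polar set. Even granting such a polar $P\subset J_0$, the perturbation step is unjustified: a polar subset of a Jordan curve can be uncountable and dense (a Cantor set of zero capacity, say), so ``a small detour around each point of $P$'' has no meaning and need not produce a Jordan curve.

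The paper's proof circumvents both issues by a reduction you are missing. On $\RS\setminus(G_-\cup N_0)$ one has $ler\geq0$, whence by \eqref{ler0} $g(\mu,\D;p(\cdot))\geq g(\nu,\RS;\cdot)+h_\RS\geq h_\RS$; on $N_0$ the $\mu$-potential is $+\infty$. Since $h_\RS=h^\prime+h^{\prime\prime}$ with $h^{\prime\prime}\geq0$ and $h^\prime\to 2/\cp_\D(\K)$ along $\B$, the set $\RS\setminus G_-$ near $\B$ lies inside a superlevel set of the \emph{single} potential $g(\mu,\D;p(\cdot))$. Lemma~\ref{lem:thin-sets} then gives, for each $\eta\in\B$, a disk $D_\eta$ centered at $p(\eta)$ with $\partial D_\eta$ entirely contained in $p(G_-^*)$, where $G_-^*:=G_-\cup\B\cup\mathcal D$. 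Finitely many such disks cover $\B$, and the boundary component inside $\RS$ of their union is the required $J$; no exceptional set ever arises. (A minor point: your last argument that Euclidean connectedness implies fine connectedness runs the implication backward, since the fine topology is the finer one; the correct justification is that Lipschitz arcs are finely connected, as recorded in Section~\ref{ssec_fine}.)
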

\begin{proof}
  We may assume that $\epsilon$ is small enough that $p({\bf rp}(\RS))\subset \D_{1-\epsilon}$. In particular, $p$ is injective  on every  conformal disk centered at a point of  $\partial\RS$ with radius smaller than or equal to $\epsilon$.  

Recall that \( \RS \) is a subset of a Riemann surface \( \RS_* \) lying over \( \overline\C \). Define \( G_-^* := G_-\cup\B\cup\mathcal D \), where $\mathcal{D}$ is the connected component of \( p^{-1}(\overline\C\setminus\overline\D) \) that borders \( \B \). Let us show that for each \( \eta\in\B \) there is a disk \( D_\eta \subset\{z:1-\epsilon<|z|<1+\epsilon\}\), centered at \( p(\eta) \) with  radius $r_\eta$, such that the circle $\partial D_\eta$ is included in  $p(G_-^*)$. In fact, we can pick $r_\eta$ so that there exist radii $r^\prime_\eta$ arbitrarily close but not equal to $r_\eta$ for which each disk $D^\prime_\eta$ centered at \( p(\eta) \) of radius $r^\prime_\eta$ also satisfies $\partial D^\prime_\eta\subset p(G_-^*)$. Indeed, like we did to establish Lemma~\ref{lem8a}, let \( \widehat\mu \) be the  lift \( \mu \) to \( \RS \) , see \eqref{lifted-measureA}.  By definition, \( ler(z) \geq 0 \) for \( z\in \RS\setminus (G_- \cup N_0)\). Therefore, if \( \eta\in\B \) is a limit point of \( \RS\setminus G_-  \), then we get from \eqref{hr-gamma},  \eqref{ler0}, the definition of \( h_\RS(z) \), and  the identity $g(\widehat\mu,\RS;\zeta)=+\infty$ when $\zeta\in N_0$ that
\[
\liminf_{\RS\setminus G_- \ni \zeta\to \eta} g(\widehat\mu,\RS;\zeta) \geq 2/\cp_\D(\K).
\]
The claim now follows from Lemma~\ref{lem:thin-sets}  by taking \( 1-r_\eta \) to be an accumulation point of \( R_{1/\cp_\D(\K)} \) in \( (1-\epsilon,1) \).

Let \( U_\eta \) be the connected component of \( p^{-1}(D_\eta) \) containing \( \eta \), which is an open subset of $\RS_*$ satisfying \( \partial U_\eta \subset G_-^* \). Since the collection \( \{U_\eta\}_\eta \) covers \( \B \), which is compact, it contains a finite subcover, say \( \{U_{\eta_i}\} \). Replacing  $D_{\eta_i}$ by some $D^\prime_{\eta_i}$ as above if needed, we can ensure by (finite) induction on $i$ that $\partial U_{\eta_i}$ and $\partial U_{\eta_j}$ may intersect only transversally for $i\neq j$, and that $\partial U_{\eta_i}\cap\partial U_{\eta_j}\cap\partial U_{\eta_k}=\varnothing$ if $i,j,k$ are all distinct. Then \( V:=\cup_i U_{\eta_i} \) is an open neighborhood of \( \B \) with boundary $\partial V$ included in $G_-^*$  that consists of  a finite union of disjoint Lipschitz-smooth  Jordan curves. In fact, there are exactly two such curves, one in \(\RS \) and  \( \mathcal D \), because each connected component of \( \partial V \) can be continuously deformed into \( \B \) via radial retraction within \( V \). We can choose the component within \( \RS \) to be \( J \) since Lipschitz curves are finely connected in $\RS$, see Section~\ref{ssec_fine}.
\end{proof}

Recall now the finely open subsets \( D_+ \) and \( D_- \) of \( \D \) introduced before Lemma~\ref{lem11a}. Denote by \( D_J \) the union of all finely connected components of \( D_+\cup D_- \) that lie entirely within the interior domain of \( p(J) \), where $J$ was introduced in Lemma \ref{lem:boundary}. Note that \( D_J \) is finely open because so are the fine components of finely open sets, see Section \ref{ssec_fine}. Define
\begin{equation}
  \label{VpD}
D^\prime := D_J \cup i(\D\setminus D_J) \qandq V^\prime :=p^{-1}(D^\prime),
\end{equation}
where \( i(\cdot) \) is the subset of finely isolated points. Thus, it follows from \eqref{Vprime} and  Lemma~\ref{lem:fineopen} that $D^\prime$ and $V^\prime$ are finely open while
\begin{equation}
\label{DprimeVprime}
b\big(\D\setminus D^\prime\big) = \D\setminus D^\prime \quad \text{and} \quad b\big(\RS\setminus V^\prime\big) = \RS \setminus V^\prime,
\end{equation}
where \( b(\cdot) \) stands for the base of a set (in particular, $\D\setminus D^\prime$ has no finely isolated points).  In other words, $D^\prime$ and $V^\prime$ are regular finely open sets, see Section \ref{ssec_reg}. Recall from Section~\ref{ssec_bal} the notation $\sigma^{E}$ for the balayage of the measure $\sigma$ onto the set $E$.

\begin{lemma}
\label{lem:ler1}
Let \( N_1 := p^{-1}(\widetilde N_1) \), where \( \widetilde N_1 \) is the \( +\infty\)-set of \( g\big(\mu^{(1)},\D;\cdot\big)\) and we set
\begin{equation}
  \label{def_bal1}
\mu^{(1)}:=\mu^{\D\setminus D^\prime} \qandq \nu^{(1)}:=\nu^{\RS \setminus V^\prime}.
\end{equation}
 Then \( N_1 \subseteq N_0 \), \( N_1\setminus V^\prime = N_0\setminus V^\prime \), and for every $z\in\RS\setminus N_1$ we can define
\[
ler^{(1)}(z) := g\big(\mu^{(1)},\D;p(z)\big) - g\big(\nu^{(1)},\RS;z\big) - h_\RS(z)
\]
with values in $[-\infty,+\infty)$. This function satisfies
\begin{equation}
\label{f11i}
ler^{(1)}(z) = 
\begin{cases}
ler(z), & z\in \RS\setminus (V^\prime\cup N_1), \smallskip \\ 0, &  z\in V^\prime\setminus N_1.
\end{cases} 
\end{equation}
\end{lemma}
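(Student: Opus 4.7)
My plan hinges on two standard balayage facts that I would apply on $\D$ with $E=\D\setminus D'$ and on $\RS$ with $E=\RS\setminus V'$, both of which are regular finely closed by \eqref{DprimeVprime}: (i) the pointwise domination $g(\sigma^E,\D;\cdot)\leq g(\sigma,\D;\cdot)$ (and analogously on $\RS$), with equality quasi everywhere on $b(E)$; and (ii) the concentration of $\sigma^E$ on $b(E)$, so that $g(\sigma^E,\D;\cdot)$ is harmonic on the complement of $b(E)$. Fact (i) yields $\widetilde N_1\subseteq\widetilde N_0$, hence $N_1\subseteq N_0$. Fact (ii) places $\mu^{(1)}$ in $\D\setminus D'$, so $g(\mu^{(1)},\D;\cdot)$ is harmonic, thus finite, on $D'$, giving $N_1\cap V'=\varnothing$.

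To upgrade the a.e.\ coincidence in (i) to an identity $g(\mu^{(1)},\D;w)=g(\mu,\D;w)$ for \emph{every} $w\in\D\setminus D'$, I would invoke fine continuity of both potentials together with the fact that $\D\setminus D'=b(\D\setminus D')$ contains no finely isolated points and that any polar exceptional set is thin at every point of $\D$: each such $w$ is then a fine limit of points of $\D\setminus D'$ where the potentials agree, so fine continuity forces equality at $w$. This gives $N_1\cap(\RS\setminus V')=N_0\cap(\RS\setminus V')$ which, combined with $N_1\cap V'=\varnothing$, yields $N_1\setminus V'=N_0\setminus V'$. Running the same fine-continuity argument on $\RS$ (locally, since $\nu$ is only locally finite) gives the analogous identity $g(\nu^{(1)},\RS;z)=g(\nu,\RS;z)$ for every $z\in\RS\setminus V'$.

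The well-definedness of $ler^{(1)}$ on $\RS\setminus N_1$, with values in $[-\infty,+\infty)$, is immediate from the choice of $N_1$ and the non-negativity of $g(\nu^{(1)},\RS;\cdot)$ and $h_\RS$. For $z\in\RS\setminus(V'\cup N_1)$, substituting the two pointwise potential identities into the definition of $ler^{(1)}$ yields $ler^{(1)}(z)=ler(z)$, which is the first case of \eqref{f11i}. For $z\in V'$ (where $V'\setminus N_1=V'$ since $N_1\cap V'=\varnothing$), fact (ii) together with harmonicity of $h_\RS$ makes $ler^{(1)}$ harmonic on $V'$: $g(\mu^{(1)},\D;p(\cdot))$ is the pullback under the holomorphic map $p$ of a function harmonic on $D'$ (holomorphic pullbacks preserve harmonicity, even at branch points), $g(\nu^{(1)},\RS;\cdot)$ is harmonic on $V'$, and $h_\RS$ is harmonic on $\RS$. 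Applying balayage termwise to the decomposition $ler=g(\mu,\D;p(\cdot))-g(\nu,\RS;\cdot)-h_\RS$ (the last summand being invariant because already harmonic) identifies $ler^{(1)}$ with the balayage of $ler$ onto $\RS\setminus V'$; equivalently, $ler^{(1)}$ is the unique finely harmonic extension to $V'$ of the finely continuous restriction $ler|_{\RS\setminus V'}$.

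The main obstacle is thus to show that $ler$ vanishes finely quasi everywhere on $\partial_{\mathsf f}V'$, whereupon uniqueness of the finely harmonic Dirichlet extension forces $ler^{(1)}\equiv 0$ on $V'$. By Lemmas~\ref{lem11a} and~\ref{lem12a} the non-vanishing set of $ler$ projects exactly to the finely open set $D_+\cup D_-\subset\D$ (modulo polar sets). The construction $D'=D_J\cup i(\D\setminus D_J)$ is engineered to absorb this set: $D_J$ collects the finely connected components of $D_+\cup D_-$ lying inside the Jordan domain bounded by $p(J)$, while the finely isolated exceptional points of the complement are adjoined via $i(\D\setminus D_J)$ to make $\D\setminus D'$ regular. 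Lemma~\ref{lem:boundary}, by placing $J\subset G_-$ in a finely connected component $G_J$ of $G_-$ with $p(J)$ encircling all ramification points of $\RS$, rules out finely connected components of $D_+\cup D_-$ that straddle $p(J)$: the finely connected component of $D_+\cup D_-$ meeting $p(J)$ is the projection of $G_J$, whose portion in the Jordan interior belongs to $D_J$. Consequently every fine boundary point of $D'$ projects to $\D\setminus(D_+\cup D_-)$, where $ler$ vanishes by the definition of $G_\pm$ in \eqref{f10a}, up to polar exceptional sets that fine continuity and quasi-everywhere uniqueness allow us to ignore. This forces $ler^{(1)}\equiv 0$ on $V'$, completing the proof.
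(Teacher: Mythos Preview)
Your overall strategy coincides with the paper's: both argue that for $z\in V'$ one has
\[
ler^{(1)}(z)=\int ler(x)\,d\delta_z^{\RS\setminus V'}(x)=0,
\]
using that $\delta_z^{\RS\setminus V'}$ is carried by $\partial_\mathsf{f}V'$ and that $ler$ vanishes quasi everywhere there. However, three steps in your write-up need correction.

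\textbf{The justification that $ler=0$ q.e.\ on $\partial_\mathsf{f}V'$ is wrong.} You claim that the fine component of $D_+\cup D_-$ meeting $p(J)$ has ``portion in the Jordan interior belong[ing] to $D_J$''. But $D_J$ is by definition a union of \emph{entire} fine components of $D_+\cup D_-$, never a portion of one; the component containing $p(J)$ does not lie wholly inside $p(J)$ and is therefore disjoint from $D_J$. The correct (and simpler) argument is: any fine component $C$ of $D_+\cup D_-$ not contained in $D_J$ is a finely open set disjoint from $D_J$, and being finely open it contains no finely isolated points of $\D\setminus D_J$, hence $C\cap D'=\varnothing$. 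Thus $C$ lies in the fine interior of $\D\setminus D'$, so $C\cap\partial_\mathsf{f}D'=\varnothing$. This gives $\partial_\mathsf{f}D'\cap(D_+\cup D_-)=\varnothing$. Since $p$ is finely continuous, $\partial_\mathsf{f}V'\subseteq p^{-1}(\partial_\mathsf{f}D')$, whence $\zeta\in\partial_\mathsf{f}V'$ implies $p(\zeta)\notin D_+\cup D_-$, so $\zeta\notin G_+\cup G_-$ and $ler(\zeta)=0$ off the polar set $N_0$. No appeal to Lemma~\ref{lem:boundary} is needed here.

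\textbf{The invariance of $h_\RS$ under balayage is not automatic from harmonicity.} The identity $\int h_\RS\,d\delta_z^{\RS\setminus V'}=h_\RS(z)$ is \eqref{reproduction}, which in Lemma~\ref{lem:mass-balayage} requires $\overline{V'}\cap\partial\RS=\varnothing$. This holds because $p(V')$ lies interior to $p(J)$, but you should state it; ``already harmonic'' is insufficient on a noncompact surface.

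\textbf{Termwise balayage of $g(\mu,\D;p(\cdot))$ on $\RS$ is not immediate.} The definition gives $\mu^{(1)}=\mu^{\D\setminus D'}$ as a balayage on $\D$, whereas the integral $\int g(\mu,\D;p(x))\,d\delta_z^{\RS\setminus V'}(x)$ sweeps on $\RS$. Equating these requires the saturated-set identity of Lemma~\ref{saturated}, which the paper invokes explicitly. Your line ``applying balayage termwise'' hides this point.

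Finally, a minor remark: your fine-continuity argument to upgrade the balayage identity from ``quasi everywhere on $b(E)$'' to ``everywhere on $b(E)$'' is unnecessary, since \eqref{char_balayage} already asserts exact equality on $b(E)$.
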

\begin{proof}
Since the Green potential of a measure dominates  the Green potential of any balayage of that measure, as explained at the beginning of Section~\ref{ssec_bal}, it holds that
\[
g\big(\mu^{(1)},\D;z\big)\leq g\big(\mu,\D;z\big)<+\infty, \quad z\in \RS\setminus N_0,
\]
by the very definition of \( N_0 \). Thus, \( N_1\subseteq N_0 \) and the upper equality in \eqref{f11i} as well as equality \( N_1\setminus V^\prime = N_0\setminus V^\prime \) are consequences of \eqref{DprimeVprime} and \eqref{char_balayage}. Since  \( h_\RS(z) \) is a harmonic function on \( \RS \) and \( \overline{ V^\prime} \cap \partial \RS = \varnothing \) by construction (recall that $p(V^\prime)$ lies interior to $p(J)$),  equation \eqref{reproduction} yields that
\begin{equation}
\label{ler1-2}
\int h_\RS(x)d\delta_z^{\RS\setminus V^\prime}(x) = h_\RS(z), \quad z\in V^\prime.
\end{equation}
Since $V^\prime$ is a regular finely open set, Lemma \ref{supfin} implies that  \( \delta_z^{\RS\setminus V^\prime} \) is carried by \( \partial_\mathsf{f}V^\prime \) and  does not charge polar sets. As \( ler(z) = 0 \) for quasi every \( z\in\partial_\mathsf{f}V^\prime \) by the definition of $V^\prime$, we get from the definition of \( ler^{(1)} \), \eqref{ler1-2}, \eqref{GreenHarmonicMeasure}, Lemma~\ref{saturated}, and \eqref{ler0}  that
\begin{eqnarray*}
ler^{(1)}(z) & = & \int \big( g(\mu,\D;p(x)) - g(\nu,\RS;x) - h_\RS(x) \big) d\delta_z^{\RS\setminus V^\prime}(x)  \\
& = & \int ler(x) d\delta_z^{\RS\setminus V^\prime}(x) = 0
\end{eqnarray*}
for \( z\in V^\prime \setminus N_1\), which proves the lower equality in \eqref{f11i}. 
\end{proof}

Let \( G_J \) be as in Lemma~\ref{lem:boundary} for some 
small \( \epsilon>0 \). Denote by \( G^*_J \) the union of \( G_J \) and the annular region delimited by \( J \) and \( \B \); the latter is diffeomorphic under $p$ to
\( \{z:1-\epsilon<|z|<1\} \) if  we fix $\epsilon$ small enough.
Clearly,  \( G^*_J \) is a fine domain that lies schlicht over \( \D \). Let  \( \mathcal G \) be the collection of all fine domains \( G\subset\RS \) lying schlicht over \( \D \) and containing \( G_J^* \). The set \( \mathcal G \) is partially ordered by inclusion and every chain in it is bounded above by the union of its elements. Therefore,   by Zorn's lemma, \( \mathcal G \) possesses a maximal element, say \( G_\mathsf{max}\). Note that if $\RS\setminus G_\mathsf{max}$ is thin at two points $\zeta_1,\zeta_2 \in \RS$, then $p(\zeta_1)\neq p(\zeta_2)$ as otherwise $G_\mathsf{max}$ could not be schlicht over $\D$ by Lemma~\ref{lem:fineopen}. Hence, it follows from \eqref{Vprime} and the maximality of $ G_\mathsf{max}$ that \( \RS\setminus G_\mathsf{max}\) is its own base.

\begin{lemma}
\label{lem:maximality}
Any maximal domain \( G_\mathsf{max}\in\mathcal G \) is a Euclidean domain. Moreover, no connected component of \( \RS\setminus G_\mathsf{max} \) (resp. \(\D\setminus p( G_\mathsf{max})\)) consists of a single point.
\end{lemma}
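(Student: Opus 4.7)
The plan is to contradict the maximality of $G_\mathsf{max}$ whenever either conclusion fails. Exactly as in Lemma~\ref{lem12a}, Lemma~\ref{lem:compthicks} implies that the complement of a schlicht set is non-thin at every ramification point, which is incompatible with $G_\mathsf{max}$ being finely open; hence $G_\mathsf{max} \cap \mathbf{rp}(\RS) = \varnothing$, and $p$ restricts to a Euclidean local homeomorphism on $G_\mathsf{max}$. Combined with schlichtness, this makes $p:G_\mathsf{max}\to D_\mathsf{max}:=p(G_\mathsf{max})$ a Euclidean homeomorphism onto its image, so Euclidean-openness of $G_\mathsf{max}$ is equivalent to Euclidean-openness of $D_\mathsf{max}\subset\D$.

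For the Euclidean-openness assertion, suppose for contradiction that some $z_0 \in G_\mathsf{max}$ has no Euclidean neighborhood in $G_\mathsf{max}$. Pick a Euclidean disk $U$ around $z_0$, disjoint from both $\mathbf{rp}(\RS)$ and $G_J^*$, on which $p$ is injective and small enough that $p^{-1}(p(U)) = U \sqcup U_2 \sqcup \cdots \sqcup U_M$, with $U_j$ a Euclidean disk around the remaining preimage $\zeta_j\notin G_\mathsf{max}$ of $p(z_0)$; let $\tau_j := (p|_U)^{-1}\circ p|_{U_j}$ denote the Euclidean sheet-switching homeomorphism. The candidate enlargement is
\[
G^\star := \Bigl(G_\mathsf{max} \setminus \textstyle\bigcup_{j\geq 2}(G_\mathsf{max}\cap U_j)\Bigr) \cup \Bigl(U \setminus \textstyle\bigcup_{j\geq 2}\tau_j(G_\mathsf{max}\cap U_j)\Bigr),
\]
which is schlicht over $\D$ by construction and contains $G_J^*$ intact. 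Each $\tau_j(G_\mathsf{max}\cap U_j)$ is finely open in $U$, being the image under the Euclidean homeomorphism $\tau_j$ of a finely open set (Lemma~\ref{lem:fineopen}); hence the added portion of $G^\star$ is a finely closed subset of the Euclidean-open disk $U$ containing $z_0$. One then argues that the fine connected component of $z_0$ in $G^\star$ is schlicht, still contains $G_J^*$, and strictly enlarges $G_\mathsf{max}$ (using that $z_0$ is, by the contradiction hypothesis, a Euclidean-hence-fine accumulation point of $\RS\setminus G_\mathsf{max}$ within $U$), producing an element of $\mathcal{G}$ that strictly contains $G_\mathsf{max}$.

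For the last assertion, suppose $\{\zeta\}$ is a connected component of $\RS\setminus G_\mathsf{max}$. Once Euclidean openness is established, $\RS\setminus G_\mathsf{max}$ is Euclidean closed. If $\zeta$ is Euclidean isolated in $\RS\setminus G_\mathsf{max}$, it is also finely isolated there, contradicting the identity $b(\RS\setminus G_\mathsf{max})=\RS\setminus G_\mathsf{max}$ recorded just before the lemma. If instead $\zeta$ is a Euclidean limit point of other components, so that $\RS\setminus G_\mathsf{max}$ is locally totally disconnected at $\zeta$, a local sheet-switching modification analogous to the one above absorbs a small Euclidean neighborhood of $\zeta$ into $G_\mathsf{max}$, again contradicting maximality. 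The corresponding statement for $\D\setminus p(G_\mathsf{max})$ follows by projecting components through the Euclidean homeomorphism $p|_{G_\mathsf{max}}$, noting that $p$ maps polar sets to polar sets. \textbf{The main obstacle} is the fine-topological bookkeeping required to confirm that the modified candidate $G^\star$ genuinely lies in $\mathcal{G}$: the finely open subset $G_\mathsf{max}\cap U_j$ on an adjacent sheet may transfer across $\tau_j$ to a fine-topologically intricate subset of $U$, and one must verify carefully that the resulting fine connected component of $z_0$ is schlicht, finely open, and strictly contains $G_\mathsf{max}\cup\{G_J^*\}$.
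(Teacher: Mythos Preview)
Your surgery construction has a genuine gap that you yourself flag at the end: the candidate $G^\star$ is \emph{not} finely open. You correctly note that each $\tau_j(G_\mathsf{max}\cap U_j)$ is finely open in $U$, so the added portion $U\setminus\bigcup_{j\ge2}\tau_j(G_\mathsf{max}\cap U_j)$ is merely finely closed in $U$. Worse, $G^\star$ does not contain $G_\mathsf{max}$: over any $w\in p(G_\mathsf{max}\cap U_j)$ with $j\ge2$, you have removed the $U_j$-preimage from the first piece and excluded the $U$-preimage from the second, so $G^\star$ has no point above $w$ at all. Hence even the fine component of $z_0$ in $G^\star$ cannot strictly contain $G_\mathsf{max}$, and no contradiction with maximality follows. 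The difficulty is not mere bookkeeping; the sheet-swapping idea as stated cannot produce an element of $\mathcal G$ enlarging $G_\mathsf{max}$.

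The paper bypasses surgery entirely with a cleaner argument. Since $V\setminus G_\mathsf{max}$ is thin at $\zeta$, there is a radius $r_1\in(0,r_0)$ such that the circle $V\cap p^{-1}(\{|z-p(\zeta)|=r_1\})$ lies wholly in $G_\mathsf{max}$ (the standard ``small circles avoid thin sets'' fact, Section~\ref{ssec_thin}). Schlichtness then forces $G_\mathsf{max}$ to miss the corresponding circles on the \emph{other} sheets, and fine connectedness of $G_\mathsf{max}$ prevents it from entering the disks those circles bound. Consequently $G_\mathsf{max}\cup\bigl(V\cap p^{-1}(\{|z-p(\zeta)|<r_1\})\bigr)$ is schlicht, Euclidean-open, and finely connected, so maximality forces the whole disk into $G_\mathsf{max}$. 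This gives Euclidean openness directly, with no need to excise anything.

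For the second assertion, your isolated-point case is fine, but the non-isolated case again relies on the flawed surgery. The paper instead shows that a single-point component $\{\zeta\}$ of the closed set $\RS\setminus G_\mathsf{max}$ is encircled by arbitrarily small smooth Jordan curves lying in $G_\mathsf{max}$ (obtained as level curves of a smooth function vanishing exactly on $(\RS\setminus G_\mathsf{max})\cap\overline W$); one then reruns the circle argument above, or invokes Lemma~\ref{tdG} when $\zeta\in\mathbf{rp}(\RS)$. The statement for $\D\setminus p(G_\mathsf{max})$ does not follow by ``projecting components through $p|_{G_\mathsf{max}}$'' as you suggest, since $p$ is not a homeomorphism on $\RS\setminus G_\mathsf{max}$; the paper lifts small Jordan curves from $\D$ back to $G_\mathsf{max}$ and argues separately.
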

\begin{proof}
Observe that \( G_\mathsf{max} \) cannot contain a ramification point of \( \RS \) as it is finely open and lies schlicht over \( \D \), see Lemma~\ref{lem:compthicks}. Thus, for every $\zeta\in G_\mathsf{max}$ there exists $r_0>0$ such that each component of
\[
p^{-1}\big(\big\{z\in\D:~|z-p(\zeta)|<r_0\big\}\big)
\]
is in one-to-one correspondence with $\{z\in\D:~|z-p(\zeta)|<r_0\}$ under \( p \). Let $V$ be the component containing $\zeta$. Since the intersection $V\cap G_\mathsf{max}$ is finely open, $V\setminus G_\mathsf{max}$ is thin at $\zeta$ and therefore there is  $r_1\in(0,r_0)$ such that
\[
V\cap p^{-1}\big(\big\{z\in\D:~|z-p(\zeta)|=r_1\big\}\big)\subset G_\mathsf{max},
\]
see Section~\ref{ssec_thin}. Now, as $G_\mathsf{max}$ lies schlicht over $\D$, it holds that
\begin{equation}
\label{pdinterr1}
G_\mathsf{max}\cap \left(p^{-1}\big(\big\{z\in\D:~|z-p(\zeta)|=r_1\big\}\big)\setminus V\right) = \varnothing.
\end{equation}
Further, since $G_\mathsf{max}$ is finely connected, we necessarily have that
\[
G_\mathsf{max}\cap \left(p^{-1}\big(\big\{z\in\D:~|z-p(\zeta)|<r_1\big\}\big)\setminus V\right) = \varnothing,
\]
otherwise $G_\mathsf{max}$ would intersect the fine boundary of  \(p^{-1}\big(\big\{z\in\D:~|z-p(\zeta)|<r_1\big\}\big)\setminus V\)  which is contained in  \(p^{-1}\big(\big\{z\in\D:~|z-p(\zeta)|=r_1\big\}\big)\setminus V\) (in fact, equal to it by regularity), thereby contradicting \eqref{pdinterr1}. The maximality of $G_\mathsf{max}$ now yields that
\[
p^{-1}\big(\big\{z\in\D:~|z-\zeta|<r_1\big\}\big) \cap V \subset G_\mathsf{max},
\]
hence  $\zeta$ belongs to the Euclidean interior of $G_\mathsf{max}$ and so $G_\mathsf{max}$ is Euclidean open. Thus, its connected components are open and therefore finely open. Hence, $G_\mathsf{max}$ is a Euclidean domain since it is finely connected by definition.

To prove the second assertion, assume to the contrary that a point $\zeta\in \RS\setminus G_\mathsf{max}$ is a connected component of the latter. \emph{We claim} that any open neighborhood  $W$ of $\zeta$ contains an open set $O\ni\zeta$ whose boundary $\partial O$ is a smooth Jordan curve $\mathcal{C}$ contained in $G_\mathsf{max}$. To see this, assume with no loss of generality that $\overline{W}\subset \RS$, and put $F:=\RS\setminus G_\mathsf{max}\cap \overline{W}$. The latter is closed in $\RS$, so there is a smooth function $h:\RS\to\R^+$ of which $F$ is the zero set, see Section~\ref{ssec:notation}. Given a sequence $\{c_n\}$ of regular values of $h$ tending to $0$, let $O_n$ be the connected component containing $\zeta$ of the open set $\{z:\,h(z)<c_n\}$. The sets $O_n$ form a (strictly) nested sequence of connected open sets containing $\zeta$, whose intersection is connected and contained in $F$ whence reduces to $\zeta$. If $O_{n_k}\cap \partial W\neq\varnothing$ for some increasing subsequence of indices $n_k$, take $z_{n_k}\in O_{n_k}\cap \partial W$ and extract from $\{z_{n_k}\}$ a subsequence converging to $z\in\partial W$, which is possible due to compactness of the latter. On the other hand, since $\overline O_{n_{k+1}}\subset O_{n_k}$,  $z\in\cap_k \overline O_{n_k}=\{\zeta\}\in \overline W\setminus \partial W$, which is a contradiction. Therefore, $O_n\subset W$ for $n$ large enough and $\partial O_n\cap F=\varnothing$ by construction, so we may set $O=O_n$ and $\mathcal{C}:=\partial O_n$ for any $n$ large enough, because $\partial O_n$ is a connected component of the level set $h^{-1}(c_n)$, and thus it is a smooth Jordan curve. \emph{This proves the claim}.

If $\zeta\notin{\bf rp}(\RS)$, let us pick $W$ so small that $p$ is injective on each connected component of $p^{-1}(p(W))$. We now argue as before, observing that ${\mathcal C}\subset G_\mathsf{max}$ and $(p^{-1}(p({\mathcal C}))\setminus {\mathcal C})\cap G_\mathsf{max}=\varnothing$ (by schlichtness), so that $ G_\mathsf{max}\cap (p^{-1}(p(O))\setminus O)=\varnothing$ because $G_\mathsf{max}$  is connected and $(p^{-1}(p({\mathcal C}))\setminus {\mathcal C})$ is the boundary of \(p^{-1}(p(O))\setminus O\). Thus, by maximality, $G_\mathsf{max}$  should contain $O$ which contradicts the fact that $\zeta\in O$. Finally, if $\zeta\in{\bf rp}(\RS)$ then Lemma~\ref{tdG} contradicts the existence of ${\mathcal C}$.

This proves that no connected component of \( \RS\setminus G_\mathsf{max} \) consists of a single point. To show the same is true of  \(\D\setminus p( G_\mathsf{max})\), observe that such a component would consist, by the same reasoning as before, of a $z\in  \partial p( G_\mathsf{max})$ lying interior to Jordan curves of arbitrary small diameter contained in $p( G_\mathsf{max})$. For $\gamma:[0,2\pi]\to  p( G_\mathsf{max})$ such a parametrized Jordan curve, let $\xi\in G_\mathsf{max}$ satisfy $p(\xi)=\gamma(0)$ ($=\gamma(2\pi)$).  Let further $\ell:[0,2\pi]\to \RS$ be a continuous lift of $\gamma$ starting at $\xi$,  i.e., $\ell(0)=\xi$ and $p(\ell(t))=\gamma(t)$ for $t\in[0,2\pi]$. Of necessity $\ell([0,2\pi])\subset G_\mathsf{max}$, because $p(\partial G_\mathsf{max})\cap p(G_\mathsf{max})=\varnothing$ since $G_\mathsf{max}$ is open and lies schlicht over $\D$, while $p$ is an open map; then schlichtness again  implies that $\ell$ is a parametrized Jordan curve in \(G_\mathsf{max}\), and it is the unique lift of $\gamma$ to \(G_\mathsf{max}\). 

Set $p^{-1}(z)=\{\zeta_1,\ldots,\zeta_\ell\}$, and let $D_z$ be an open disk centered at $z$  such that each connected component of $p^{-1}(D_z)$ is (isomorphic to) a  $m(\zeta_j)$-sheeted cyclic covering $D_{\zeta_j}$ of $D_z$; in addition, we require that $D_z$ is so small that $D_{\zeta_j}\subset U_{\zeta_j}$ for all $j$, where $U_{\zeta_j}$ is as in Lemma~\ref{tdG}. Let $\gamma:[0,2\pi]\to D_z$ be a parametrized Jordan curve containing $z$ in its interior, and $\ell:[0,2\pi]\to G_\mathsf{max}$ the associated lift.  Necessarily $\ell([0,2\pi])$ is contained in a single component of $p^{-1}(D_z)$, say  $D_{\zeta_j}$, and  it must contain $\zeta_j$  in its interior (otherwise $\ell$ would be a unit in $\pi_1(D_{\zeta_j}\setminus\zeta_j)$ and so would be $\gamma=p\circ\ell$ in $\pi_1(D_z\setminus z)$). Now, if  $m(\zeta_j)>1$, then we contradict Lemma~\ref{tdG}. Hence,  $\ell$ is valued in a $D_{\zeta_j}$ such that $m(\zeta_j)=1$, which is  thus homeomorphic to $D_z$ under $p$.

 Let $\gamma_n$ be a sequence of Jordan curves in $D_z$, containing $z$ in their interior and shrinking to $z$ when $n\to\infty$. Let further  $\ell_n$ be the corresponding sequence of lifts to \(G_\mathsf{max}\). By what precedes, some subsequence $\ell_{m_n}$ shrinks to \(\zeta_j\) in $D_{\zeta_j}$, for some $j$ such that $D_{\zeta_j}$ is homeomorphic to $D_z$ under $p$. Moreover, $\ell_{m_n}$ contains $\zeta_j$ in its interior. We can now argue as we did to show that no connected component of \( \RS\setminus G_\mathsf{max} \) consists of a single point, and contradict  the maximal character  of \(G_\mathsf{max}\). This completes the proof of the lemma.
 \end{proof}
 
Define \( D_\mathsf{max} := p(G_\mathsf{max}) \) and \( V_\mathsf{max}:=p^{-1}(D_\mathsf{max}) \). Notice that both sets are  open by Lemma~\ref{lem:maximality} along with openness and continuity of $p$.

\begin{lemma}
\label{lem:ler2}
Let \( N_2 := p^{-1}(\widetilde N_2) \), where \( \widetilde N_2 \) is the \( +\infty\)-set of \( g\big(\mu^{(2)},\D;\cdot\big)\) and we set
\begin{equation}
  \label{def_bal2}
\mu^{(2)}:=\big(\mu^{(1)}\big)^{\D\setminus D_\mathsf{max}} \qandq  \nu^{(2)}:=\big(\nu^{(1)}\big)^{\RS\setminus V_\mathsf{max}}.
\end{equation}
Then  \( N_2 = N_1\setminus V_\mathsf{max} \), and for \( z\in\RS\setminus N_2\) we can define
\begin{equation}
\label{ler2}
ler^{(2)}(z) := g\big(\mu^{(2)},\D;p(z)\big) - g\big(\nu^{(2)},\RS;z\big) - h_\RS(z).
\end{equation}
In this case it holds that
\begin{equation}
\label{f12m}
\begin{cases}
\displaystyle \limsup_{\zeta\to z} ler^{(2)}(\zeta) \leq -2/\cp_\D(\K), & z\in \B,  \\
ler^{(2)}(\zeta) = 0, & z\in \RS\setminus ( G_\mathsf{max}\cup N_2 ). 
\end{cases}
\end{equation}
Moreover, we have that
\begin{equation}
\label{f12n}
\big\|\mu^{(2)}\big\|
\begin{cases}
= \|\mu\| & \text{if } \supp(\mu^{(1)})\cap D_\mathsf{max} = \varnothing, \smallskip \\
< \|\mu\| & \text{otherwise}.
\end{cases}
\end{equation}
\end{lemma}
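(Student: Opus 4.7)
The proof follows the pattern of Lemma~\ref{lem:ler1}, now balayaging against the finely regular sets $\D\setminus D_\mathsf{max}$ and $\RS\setminus V_\mathsf{max}$, whose regularity is furnished by Lemma~\ref{lem:maximality}. To pin down $N_2$, I would invoke the characterization of balayage: $g\big(\mu^{(2)},\D;\cdot\big)\leq g\big(\mu^{(1)},\D;\cdot\big)$ holds pointwise, with equality quasi everywhere on the base $b(\D\setminus D_\mathsf{max})=\D\setminus D_\mathsf{max}$ (the equality of base and set being ensured by Lemma~\ref{lem:maximality}, which excludes single-point connected components of the complement), while $g\big(\mu^{(2)},\D;\cdot\big)$ is harmonic, hence finite, on the Euclidean-open set $D_\mathsf{max}$. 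This identifies the $+\infty$-set as $\widetilde{N}_1\setminus D_\mathsf{max}$, so $N_2=N_1\setminus V_\mathsf{max}$, and $ler^{(2)}$ is unambiguously defined on $\RS\setminus N_2$ with values in $[-\infty,+\infty)$.

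For the first line of \eqref{f12m}, the point is that $G_J^*\subset G_\mathsf{max}$ contains, by Lemma~\ref{lem:boundary}, an annular neighborhood of $\B$ whose projection is $\{1-\epsilon<|z|<1\}$. Consequently $\supp\mu^{(2)}$ and $\supp\nu^{(2)}$ are compactly contained in $\D$ and $\RS$, bounded away from $\partial\D$ and $\partial\RS$, so their Green potentials extend continuously by zero to the respective boundaries. Combining this with the continuous extension of $h_\RS$ from Lemma~\ref{lem:hr} and its boundary values \eqref{hr-gamma} yields the limit $-2/\cp_\D(\K)$ on $\B$.

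The main obstacle is the second line of \eqref{f12m}. I would split $\RS\setminus(G_\mathsf{max}\cup N_2)$ into the part lying in $\RS\setminus V_\mathsf{max}$ and the part in $V_\mathsf{max}\setminus G_\mathsf{max}$. On the first part, the balayage identity gives $ler^{(2)}=ler^{(1)}$ quasi everywhere, and then Lemma~\ref{lem:ler1}, after checking that $V^\prime\subset V_\mathsf{max}$ (which reduces to $D_J\subset D_\mathsf{max}$ from $G_J\subset G_\mathsf{max}$ plus a comparison of fine topologies to handle the finely isolated points $i(\D\setminus D_J)$), gives $ler^{(2)}=ler$ quasi everywhere. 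The maximality of $G_\mathsf{max}$ in $\mathcal G$ must then be leveraged to force $G_+\cup G_-\subset V_\mathsf{max}$: the fiber-saturation of $G_+$ from Lemma~\ref{lem11a} and the schlichtness of $G_-$ from Lemma~\ref{lem12a} ensure that any finely connected component of either set straying outside $V_\mathsf{max}$ could be adjoined to $G_\mathsf{max}$ through the fibers already captured in $G_J^*$ while maintaining schlichtness and fine connectedness, contradicting maximality. Hence $ler=0$ on $\RS\setminus V_\mathsf{max}$. On the second part $V_\mathsf{max}\setminus G_\mathsf{max}$, both $\mu^{(2)}$ and $\nu^{(2)}$ place no mass inside $V_\mathsf{max}$, so $ler^{(2)}$ is harmonic there; a harmonic-measure representation on each Euclidean component, whose fine boundary sits in $\RS\setminus V_\mathsf{max}$ where $ler^{(2)}=0$ by the previous case, extends the vanishing. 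I expect this enlargement-of-$G_\mathsf{max}$ argument to be the most delicate part of the proof.

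Finally, the mass assertion \eqref{f12n} is standard Green-balayage bookkeeping. When $\supp\mu^{(1)}\cap D_\mathsf{max}=\varnothing$ the second balayage acts trivially, so $\mu^{(2)}=\mu^{(1)}$, and a parallel argument applied to the first balayage in Lemma~\ref{lem:ler1} gives $\|\mu^{(1)}\|=\|\mu\|$ because $\D\setminus D^\prime$ separates $\supp\mu$ from $\partial\D$ (all Brownian paths cross it before exiting $\D$). When instead $\supp\mu^{(1)}\cap D_\mathsf{max}\neq\varnothing$, the sweep of this mass out of $D_\mathsf{max}$ loses a strictly positive proportion to $\partial\D$, since $D_\mathsf{max}$ reaches $\partial\D$ and the hitting probability of $\D\setminus D_\mathsf{max}$ from inside $D_\mathsf{max}$ is strictly less than one, yielding the strict inequality.
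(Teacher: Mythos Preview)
Your handling of the second line of \eqref{f12m} diverges from the paper's and contains a genuine gap. The claimed inclusion \(V'\subset V_\mathsf{max}\) (equivalently \(D'\subset D_\mathsf{max}\)) is unjustified: both \(D_J\) and \(K_\mathsf{max}=\D\setminus D_\mathsf{max}\) sit inside \(\text{int}\,p(J)\), so nothing prevents them from overlapping, and the paper never asserts this. More seriously, your maximality argument for \(G_+\cup G_-\subset V_\mathsf{max}\) does not work as sketched. For \(G_+\): by Lemma~\ref{lem11a} the set \(G_+=p^{-1}(D_+)\) is fiber-saturated, so any finely open piece of \(G_+\) covers its projection with multiplicity \(M>1\), and adjoining it to the schlicht domain \(G_\mathsf{max}\) immediately destroys schlichtness; there is no single ``sheet'' one may select while keeping the enlarged set finely open. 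For \(G_-\): adjoining a stray fine component to \(G_\mathsf{max}\) requires the union to remain finely connected, and your phrase ``through the fibers already captured in \(G_J^*\)'' does not supply a mechanism for this. In fact the equalities \(G_+=\varnothing\) and \(G_-=G_\mathsf{max}\) are only obtained in Section~\ref{ssec_concl}, and they use Lemma~\ref{lem:ler2} as input; attempting to establish them here is circular.

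The paper's route is more indirect and makes essential use of the two-step balayage. On \(\RS\setminus(V_\mathsf{max}\cup N_2)\) the balayage identity \eqref{char_balayage} yields \(ler^{(2)}=ler^{(1)}\), and then \emph{both} cases of \eqref{f11i} are invoked: on the portion lying in \(V'\setminus N_1\) one has \(ler^{(1)}=0\) outright, while on the remainder \(\RS\setminus(V'\cup V_\mathsf{max}\cup N_0)\) one has \(ler^{(1)}=ler\), and the paper argues \(ler=0\) there from the very definitions of \(D_J\) and \(D_\mathsf{max}\). By trying to collapse the two balayage steps into one, you are forced toward the stronger inclusion \(G_\pm\subset V_\mathsf{max}\), which is not available at this point. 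A smaller omission: on \(V_\mathsf{max}\setminus G_\mathsf{max}\) the paper first shows this set is Euclidean open (using schlichtness of \(G_\mathsf{max}\)) and then applies the harmonic-measure representation via Lemmas~\ref{supfin} and~\ref{casparta}, the latter because components of \(V_\mathsf{max}\setminus G_\mathsf{max}\) may reach \(\partial\RS\setminus\B\), where \(h_\RS\) vanishes by Lemma~\ref{lem:hprime2}; your sketch glosses over this boundary issue.
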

\begin{proof}
By Lemma~\ref{lem:maximality} the set \( \D\setminus D_\mathsf{max} \) is closed  in $\D$ and none of its connected components reduces to a point. Hence, it has no finely isolated points  (remember that a connected set cannot be thin at an accumulation point, see discussion after \eqref{thin}). Thus, \( \D\setminus D_\mathsf{max} \) is its own base. Consequently, \( \RS\setminus V_\mathsf{max}\) is also its own base by Lemma~\ref{lem:fineopen}.  Therefore, we get  from \eqref{char_balayage} that
\begin{equation}
\label{bal3}
\mu^{(2)}(D_\mathsf{max}) = \nu^{(2)}(V_\mathsf{max}) = 0.
\end{equation}
This  implies that the potentials of \( \mu^{(2)} \) and \( \nu^{(2)} \) are harmonic in \( D_\mathsf{max} \) and \( V_\mathsf{max}\), respectively. Moreover, since \( \partial D_\mathsf{max}\setminus\T \) is separated from \( \T \) by the very definition of \( G_\mathsf{max} \), \( \partial V_\mathsf{max}\setminus\partial\RS \) is necessarily separated from \( \partial\RS \) and these potentials extend continuously by zero to \( \T \) and \( \partial \RS \) respectively by the regularity of the latter, see the discussion after \eqref{regGB}. Therefore, \( ler^{(2)} \) extends harmonically to the whole of \( V_\mathsf{max} \) and continuously to $\partial\RS\setminus\mathcal T$ by Lemma~\ref{lem:hprime2}. So, as in Lemma~\ref{lem:ler1}, the inclusion \( N_2 \subseteq N_1\setminus V_\mathsf{max} \) follows directly from \eqref{defbalf} and the equality \( N_2 = N_1\setminus V_\mathsf{max} \) is then deduced from \eqref{char_balayage}.  In addition, the first inequality in \eqref{f12m} holds in view of \eqref{hr-gamma}, the definition of \( h_\RS \) given after \eqref{ler0}, and the fact that \( h^{\prime\prime} \) is non-negative, see Lemma~\ref{lem:pdlet-surface}. 

Since \( N_1\setminus V^\prime = N_0\setminus V^\prime \) by Lemma~\ref{lem:ler1} we have that \( \RS\setminus( V^\prime\cup V_\mathsf{max}\cup N_0) = \RS\setminus( V^\prime\cup V_\mathsf{max}\cup N_1) \). Moreover, \( ler(z) =0 \) on this set by the very definitions of \( V^\prime \) and \( V_\mathsf{max} \). Further,
\[
\RS\setminus(V_\mathsf{max}\cup N_2) = \RS\setminus(V_\mathsf{max}\cup N_1) =  (\RS\setminus( V^\prime\cup V_\mathsf{max}\cup N_1) ) \cup (V^\prime\setminus N_1).
\]
 Since  \( \D\setminus D_\mathsf{max} \) and \( \RS\setminus V_\mathsf{max}\) are their own bases, it therefore follows from \eqref{char_balayage} and \eqref{f11i} that
\begin{equation}
\label{ler-1-2-pr}
ler^{(2)}(z) = ler^{(1)}(z) =0, \quad z\in \RS\setminus(V_\mathsf{max}\cup N_2).
\end{equation}

To study the values of \( ler^{(2)} \) on $V_\mathsf{max} \setminus G_\mathsf{max}$, let us show that this is an open set. Indeed, since \( p(G_\mathsf{max}) = p(V_\mathsf{max}) = D_\mathsf{max} \), for each $\zeta\in V_\mathsf{max}$ there exists a disk $D_{p(\zeta)}\subset D_\mathsf{max}$, centered at $p(\zeta)$, and a point $z\in G_\mathsf{max}$ with $p(z)=p(\zeta)$ such that \( D_\zeta \subset V_\mathsf{max} \) and \( D_z\subset G_\mathsf{max} \), where $D_\zeta$ and $D_z$ are the connected components of $p^{-1}(D_{p(\zeta)})$ that contain $\zeta$ and $z$, respectively. If $\zeta\neq z$, then $D_\zeta\cap G_\mathsf{max}=\varnothing$ since $ G_\mathsf{max}$ lies schlicht over $\D$ and therefore \( D_z\subset V_\mathsf{max} \setminus G_\mathsf{max} \) as claimed. Moreover, since $\RS\setminus V_\mathsf{max}$ is its own base, we get from Lemma~\ref{distregt} that $\RS\setminus (V_\mathsf{max}\setminus G_\mathsf{max}) $ is also its own base.

Pick $z\in V_\mathsf{max} \setminus G_\mathsf{max}$ and notice that $V_\mathsf{max} \setminus G_\mathsf{max}$ consists of at most finitely many connected components. Let $V_z$ be the component containing $z$. As $ G_\mathsf{max}$ contains the annular region delimited by \( J \) and \( \B \) and lies schlicht over $\D$, it follows that $\partial (V_\mathsf{max} \setminus G_\mathsf{max})\cap\partial\RS=\partial\RS\setminus\B$. Since \( h_\RS\equiv 0 \) on \( \partial \RS\setminus \B\) by Lemma~\ref{lem:hprime2}, we get from Lemmas~\ref{supfin}  and~\ref{casparta} that
\begin{equation}
\label{ler1-2-p}
\int h_\RS(x)d\delta_z^{\RS\setminus V_z}(x) = \int h_\RS(x)d\delta_z^{\RS\setminus (V_\mathsf{max}\setminus G_\mathsf{max})}(x)=h_\RS(z).
\end{equation} 
Since \( V_z \),  \( V_\mathsf{max}\setminus G_\mathsf{max} \), and \( G_\mathsf{max} \) are Euclidean open sets, it holds that \( \partial V_z \subseteq \partial (V_\mathsf{max}\setminus G_\mathsf{max}) \subseteq \partial V_\mathsf{max} \), a set on which \( ler^{(1)}(z) \) is zero except possibly on \( N_2 \) by \eqref{ler-1-2-pr}. Consequently, in view of \eqref{ler2}, we see upon using  Lemma~\ref{saturated}, \eqref{GreenHarmonicMeasure}, and \eqref{ler1-2-p} that
\[
ler^{(2)}(z) =  \int ler^{(1)}(x) d\delta_z^{\RS\setminus V_z}(x) = 0,
\]
where the second equality follows from Lemma \ref{supfin} as \( \partial_\mathsf{f} V_z \subseteq \partial V_z \) and $ler^{(1)}(x)=0$ quasi and therefore  $\delta_z^{\RS\setminus V_z}$-almost everywhere on \(  \partial_\mathsf{f} V_z \). 

Finally, we get from \eqref{preservemass} and \eqref{mass-balayage} that \( \|\mu^{(2)}\|=\|\mu^{(1)}\| \) when \( \supp(\mu^{(1)})\cap D_\mathsf{max}=\varnothing \) and \( \|\mu^{(2)}\|<\|\mu^{(1)}\| \) otherwise as well as that \( \|\mu^{(1)}\|=\|\mu\| \), which proves \eqref{f12n}.
\end{proof}

\subsection{Projected Logarithmic Error Function}

Recall that \( z\in \partial G_\mathsf{max} \) is called accessible if there exists a continuous map \( \psi \) on \( [0,1] \) such that \( \psi(t)\in G_\mathsf{max} \) for \( t\in[0,1) \) and \( \psi(1)=z \).

\begin{lemma}
\label{lem:prime-ends}
It holds that \( \operatorname*{card}\left(  p^{-1}(z)\cap (\partial G_\mathsf{max}\setminus E^*) \right) \leq 2 \) for all except countably many \( z\in \partial D_\mathsf{max} \), where \( E^* \subset \partial G_\mathsf{max}\) is the subset of non-accessible points.
\end{lemma}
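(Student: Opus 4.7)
The plan is to reduce the lemma to a classical fact about plane domains via the homeomorphism \( p: G_\mathsf{max}\to D_\mathsf{max} \) furnished by Lemma~\ref{lem:maximality}. Excluding the finite set \( p(\mathbf{rp}(\RS))\cap\partial D_\mathsf{max} \) (whose removal is harmless for the countable exceptional set in the statement), I may restrict attention to those \( z\in\partial D_\mathsf{max} \) for which every \( \zeta\in p^{-1}(z) \) is not a ramification point, so that \( p \) is a local homeomorphism at each such \( \zeta \); in particular \( p^{-1}(z) \) is finite.

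The crucial geometric step is to show that three distinct accessible preimages of \( z \) in \( \partial G_\mathsf{max} \) force \( D_\mathsf{max} \) to have three distinct prime ends with principal point \( z \). Let \( \zeta_1,\zeta_2,\zeta_3\in p^{-1}(z)\cap(\partial G_\mathsf{max}\setminus E^*) \) be distinct, and choose pairwise disjoint conformal disks \( D_i \) around \( \zeta_i \) on which \( p \) is a homeomorphism onto a neighborhood of \( z \). For a small disk \( U\ni z \) contained in \( \bigcap_i p(D_i) \), the preimage \( p^{-1}(U) \) decomposes as a disjoint union of open sets, one per element of \( p^{-1}(z) \); hence the connected components of \( G_\mathsf{max}\cap p^{-1}(U) \) each lie in a single such piece. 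Picking Jordan arcs \( \psi_i \) in \( G_\mathsf{max} \) converging to \( \zeta_i \), their tails eventually lie in \( D_i\cap G_\mathsf{max}\cap p^{-1}(U) \) for \( i=1,2,3 \), hence in three distinct components of \( G_\mathsf{max}\cap p^{-1}(U) \). Projecting by the homeomorphism \( p|_{G_\mathsf{max}} \), the tails of \( \gamma_i:=p\circ\psi_i \) lie in three distinct components of \( U\cap D_\mathsf{max} \), each having \( z \) in its closure. Truncated appropriately, these three arcs meet only at \( z \) and represent three pairwise inequivalent prime ends of \( D_\mathsf{max} \) with principal point \( z \).

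The conclusion follows from the classical fact that, for any plane domain \( \Omega \), the set of \( z\in\partial\Omega \) which is the principal point of at least three distinct prime ends is at most countable. This is a consequence of R.~L. Moore's theorem that a pairwise disjoint family of triods in the plane is necessarily countable, applied to the triods \( T_z \) naturally attached to each such \( z \) via the three arcs constructed above (after shrinking them inside arbitrarily small neighborhoods of \( z \)).

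The principal obstacle is the middle step above: one must ensure that distinct accessible points \( \zeta_i \) yield genuinely distinct \emph{local} components of \( D_\mathsf{max} \) near \( z \), which rests on \( G_\mathsf{max} \) lying schlicht over \( \D \) together with the local-homeomorphism property of \( p \) at non-ramification points, so that components of \( G_\mathsf{max}\cap p^{-1}(U) \) cannot bridge between disjoint conformal disks around the \( \zeta_i \). Citing the classical consequence of Moore's triod theorem in precisely the form needed is the other delicate point.
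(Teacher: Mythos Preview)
Your approach is essentially the same as the paper's: both construct triods in the plane from the accessibility arcs and appeal to Moore's theorem. The construction of each individual triod $T_z$ is correct, and your observation that the three legs project into three distinct components of $U\cap D_\mathsf{max}$ (via the homeomorphism $p|_{G_\mathsf{max}}$) is valid.

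There is, however, a genuine gap in your final step. You invoke Moore's theorem for \emph{pairwise disjoint} triods, but the triods $T_z$ and $T_{z'}$ need not be disjoint: their legs are arcs in $D_\mathsf{max}$ and nothing prevents them from crossing, no matter how much you shrink them (nearby bad points $z,z'$ will have overlapping neighborhoods $U$). The detour through prime ends does not fix this, since the ``classical fact'' you cite is itself proved via Moore's theorem and faces the same issue. The paper addresses this by first passing to a countable cover of $\partial D_\mathsf{max}\cap\D$ by evenly-covered disks $D_x$, so that within each $D_x$ the sheets $V_{x_1},\ldots,V_{x_N}$ of $p^{-1}(D_x)$ are fixed independently of $z$. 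Each leg $p(\psi_{z_{\ell_j}})$ is then ``colored'' by its sheet index $\ell_j$; schlichtness of $G_\mathsf{max}$ guarantees that legs of \emph{different} colors from \emph{any} two triods never meet (whereas same-color legs may). After passing to an uncountable subfamily sharing the same triple $(\ell_1,\ell_2,\ell_3)$, one applies the colored version of Moore's theorem (Pommerenke, Proposition~2.18), which allows same-color intersections. Your local components of $U\cap D_\mathsf{max}$ depend on $z$, so they do not furnish this global coloring.
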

\begin{proof}
Since \( \T\subset \partial D_\mathsf{max} \) and \( p^{-1}(\T) \cap \partial G_\mathsf{max} = \B \), we only need to consider \( z\in\partial D_\mathsf{max}\cap \D \). The set   \( (\partial D_\mathsf{max}\cap \D)\setminus p({\bf rp}(\RS))\) can be covered by countably many open sets of the form $D_x$, where $x\in (\partial D_\mathsf{max}\cap \D )\setminus p({\bf rp}(\RS))$ and $D_x\subset \D$ is a disk centered at $x$, small enough that each component of $p^{-1}(D_x)$ is homeomorphic to  $D_x$ under $p$. Hence, it is enough to show that for each such $D_x$ and all \( z\in \partial D_\mathsf{max} \cap D_x\) but countably many,  one has \( \operatorname*{card}\left(  p^{-1}(z)\cap (\partial G_\mathsf{max}\setminus E^*) \right) \leq 2 \).  Fix $D_x$ and let $x_1,\ldots,x_N$ denote the preimages of $x$ under $p$. We write $V_{x_\ell}$ for the connected component of $p^{-1}(D_x)$ containing $x_\ell$, $1\leq \ell\leq N$. Then, each $z\in D_x$ has preimages  $z_1,\ldots,z_N$ under $p$, with $z_\ell\in V_{x_\ell}$. When \( z\in \partial D_\mathsf{max} \cap D_x\), it follows from the definition of accessibility that if $z_{\ell}\in \partial G_\mathsf{max}\setminus E^*$, then there is a continuous  arc \( \psi_{z_\ell}: [0,1] \to \overline{G}_{\mathsf{max}}\) such that \( \psi_{z_\ell}(t)\in G_\mathsf{max} \) for \( t\in[0,1) \) and \( \psi_{z_\ell}(1)=z_\ell \). Moreover, on shortening and reparametrizing the arc if necessary, we may assume that $\psi_{z_\ell}$ is valued in $V_{x_\ell}$. Now, if \( p^{-1}(z)\cap (\partial G_\mathsf{max}\setminus E^*) \) contains 3  distinct points, say $z_{\ell_j}$ for $j=\{1,2,3\}$, then $p(\psi_{z_{\ell_j}})$ are Jordan arcs  having only the point $z$ in common, because $G_\mathsf{max}$ lies schlicht over $\D$ and the $V_{x_{\ell_j}}$ are disjoint. Thus, $T_z:=\cup_j p(\psi_{z_{\ell_j}})$ is a triod, and if we had uncountably many such $z$ then some triple $(\ell_1,\ell_2,\ell_3)$ would occur uncountably many times. Assigning different colors to $\psi_{z_{\ell_1}}$,  $\psi_{z_{\ell_2}}$ and $\psi_{z_{\ell_3}}$, we get and uncountable collection of colored triods $T_z$ whose arcs of different colors never meet, again because $G_\mathsf{max}$ lies schlicht over $\D$ and the $V_{x_{\ell_j}}$ are disjoint. However, this contradicts the Moore triod theorem \cite[Proposition 2.18]{Pommerenke}, thereby finishing the proof.
\end{proof}

Put \( K_\mathsf{max}:=\D\setminus D_\mathsf{max} \). It follows from Lemma~\ref{lem:maximality} that $f$ has a single-valued meromorphic continuation throughout \( D_\mathsf{max}\setminus p(E_f) \) and \( K_\mathsf{max} \) is a compact subset of \( \D \). Hence, \( K_\mathsf{max}\in\mathcal{K}_f\), where \( \mathcal K_f \) was defined just before \eqref{mincapset}. Clearly, the measure \( \mu^{(2)} \) is supported on \( K_\mathsf{max} \).

\begin{lemma}
\label{lem13b}
Let \( m_\zeta \) be the ramification order of \( \zeta\in\RS \). Define
\begin{equation}
\label{f13a0}
ler_\mathsf{pr}(z) := \sum_{\zeta\in p^{-1}(z)} m_\zeta ler^{(2)}(\zeta), \quad z\in\D\setminus\widetilde N_2.
\end{equation}
Then \(  ler_\mathsf{pr} \) is a \( \delta \)-subharmonic function in \( \D \) such that \(  ler_\mathsf{pr}(z)=0 \) when \( z\in K_\mathsf{max}\setminus\widetilde N_2 \) and
\[
\limsup_{z\to \zeta} ler_\mathsf{pr}(z) \leq -2/\cp_\D(\K), \quad \zeta\in\T.
\]
Moreover, there exist non-negative measures \( \mu_\mathsf{pr} \) and \( \nu_\mathsf{pr} \), supported on \( \partial D_\mathsf{max} \), and a non-negative function  \( h_\mathsf{pr} \), harmonic in \( \D \),  such that \( \mu_\mathsf{pr} \leq 2\mu^{(2)} \) and for \( z\in\D\setminus\widetilde N_2 \)
\begin{equation}
ler_\mathsf{pr}(z) = g(\mu_\mathsf{pr},\D;z) - g(\nu_\mathsf{pr},\D,z) - h_\mathsf{pr}(z) - 2/\cp_\D(\K).
\label{f13g}
\end{equation}
\end{lemma}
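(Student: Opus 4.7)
The plan is to establish the four assertions of the lemma in sequence, leveraging the push-down identity
\[
\sum_{\zeta\in p^{-1}(z)} m_\zeta g_\RS(\zeta,w) = g_\D(z,p(w)),
\]
which is immediate by comparing the two sides on $\D$: both are harmonic on $\D\setminus\{p(w)\}$, vanish on $\T$ by regularity, and share a logarithmic singularity of strength $1$ at $p(w)$ (the factor $m_\zeta$ absorbing the branching of $p$ at $w$). Integrating against a measure $\sigma$ on $\RS$ yields $\sum_\zeta m_\zeta g(\sigma,\RS;\zeta)=g(p_*(\sigma),\D;z)$, and together with the lift identity $g(\widehat{\mu^{(2)}},\RS;\cdot)=g(\mu^{(2)},\D;p(\cdot))$ and $p_*(\widehat{\mu^{(2)}})=M\mu^{(2)}$ (from \eqref{lifted-measureA}), this rewrites \eqref{f13a0} as
\[
ler_\mathsf{pr}(z) = Mg\big(\mu^{(2)},\D;z\big) - g\big(p_*(\nu^{(2)}),\D;z\big) - H(z),\quad z\in\D\setminus\widetilde N_2,
\]
where $H(z):=\sum_{\zeta\in p^{-1}(z)} m_\zeta h_\RS(\zeta)$ is harmonic on $\D\setminus p(\mathbf{rp}(\RS))$ and extends harmonically across each branch value by local boundedness and the continuity of $h_\RS$ there. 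This representation also displays $ler_\mathsf{pr}$ as a $\delta$-subharmonic function on $\D$.

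For the prescribed boundary behavior I would argue as follows. When $z\in K_\mathsf{max}\setminus\widetilde N_2$, the schlichtness of $G_\mathsf{max}$ over $D_\mathsf{max}$ forces $p^{-1}(z)\cap G_\mathsf{max}=\varnothing$, so every $\zeta\in p^{-1}(z)$ lies in $\RS\setminus(G_\mathsf{max}\cup N_2)$ where $ler^{(2)}(\zeta)=0$ by the second line of \eqref{f12m}; summing yields $ler_\mathsf{pr}(z)=0$. For $\zeta\in\T$, the fiber $p^{-1}(\zeta)$ contains exactly one point $\zeta^*\in\B$ with $m_{\zeta^*}=1$ (since $p:\B\to\T$ is a homeomorphism), and every other fiber point lies on some boundary component $\B'\neq\B$, necessarily contained in $V_\mathsf{max}\setminus G_\mathsf{max}$ because a schlicht $G_\mathsf{max}$ cannot meet two distinct boundary components over the same base point. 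The first line of \eqref{f12m} then yields $\limsup_{\xi\to\zeta^*} ler^{(2)}(\xi)\leq -2/\cp_\D(\K)$, while the contributions from the remaining fiber points vanish in the limit by the continuous extension by zero of $ler^{(2)}$ to $\partial\RS\setminus\B$ recorded in the proof of Lemma~\ref{lem:ler2}. Summation produces the desired $\limsup\leq -2/\cp_\D(\K)$.

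Finally, for the decomposition \eqref{f13g} the decisive observation is that $ler^{(2)}$ is harmonic on $V_\mathsf{max}$ and vanishes quasi everywhere, hence identically by fine continuity, on the Euclidean open set $\mathrm{int}(\RS\setminus G_\mathsf{max})$. Consequently $\Delta_\RS ler^{(2)}=2\pi(\nu^{(2)}-\widehat{\mu^{(2)}})$ is a signed Radon measure supported on $\partial G_\mathsf{max}$, forcing $\widehat{\mu^{(2)}}$ and $\nu^{(2)}$ to coincide on $\mathrm{int}(\RS\setminus G_\mathsf{max})$. Setting
\[
\mu_\mathsf{pr}:=p_*\big(\widehat{\mu^{(2)}}_{\mathcal b\,\partial G_\mathsf{max}}\big),\qquad \nu_\mathsf{pr}:=p_*\big(\nu^{(2)}_{\mathcal b\,\partial G_\mathsf{max}}\big),
\]
both non-negative and supported on $p(\partial G_\mathsf{max})=\partial D_\mathsf{max}$, the aforementioned cancellation on $\mathrm{int}(\RS\setminus G_\mathsf{max})$ collapses the first two potentials in the representation of $ler_\mathsf{pr}$ to $g(\mu_\mathsf{pr},\D;\cdot)-g(\nu_\mathsf{pr},\D;\cdot)$. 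Writing $h_\RS=h'+h''$, the harmonic function $\sum m_\zeta h'(\zeta)$ on $\D$ is bounded with constant boundary value $2/\cp_\D(\K)$ on $\T$ (since $h'$ has boundary value $2/\cp_\D(\K)$ on $\B$ and $0$ on $\partial\RS\setminus\B$ by Lemma~\ref{lem:hr}) and hence equals $2/\cp_\D(\K)$ identically; thus $H=2/\cp_\D(\K)+h_\mathsf{pr}$ with $h_\mathsf{pr}(z):=\sum m_\zeta h''(\zeta)\geq 0$ harmonic on $\D$. The bound $\mu_\mathsf{pr}\leq 2\mu^{(2)}$ is then the direct content of Lemma~\ref{lem:prime-ends}: up to the polar set $E^*$ of non-accessible points, which $\widehat{\mu^{(2)}}$ cannot charge as balayage measures do not charge polar sets, at most two points of $\partial G_\mathsf{max}$ lie over each $z\in\partial D_\mathsf{max}$, so a covering argument shows the push-forward multiplies masses by at most~$2$. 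The main hurdle is the clean identification of $\Delta_\RS ler^{(2)}$ with a measure supported on $\partial G_\mathsf{max}$, which relies on the Euclidean openness of $G_\mathsf{max}$ granted by Lemma~\ref{lem:maximality} together with the fine continuity of $ler^{(2)}$.
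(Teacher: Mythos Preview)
Your overall structure mirrors the paper's, and the first two assertions (the $\delta$-subharmonic representation via the push-down identity, the vanishing on $K_\mathsf{max}$, and the $\limsup$ on $\T$) are handled essentially as in the paper. The decomposition \eqref{f13g} with your choice of $\mu_\mathsf{pr}=p_*(\widehat{\mu^{(2)}}_{\mathcal b\,\partial G_\mathsf{max}})$ and $\nu_\mathsf{pr}=p_*(\nu^{(2)}_{\mathcal b\,\partial G_\mathsf{max}})$ is also legitimate, since the signed measure $\widehat{\mu^{(2)}}-\nu^{(2)}$ vanishes on $\mathrm{int}(\RS\setminus G_\mathsf{max})$ and hence the restrictions of the two positive pieces agree there and cancel in the potential difference.

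However, there is a genuine gap in your argument for $\mu_\mathsf{pr}\leq 2\mu^{(2)}$. You assert that $E^*$, the set of non-accessible points of $\partial G_\mathsf{max}$, is polar. This is not established anywhere and is not true in general for domains: non-accessible boundary points always have zero harmonic measure, but they can form a set of positive capacity. Since your $\mu_\mathsf{pr}$ uses the full restriction $\widehat{\mu^{(2)}}_{\mathcal b\,\partial G_\mathsf{max}}$, you would need $\widehat{\mu^{(2)}}(E^*)=0$ for Lemma~\ref{lem:prime-ends} to yield the factor $2$, and ``balayage measures do not charge polar sets'' does not help if $E^*$ is not polar.

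The paper avoids this by passing to the Jordan decomposition $\mu_\mathsf{R}-\nu_\mathsf{R}=\widehat{\mu^{(2)}}-\nu^{(2)}$ with $\mu_\mathsf{R},\nu_\mathsf{R}$ mutually singular, and setting $\mu_\mathsf{pr}:=p_*(\mu_\mathsf{R})$, $\nu_\mathsf{pr}:=p_*(\nu_\mathsf{R})$ instead. Two things are gained: first, $\mu_\mathsf{R}\leq\widehat{\mu^{(2)}}$, so $\mu_\mathsf{R}(B)\leq\mu^{(2)}(p(B))$ on any schlicht Borel set $B$; second, and this is the essential point, \cite[Corollary~2]{EremFugSod92} shows that the Riesz charge of a $\delta$-subharmonic function cannot charge boundary points that are non-accessible from the complement of the base of its zero set. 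Hence $\mu_\mathsf{R}(E^*)=0$ regardless of whether $E^*$ is polar, and combining with Lemma~\ref{lem:prime-ends} gives $\mu_\mathsf{pr}\leq 2\mu^{(2)}$. To repair your proof you should replace $\widehat{\mu^{(2)}}_{\mathcal b\,\partial G_\mathsf{max}}$ and $\nu^{(2)}_{\mathcal b\,\partial G_\mathsf{max}}$ by the mutually singular parts $\mu_\mathsf{R}$, $\nu_\mathsf{R}$ and invoke that corollary.
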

\begin{proof}
The first two claims of the lemma follow readily from \eqref{f12m} and the computation in \eqref{defetagev}. Let \( \widehat\mu^{(2)} \) be the pullback of \( \mu^{(2)} \) onto \( \RS \), see \eqref{lifted-measureA} and \eqref{remontp}. Then we can equivalently write
\begin{align*}
ler^{(2)}(z) &= g\big(\widehat\mu^{(2)},\RS;z \big) - g\big(\nu^{(2)},\RS;z \big) - h_\RS(z) \\
& =  g(\mu_\mathsf{R},\RS;z) - g(\nu_\mathsf{R},\RS;z)  - h_\RS(z),
\end{align*}
where \( \mu_\mathsf{R} - \nu_\mathsf{R} = \widehat\mu^{(2)} - \nu^{(2)} \) and \( \mu_\mathsf{R} \), \( \nu_\mathsf{R} \) are positive mutually singular measures on \( p^{-1}(K_\mathsf{max}) \), i.e., \( \mu_\mathsf{R} - \nu_\mathsf{R} \) is the Riesz charge of a \( \delta \)-subharmonic function \( ler^{(2)} \). Clearly, \( \nu_\mathsf{R}\leq \nu^{(2)} \), \( \mu_\mathsf{R} \leq \widehat\mu^{(2)} \), and for any Borel set \( B\subset p^{-1}(K_\mathsf{max})\setminus {\bf rp}(\RS) \) that lies schlicht over \( \D \) it holds that
\begin{equation}
\label{muRmupr}
\mu_\mathsf{R}(B) \leq \mu^{(2)}(p(B)). 
\end{equation}

Similarly to the computation in \eqref{defetagev}, one can show that \( \sum_{\zeta\in p^{-1}(z)} m_\zeta h(\zeta) \) is harmonic in \( \D \) when \( h \) is harmonic on \( \RS \). Hence, it follows from the maximum principle for harmonic functions and Lemma~\ref{lem:hr} that
\[
\sum_{\zeta\in p^{-1}(z)} m_\zeta h_\RS(\zeta) = 2/\cp_\D(\K) + h_\mathsf{pr}(z), \quad h_\mathsf{pr}(z) := \sum_{\zeta\in p^{-1}(z)} m_\zeta h^{\prime\prime}(\zeta).
\]
Thus, if we set \( \mu_\mathsf{pr} = p_*(\mu_\mathsf{R}) \) and \( \nu_\mathsf{pr} = p_*(\nu_\mathsf{R}) \), see \eqref{push-down}, we get \eqref{f13g}.

As explained in Lemma~\ref{lem:ler2}, \( \RS\setminus V_\mathsf{max} \) is its own base and \( V_\mathsf{max} \setminus G_\mathsf{max} \), \( G_\mathsf{max} \) are disjoint open sets. Hence, \( \RS \setminus G_\mathsf{max} \) is its own base by Lemma~\ref{distregt}. Moreover,  the function \( ler^{(2)} \) is equal to zero  on this set by \eqref{f12m}. Hence, we get from the proof of \cite[Theorem~2]{EremFugSod92} (that carries over {\it mutatis mutandis} to any hyperbolic surface) the implication:
\begin{equation}
  \label{EFS}
\delta_z^{\RS\setminus G_\mathsf{max}}(F) =0 \quad \Rightarrow \quad (\mu_\mathsf{R}+\nu_\mathsf{R})(F)=0
\end{equation}
for any \( F\subset \RS\setminus G_\mathsf{max} \) and  some (therefore any) \( z\in G_\mathsf{max} \). In particular, it follows from Lemma~\ref{supfin} that \( \supp(\mu_\mathsf{R}+\nu_\mathsf{R})\subseteq \partial G_\mathsf{max} \setminus\mathcal{T}\) and that this measure does not charge polar sets. Hence, since \( G_\mathsf{max} \) lies schlicht over \( D_\mathsf{max} \), it holds that \( \supp(\mu_\mathsf{pr}+\nu_\mathsf{pr})\subseteq \partial D_\mathsf{max} \setminus\T\). Moreover, let \( \tilde E \) be the set of points \( z\in \partial D_\mathsf{max}  \) such that \( \operatorname*{card}\left(  p^{-1}(z)\cap (\partial G_\mathsf{max}\setminus E^*) \right) > 2 \), where \( E^* \) is the set of non-accessible points of \( \partial G_\mathsf{max} \). It follows from Lemma~\ref{lem:prime-ends} that \( \tilde E \) is at most countable and therefore is not charged by \( \mu_\mathsf{pr} \). Of course, the same is true of \( p({\bf rp}(\RS)) \), as it is a finite set. For any Borel set \( B \subseteq \partial D_\mathsf{max} \), let \( \tilde B := B\setminus(\tilde E \cup p({\bf rp}(\RS))) \). Then, we obtain
\begin{align*}
\mu_\mathsf{pr}(B) &= \mu_\mathsf{pr}(\tilde B) = \mu_\mathsf{R} \big( p^{-1}(\tilde B)\cap \partial G_\mathsf{max} \big) \\
&= \mu_\mathsf{R} \big( p^{-1}(\tilde B)\cap (\partial G_\mathsf{max}\setminus E^*) \big)  \leq 2  \mu^{(2)}(B),
\end{align*}
where the third equality follows from \cite[Corollary~2]{EremFugSod92}, which says that the Riesz charge of a \( \delta \)-subharmonic function cannot charge points that are non-accessible from the complement of the base of its zero set, and the last inequality is due to Lemma~\ref{lem:prime-ends} and \eqref{muRmupr}.
\end{proof}

\subsection{Computation of the Logarithmic Error Function}
\label{ssec_concl}

Lemma~\ref{lem13b} implies that \( ler_\mathsf{pr}(z)=0 \) for \( z\in K_\mathsf{max}\setminus \widetilde N_2 \). Hence, it follows from \eqref{f13g} and properties of the Green equilibrium potential that
\begin{equation}
\label{proof_lb1}
g\big( \mu_\mathsf{pr},\D;z \big) - g(\nu_\mathsf{pr},\D;z) - h_\mathsf{pr}(z) = 2\frac{\cp_\D(K_\mathsf{max})}{\cp_\D(\K)} g \big(\mu_{\D,K_\mathsf{max}},\D;z \big)
\end{equation}
for \( z\in K_\mathsf{max}\setminus \widetilde N_2 \) (since \( K_\mathsf{max} \) is equal to its own base, \( g \big(\mu_{\D,K_\mathsf{max}},\D;\cdot \big) \equiv 1/\cp_\D(K_\mathsf{max}) \) on \( K_\mathsf{max} \)). Assume that either \( \nu_\mathsf{pr} \) is a non-zero measure or \( h_\mathsf{pr} \) is strictly positive (harmonic) function in  \( \D \). Then, since \( K_\mathsf{max} \) is separated from \( \T \), it would hold that 
\[
g\big( \mu_\mathsf{pr},\D;z \big) >  2\frac{\cp_\D(K_\mathsf{max})}{\cp_\D(\K)} g \big(\mu_{\D,K_\mathsf{max}},\D;z \big)
\]
for \( z\in K_\mathsf{max}\setminus \widetilde N_2 \). Since \( \supp(\mu_{\D,K_\mathsf{max}} )\subseteq K_\mathsf{max} \) and   \( K_\mathsf{max} \) is its own base, the \hyperref[sdp]{Strong Domination Principle}, see Section~\ref{ssec_thin}, implies that the above inequality holds everywhere in \( \D \). Thus,  integrating both sides of this inequality against \( \mu_{\D,K_\mathsf{max}} \) which is a probability measure, we get on using Tonelli's theorem on the left-hand side and multiplying by \( \cp_\D(K_\mathsf{max}) \)  that
\begin{equation}
\label{proof_lb5}
\|\mu_\mathsf{pr} \| > 2\frac{\cp_\D(K_\mathsf{max})}{\cp_\D(\K)}.
\end{equation}
On the one hand, by Lemma~\ref{lem13b} and equation \eqref{f12n} together with the very construction of \( \mu \), we have that \( \|\mu_\mathsf{pr} \| \leq 2\|\mu^{(2)}\| \leq 2\|\mu\|\leq 2 \). On the other hand holds  \( \cp_\D(K_\mathsf{max})\geq\cp_\D(\K) \), see \eqref{mincapset}. These observations clearly show that \eqref{proof_lb5} is impossible. Hence, it is necessarily the case that
\begin{equation}
\label{proof_lb4}
 \nu_\mathsf{pr} = 0 \quad \text{and} \quad h_\mathsf{pr}(z) \equiv 0, \;\; z\in\D.
 \end{equation}
Then, one can rewrite \eqref{proof_lb1} as
\begin{equation}
\label{proof_lb2}
g\big( \mu_\mathsf{pr},\D;z \big) = 2\frac{\cp_\D(K_\mathsf{max})}{\cp_\D(\K)} g \big(\mu_{\D,K_\mathsf{max}},\D;z \big)
\end{equation}
for \( z\in K_\mathsf{max}\setminus \widetilde N_2 \). Using now the \hyperref[sdp]{Strong Domination Principle} in both directions, we get that \eqref{proof_lb2} holds for every \( z\in \D \). Therefore,
\begin{equation}
  \label{inegpfi}
\mu^{(2)} \geq \frac12 \mu_\mathsf{pr} = \frac{\cp_\D(K_\mathsf{max})}{\cp_\D(\K)} \mu_{\D,K_\mathsf{max}}
\end{equation}
which, upon recalling once again \eqref{mincapset} and the fact that \( \mu_{\D,K_\mathsf{max}} \) is a probability measure, gives us
\begin{equation}
\label{proof_lb3}
\cp_\D(K_\mathsf{max})=\cp_\D(\K) \qandq \mu^{(2)} = \mu_{\D,K_\mathsf{max}}.
\end{equation}
In particular $\widetilde{N}_2=\varnothing$ and the first equality in \eqref{proof_lb3}, combined with the minimality and uniqueness of \( \K \), yields that \( K_\mathsf{max} = \K \). In addition, as $\|\mu^{(2)}\|=1$ by \eqref{proof_lb3}, we get from \eqref{f12n} that $\supp(\mu^{(1)})\cap D_\mathsf{max} =\varnothing$ and therefore $\mu^{(1)}=\mu^{(2)}$, by \eqref{char_balayage}, \eqref{def_bal2}, and the fact that $\D\setminus D_{\mathsf{max}}$  is its own base (because so is \( \RS \setminus G_\mathsf{max} \)  and we can use Lemma \ref{lem:fineopen}). Moreover, we also get from \eqref{f12n} that $\supp_{\mathsf{f}}(\mu^{(1)})\cap D_\mathsf{max} = \varnothing$ (note that $\supp_{\mathsf{f}}(\mu^{(1)})$ exists because $\mu^{(1)}$ is admissible, see Section \ref{ssec_bal}).  Remembering that the fine open set $D^\prime$ is regular by \eqref{DprimeVprime}, we get from \eqref{preservemass} and Lemma~\ref{supfin} that $\supp_{\mathsf{f}}(\mu^{(1)})\supset \cup_{t}\partial_{\mathsf{f}}D^\prime_t$ where $D^\prime_t$ are the finely connected components of $D^\prime$ such that $\mu(D_t)\neq0$. However, since $K_\mathsf{max}$(= \( \K \)) has no fine interior by inspection of \eqref{Kf}, each $\partial_{\mathsf{f}}D_t^\prime$ must intersect  $D_\mathsf{max}$ whenever $D^\prime_t$ is  nonempty. Hence, \(\mu\) cannot charge $D^\prime$ as otherwise it would contradict that $\supp_{\mathsf{f}}(\mu^{(1)})\cap D_{\mathsf{max}}=\varnothing$. Consequently, since  $\D\setminus D^\prime$ is its own base, we conclude in view of  \eqref{char_balayage} and \eqref{def_bal1} that $\mu^{(1)} = \mu $. Thus, we obtain altogether that
\begin{equation}
\label{concl1}
\mu_{\D,\K} = \mu^{(2)} = \mu^{(1)} = \mu = \mu^\prime,
\end{equation}
where the last equality comes from the inequalities \( \mu\leq\mu^\prime \) and \( \|\mu^\prime\|\leq1 \), see \eqref{mun-muprime}. In addition, since $\mu_{\D,\K}$ has finite potential everywhere, we get that $\widetilde{N}_0=N_0=\varnothing$ whence also $\widetilde{N}_1=N_1=\varnothing$.

From \eqref{concl1} one sees that $\mu$ does not charge $D_{\mathsf{max}}$, implying in view of \eqref{ler} that $ler$ is subharmonic on $V_{\mathsf{max}}=p^{-1}(D_{\mathsf{max}})$. In particular, since $g (\mu_{\D,K_\mathsf{max}},\D;\cdot)$ extends continuously by zero on $\T$ and  $h^\prime$ by $2/\cp_\D(K_f)$ on $\mathcal{T}$, see Lemma~\ref{lem:hr}), while $\mathrm{fine}\lim_{z\to \zeta} ler(z)=ler(\zeta)<0$ when $\zeta\in J$ by the fine continuity of $ler$ and the fact that $J\subset G_-$, it follows from the relative fine boundary maximum principle \cite[Theorem 10.8]{Fuglede} that $ler<0$ in the annular region $\mathcal{A}(\mathcal{T},J)$ bounded by $\mathcal{T}$ and $J$, since it is bounded above by the fine potential $g (\widehat{\mu}_{\D,K_\mathsf{max}},\RS;\cdot)$ there, see \eqref{remontp}. Let now $G^\prime$ be a finely connected component of $G_+$.  Since \( G^+ = p^{-1}(p(G^+)) \) by Lemma~\ref{lem11a} and $\mathcal{A}(\mathcal{T},J)\subset G_-$, $p(G^\prime)$ lies in the interior of $p(J)$. Hence, $D^\prime = p(G^\prime)\in D_J$ and by what precedes $D^\prime \cap \K = \varnothing$ as otherwise $\mu=\mu_{\D,\K}$ would charge $D^\prime$ because it is carried by the whole set \( \K \) according to \eqref{fsuppeqm}, i.e., it cannot be carried by $K_f\setminus D^\prime$ (which is finely closed). Consequently, $G^\prime\cap p^{-1}(K_f)=\varnothing$ which implies that $ler\leq0$ on $p^{-1}(K_f)$, and the relative fine boundary maximum principle in turn implies that $ler\leq0$ on  $V_\mathsf{max}$. Moreover, $ler<0$ on $G_\mathsf{max}$ as it is strictly negative on  $\mathcal{A}(\mathcal{T},J)$. Immediately we deduce that $G_+=\varnothing$ and \( G_\mathsf{max}\subseteq G_- \). Furthermore, since \( G_- \) lies schlicht over \( \D \) by  Lemma~\ref{lem12a} while \( K_f \) has no fine interior, maximality of \( G_\mathsf{max} \) implies that \( G_\mathsf{max}= G_- \). Altogether, we obtain that \( D^\prime = V^\prime = \varnothing \) and $G_-=G_\mathsf{max}$. In particular, the step of Lemma~\ref{lem:ler1} is vacuous and \( \nu^{(1)}=\nu \) as well as $ler=ler^{(1)}$. Moreover, \( ler(z)=0 \) holds for \( z\in \RS\setminus G_\mathsf{max} \).

Next, by \eqref{inegpfi}, \eqref{concl1} and  the construction of the measures \( \mu_\mathsf{pr} \), \( \mu_\mathsf{R} \) in Lemma~\ref{lem13b}, along with the discussion after \eqref{EFS}, one has
\[
2\mu_{\D,\K}  = 2\mu^{(2)} = \mu_\mathsf{pr} = p_*(\mu_\mathsf{R}) \qandq \mu_\mathsf{R} \leq (\widehat\mu_{\D,\K})_{\mathcal b\partial G_\mathsf{max}}.
\]
Lemma~\ref{lem:prime-ends} now yields that this last inequality is in fact an equality. As \(\mu_\mathsf{R}-\nu_\mathsf{R}\) is the Riesz charge of $ler^{(2)}$ that vanishes on $\RS\setminus G_\mathsf{max}$ by \eqref{f12m}, the discussion after \eqref{EFS} implies \( \nu_\mathsf{R} = \nu^{(2)}_{\mathcal b \partial G_\mathsf{max}} \), and since \( \nu_\mathsf{pr} = p_*(\nu_\mathsf{R}) \) we get from \eqref{proof_lb4} and \eqref{def_bal2} that
\begin{equation}
\label{concl2}
0 = \nu^{(2)}(\overline G_\mathsf{max}) = \nu^{(1)}(\overline G_\mathsf{max}) = \nu(\overline G_\mathsf{max}),
\end{equation}
where the middle equality holds by \eqref{preservemass} and Lemma \ref{supfin} (because $\delta_z^{\RS\setminus G_\mathsf{max}}$ is a strictly positive function of $z$ in the regular open set $ G_\mathsf{max}$), while the last equality comes from $ler=ler^{(1)}$. Recall that \( \nu= \nu^{\prime\prime} + \nu^\prime = \nu^* + \tilde \nu + \nu^\prime \), see \eqref{ler0} and \eqref{pdlet-8}, where \( \nu^* \) the vague limit point of \( \{\nu_n\} \) in \( \RS \), see Lemma~\ref{lem:pdlet-surface}. As \( ler(z)=0 \) for \( z\in \RS\setminus G_\mathsf{max} \) and $ler$ is a \( \delta \)-subharmonic function, we get from \cite[Theorem~2]{EremFugSod92} that its Riesz charge is supported on \( \partial G_\mathsf{max} \). In view of \eqref{concl2}, it entails that
\begin{equation}
\label{concl3}
(\widehat\mu_{\D,\K})_{\mathcal b \overline{p^{-1}(K_f)\setminus\partial G_\mathsf{max}}} = \nu = \nu^{\prime\prime }= \nu^*,
\end{equation}
where \( \tilde \nu + \nu^\prime = 0\) since it is a measure supported on \( E_f \), see Lemma~\ref{lem:hr-e} and \eqref{pdlet-6}, while \( \nu \) does not charge polar sets by the first equality above. Since \( \nu=\nu^{(1)} \) and \( \nu^{(1)} \) has no mass in \( V_\mathsf{max} \) by \eqref{concl3}, the step of Lemma~\ref{lem:ler2} was also vacuous. We thus get that \( ler=ler^{(2)} \), and it follows from \eqref{proof_lb4} together with the construction of \( h_\mathsf{pr} \) in Lemma~\ref{lem13b} that \( h_\RS=h^\prime \). Therefore, the inequality in \eqref{f12m} is in fact an equality by Lemma~\ref{lem:hr},
and consequently
\begin{equation}
\label{concl4}
ler(z) = 
\begin{cases}
2 g(\mu_{\D,\K,},\D;p(z)) - 2/\cp_\D(\K), &  z\in G_\mathsf{max}, \smallskip \\
0, & z\in\RS\setminus G_\mathsf{max},
\end{cases}
\end{equation}
as both sides of \eqref{concl4} are continuous on \( \overline{\RS} \), equal to zero on \( \RS\setminus G_\mathsf{max} \), equal to \( -2/\cp_\D(\K) \) on \( \B \), and  harmonic in \( G_\mathsf{max} \) so that the equality in \( G_\mathsf{max} \) is consequence of the maximum principle for harmonic functions. 

Notice that we started our proof with the limit \eqref{f6a} taking place along the full sequence \( \N \) of integers, that was later refined into a subsequence in Section~\ref{sec_sripped_error}, and refined still further in Section~\ref{ssec:ler} to a subsequence \( \mathcal N \) along which all the above results hold. However, we could have initiated our argument using any subsequence  \( \N_0\subset\N \) in \eqref{f6a} with the same conclusions holding along some \( \mathcal N\subseteq\N_0 \). Hence, if there existed a subsequence along which either \( \mu^\prime\neq\mu_{\D,\K} \), \( \nu^\prime\neq 0 \), \( \tilde\nu\neq 0 \),  or \( \nu^* \) are not as in the left-hand side of \eqref{concl3}, or if \( h^{\prime\prime} \) were not equal to zero, then we could use it as \( \N_0 \) in \eqref{f6a} to arrive at a contradiction. Hence, all lemmas of this section hold along the full sequence \( \N \).

This proves \eqref{wcopg} and the analogous assertion in Theorem \ref{thm:main2}.

\subsection{Convergence in Capacity}
\label{ssec:conv_cap}

In view of Sections~\ref{ssec_reduc}-\ref{ssec_nehari}, to complete the proof of  Theorems~\ref{thm:main1} and~\ref{thm:main2}, we only need to establish that
\begin{equation}
\label{concl6}
\frac1{2n}\log | (f-N(M_n))(z) | \, \cic \, g\big(\mu_{\D,\K},\D;z\big) - \frac1{\cp_\D(\K)} \qasq n\to\infty, \;\; z\in \D\setminus\K.
\end{equation}

Let \( K\subset\D\setminus\K \) be compact. Further, let \( V \) be an open neighborhood of \( p^{-1}(\K) \) on \( \RS \) whose closure is disjoint from \( K \), and \( U  \)  an open subset of \( \RS \) containing \( K \) whose closure is disjoint from \( \overline V\cup \B \).  For convenience, we also assume that \( V=p^{-1}(p(V)) \). Recall from \eqref{gen-bl}--\eqref{hnm} the relation
\[
\frac1{2n}\log | (f-N(M_n)\circ p)(z) | = \frac12\big(h_{n,n}(z) + g(\mu_n,\D;p(z)) - g(\nu_n,\Omega_n;z) \big), \quad z\in\Omega_n.
\]
We have established in the previous section that the functions \( h_{n,n} \) converge to \( - h_{\RS} = -h^\prime \) locally uniformly in \( \RS\setminus E_f \) and therefore on \( K \), see Lemma~\ref{lem:hr-e} (note that $\mathcal{N}^\prime$ may now be replaced by $\N$ by the last remark in Section~\ref{ssec_concl}). Since
\[
\frac12\big(-h_{\RS}(z) + g(\mu_{\D,\K},\D;p(z)) - g(\nu,\RS;z) \big) = \frac12 ler(z) = g(\mu_{\D,\K},\D;p(z)) - \frac1{\cp_\D(\K)}
\]
for \( z\in G_\mathsf{max} \) by \eqref{ler0} and \eqref{concl4}, to obtain \eqref{concl6} we only need to establish that for any \( a>0 \)
\begin{equation}
\label{bsy}
\begin{cases}
\displaystyle\lim_{n\to\infty} \cp_\D\big( \{z\in p(K):|g(\mu_n,\D;z) - g(\mu_{\D,\K},\D;z)|> a\} \big) = 0, \medskip \\
\displaystyle\lim_{n\to\infty} \cp_\D\big( p\{z\in K:| g(\nu_n,\Omega_n;z) - g(\nu,\RS;z)|> a\} \big) = 0.
\end{cases}
\end{equation}

Write \( \mu_n=\mu_{n,1}+\mu_{n,2} \), where \( \mu_{n,1}:= \mu_{n\mathcal b p(V)} \). Notice that \( \mu_{n,1} \cws \mu_{\D,\K} \) and therefore the differences \( g(\mu_{n,1},\D;\cdot) - g(\mu_{\D,\K},\D;\cdot) \) converge uniformly to zero on \( K \).  Recall that the Green potential of a measure supported in \( \D \) can be written as a difference of the logarithmic potentials of that measure and of its balayage onto \( \T \), see \eqref{balogh}. In \cite[Lemma~21]{BStYa}, it was shown  that when compactly supported measures converge weak$^*$ to the zero measure then their logarithmic potentials converge to zero in logarithmic capacity on \( \C \),  see \eqref{deflogcap} for the definition of logarithmic capacity. Thus, the first relation in \eqref{bsy} holds but in logarithmic capacity.  However, since $t\mapsto g_D(t,z)+\log|z-t|$  is uniformly  bounded for $z$ on compact subsets of $\D$, the convergence in logarithmic and Greenian capacities in \( \D \) are equivalent. This finishes the proof of the first limit in \eqref{bsy}.

The proof of the second limit in \eqref{bsy} proceeds similarly, but requires a more detailed analysis. As we have shown in the previous subsection, the measures $\nu_{n\mathcal b\Omega_n}$ converge vaguely to $\nu^*$ on \( \RS \), where \( \nu^* \) stands for the left-hand side of \eqref{concl3}. Hence, one has the limiting relation
\begin{equation}
\label{wsc-1}
\nu_{n,1}:=\nu_{n\mathcal b \overline V\cap\Omega_n} \overset{w*}{\to} \nu^*
\end{equation}
on \( \RS \). The functions \( g_{\mathcal R}(z,w) - g_{\Omega_n}(z,w) \) uniformly converge to zero for \( w\in V \) and \( z\in K \), see \eqref{etalm} (we extend \( g_{\Omega_n}(z,w) \) by zero to \( \RS\setminus \Omega_n \)). Thus, as the measures \( \nu_{n,1} \) have uniformly bounded masses (for they converge weak$^*$ to a finite measure), we get that
\begin{equation}
\label{wsc-2}
\lim_{n\to\infty} \big(g(\nu_{n,1},\RS;z) - g(\nu_{n,1},\Omega_n;z)\big) = 0
\end{equation}
uniformly on \( K \).  Moreover, since the \( g(\nu_{n,1},\RS;\cdot) \) are positive harmonic functions on \( U \) for all \( n \) large, they converge uniformly to \( g(\nu^*,\RS;\cdot) \) on \( K \) (they converge pointwise on \( U \) by \eqref{wsc-1} and hence uniformly on \( K \) by \hyperref[hst]{Harnack's theorem}). That is, we get from \eqref{wsc-2} that
\begin{equation}
\label{wsc-3}
\lim_{n\to\infty} g(\nu_{n,1},\Omega_n;z)  = g(\nu^*,\RS,z)
\end{equation}
uniformly on \( K \).  Next, let \( \nu_{n,2} := \nu_{n\mathcal b U} \). As the
\( \nu_n \) converge vaguely to \( \nu^* \) and $\overline{U}\cap(\overline{V}\cup\mathcal{T})=\varnothing$, the measures \( \nu_{n,2} \) converge strongly to the zero measure. Subsequently, the measures \( p_*(\nu_{n,2}) \)  strongly converge to zero. As before, it entails that the potentials \( g(p_*(\nu_{n,2}),\D;p(\cdot)) \) as well as \( g(\nu_{n,2},\RS;\cdot) \) converge to zero in capacity on \( K \), by \eqref{push-down}.  As the latter potentials majorize \( g(\nu_{n,2},\Omega_n;\cdot) \), these in turn converge to zero in capacity on \( K \). Finally, define \( \nu_{n,3} := \nu_n - \nu_{n,1} - \nu_{n,2} \). The potentials \( g(\nu_{n,3},\Omega_n;\cdot) \) form a sequence of positive harmonic functions in \( U \). By \hyperref[hst]{Harnack's theorem} there exists a subsequence of indices, say \( \mathcal N_0 \), along which these potentials converge locally uniformly  on \( U \) to some \( h_0 \) which is a non-negative harmonic function or $+\infty$. Now, we can initiate the proof of Theorem~\ref{thm:main2} in Section~\ref{sec_sripped_error} with \( \mathcal N_0 \) instead of \( \N \). Then, it would follow from \eqref{concl3} and \eqref{wsc-3} that the function \( h^{\prime\prime} \) from Lemma~\ref{lem:pdlet-surface} must coincide with \( h_0 \) in \( U \). Proceeding with the remainder of the proof we again inevitably arrive at the conclusion that \( h^{\prime\prime}\equiv0 \). Hence, the potentials \( g(\nu_{n,3},\Omega_n;\cdot) \) converge to \( 0 \) locally uniformly on \( U \) and thus uniformly on \( K \). This finishes the proof of the second limit in \eqref{bsy} and, respectively, of \eqref{concl6}; that is to say, of Theorem~\ref{thm:main2}.

Finally, we need to account for Remark~\ref{convlogcapv}, in which we claimed that for any sequence $\{M_n\}$ of $n$-th root optimal meromorphic approximants  to $f\in \mathcal F(\overline\C\setminus D)$ and any $\epsilon>0$, it holds that
\begin{equation}
\label{lower_bound_log_cap}
\lim_{n\to\infty} \cp\left\{z\in K:\frac1{2n}\log | (f-M_n)(z) |- g\big(\mu_{D,\K},D;z\big)+ \frac1{\cp_D(\K)}  >\epsilon\right\} = 0
\end{equation}
for every compact set \( K\subset \overline D\setminus \K \); moreover, if $|f-M_n|^{1/n}$ is asymptotically constant on $T$, i.e., if
\begin{equation}
\label{asce}
\lim_{n\to\infty}\inf_{z\in\mathcal{T}}|(f-M_n)(z)|^{1/n} =  \lim_{n\to\infty}\sup_{z\in\mathcal{T}}|(f-M_n)(z)|^{1/n}=\exp\left\{-\frac{2}{\cp_D(K_f)}\right\},
\end{equation}
then the limit in \eqref{lower_bound_log_cap} is supplemented by 
\begin{equation}
\label{upper_bound_log_cap}
\lim_{n\to\infty} \cp\left\{z\in K: \frac1{2n}\log | (f-M_n)(z) |- g\big(\mu_{D,\K},D;z\big)+ \frac1{\cp_D(\K)}  < - \epsilon\right\} = 0.
\end{equation}
Together, \eqref{lower_bound_log_cap} and \eqref{upper_bound_log_cap} establish a full analog of \eqref{concl6} but in logarithmic rather than Greenian capacity, and not just in \( D\setminus K_f \) but in \( \overline D\setminus \K \). Notice also that we work directly on the domain \( D \) here, because logarithmic capacity is not preserved by conformal maps. 

Since convergence in logarithmic and Greenian capacities are equivalent on compact subsets of \( D\setminus \K \), we only need to establish  \eqref{lower_bound_log_cap} and \eqref{upper_bound_log_cap} for a compact set \( K_\eta \) of the form
\[
K_\eta:=\big\{ z\in \overline D:1-\eta \leq \big|\phi^{-1}(z) \big| \leq1 \big\}
\]
for some \( \eta>0 \) sufficiently small, where \( \phi:\D \to D \) is a conformal map. Let \( K_\eta^\prime = \overline{K_{2\eta}\setminus K_\eta} \). We claim that for all \( n \) large enough there exists \( r_n \in (1-4\eta,1-2\eta) \) such that
\begin{equation}
\label{small_circle}
\left|\frac1{2n}\log | (f-M_n)(z) |- g\big(\mu_{D,\K},D;z\big)+ \frac1{\cp_D(\K)}\right|  \leq \epsilon,
\end{equation}
for all \( z\in T_{r_n} \), where \( T_r := \phi(\T_r) \). Indeed, let \( E_{n,2\eta} \) be the set of all \( z\in K_{2\eta}^\prime \) for which the above inequality is false. As we have already shown, \eqref{concl6} holds for the $n$-th root optimal approximants \( M_n \), both in Greenian and logarithmic capacities on compact subsets of \( D\setminus \K \), in particular, on \( K_{2\eta}^\prime \). Hence, \( \cp(E_{n,2\eta}) \to 0 \) as \( n\to\infty \). On the other hand, if our claim were false, then \( E_{n,2\eta} \cap T_r \neq \varnothing \) for every \( r\in[1-4\eta,1-2\eta] \). In this case it must hold that
\[
\cp(E_{n,\eta})\geq \frac{\cp([1-4\eta,1-2\eta])}{\|(\phi^{-1})^\prime\|_{K_\eta^\prime}} = \frac{\eta}{2\|(\phi^{-1})^\prime\|_{K_\eta^\prime}}
\]
because contractive maps, in particular $\phi^{-1}_{\mathcal{b}K^\prime_\eta}/\|(\phi^{-1})^\prime\|_{K_\eta^\prime}$ followed by the circular projection, do not increase logarithmic capacity \cite[Theorem 5.3.1]{Ransford}. The above contradiction proves our claim. Clearly, \( T_{r_n} \) is a Jordan curve homotopic to \( T \) in $D\setminus K_f$ for all \( \eta \) small enough. By adding the same constant to \( f \) and \( M_n \) we can assume that \( f \) is not vanishing on \( K_{2\eta}^\prime \). Hence, we deduce from \eqref{small_circle} that $|f-M_n|<|f|$ on $T_{r_n}$ for all $n$ large enough. Besides, the same inequality certainly holds on $T$ and therefore, by Rouch\'e's theorem, the number of zeros of $f-M_n$ in $K_{2\eta}$ is less that the number of its poles plus the degree of $f$ on $T\cup T_{r_n}$ (which is bounded by a constant independently of $n$). It now follows from Theorem~\ref{thm:main2} that \( f-M_n \) has little \( o(1) \) poles and zeros in \( K_{2\eta} \) as \( n\to\infty \). Denote by \( b_{n,\eta}^{pole} \) and \( b_{n,\eta}^{zero} \) the Blaschke products in \( D \) vanishing at the poles and zeros of \( f-M_n \) in \( K_{2\eta} \), respectively. As Green potentials of measures converging
strongly to zero converge to zero in capacity in \( D \),  the functions
\[
e_n(z) := \frac{1}{2n}\log|(f-M_n)(z)b_{n,\eta}^{pole}(z)/b_{n,\eta}^{zero}(z)|-g(\mu_{D,K_f},D;z)+\frac1{\cp_D(\K)}
\] 
still converge to zero in capacity in \( D\setminus K_f \). Hence, similarly to \eqref{small_circle}, we can find \( r_n^*\in(1-2\eta,1-\eta) \) such that \( |e_n(z)| < \epsilon \) for all \( z\in T_{r_n^*} \). As $\limsup_n \sup_{z\in T}e_n(z)\leq0$  by the $n$-th root optimality of $e_n$ and $\lim_n \|e_n\|_T=0$ if in addition \eqref{asce} holds (asymptotic circularity of the error), the desired conclusion \eqref{lower_bound_log_cap} and \eqref{upper_bound_log_cap} come out by applying the maximum principle for the harmonic function to $e_n$ on \( K_\eta \), along with the minimum principle if \eqref{asce} holds.

\section{Proof of Theorem~\ref{thm:main2p}}

Since the considerations of Sections~\ref{ssec_reduc} and~\ref{ssec_nehari} still apply, we can assume that \( D=\D \), i.e., that  $f$ is analytic in $\overline{\C}\setminus E,$ where $E\subset\D$ is closed polar, and we may replace \(\{M_n\} \) by  the sequence \(\{N(M_n)\} \) of its Nehari modifications. Write \( N(M_n) = h_n/b_n \), where $h_n\in \mathcal{A}(\D)$ and \( b_n \) is a Blaschke product vanishing at the poles of \( N(M_n) \) according to their multiplicities. That is, \( b_n=q_n/\widetilde q_n \), where  $q_n\in\mathcal{M}_n(\D)$ is a polynomial such that \( q_nM_n \in \mathcal{A}(\D) \) and $\widetilde{q}_n(z):=z^n\overline{q_n(1/\bar z)}$ is the reciprocal polynomial. We have that
\[
|f(z) - (h_n/b_n)(z)| = |(b_nf)(z)-h_n(z)|, \quad z\in \T,
\]
since Blaschke products are unimodular on the unit circle. Thus, \( h_n \) is in fact the best Nehari approximant of $b_nf$ in \( \mathcal{A}(\D) \). Recall that the above expressions converge to zero faster than geometrically to zero by the very choice of \( \{M_n\} \).

Our goal is to show that $h_n/b_n$  converge in logarithmic capacity to $f$ in \( \overline{\D}\setminus E \) at faster than geometric rate. That is, we fix a compact set $K\subset \overline{\D}$ disjoint from $E$ and we will prove that 
\begin{equation}
\label{convcapc}
\lim_{n\to\infty} \cp\big( \{z\in K:|e_n(z)|> a^n\} \big) = 0\quad \text{for any} \quad a>0,
\end{equation}
where \( e_n(z) := f(z)-(h_n/b_n)(z) \) is the approximation error. This will establish convergence in logarithmic capacity on compact subsets of \( \overline\D\setminus E \). Subsequently, as we pointed out in Section~\ref{ssec:conv_cap}, \eqref{convcapc} yields an analogous claim for the Greenian capacity on any compact subset of \( \D\setminus E \).

Since $\cp(E)=0$, it follows from \cite[Theorems 5.5.2 \& 5.5.4]{Ransford} that for each $\eta>0$ there is $k\in\N$ and $p_k\in\mathcal M_k(E)$ (we can take \( p_k \) to be the \( k \)-th Fekete polynomial for \( E \)) such that
\begin{equation}
\label{lemniscate}E\subset L_\eta:=\big\{\zeta\in\C: |p_k(\zeta)|<\eta^k\big\}.
\end{equation}
Pick $\eta<\min\{\mathrm{dist}(K,E)\,,\,1\}$ to be adjusted later. Of necessity $K\cap L_\eta=\varnothing$, because  $|p_k(z)|\geq\dist(K,E)^k$  for $z\in K$. Let  $\gamma\subset L_\eta$ be  a system of closed curves encompassing each point of $E$ exactly once, and such that $\dist(\gamma,E)<\dist(\T,E)/4$. Then, one has that
\begin{equation}
\label{Cauchyexp}
f(z) = \int_\gamma \frac{f(\xi)}{z-\xi}\frac{d\xi}{2\pi\ic}, \quad z\in\overline\C\setminus\overline{\mathrm{int}\,\gamma},
\end{equation}
by the Cauchy formula, where \( \mathrm{int}\,\gamma \) is the union of the bounded components of the complement of \( \gamma \).  Since $\mathbb{P}_-$ evaluated at $|z|>1$ coincides with the Cauchy projection having  kernel $(2\pi\ic(z-\zeta))^{-1}d\zeta$ on $\T$, the Hankel operator $\Gamma_{b_n f}$ acts on $v\in H^2$ by
\begin{equation}
\label{intCprojm}
\Gamma_{b_nf}(v)(z)=\int_\T \frac{b_n(\zeta)f(\zeta)v(\zeta)}{z-\zeta}\frac{d\zeta}{2\pi\ic}, \quad  |z|>1.
\end{equation}
Inserting  \eqref{Cauchyexp} into  \eqref{intCprojm} yields by Fubini's theorem and the Cauchy formula for $H^2$-functions that
\begin{equation}
\label{Hankint}
\Gamma_{b_nf}(v)(z) = \int_\gamma\frac{b_n(\xi)f(\xi)v(\xi)}{z-\xi}\frac{d\xi}{2\pi\ic}, \quad z\in \overline\C\setminus\overline{\mathrm{int}\,\gamma}.
\end{equation}
Observe that \eqref{Hankint}, initially proven for $|z|>1$, actually defines  an analytic extension to the exterior of $\gamma$ of \(\Gamma_{b_nf}(v)=\mathbb{P}_-(b_nfv) \in H_-^2\) (and therefore an extension to $\overline{\C}\setminus E$ since $\gamma$ could be taken arbitrary close to $E$).  Next, recall that a first singular vector of the Hankel operator $\Gamma_{b_nf}$ is an element $v_0\in H^2$ of unit norm that maximizes  \(\|\Gamma_{b_nf}(v)\|\) over all $v\in H^2$ with \(\|v\|_2=1\), and that it  always can be chosen to be outer\footnote{An outer function $w\in H^2$ is of the form \( w(z)=\alpha\exp\Big \{ \int_\T \frac{\xi+z}{\xi-z}  \log|w(\xi)| \frac{|d\xi|}{2\pi}\Big \}\), with $w_{\mathcal{b}\T}\in L^2(\T)$ and \( |\alpha|=1 \).}, see for instance \cite[p. 62]{BaSey}. Then, we get from \eqref{Hankint} and \eqref{forerreur} (applied with $n=0$ and $f$ replaced by $fb_n$)  that
\begin{equation}
\label{lerreurpf}
(e_nb_n)(z) = (b_nf-h_n)(z)=\frac{1}{v_0(z)}\int_\gamma \frac{b_n(\xi)f(\xi)v_0(\xi)}{z-\xi}\frac{d\xi}{2\pi\ic},\quad z\in\D\setminus\overline{\mathrm{int}\,\gamma}.
\end{equation}
Note that the right-hand side of \eqref{lerreurpf} is analytic in $\D\setminus\overline{\mathrm{int}\,\gamma}$, since $v_0$  is outer and thus has no zeros in $\D$.  Let \( B_k := p_k/\widetilde p_k \). Similarly to \eqref{Cauchyexp}--\eqref{lerreurpf}, it holds that
\begin{equation}
\label{Pminusalot}
\mathbb{P}_-\left(e_nb_nB_k^\ell \right)(z) =  \frac{1}{2\pi\ic}\int_\gamma \frac{b_n(\xi)f(\xi)B_k^\ell(\xi)}{z-\xi}d\xi,\quad z\in\overline{\D}\setminus\overline{\mathrm{int}\,\gamma},
\end{equation}
where $\ell\in\N$ is such that $\ell k\leq n<(\ell+1)k$ and the right-hand side again defines an analytic extension of the left-hand side into the exterior of $\gamma$. Indeed, we can express the left-hand side of \eqref{Pminusalot} for $|z|>1$ as the Cauchy integral of \( e_nb_nB_k^\ell \) on $\T$ like we did in \eqref{intCprojm} for $\mathbb{P}_-(b_nfv)$.  Since \(e_nb_n\) is analytic across $\T$ and $B_k\in \mathcal{H}(\overline{\D})$, we then deform the contour of integration into a circle  of radius slightly smaller than $1$, which can be done without changing the value of the integral by Cauchy's theorem. Subsequently, we insert \eqref{lerreurpf} in this integral and use Fubini's theorem and the residue formula as before to get \eqref{Pminusalot}.

Recall that  $\mathrm{dist}(\T,E)\geq\mathrm{dist}(K,E)>\eta$ by construction. Observe also that \( |p_k|\leq \eta^k \) on \( \gamma \) by \eqref{lemniscate} and that $|\widetilde{p}_k|\geq \mathrm{dist}(\T,E)^k$ in $\D$. Since $|b_n|\leq1$, we get from our choice of \( \ell \) that
\begin{equation}
\label{expmoinserr}
\left|\mathbb{P}_-\left( e_nb_nB_k^\ell \right)(z)\right| \leq \left( \frac\eta{\dist(\T,E)}\right)^{n-k} \frac{|\gamma| \|f\|_\gamma}{\dist(z,\gamma)}, \quad z\in\overline{\D}\setminus\overline{\mathrm{int}\,\gamma},
\end{equation}
where \(|\gamma|\) stands for the arclength of $\gamma$.  In another connection, it follows from \cite[Lemma]{PommerenkeNP}
      that for any $\varepsilon\in(0,1/3)$ there exists \( W_n\subset \D \) such that  $\cp(W_n)\leq 3\varepsilon$ and
\[
|p_k^\ell(\zeta)q_n(\zeta)|> \varepsilon^{n+\ell k} \|p_k^\ell q_n\|_\T, \quad \zeta\in\D\setminus W_n.
\]
As $|\widetilde{p}_k^\ell\widetilde{q}_n|(z)\leq \|\widetilde{p}_k^\ell\widetilde{q}_n\|_\T=\|p_k^\ell q_n\|_\T$  for $z\in\D$ by the maximum principle and the definition of the reciprocal polynomial, we get  that $|(b_nB_k^\ell)(z)|\geq \varepsilon^{n+k\ell}$ for $z\in\D\setminus W_n$.  Since \( \mathbb P_+ + \mathbb P_- \) is the identity operator, we can use the analytic continuation provided by \eqref{Pminusalot} to write
\begin{equation}
\label{decerrb}
e_n(z)=\frac{\mathbb{P}_+(e_nb_nB_k^\ell)(z)+\mathbb{P}_-(e_nb_nB_k^\ell)(z)}{b_n(z)B_k^\ell(z)}, \quad z\in  \overline{\D}\setminus\overline{\mathrm{int}\,\gamma}. 
 \end{equation}
The estimate $|(b_nB_k^\ell)(z)|\geq \varepsilon^{n+k\ell}$ for $z\in\D\setminus W_n$, \eqref{expmoinserr}, and the definition of $\ell$ give us that
\begin{equation}
\label{estPmerr}
 \left|\frac{\mathbb{P}_-(e_nb_nB_k^\ell)(z)}{b_n(z)B_k^\ell(z)}  \right| \leq \frac1{\varepsilon^{2n}} \left( \frac\eta{\dist(\T,E)}\right)^{n-k} \frac{|\gamma| \|f\|_\gamma}{\dist(z,\gamma)}, \quad z\in \overline{\D}\setminus (\overline{\mathrm{int}\,\gamma}\cup W_n).
 \end{equation}
Now, given \(0<\varepsilon<1\) and $0<a<1$, choose \( \eta \) in \eqref{lemniscate} so that $0<\eta<a\varepsilon^2\mathrm{dist}(\T,E)$. Choice of \( \eta \) of course fixes \( k \) in \eqref{lemniscate}. Then, since $K$ lies exterior to $\gamma$ because $K\cap L_\eta=\varnothing$, we get from \eqref{estPmerr} that there exists a natural number $n_0=n_0(f,K,\varepsilon,\eta,\gamma)$ for which
\begin{equation}
\label{estPmerrfg}
\left|\frac{\mathbb{P}_-(e_nb_nB_k^\ell)(z)}{b_n(z)B_k^\ell(z)}  \right| < a^n, \quad n\geq n_0, \quad z\in K\setminus W_n.
\end{equation}
Next, as $\|e_nb_nB_k^\ell\|_\T=\|e_n\|_\T\to0$ faster than geometrically with $n$ by hypothesis, \eqref{expmoinserr} and the triangle inequality yield that
\begin{equation}
\label{errPplus}
\left\|\mathbb{P}_+(e_nb_nB_k^\ell)\right\|_\T=   \left\|e_nb_nB_k^\ell-\mathbb{P}_-(e_nb_nB_k^\ell)\right\|_\T\leq C\left(\frac{\eta}{\mathrm{dist}(\T,E)}\right)^n, \quad n\geq n^\prime_0,
\end{equation}
for some  constant $C=C(f,E,\gamma)$ and some $n^\prime_0$ depending on $C$ and the speed of approximation of $f$ by $h_n/b_n$. Subsequently, as in \eqref{estPmerrfg}, we get from the estimate $|(b_nB_k^\ell)(z)|\geq \varepsilon^{n+k\ell}$ for $z\in\D\setminus W_n$, \eqref{errPplus}, and the maximum modulus principle that 
\begin{equation}
\label{estPperrfg}
 \left|\frac{\mathbb{P}_+(e_nb_nB_k^\ell)(z)}{b_n(z)B_k^\ell(z)}  \right| < a^n, \quad n\geq n_0^{\prime\prime}, \quad z\in K\setminus W_n,
\end{equation}
for some natural number \( n_0^{\prime\prime} = n_0^{\prime\prime}(f,K,\varepsilon,\eta,\gamma) \).  Because $\varepsilon$ and $a$ can be arbitrarily small and \( \cp(W_n) \leq 3\varepsilon \), \eqref{convcapc} now follows from \eqref{decerrb}, \eqref{estPmerrfg} and \eqref{estPperrfg}.

 Having proven  that \(M_n \overset{\mathrm{cap}}{\to} f \) in \( D \setminus E\) at faster than geometric rate whenever it is a sequence of $n$-th root optimal meromorphic approximants to $f$, we turn to the construction of rational functions $R_{k_n}\in\mathcal{R}_{k_n}(D)$ such that the poles of $R_{k_n}$ are among the poles of $M_n$ lying in $V$ and  \eqref{optimalEs} holds, where $V$ any open set such that $E\subset V\subset \overline{V}\subset D$.  Let $B\supset E$ be a closed set contained in $V$ which is regular for the Dirichlet problem, see Section~\ref{ssec_reg}. Such a $B$ is easily constructed as a sublevel set, for some small regular value,  of a smooth non-negative function whose zero set is $E$, see, for example, discussion after \eqref{exhaustion}. Then the Green equilibrium potential $G(z):=g(\mu_{D,B},D;z)$ is harmonic in $D\setminus B$, continuous on $D$, strictly less than the constant $1/\cp_D(B)$ on $D\setminus B$ and equal to that constant on $B$, see \eqref{GreenEqPot} or Section~\ref{ssec_cap} for a more detailed discussion. Since $\partial_zG(z)$ is holomorphic in $D\setminus B$, the critical points of $G$ are isolated and cannot accumulate in $D\setminus B$, so we can find an interval $[t_1, t_2]\subset(0,1/\cp_D(B))$ that is free of critical values and such that $G^{-1}([t_1,t_2])\subset V$. For any $t\in[t_1,t_2]$, $\gamma(t) := G^{-1}(t)$ is a 1-dimensional compact manifold, i.e., a finite union of disjoint real analytic closed curves $\gamma_{1,t},\ldots,\gamma_{N,t}$, none of which lies interior to another (by the maximum principle), and such that \( B\subset\mathrm{int}\,\gamma(t)\subset\overline{\mathrm{int}\,\gamma(t)}\subset V \). Note that $N$ is independent of $t\in[t_1,t_2]$ since any such $t$ is a regular value; note also that the total length $|\gamma(t)| = \sum_{j=1}^N|\gamma_{j,t}|$ is bounded above independently of \( t \), say by a constant $L$, because the gradient $\nabla G$ is normal to $\gamma_{j,t}$ at its every point and therefore the divergence formula implies for any $t\in[t_1,t_2]$:
\[
t|\gamma(t)|\min_{z\in G^{-1}(t)}\|\nabla G(z)\| \leq t_2|\gamma(t_2)|\max_{z\in G^{-1}(t_2)}\|\nabla G(z)\| -\int_{G^{-1}([t,t_2])}\|\nabla G\|^2dxdy.
\]
Pick $a>0$, set  $K:=\max\{\|\nabla G(z)\|:z\in G^{-1}([t_1,t_2])\}$ and let $n_a\in\N$ be so large that
\begin{equation}
\label{convcapct}
\cp\bigl( \{z\in G^{-1}([t_1,t_2]):|f(z)-M_n(z)|> a^n\} \bigr) <\frac{t_2-t_1}{4K},\quad n\geq n_a.
\end{equation}
Such a $n_a$ exists by the first part of the proof. Let \( A_n \) be the set whose capacity is estimated in \eqref{convcapct}. Assume for the moment that for each $t\in[t_1,t_2]$ there exists $z\in G^{-1}(t) \cap A_n$. Then the image of \( A_n \) under $G/K:G^{-1}([t_1,t_2])\to\R$ is equal to the interval \( [t_1/K,t_2/K] \) whose capacity is \( (t_2-t_1)/(4K) \). However, since contractive maps do not increase the logarithmic capacity \cite[Theorem 5.3.1]{Ransford}, the capacity of \( G(A_n)/K\) should be strictly smaller than \( (t_2-t_1)/(4K) \) by \eqref{convcapct}. Hence, for each $n\geq n_a$ there is $t_n\in[t_1,t_2]$ for which
\begin{equation}
\label{inegfgpart}
\left| \int_{\gamma(t_n)}\frac{f(\xi)-M_n(\xi)}{z-\xi} \frac{d\xi}{2\pi\ic} \right|\leq \frac{L a^n}{2\pi\mathrm{dist}(T,\overline{V})}, \quad z\in T.
\end{equation}
Pick a positive sequence $\{a_k\}$ converging to $0$ and, without loss of generality,  arrange things so that $n_{a_k}<n_{a_{k+1}}$.  Define
\[
J_n(z):= \int_{\gamma(t_{n_{a_k}})}\frac{M_n(\xi)}{z-\xi}\frac{d\xi}{2\pi\ic}, \quad z\in D\setminus \overline V, \quad n_{a_k} \leq n< n_{a_{k+1}}.
\]
Clearly, $J_n$ is a rational function retaining the singular part of $M_n$ inside the system of arcs \( \gamma(t_{n_{a_k}}) \), and it is of type $(k_n-1,k_n)$ where $k_n\leq n$ is the number of the poles of $M_n$ inside this system of arcs, counting multiplicities. If we put $R_{k_n}:=J_n$, then since  $a_k\to0$ we get from  \eqref{inegfgpart} and the Cauchy formula that \eqref{optimalEs} holds, as desired.

\appendix

\section{Potential Theory on a Riemann Surface}
\label{sec_app}

Even though the proof of Theorem~\ref{thm:main2} in Sections~\ref{ssec_nehari}--\ref{ssec_concl} was carried out for \( D=\D \), this appendix is written for a general Jordan domain \( D \) since specializing \( D \) to the unit disk would only shorten the proofs of Lemmas~\ref{lem:thin-sets} and~\ref{casparta} by a couple of paragraphs but otherwise would not lead to any further simplifications.

\subsection{Subharmonic Functions}
\label{ssec_sub}

Let $d$ be the differential and ${}^*$ the conjugation operators on a connected Riemann surface. The Laplacian $\Delta:=d{}^*d$ takes smooth functions  to $2$-forms.  If $U$ is an open subset of the surface and  $u:U\to\R$ a locally integrable function (against the area-form ${}^*{\bf 1}$, where ${\bf 1}$ is the constant unit function, or equivalently against the Lebesgue measure $(\ic/2)dz\wedge d\bar{z}$ in any system of local coordinates $z$, $\overline{z}$), the distributional Laplacian $\Delta u$ is the $0$-current acting on a smooth compactly supported functions $\varphi$ on $U$ by $\int u\Delta\varphi$. When $\Delta u=0$, one says that $u$ is harmonic on $U$, and such functions are in fact smooth (even real analytic) by Weyl's lemma \cite[Theorem~24.9]{Forster}. Subharmonic functions on $U$ are defined as upper-semicontinuous functions $u:U\to[-\infty,\infty)$ such that, if $V$ is open in $U$ and $h:V\to\R$ is harmonic, then $u-h$ is either  constant or fails to have a maximum in $V$. On open subsets of $\C$, this definition coincides with the usual one;  see \cite[Definition~2.2.1 \& Theorem~2.4.1]{Ransford}. A superharmonic function is the negative of a subharmonic function. A difference of two subharmonic functions is sometimes called a \( \delta \)-subharmonic function.

Harmonicity and subharmonicity are local properties:  $u$ is harmonic (resp. subharmonic) on $U$ if and only if its restriction to every open subset is, or equivalently if and only if $u\circ\varphi^{-1}$ is harmonic (resp. subharmonic) on the open set $\varphi(V\cap U)\subset\C$ whenever $(V,\varphi)$ is a local chart. Thus, standard facts regarding such functions on open subsets of a Riemann surface follow from their planar counterparts,  using charts. In particular,  the \emph{integrability theorem} \cite[Theorem~2.5.1]{Ransford} states that a subharmonic function which is not identically $-\infty$ is locally integrable, and therefore it has a  distributional Laplacian. Hence, two subharmonic functions that coincide almost everywhere (with respect to area measure) are in fact equal, for either they are both identically $-\infty$ or they have the same distributional Laplacian, and so their difference is harmonic; this is the \emph{weak identity principle}. The following is a variant of Harnack's theorem \cite[Theorem~1.3.10]{Ransford} and of \cite[Theorem~2.4.6]{Ransford}.

\begin{hst}
\label{hst}
A sequence of harmonic functions on $U$ that is bounded below has a subsequence that converges locally uniformly on $U$, either to $+\infty$ or to a harmonic function. For an increasing sequence, convergence holds along the full sequence. A decreasing sequence of subharmonic functions converges pointwise to a subharmonic function.
\end{hst}

A locally integrable function $u$ is subharmonic if and only if $\Delta u$ is a Radon measure on the surface; that is, $\Delta u$ is a positive linear form on continuous functions with compact  support. Indeed, as this statement is local, it reduces to its planar analog. The ``only if'' part  follows from \cite[Section~3.7]{Ransford}. As to the ``if'' part, let  $W\subset\C$ be open  and $\nu$ be a finite positive Borel measure carried by $W$. The logarithmic potential  of \( \nu \), i.e., $V^\nu(z):=\int\log|z-t|^{-1}d\nu(t)$, is superharmonic on $\C$ with distributional Laplacian  $-\nu$, so if $u$ is locally  integrable on $W$ with $\Delta u=\nu$ there, then $h:=u+V^\nu_{\mathcal bW}$ is harmonic and therefore $u=-V^\nu_{\mathcal bW}+h$ is subharmonic in $W$ (as in the main text, for a set $E$ (that may require further qualification), a subscript $\mathcal bE$ indicates ``restriction to $E$'').

\subsection{Green Functions}
\label{ssec_green}

Throughout, \( \Omega \) will be a subdomain of some ambient algebraic Riemann surface $\RS_*$  such that \( p(\Omega) \) is a bounded domain in \( \C \), where $p$ stands for the canonical projection; in particular the results apply to $\Omega=\RS$ with $p(\Omega)=D$, see beginning of Section~\ref{laclasse}. Since the lift to $\Omega$ of a positive non-constant superharmonic function  on \( p(\Omega) \) is again positive, non-constant, and superharmonic, \( \Omega \) is hyperbolic and as such possesses Green functions \cite[Theorem~IV.3.7]{FarkasKra}. Notice also that there exists a subdomain $\Omega^\prime\subset\RS_*$ with \(p(\Omega^\prime)\) bounded such that $\overline{\Omega}\subset\Omega^\prime$, where an overline (as in \( \overline\Omega \)) always denotes the closure in $\RS_*$.

Recall that the Green function for $\Omega$ with pole at $w$, denoted by $g_\Omega(\cdot,w)$, is the unique function that is harmonic and positive in $\Omega\setminus\{w\}$ with a logarithmic singularity at $w$ and whose largest harmonic minorant is identically zero. By a logarithmic singularity at $w$, it is meant that if $(V,\varphi)$ is a coordinate chart on $\Omega$ such that $w\in V$ and $\varphi(w)=0$, then $g_\Omega(\varphi^{-1}(\cdot),w)+\log|\cdot|$ is harmonic on $\varphi(V)$. Obviously $g_\Omega(\cdot,w)>0$ everywhere on $\Omega$, by the minimum principle for harmonic functions. Recall also that if a superharmonic function on $\Omega$ is not identically $+\infty$ and has a harmonic minorant, then it has the largest one whose construction can be carried out as in the Euclidean case \cite[Theorem~4.3.5]{Helms}, because Poisson modifications can be performed locally.

Clearly, $g_\Omega(\cdot,w)$ is superharmonic and $\Delta g_\Omega(\cdot,w)=-\delta_w$, where $\delta_w$ is the Dirac mass at $w$. Moreover, $g_\Omega$ is symmetric in that $g_\Omega(z,w)=g_\Omega(w,z)$ \cite[Theorem~IV.3.10]{FarkasKra}.  Symmetry entails that $g_\Omega(z,w)$ is separately harmonic in $z$ and $w$,  and therefore jointly harmonic on $\{(z,w)\in\Omega\times\Omega:\,z\neq w\}$ \cite[p. 561]{Le61}; in particular, $(z,w)\mapsto g_\Omega(z,w)$ is continuous off the diagonal. Note that
\begin{equation}
\label{GreenDom}
g_{\Omega}(z,w) \leq g_{\Omega^\prime}(z,w), \quad z,w\in\Omega\subseteq\Omega^\prime,
\end{equation}
because $g_{\Omega}(\cdot,w)- g_{\Omega^\prime}(\cdot,w)$  is a harmonic minorant of $g_{\Omega}(\cdot,w)$ and therefore must be non-positive. Thus, if $F_1$, $F_2$ are relatively closed subsets of $\Omega$ with $\overline{F}_1\cap\overline{F}_2=\varnothing$, we deduce that $(z,w)\mapsto g_\Omega(z,w)$ is  bounded on $F_1\times F_2$ because $g_{\Omega^\prime}(z,w)$ is continuous on the compact set $\overline{F}_1\times \overline{F}_2$ whenever  $\overline{\Omega}\subset\Omega^\prime$. We also remark that to each $w\in\Omega$ there is an open set $V\ni w$ and a constant $C=C(V)$ such that
\begin{equation}
\label{feL}
\int_{V} g_\Omega(z,w')\,{}^*{\bf 1}(z)<C,\qquad w'\in V,
\end{equation}
a result that follows by uniformization from the corresponding fact on the disk \cite[Theorem~4.4.12]{Helms}. Moreover, if a sequence of open sets $\Omega_n$ increases to $\Omega$ as $n\to\infty$, it follows from \eqref{GreenDom} that \( g_{\Omega}(\cdot,w)_{\mathcal b\Omega_n} - g_{\Omega_n}(\cdot,w) \) is a decreasing sequence of positive harmonic functions that must converge locally uniformly in \( \Omega \), by Harnack's theorem; as the limit is necessarily a non-negative harmonic minorant of \( g_{\Omega}(\cdot,w) \), it must be identically zero.

\subsection{Green Potentials}
\label{ssec_pot}

A Green potential in \(\Omega\) is a non-negative superharmonic function whose largest harmonic minorant is identically zero. Given \( \sigma \), a Radon measure in \( \Omega \), let us put
\begin{equation}
\label{GreenPot}
g(\sigma,\Omega;z) := \int g_\Omega(z,w)d\sigma(w).
\end{equation}
This is a superharmonic function of  $z\in\Omega$ which is either identically $+\infty$, or locally integrable with distributional Laplacian $-\sigma$ by Fubini's theorem. If \( g(\sigma,\Omega;\cdot) \) is not identically $+\infty$, using the monotone convergence and Fubini's theorem, the proof of \cite[Lemma~4.3.6]{Helms} carries over to integrals instead of sums to show  that the largest harmonic minorant of  $g(\sigma,\Omega;\cdot)$ is the integral against $d\sigma(w)$ of the largest harmonic minorants of the $g_\Omega(\cdot,w)$, namely zero. Thus,  \( g(\sigma,\Omega;\cdot) \) is a Green potential. Conversely, it follows from the \hyperref[rrt]{Riesz representation theorem} stated below that every Green potential has the form \eqref{GreenPot}. Notice that if $\sigma(\Omega)<\infty$, then $g(\sigma,\Omega;\cdot)\not\equiv+\infty$, for if $V\subset \Omega$ is as in \eqref{feL} and $W$ is a nonempty open set with  $\overline{W}\subset V$, then
\[
\int_W g(\sigma,\Omega;z){}^*{\bf 1}(z)<C\sigma(V) +C^\prime\sigma(\Omega\setminus V) \int_W{}^*{\bf 1}
\]
by Fubini's theorem, where $C^\prime$ is an upper bound for \(g_\Omega(z,w)\) on $(\Omega\setminus V)\times \overline{W}$. 

\begin{rrt}
\label{rrt}
Let $u\not\equiv+\infty$ be a superharmonic function on $\Omega$ that has a harmonic minorant. Then $u=g(\sigma,\Omega;\cdot)+h$\textcolor{blue}{,} where $h$ is the largest harmonic minorant of \( u \) and \( \sigma:=-\Delta u \).
\end{rrt}
\begin{proof}
Assume first that $\sigma(\Omega)<\infty$. Then, $g(\sigma,\Omega;\cdot)\not\equiv+\infty$ and therefore $h:=u-g(\sigma,\Omega;\cdot)$ is  harmonic on $\Omega$. Clearly, $h$ is a minorant of $u$. Since the largest harmonic minorant of \( g(\sigma,\Omega;\cdot) \) is zero,  $h$ is the largest harmonic minorant of $u$. If $\sigma(\Omega)=\infty$, pick $\Omega_m$  to be an increasing exhaustion of $\Omega$  by relatively compact open  sets. Put $\sigma_m:=\sigma_{\mathcal{b}\Omega_m}$, which are finite measures on $\Omega_m$ because $\sigma$ is a Radon measure. By what precedes, $u_{\mathcal b\Omega_m}=g(\sigma_m,\Omega_m;\cdot)+h_m$, where $h_m$ is the largest harmonic minorant of $u_{\mathcal b\Omega_m}$. As the functions $g_{\Omega_m}(\cdot,w)$ increase locally uniformly to $g_{\Omega}(\cdot,w)$ while $h_m$ decrease and are bounded below by any harmonic minorant of  $u$, we get by monotone convergence and \hyperref[hst]{Harnack's theorem} that $u=g(\sigma,\Omega;\cdot)+h$, where $h$ is harmonic and necessarily $g(\sigma,\Omega;\cdot)\not\equiv+\infty$. We now conclude the proof as in the first case.
\end{proof}

This version of the Riesz representation theorem featuring  the weak-Laplacian  may be compared to the more  abstract  formulation  for Green spaces (of which $\Omega$ is a special case) in \cite[Section~VI.7]{Brelot}, that  does not refer to the Laplacian; see also the planar statement of \cite[Theorem~4.5.4]{Ransford}. 

The previous considerations allow us to simplify in our case the notion of admissibility of a measure given in \cite[Section~1]{Fug71} and \cite[Section~I.3]{Fuglede}. According to that definition, a measure \( \sigma \) is \emph{admissible} if it is integrable against continuous Green potentials with compactly supported Laplacian. Fubini's theorem immediately implies that $g(\sigma,\Omega;\cdot)\not\equiv+\infty$ if \( \sigma \) is admissible. In the present Greenian context the condition $g(\sigma,\Omega;\cdot)\not\equiv+\infty$ is also sufficient for (and therefore equivalent to) admissibility of \( \sigma \). Indeed, if \( \nu \) is compactly supported in \( \Omega \) with continuous potential, let \( V \) be an open set such that \( \supp\,\nu\subset V\subset \overline V\subset \Omega \). By continuity, there exists \( C \) such that \( g(\nu,\Omega;z) \leq C \), \( z\in \Omega \). Since $g(\sigma,\Omega;\cdot)\not\equiv+\infty$, the same is true for the potential of $\sigma_{\mathcal b\Omega\setminus V}$. As this potential is harmonic in \( V \) and is not equal identically to \( +\infty \) there, it is finite and locally bounded in \( V \). Hence, \( g(\sigma_{\mathcal b\Omega\setminus V},\Omega;z) \leq C^\prime \), \( z\in \supp\,\nu \). Therefore, it follows from Fubini's theorem that 
\[
\int g(\nu,\Omega;z)d\sigma(z) \leq C\sigma(V) + C^\prime \nu(\Omega).
\]

\subsection{Capacities}
\label{ssec_cap}

Given two Radon measures $\sigma_1$ and $\sigma_2$ on $\Omega$, we put 
\[
(\sigma_1,\sigma_2)_\Omega:=\int g(\sigma_1,\Omega;z)d\sigma_2(z)=\int g(\sigma_2,\Omega;z)d\sigma_1(z),
\]
which is either a non-negative number or $+\infty$. The \emph{Green energy} of $\sigma$ is defined as $I_\Omega(\sigma):=(\sigma,\sigma)_\Omega$. The \emph{Greenian capacity} relative to $\Omega$  of a compact set $K\subset\Omega$ is a non-negative number
\begin{equation}
\label{defGcap}
\cp_\Omega(K):=\frac{1}{\inf_{\mu\in\mathcal{P}(K)} I_\Omega(\mu)},
\end{equation} 
where $\mathcal{P}(K)$ is the set of Borel probability measures on $K$. The \emph{Greenian capacity} of a Borel set $B$ is given by
\begin{equation}
\label{GreenCap}
\cp_\Omega(B) := \sup_{K\subset B}\cp_\Omega(K)=\inf_{U\supset B} \cp_\Omega(U) ,
\end{equation}
where the supremum is taken over all compact subsets of $B$, the infimum is taken over all open sets containing \( B \), and the equality is due to a theorem by Choquet \cite[Section~VIII.4]{Brelot}. When $K$ is compact and $\cp_\Omega(K)>0$, there exists a unique $\mu_{\Omega,K}\in\mathcal{P}(K)$, called the \emph{Green equilibrium measure} of $K$ in $\Omega$, to meet the infimum in \eqref{defGcap}. It is characterized by the fact that for some constant $C$($=1/\cp_\Omega(K)$), the \emph{Green equilibrium potential} $g(\mu_{\Omega,K},\Omega;z)$ satisfies $g(\mu_{\Omega,K},\Omega;z)\leq C$ for $z\in \Omega$ with $g(\mu_{\Omega,K},\Omega;z)=C$ for $z\in K\setminus E$,  where $E$ has Greenian capacity zero; this can be shown as in the Euclidean case \cite[Theorems~II.5.11 \&~II.5.12]{SaffTotik}. 

In the case of an arbitrary set \( B \), the infimum in \eqref{GreenCap} introduces the \emph{outer Greenian capacity} of $B$ and will serve as a definition of \(\cp_\Omega(B)\). However, it  may no longer match   the supremum (the latter defines the \emph{inner Greenian capacity} of $B$).

When $\Omega\subset\C$,  another notion of capacity is instrumental in this paper, namely the \emph{logarithmic capacity}  defined for a compact set $K\subset\C$ as
\begin{equation}
\label{deflogcap}
\cp(K):=\exp\left\{-\inf_{\mu\in\mathcal{P}(K)} \int V^\mu(z)d\mu(z)\right\},
\end{equation}
where $V^\mu(z)$ is the logarithmic potential of $\mu$ defined earlier in Section \ref{ssec_sub}. The logarithmic capacity of a Borel subset of $\C$ and the outer logarithmic capacity of an arbitrary subset are defined via the same process  as for Greenian capacity, based on the analog of \eqref{GreenCap}, see \cite{Ransford, SaffTotik} (note that in \cite{Ransford}, $\cp(E)$ denotes the inner logarithmic capacity and potentials carry a sign opposite to the current one). If $K$ is compact and $\cp(K)>0$, then  there is a unique $\mu_K\in\mathcal{P}(K)$, called the \emph{logarithmic equilibrium measure} of $K$, that realizes the infimum in \eqref{deflogcap}. It is characterized by the fact that for some constant $C$($=-\log\cp(K)$), the \emph{logarithmic equilibrium potential} $V^{\mu_K}(z)$ is at most $C$ for $z\in K$ and in fact equal to $C$ on $K$  except possibly for a subset of logarithmic capacity zero.  

Both the Greenian and logarithmic capacities are  right continuous on compact sets, meaning that
\[
\cp (\cap_{j=1}^\infty K_j) = \lim_n \cp (\cap_{j=1}^n K_j) \qandq \cp_\Omega (\cap_{j=1}^\infty K_j) = \lim_n \cp_\Omega (\cap_{j=1}^n K_j)
\]
if the $K_j$ are compact; see \cite[Theorem~5.1.3(a)]{Ransford} for the logarithmic case, the Greenian one being argued the same way with an obvious adaptation of \cite[Lemma~3.3.3]{Ransford}.  In addition, the (outer) Greenian and logarithmic capacities are  left continuous:
\[
\cp (\cup_{j=1}^\infty E_j) = \lim_n \cp (\cup_{j=1}^n E_j) \qandq \cp_\Omega (\cup_{j=1}^\infty E_j) = \lim_n \cp_\Omega (\cup_{j=1}^n E_j);
\]
for the logarithmic capacity this follows from \cite[Theorem 5.1.3(b)]{Ransford} combined with Choquet's theorem, and the Greenian case can be handled similarly, compare to \cite[Section~VIII.4]{Brelot}.

One form of the \emph{domination principle} for Green potentials says that if $g(\sigma,\Omega;\cdot)\leq v$ on $\supp\,\sigma$ (the support of $\sigma$) for some superharmonic function $v$, then $g(\sigma,\Omega;\cdot)\leq v$ everywhere on $\Omega$; in fact, we shall state a stronger version in Section~\ref{ssec_thin}. It implies the \emph{continuity theorem}, saying that if  the restriction of  $g(\sigma,\Omega;\cdot)$ to  $\supp\,\sigma$ is continuous at $z_0\in  \supp\,\sigma$ then $g(\sigma,\Omega;\cdot)$  is continuous at $z_0$. When $\sigma$ is a positive Borel measure  with compact support  such that $g(\sigma,\Omega;z)<+\infty$ for $\sigma$-a.e. $z$, there is an increasing  sequence  of measures $\sigma_k$ supported on  $\supp\,\sigma$, having continuous Green potentials and converging to $\sigma$ in the strong (total variation) sense, such that $g(\sigma_k,\Omega;\cdot)$ increases pointwise to $g(\sigma,\Omega;\cdot)$ on $\Omega$. The proof is {\it mutatis mutandis} the same as for logarithmic potentials \cite[Lemma~I.6.10]{SaffTotik}, using the continuity theorem for Green potentials. In particular, if $K$ is compact with $\cp_\Omega(K)>0$, we find upon letting  $\sigma$  be the Green equilibrium distribution that there exist nonzero positive  measures supported on $K$ whose Green potentials are continuous.

When $\Omega\subset\C$, a subset of $\Omega$ has (outer) Greenian capacity zero  if and only if it has (outer) logarithmic capacity zero. Indeed, it is enough to verify this claim on compact sets since capacity is left continuous and a set of outer (Greenian or logarithmic) capacity zero is contained in a Borel (even $G_\delta$) set of capacity zero, by definition. Moreover, by the increasing character of $g_\Omega(z,w)$ with $\Omega$, we may assume that $\Omega$ is simply connected. Then the result follows by comparing the logarithmic kernel $\log(1/|z-w|)$ with the Green kernel $g_\Omega(z,w)=\log|(1-\varphi(z)\overline{\varphi(w)})/(\varphi(z)-\varphi(w))|$, where $\varphi$ is a conformal map $\Omega\to\D$. A property holding pointwise  except on a set of outer Greenian capacity zero (equivalently: logarithmic capacity zero if $\Omega\subset\C$) is said to hold \emph{quasi everywhere}.

\subsection{Fine Topology}
\label{ssec_fine}

A  basis for the fine topology on $\Omega$ is given by all sets of the form
\begin{equation}
\label{defft}
\cap_{i=1}^m \big\{z\in B:v_i(z)<\alpha_i\big\},
\end{equation}
where $B\subseteq\Omega$ is open,  $v_i$ are superharmonic functions on $B$, and $\alpha_i$ are  constants. Consequently, all superharmonic functions $\Omega\to(-\infty,+\infty]$ are finely continuous (equivalently: all subharmonic functions $\Omega\to[-\infty,+\infty)$ are finely continuous), and the fine topology is the coarsest with this property because, by the  \hyperref[rrt]{Riesz representation theorem} and the monotonicity of Green functions with respect to the domain, each set of the form \eqref{defft} contains one for which $v_i$ are Green potentials. In particular, we may as well require in \eqref{defft} that $v_i$ be defined and superharmonic on the whole of $\Omega$. Hence, the present definition modeled after \cite[Definition~6.5.1]{Helms} (which deals with the Euclidean case) is equivalent to \cite[Definition~I.1]{Brelot}. It is known that the fine topology on $\Omega$ is locally connected \cite[Corollary to Theorem~9.11]{Fuglede},  and that the fine connected  components of a finely open set are finely open  \cite[Corollary~1]{Fug71}. Moreover,  Lipschitz curves are finely connected \cite[Theorem~7]{Fug71}, so that Euclidean domains are fine domains as well.

As a general convention, we use the prefix ``fine'' to signify that a notion is understood with respect to the fine topology. This way we distinguish the latter from the classical, Euclidean topology (more precisely: the one induced on $\Omega$ by the Euclidean topology of charts). The fine boundary of a set $S$ is denoted by $\partial_\mathsf{f}S$, and its fine closure by $\clos_\mathsf{f}$.

A set $E\subset \Omega$ is called \emph{polar} (in $\Omega$) if there is a superharmonic function $u\not\equiv+\infty$ on $\Omega$ such that $u(z)=+\infty$ for  $z\in E$. Superharmonic functions not identically $+\infty$ are locally integrable, therefore  polar sets have area  measure zero, see Section~\ref{ssec_sub}. By definition a polar set is contained in a $G_\delta$ polar set, and every $G_\delta$ polar set arises as the $+\infty$-set of a superharmonic function \cite[Section~VI.9]{Brelot}.  If $U$ is a fine domain and $E$ is polar, then $U\setminus E$ is again a fine domain \cite[Theorem~6]{Fug71}. In fact, polar sets are exactly the sets of zero outer Greenian  capacity (equivalently: of zero outer logarithmic capacity if $\Omega\subset\C$) defined in Section~\ref{ssec_cap}; this  is justified in Section \ref{ssec_bal}, but we take it presently for granted (we stress that \cite{Ransford} defines polar sets as having inner capacity zero, thereby making for  a larger class of non-Borel polar sets). One consequence is: if $\Omega^\prime\supset\Omega$ is hyperbolic and $E$ is polar in $\Omega$, then it is polar in $\Omega^\prime$ as well; indeed, since $g_{\Omega^\prime}(\cdot,w)\geq g_{\Omega}(\cdot,w)$, it is clear that $E$ has zero outer Greenian capacity  in $\Omega^\prime$ if it does in $\Omega$. Conversely, it is obvious from the definition  that $E$ is polar in $\Omega$ if it is polar in $\Omega^\prime \supset \Omega$, therefore we may speak of a polar set without specifying a hyperbolic subset of $\RS_*$ in which \( E \) is contained.

A countable union of polar sets is polar, for if $E_k$ is included in the $+\infty$-set of a superharmonic function $u_k\not\equiv+\infty$ while $K\subset\Omega$ is compact and of positive Lebesgue measure, then there are $t_k>0$ such that $u:=\sum_kt_k u_k$ is summable on $K$ (therefore $u\not\equiv+\infty$) and is superharmonic with value $+\infty$ at each point of $\cup_k E_k$. So, if $E\subset\Omega$ is polar and $p:\RS_*\to\C$ is the natural projection, then $p(E)$ is polar. Indeed, for $V\subset \Omega$ a domain such that $p:V\to p(V)$ is a homeomorphism, $\nu\circ p^{-1}$ is superharmonic on $p(V)$ when $\nu$ is superharmonic on $V$, and $\Omega\setminus \mathbf{rp}(\RS_*)$ can be covered with countably many such domains while $\mathbf{rp}(\RS_*)$ is finite. Conversely, if $V\subset\C$ is a bounded open set and $E\subset V$ is polar, then $p^{-1}(E)$ is polar because $v\circ p$ is superharmonic as soon as $v$ is superharmonic on $V$.

If $u\not\equiv+\infty$ is superharmonic and finite on a polar set $E$, then $E$ has outer $\Delta u$-measure zero \cite[Section~VI.9, item~$\beta)$]{Brelot}.  Consequently a Radon measure $\sigma$ of finite Green energy cannot charge a polar set $E$, for we may assume $\sigma$ has compact support (since $\Omega$ is $\sigma$-compact) and as in  Section~\ref{ssec_cap} there is an increasing sequence of measures $\sigma_k$ converging strongly to $\sigma$  with  $g(\sigma_k,\Omega;\cdot)$ continuous, whence $\sigma(E)=\lim_k\sigma_k(E)=0$.

\begin{rt}
\label{rt}
If $E\subset\Omega$ is a (relatively) closed polar set  while $u$ is superharmonic on $\Omega\setminus E$ and locally bounded below in a neighborhood of $E$, then $u$ extends in a unique manner to a superharmonic function on $\Omega$. Moreover, if $u$ is harmonic in $\Omega\setminus E$ and locally bounded in a neighborhood of $E$, then $u$ extends harmonically to $\Omega$.
\end{rt}
\begin{proof}
  The proof of the first statement carries over to hyperbolic Riemann surfaces from its planar version, see \cite[Theorem~3.6.1]{Ransford}. When $u$ is harmonic in $\Omega\setminus E$ and locally bounded in a neighborhood of $E$, it extends both to a subharmonic and a superharmonic function on $\Omega$ by the first part. Since $\Delta u$ does not depend on the extension because $E$ has Lebesgue measure zero, we deduce that $\Delta u=0$.
\end{proof}

The removability  theorem implies the following result.

\begin{gmp}
If $u$ is superharmonic and bounded below on some open set $U\subset\overline{U}\subset\Omega$, and if moreover $\liminf_{U\ni z\to\xi}u(z)\geq 0$ for quasi every $\xi\in\partial U$, then $u\geq 0$ in $U$. 
\end{gmp}
\begin{proof}
  Given $\varepsilon>0$, let $E_\varepsilon:=\{\zeta\in\partial U:\, \liminf_{z\to\xi} u(z)\leq-\varepsilon\}$. Then $E_\varepsilon$ is a closed polar set and the function $w:\Omega\to(-\infty,+\infty]$ given by $\min(u,-\varepsilon)$ on $U$ and $-\varepsilon$ on $\Omega\setminus(U\cup E_\varepsilon)$ is superharmonic on $\Omega\setminus E_\varepsilon$, by the glueing theorem, see \cite[Theorem~2.4.5]{Ransford} for a planar version of this local result. As $w$ is bounded below, it extends to a superharmonic function on $\Omega$. Because \( U\cup E_\varepsilon \subset \overline U \subset \Omega \), it holds that \( \liminf_{z\to\xi}w(z) = -\varepsilon \) for any \( \xi\in\partial\Omega \). Thus, $w\geq -\varepsilon$ in \( \Omega \) by the classical minimum principle contained in the very definition of superharmonic functions. 
\end{proof}

The hypothesis ``$u$ is bounded below'' can be relaxed somewhat: it is enough to assume that $u\geq -g$ where $g$ is a \emph{semi-bounded potential}, meaning that it is  the increasing pointwise limit of a sequence of locally bounded potentials, see \cite[Theorem~9.1]{Fuglede}. Note that $g(\sigma,\Omega;\cdot)$ is semi-bounded when it is finite $\sigma$-a.e.,  for we may assume $\sigma$ has compact support (as $\Omega$ is a countable union of compact sets) and then appeal to properties of the measures $\sigma_k$ in Section~\ref{ssec_cap}. In fact,  $g(\sigma,\Omega;\cdot)$ is semi-bounded if and only if it is finite $\sigma$-a.e. in \( \Omega \), which is also if and only if $\sigma$ does not charge polar sets, see  \cite[Section~I.2.6, Theorem]{Fuglede}.

\subsection{Thinness}
\label{ssec_thin}

Fine topology can also be introduced via the notion of thinness. A set $E\subset\Omega$ is said to be \emph{thin} at $\zeta\in\Omega$ if \( \zeta \) is not a fine limit point of \( E \). Equivalently, \( E \) is thin at \( \zeta \) if and only if either $\zeta\notin\overline{E}$ or there exists a function $v$, superharmonic in a neighborhood of $\zeta$, such that
\begin{equation}
\label{thin}
\liminf_{E\ni z\to \zeta,z\not=\zeta}v(z)> v(\zeta);
\end{equation}
see \cite[Theorem~6.6.3]{Helms} for a proof of this equivalence in the Euclidean setting, which applies to hyperbolic Riemann surfaces as well and also shows that $v$ in \eqref{thin} may be taken superharmonic on the whole of $\Omega$. Hence, the above definition of thinness (which is local) matches \cite[Definition~I.2]{Brelot} (whose local character is not immediate, see \cite[Theorem~VII.1]{Brelot}).  Setting $\liminf$ over the empty set to $+\infty$ by convention, \eqref{thin} may still be regarded as characterizing thinness at $\zeta\notin\overline{E}$, upon letting $v\equiv0$. Note that when the limit inferior in \eqref{thin} is taken over a full Euclidean neighborhood of $\zeta$, superharmonicity of $v$ implies that the inequality gets replaced by an equality. Clearly,  \( E \) is thin at \( \zeta \) if and only if for some (hence any) chart $(V,\varphi)$ with $\zeta\in V$, the planar set $\varphi(V\cap E)$ is thin at \(\varphi(\zeta)\), and a countable union of thin sets at $\zeta$ is again thin at $\zeta$. 
    
A set $V$ is a fine neighborhood of $\zeta\in V$ if and only if the complement of $V$ is thin at $\zeta$, see \cite[Theorem~I.3]{Brelot}. In particular, if \( V \) is finely open and \( Z \) is polar, then \( V\setminus Z \) is finely open. The points of \( E \subset\Omega\) at which $E$ is thin form a polar set, and $E$ is thin at each of its points if and only if it is polar \cite[Theorem~VII.7 \& Corollary]{Brelot}. One consequence of $E$ being thin at $\zeta$ is that, locally in a chart $(V,\varphi)$ with $\zeta\in V$, there are arbitrary small circles centered at $\varphi(\zeta)$ which do not meet $\varphi(E \cap V)$, see \cite[Theorem~6.7.9]{Helms}; in the same vein,  $\varphi(V\setminus E)$ contains a  segment  of the form \([\varphi(\zeta),\varphi(\zeta)+r e^{i\theta}]\) with $r=r(\theta)>0$ for quasi-every direction $e^{i\theta}$  in $\T$, see \cite[Theorem 5.4.3]{Ransford}. In particular, a connected set cannot be thin at an accumulation point and therefore polar sets are totally disconnected.

The \emph{base} $b(E)$ of a set $E$ is the set of points in $\Omega$ at which $E$ is non-thin, and $E$ is called a base if $b(E)=E$. It is known that $b(E)$ is a finely closed $G_\delta$ set, see \cite[Proposition~VII.8]{Brelot}. We record the following, elementary fact.

\begin{lemma}
\label{distregt}
Let $E,F\subset\Omega$ be disjoint  finely open sets such that $\Omega\setminus(E\cup F)$ is a base. Then $\Omega\setminus E$ and $\Omega\setminus F$ are bases as well.
\end{lemma}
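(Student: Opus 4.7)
The plan is to verify $b(\Omega\setminus E)=\Omega\setminus E$ by checking each inclusion directly, and then to invoke the symmetry between $E$ and $F$ to dispose of $\Omega\setminus F$. One inclusion, $b(\Omega\setminus E)\subseteq\Omega\setminus E$, reduces to showing that no point of $E$ lies in $b(\Omega\setminus E)$. This is immediate from the characterization of fine openness recorded in Section~\ref{ssec_thin}: if $z\in E$ then $E$ itself is a fine neighborhood of $z$, so $\Omega\setminus E$ is thin at $z$, and hence $z\notin b(\Omega\setminus E)$.

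For the reverse inclusion I would fix $z\in\Omega\setminus E$ and check non-thinness of $\Omega\setminus E$ at $z$ in two cases. If $z\in\Omega\setminus(E\cup F)$, the hypothesis that $\Omega\setminus(E\cup F)$ is its own base supplies non-thinness of this smaller set at $z$, which passes at once to $\Omega\setminus E$. If instead $z\in F$, then $F$ is a fine neighborhood of $z$ (by fine openness of $F$), so $\Omega\setminus F$ is thin at $z$; I would then argue that $F\setminus\{z\}$ cannot also be thin at $z$, for otherwise the stability of thinness under finite unions (noted in Section~\ref{ssec_thin}) would force the union $\Omega\setminus\{z\}=(F\setminus\{z\})\cup(\Omega\setminus F)$ to be thin at $z$, contradicting the fact, also recalled in Section~\ref{ssec_thin}, that a connected set accumulating at a point is never thin there. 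This last fact would be applied to, say, the image under a local chart of a small Euclidean segment emanating from $z$. Consequently $F\setminus\{z\}\subseteq\Omega\setminus E$ is non-thin at $z$, and so is $\Omega\setminus E$ itself.

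The argument is essentially a direct unfolding of the definitions once the vocabulary of Section~\ref{ssec_thin} is in hand; the only substantive step is the appeal to the impossibility of $\Omega\setminus\{z\}$ being thin at $z$, which relies on the connectedness remark. I do not foresee any genuine obstacle, and the conclusion for $\Omega\setminus F$ follows verbatim by interchanging the roles of $E$ and $F$ throughout.
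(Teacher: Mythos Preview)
Your proposal is correct and follows essentially the same approach as the paper. The only cosmetic difference is in the case $z\in F$: the paper observes directly that $E\subset\Omega\setminus F$ is thin at $z$ and then decomposes $\Omega=E\cup(\Omega\setminus E)$, whereas you decompose $\Omega\setminus\{z\}=(F\setminus\{z\})\cup(\Omega\setminus F)$ and pass through $F\setminus\{z\}\subset\Omega\setminus E$; both routes rest on the same fact that $\Omega$ cannot be thin at one of its own points.
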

\begin{proof}
  If $x\in (\Omega\setminus E)\cap(\Omega\setminus F)=\Omega\setminus(E\cup F)$, the latter set is non-thin at $x$ by assumption and therefore so is $\Omega\setminus E$. If now $x\in (\Omega\setminus E)\cap F=F$, then the fine openness of $F$ implies that $\Omega\setminus F$ is thin at $x$ and so is \( E\subset \Omega\setminus F \). However, if  $\Omega\setminus E$ were also thin at $x$ in this case,  then $\Omega= E\cup (\Omega\setminus E)$ would be  thin at $x$ which is impossible. Hence, $\Omega\setminus E$ is non-thin at any of its points, therefore it is a base.
\end{proof}

The notion of a base generates a strong form of the domination principle, see \cite[Theorem~VIII.4]{Brelot}.

\begin{sdp}
\label{sdp}
Let \( v \) be a non-negative superharmonic function in \( \Omega \) such that \( v \geq g(\sigma,\Omega;\cdot) \) quasi everywhere on a set \( E \) such that \( \sigma(\Omega\setminus b(E)) = 0 \).  Then \( v \geq g(\sigma,\Omega;\cdot) \) everywhere in \( \Omega \).
\end{sdp}

The fine closure of $E$ is equal to \( \clos_\mathsf{f}(E)=b(E)\cup i(E) \), where $i(E)$ is the set of finely isolated points of $E$, see \cite[Proposition~V.10]{Brelot}. The finely closed sets are precisely those for which $b(E)\subseteq E$. Note that $b(E)\cap i(E)=\varnothing$ and therefore, if $V$ is finely open, we have that
\begin{equation}
\label{Vprime}
b(\Omega\setminus V)=\Omega\setminus V^\prime, \quad V^\prime:=V\cup i(\Omega\setminus V),
\end{equation}
where we observe that $V^\prime$ is in turn  finely open. For any set $E$, the fine boundary $\partial_\mathsf{f}E$ is finely closed and, as shown in \cite[Lemma~12.3]{Fuglede}, it holds that
\begin{equation}
\label{expfb}
i(\partial_\mathsf{f}E)=i(E)\cup i(\Omega\setminus E) \qandq b(\partial_\mathsf{f}E)=b(E)\cap b(\Omega\setminus E).
\end{equation}

The next lemma connects fine topologies in \( D \) and \( \RS \) (defined at the beginning of Section \ref{laclasse}).

\begin{lemma}
\label{lem:fineopen}
The map $p:\RS\to D$ is finely open and finely continuous, that is, \( p(V) \) and \( p^{-1}(U) \) are finely open when \( V\subset\RS \) and \( U\subset D \) are finely open. Moreover, \( i(\RS\setminus p^{-1}(U)) = p^{-1}(i(D\setminus U)) \).
\end{lemma}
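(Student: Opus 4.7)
The plan is to establish the three assertions of Lemma~\ref{lem:fineopen} in sequence: fine continuity, fine openness, and the identity on finely isolated points.

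For fine continuity, given $U\subset D$ finely open and $\zeta\in p^{-1}(U)$, I would argue by pulling back a thinness witness. By definition there exists a superharmonic function $v$ near $p(\zeta)$ with $\liminf_{D\setminus U\ni w\to p(\zeta),\,w\neq p(\zeta)}v(w)>v(p(\zeta))$. Since $p$ is nonconstant and holomorphic, $v\circ p$ is superharmonic near $\zeta$; because $p^{-1}(p(\zeta))$ is finite and classically discrete in $\RS$, any $z$ close enough to $\zeta$ with $z\neq\zeta$ satisfies $p(z)\neq p(\zeta)$, so $v\circ p$ witnesses that $\RS\setminus p^{-1}(U)=p^{-1}(D\setminus U)$ is thin at $\zeta$.

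For fine openness, take $V\subset\RS$ finely open and $\zeta\in V$. List the preimages of $p(\zeta)$ as $\zeta=\zeta_0,\zeta_1,\dots,\zeta_k$ and pick a classical open neighborhood $D_0$ of $p(\zeta)$ small enough that $p^{-1}(D_0)=U_0\sqcup\dots\sqcup U_k$ with $\zeta_j\in U_j$. Since $D_0\cap p(V)\supseteq p(V\cap U_0)$, it suffices to show $D_0\setminus p(V\cap U_0)$ is thin at $p(\zeta)$. If $\zeta$ is unramified, $p\colon U_0\to D_0$ is a biholomorphism in local coordinates and the conclusion follows from invariance of the fine topology under biholomorphisms. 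If $\zeta$ is ramified of order $m$, choose coordinates on $U_0$ and $D_0$ in which $p$ becomes $\pi\colon z\mapsto z^m$, let $\omega=e^{2\pi\ic/m}$, and set $W:=\bigcap_{j=0}^{m-1}\omega^{-j}(V\cap U_0)$. Then $W$ is rotation-invariant (as a finite intersection of rotates of $V\cap U_0$, each of which is a fine neighborhood of $0$) and still a fine neighborhood of $0$. Take $v$ superharmonic near $0$ witnessing that $U_0\setminus W$ is thin at $0$, and symmetrize to $\bar v:=\frac1m\sum_{j}v\circ R_{\omega^{j}}$. Because $U_0\setminus W$ is rotation-invariant, $\bar v$ retains the thinness property, and by rotation-invariance descends to a function $\tilde v$ on $D_0$ with $\bar v=\tilde v\circ\pi$; a direct mean-value check at $0$ shows $\tilde v$ is superharmonic there. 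Since $w\in D_0\setminus\pi(W)$ forces every $m$-th root of $w$ to lie in $U_0\setminus W$, we obtain $\liminf_{w\to 0,\,w\in D_0\setminus\pi(W)}\tilde v(w)\geq \liminf_{z\to 0,\,z\in U_0\setminus W}\bar v(z)>v(0)=\tilde v(0)$, which shows thinness of $D_0\setminus\pi(W)\supseteq D_0\setminus p(V\cap U_0)$ at $p(\zeta)$.

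For the identity, set $E_{\RS}:=\RS\setminus p^{-1}(U)=p^{-1}(E_D)$ with $E_D:=D\setminus U$, and recall that $i(E)=\{x\in E:E\text{ is thin at }x\}$, so the claim reduces to showing $z\in i(E_{\RS})\Longleftrightarrow p(z)\in i(E_D)$. For the forward direction, pick a fine neighborhood $V\ni z$ with $V\cap E_{\RS}\subseteq\{z\}$; intersecting $V$ with a classical neighborhood of $z$ containing no other preimage of $p(z)$, we may assume $p$ is injective on $V\cap p^{-1}(p(z))$. Then $p(V)$ is a fine neighborhood of $p(z)$ by fine openness, and $p(V)\setminus\{p(z)\}=p(V\setminus\{z\})\subseteq U$, so $E_D$ is thin at $p(z)$. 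For the reverse direction, given $p(z)\in i(E_D)$ take a fine neighborhood $W\ni p(z)$ in $D$ with $W\cap E_D\subseteq\{p(z)\}$; by fine continuity $p^{-1}(W)$ is a fine neighborhood of $z$, and $p^{-1}(W)\cap E_{\RS}\subseteq p^{-1}(p(z))$ is finite, so removing the remaining finitely many preimages of $p(z)$ (a classically closed finite set not containing $z$, and therefore trivially thin at $z$) produces a fine neighborhood of $z$ meeting $E_{\RS}$ only at $z$, hence $z\in i(E_{\RS})$. The main obstacle in the proof will be the superharmonic descent at a ramification point: confirming that the symmetrized $\bar v$ truly descends to a superharmonic $\tilde v$ on $D_0$ with the expected behavior at the branch value, and that the thinness witness is faithfully transported under $\pi$.
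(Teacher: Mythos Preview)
Your proof is correct and follows essentially the same strategy as the paper. For fine continuity and for the identity on finely isolated points you match the paper exactly (the paper dispatches the latter as ``straightforward'' without details). For fine openness both arguments push a thinness witness down through the branched cover by symmetrizing over the fiber: in local coordinates your deck-group average $\bar v$ coincides, up to the factor $m$, with the paper's intrinsic fiber-sum $\tilde v(w)=\sum_{z\in p^{-1}(w)\cap O}v(z)$. Two minor differences are worth noting. First, you symmetrize the neighborhood to $W$ before choosing the witness, which is harmless but not needed---the paper works directly with $E=O\setminus V$ and uses lower semicontinuity of $v$ to control the fiber points lying outside $E$. Second, and more interestingly, you verify superharmonicity of the descended function at the branch value by a direct mean-value computation (exploiting that circle averages of $\tilde v$ downstairs equal circle averages of $\bar v$ upstairs via rotation invariance), whereas the paper approximates $v$ from below by continuous superharmonic functions and then appeals to the Removability Theorem. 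Your route is slightly more elementary and self-contained; the paper's is coordinate-free.
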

\begin{proof}
Let \( V\subset\RS \) be finely open and $\zeta\in V$. Denote by $\tilde O$ a Euclidean disk centered at $p(\zeta)$ of small enough radius so that \( O \), the connected component of $p^{-1}(\tilde O)$ containing $\zeta$, contains no ramification points of \( \RS \) except possibly \( \zeta \) itself (if \( m(\zeta)>1 \)) and \( O\cap p^{-1}(p(\zeta))=\{\zeta\} \). Since $E:=O\setminus V$ is thin at $\zeta$, there exists a superharmonic function \( v \) in \( O \) for which \eqref{thin} takes place. \emph{We claim} that
\begin{equation}
\label{defetagev}
\tilde v(z) := 
\begin{cases}
\sum_{w\in O\cap p^{-1}(z)}v(w), & z\in \tilde O\setminus\{p(\zeta)\}, \medskip \\
m(\zeta)v(\zeta), & z=p(\zeta),
\end{cases}
\end{equation}
is superharmonic on \( \tilde O \). Indeed, by shrinking $\tilde O$ if needed, we may assume that $v$ is the increasing limit of a sequence of continuous superharmonic functions $v_n$ on $O$, see \cite[Corollary~2.7.3]{Ransford} for a proof of this fact in the planar case that carries over to $\RS$ using local charts. Define \( \tilde v_n \) similarly to \eqref{defetagev}, only replacing $v$ with $v_n$. Clearly, \( \tilde v_n \) is superharmonic on $\{z\in\tilde{O}:z\neq p(\zeta)\}$. Since it is bounded around $p(\zeta)$ by the continuity of $\nu_n$, the restriction $\tilde v_{n\mathcal{b}\tilde{O}\setminus p(\zeta)}$ uniquely extends to a superharmonic function on $\tilde O$ by the \hyperref[rt]{Removability Theorem}. Of necessity, the value at $p(\zeta)$ of this extension is given by
\[
 \liminf_{z\to p(\zeta),z\neq p(\zeta)} \tilde v_n(z) = m(\zeta)v_n(\zeta)=\tilde v_n(p(\zeta)), 
\]
where the first equality comes from the continuity of \( v_n \) at $\zeta$. Hence, \(\tilde v_n(z) \) is superharmonic on $\tilde O$ and so is its increasing limit $\tilde v$. \emph{This proves the claim}. In another connection, the lower semicontinuity of $v$ shows that the analog of \eqref{thin} holds for \( \tilde v \) when the limit inferior is taken along $p(E)$. As $\tilde O\setminus p(V)\subseteq p(E)$, we get that $D\setminus p(V)$ is thin at $p(\zeta)$ so that $p(V)$ is finely open, as claimed. 

To show the second claim, observe that the lift of a function from \( D \) to \( \RS \) preserves superharmonicity and that \( \RS\setminus p^{-1}(U) = p^{-1}( D\setminus U ) \). The identity \( i(\RS\setminus p^{-1}(U)) = p^{-1}(i(D\setminus U)) \)  is now straightforward.
\end{proof}

In Lemma \ref{tdG} below, we single out for easy reference a basic geometric fact, used at several places in the paper. We say that a continuous injective map $\gamma:\T\to\RS$ is a \emph{parametrized Jordan curve} in $\RS$ and we simply call the image $\gamma(\T)$ a (non-parametrized) Jordan curve. On a hyperbolic Riemann surface $\RS$, any Jordan curve $\Gamma$ homotopic to a point is uniquely the boundary of a (topological) disk $O\subset\RS$ \cite[Theorem 2.4]{Strebel}; we say that $O$ is the \emph{interior} of $\Gamma$, and we write $O=\mathrm{int}\,\Gamma$. That $\Gamma$ is homotopic to a point in particular holds if it is included in a simply connected open set $U$ which is  the domain of a chart. Recall also that $E\subset\RS$ is called \emph{schlicht} over $U\subset\C$ if $U\supset p(E)$ and the restriction $p_{\mathcal bE}:E\to U$ is injective. 

\begin{lemma}
\label{tdG}
Let $E \subset\RS$ be schlicht over $D$ and $\xi\in\mathbf{rp}(\RS)$. There exists a neighborhood $U_\xi$ of $\xi$ such that no Jordan curve in $U_\xi$ contains $\xi$ in its interior and simultaneously is contained in $E$.
\end{lemma}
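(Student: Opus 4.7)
The plan is to exploit the local normal form of $p$ at a ramification point and derive a contradiction via the winding number. Since $\xi\in\mathbf{rp}(\RS)$ has ramification order $m:=m(\xi)\geq 2$, we may choose a simply connected neighborhood $U_\xi$ of $\xi$ together with local charts $\varphi:U_\xi\to D_0$ (a disk centered at $0$) and $\psi:p(U_\xi)\to\psi(p(U_\xi))\subset\C$ with $\varphi(\xi)=0$, $\psi(p(\xi))=0$, and such that $\psi\circ p\circ\varphi^{-1}(z)=z^m$. Being simply connected, $U_\xi$ has the property that any Jordan curve contained in it is null-homotopic, so that the notion of ``interior'' in $\RS$ coincides with the bounded component of $D_0\setminus\varphi(\Gamma)$ via the chart $\varphi$.

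Next, I would argue by contradiction. Suppose there exists a Jordan curve $\Gamma\subset U_\xi\cap E$ containing $\xi$ in its interior, and let $\gamma:S^1\to\Gamma$ be a parametrization. Since $0\in\varphi(\mathrm{int}\,\Gamma)$, the planar curve $\varphi\circ\gamma:S^1\to D_0\setminus\{0\}$ has winding number $\pm 1$ about $0$. Postcomposing with $z\mapsto z^m$ multiplies the winding number by $m$, so $\psi\circ p\circ\gamma:S^1\to\C\setminus\{0\}$ has winding number $\pm m$ about $0$, equivalently $p\circ\gamma$ has winding number $\pm m$ about $p(\xi)$. On the other hand, because $E$ is schlicht over $D$, the projection $p_{\mathcal b\Gamma}$ is injective, hence a homeomorphism onto its image, so $p(\Gamma)$ is a Jordan curve in $\C$ parametrized by $p\circ\gamma$. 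By the Jordan curve theorem the winding number of $p\circ\gamma$ about any point not on $p(\Gamma)$ belongs to $\{-1,0,+1\}$, contradicting $|\pm m|=m\geq 2$.

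This completes the argument. There is no real obstacle: the only subtlety is to justify that on a small enough $U_\xi$ one has the local form $z\mapsto z^m$ for $p$, which is precisely the definition of ramification, and to invoke the elementary fact that $z\mapsto z^m$ multiplies winding numbers by $m$. The schlichtness hypothesis then collides with the resulting winding number discrepancy, which forces the nonexistence of such a $\Gamma$.
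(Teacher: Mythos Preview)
Your proof is correct and follows essentially the same approach as the paper. Both arguments pick a neighborhood on which $p$ has the local form of an $m$-fold cyclic cover, observe that a Jordan curve around $\xi$ gets mapped to a curve of winding number $\pm m$ about $p(\xi)$, and then derive a contradiction from the fact that schlichtness forces $p(\Gamma)$ to be a Jordan curve (hence of winding number $\pm1$). The paper phrases this through the induced map $p_*:\pi_1(O_\xi\setminus\{\xi\})\to\pi_1(O\setminus\{p(\xi)\})$, $a\mapsto a^{m(\xi)}$, while you use the explicit normal form $z\mapsto z^m$ and winding numbers directly; these are the same argument in different clothing.
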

\begin{proof}
Let $U$ be a simply connected domain of a chart containing $\xi$, so that every Jordan curve $\T\to U$ is homotopic to a point. Let   \( O \subset D \) be a disk centered at \( p(\xi) \) and \( O_\xi \) the connected component of \( p^{-1}(O) \) containing \( \xi \), with \( O \)  small enough that $O_\xi\subset U$ and \( O_\xi\cap\mathbf{rp}(\RS) = \{ \xi \} \); this is possible since $\RS_*$ is compact. Then \( O_\xi \) is a neighborhood of \( \xi \) which is  (isomorphic via a biholomorphic map fixing $\xi$ to) an \( m(\xi) \)-sheeted cyclic covering of $O$. So, if we identify the homotopy groups $\pi_1(O_\xi\setminus\{\xi\})$ and $\pi_1(O\setminus\{p(\xi)\})$ with the infinite cyclic group generated by the symbol $a$, the induced morphism $p_*:\pi_1(O_\xi\setminus\{\xi\})\to \pi_1(O\setminus\{p(\xi)\})$ is the map $a\mapsto a^{m(\xi)}$. Now, a parametrized Jordan curve $\gamma:\T\to O_\xi$ containing $\xi$ in its interior is a generator of the fundamental group of $O_\xi\setminus\{\xi\}$. Hence, $p\circ \gamma$ is the $m(\xi)$-power of a generator of the fundamental group of $O\setminus\{p(\xi)\}$, in particular it  has winding number $\pm m(\xi)$ with respect to $p(\xi)$. However, if $\gamma$ is valued in $E$, then $p\circ\gamma:\T\to O$ is a parametrized Jordan curve because $p$ is injective on $E$, and therefore it has winding number $\pm1$ with respect to $p(\xi)$. This contradicts the assumption that $m(\xi)>1$, showing that $U_\xi:=O_\xi$ satisfies our requirements.
\end{proof}
 
The next lemma, used in the proof of  Lemma \ref{lem12a}, depends on Lemma \ref{tdG}. 
\begin{lemma}
\label{lem:compthicks}
If $E\subset\RS$ is schlicht over \(D\) and $\xi\in\mathbf{rp}(\RS)$, then $\RS\setminus E$ is non-thin at $\xi$.
\end{lemma}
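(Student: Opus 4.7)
The plan is to argue by contradiction, reducing the statement to the topological obstruction already isolated in Lemma~\ref{tdG}. Suppose that $\RS\setminus E$ is thin at $\xi$. Pick a local chart $(V,\varphi)$ on $\RS$ at $\xi$, with $V$ small enough that $V\subset U_\xi$, where $U_\xi$ is the neighborhood provided by Lemma~\ref{tdG}. By the locality of thinness in charts, $\varphi\bigl((\RS\setminus E)\cap V\bigr)$ is thin at $\varphi(\xi)$ as a planar set.

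Next, I would invoke the standard characterization of planar thinness recalled in Section~\ref{ssec_thin}: there exist arbitrarily small Euclidean circles $C_r$ centered at $\varphi(\xi)$ that do not meet $\varphi\bigl((\RS\setminus E)\cap V\bigr)$. Pulling back, $\gamma_r:=\varphi^{-1}(C_r)$ is, for $r$ sufficiently small, a Jordan curve in $V$ whose (topological) interior in $V$ contains $\xi$, and moreover $\gamma_r\subset E$ since it misses $\RS\setminus E$.

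Finally, for $r$ small enough one has $\gamma_r\subset V\subset U_\xi$, so $\gamma_r$ is a Jordan curve contained in $U_\xi$, lying entirely in $E$, and containing $\xi$ in its interior. This is precisely the configuration excluded by Lemma~\ref{tdG}, which contradicts the hypothesis that $E$ is schlicht over $D$. Hence $\RS\setminus E$ cannot be thin at $\xi$.

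There is no serious obstacle here; the work has been done in Lemma~\ref{tdG}, where the schlichtness of $E$ together with the fact that $p$ is locally an $m(\xi)$-sheeted cyclic covering near a ramification point $\xi$ prevents any small Jordan curve in $E$ from winding once around $\xi$. The present lemma is simply the translation of that topological obstruction into the language of the fine topology, and the only potentially delicate point is to ensure that the circles $C_r$ coming from the planar thinness characterization give rise, via $\varphi^{-1}$, to genuine Jordan curves enclosing $\xi$; this is automatic once $V$ is chosen so that $\varphi$ is a homeomorphism onto an open planar neighborhood of $\varphi(\xi)$.
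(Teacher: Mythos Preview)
Your proposal is correct and follows essentially the same approach as the paper's proof: argue by contradiction, pass to a chart around $\xi$ contained in the neighborhood $U_\xi$ of Lemma~\ref{tdG}, use the planar characterization of thinness to find a small circle avoiding the image of $\RS\setminus E$, and pull it back to obtain a Jordan curve in $E$ enclosing $\xi$, contradicting Lemma~\ref{tdG}. The paper's version is simply more terse, taking the chart domain to be $U_\xi$ itself rather than a subset of it.
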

\begin{proof}
  Let $(U_\xi,\varphi)$ be a chart around $\xi$, with $U_\xi$ as in Lemma~\ref{tdG} and $\varphi(\xi)=0$. If $\RS\setminus E$ is thin at $\xi$, then there is a  circle $\T_r:=\{|z|=r\}\subset \varphi(U_\xi)$ such that $\varphi^{-1}(\T_r)\subset E$. As $\varphi^{-1}:\T_r\to U_\xi$ is a Jordan curve in  $E$ that contains $\xi$ in its interior, this contradicts Lemma \ref{tdG}.
\end{proof}

\subsection{Regularity}
\label{ssec_reg}

Thinness is intimately  connected to the notion of a regular boundary point with respect to the Dirichlet problem. Given a Euclidean open set \( U \) with \( \overline U\subset \Omega \), a point $\zeta\in\partial U$ is called \emph{regular} if for any continuous function $\psi$ on $\partial U$ it holds that
\[
\lim_{U\ni z\to\zeta} H_\psi(z) = \psi(\zeta),
\]
where $H_\psi(z)$ is the Perron-Wiener-Brelot solution of the Dirichlet problem on $U$ with boundary data $\psi$, see \cite[Section~VI.6, item~$\gamma)$]{Brelot} for a description of the Perron-Wiener-Brelot process; other boundary points are called \emph{irregular}. When all its boundary points are regular, we say that $U$ itself is regular. It is known that $\zeta\in\partial U$ is regular if and only if
\begin{equation}
\label{regGB}
\lim_{U\ni z\to\zeta} g_U(z,w) = 0
\end{equation} 
for some (and then any) $w$ in each connected component of $U$. Moreover, $\zeta$ is  irregular if and only if the complement of $U$ is thin at $\zeta$  \cite[Theorem~VII.13]{Brelot}. This entails that regularity is a local notion, in particular  each point of $\partial U$ is  regular as soon as the latter is locally connected, as follows from the analogous property in a Euclidean space \cite[Theorem~4.2.2]{Ransford}.

Let \( \sigma \) be a finite measure, compactly supported in $U$. As $g_U(z,w)$ is bounded for $w\in\supp\,\sigma$ and $z$  outside of a neighborhood of the latter, see Section \ref{ssec_green}, we get from \eqref{regGB} and the dominated convergence theorem that $g(\sigma,U;\cdot)$ extends continuously by zero to the set of regular points of $\partial U$. When \( \sigma \) is not compactly supported, a weaker result is stated in Section~\ref{ssec_level} (Lemma~\ref{lem:thin-sets}).

Regular points of \emph{finely open} sets are defined analogously: when $U$ is finely open, $\zeta\in \partial_\mathsf{f}U$ is said to be regular if $\Omega\setminus U$ is non-thin at $\zeta$. By \eqref{expfb}, the set of regular points is then $b(\partial_\mathsf{f}U)$, and if $\partial_\mathsf{f}U$ is its own base one says that $U$ is regular, see \cite[Section~IV.12]{Fuglede}.

The following results are the natural analogs, for Green potentials on regular open sets of  hyperbolic surfaces, of their logarithmic counterparts in the plane, see \cite[Theorems~I.6.8 \&~I.6.9]{SaffTotik}.

\begin{pdleg}
\label{PDLEG}
  Let $U$ be a regular open set with compact closure $\overline{U}\subset\Omega$. If $\sigma_n$ are positive measures on $U$ with uniformly bounded masses that converge weak$^*$ to some measure $\sigma$ as $n\to\infty$, then
\begin{enumerate}
\item{{\rm [Principle of Descent]}} 
  \begin{equation}
    \label{principleGreen}
    \liminf_{n\to\infty}g(\sigma_n,U;z_n)\geq g(\sigma,U;z), \quad z_n\to z\in U.
    \end{equation}
\item{{\rm [Lower Envelope Theorem]}}
    \begin{equation}
    \label{envelopeGreen}
    \liminf_{n\to\infty}g(\sigma_n,U;z)=g(\sigma,U;z) \quad \text{ for quasi every} \quad z\in U.
    \end{equation}
  \end{enumerate}
\end{pdleg}
\begin{proof}
The arguments are a  minor variation of those used in \cite{SaffTotik}.   For  $M>0$ and $z\in U$, observe from \eqref{regGB} that $\varphi_{M,z}(w):=\min\{M,g_U(z,w)\}$ is continuous on $\overline{U}$ and zero on $\partial U$, locally uniformly with respect to $z$.   Thus, $\varphi_{M,z}$   lies in the closure of  the space $C_c(U)$ of continuous functions on   $U$ with compact support endowed with the $\sup$ norm. Moreover,   $|\varphi_{M,z_n}-\varphi_{M,z}|$ is arbitrary small on $U$ for $n$ large enough, by the minimum principle and the continuity  of Green functions off the diagonal. Hence, as  $\sigma_n\overset{w*}{\to}\sigma$, we get that   $\lim_n\int\varphi_{M,z_n}d\sigma_n=\int\varphi_{M,z}d\sigma$ and consequently, by monotone convergence, we deduce \eqref{principleGreen} from the relations
\[
  g(\sigma,U;z) = \lim_{M\to\infty} \int\varphi_{M,z}d\sigma = \lim_{M\to\infty} \lim_{n} \int\varphi_{M,z_n}d\sigma_n \leq \liminf_n g(\sigma_n,U;z_n).
\]
Next, assume to the contrary that $g(\sigma,U;z)<\liminf_n g(\sigma_n,U,z)$ for $z\in K$, where $K\subset U$ is such that $\cp_U(K)>0$. Clearly, we may suppose that $K$ is compact and so we can find a nonzero measure $\sigma_*$, supported on $K$, such that $g(\sigma_*,U;\cdot)$ is continuous on $U$, see Section~\ref{ssec_cap}. Then by Fatou's lemma it holds that
\begin{equation}
\label{Fatl}
\int g(\sigma,U;z)d\sigma_*(z) < \int \liminf_n g(\sigma_n,U;z)d\sigma_*(z) \leq \liminf_n \int g(\sigma_n,U;z) d\sigma_*(z).
\end{equation}
Moreover, as $g(\sigma_*,U;\cdot)$ extends continuously by zero on $\partial U$, see discussion after \eqref{regGB}, it lies in the closure of $C_c(U)$. Therefore, by Fubini's theorem,
\[
\lim_n \int g(\sigma_n,U;z)d\sigma_*(z) = \lim_n \int g(\sigma_*,U;z)d\sigma_n(z) = \int g(\sigma_*,U;z) d\sigma(z) =
 \int g(\sigma,U;z)d\sigma_*(z),
\]
thereby contradicting \eqref{Fatl}.
\end{proof}

\subsection{Superlevel Sets of Green Potentials}
\label{ssec_level}

In this section we restrict attention to a planar simply connected domain \( D \), which is the interior of a Jordan curve \( T \). In this case, any conformal map \( \phi:\D\to D \) extends to a homeomorphism from \( \overline \D \) onto \( \overline D \) \cite[Theorem 2.6]{Pommerenke}, that we continue to denote with $\phi$.  Clearly, such a domain \( D \) is  regular. As mentioned in the previous subsection, if \( \sigma \) is a finite Borel measure compactly supported in \( D \), then \( g(\sigma,D;\cdot) \) continuously extends by zero to \( T \). If \( \sigma \) is not compactly supported this may not hold, but when
\(F\) is a relatively closed subset of \( \D \) with a limit point \( \xi\in \T \), it was shown in \cite{Lueck86} that
\begin{equation}
\label{luecking}
\lim_{\epsilon\to 0} \cp_\D( F\cap \{|z-\xi|<\epsilon\})>0 \quad \Rightarrow \quad \liminf_{F\ni z\to\xi} g(\sigma,\D;z) = 0,
\end{equation}
and if the rightmost limit holds for every finite measure \( \sigma \), then the implication can be reversed. This result is in fact stated in \cite{Lueck86} with \( g(\sigma,\D;z) \) replaced by \( (1-|z|)g(\nu,\D;z) \) where \( \nu \) is any measure whose Green potential is not identically \( +\infty \), but the latter condition is equivalent to saying that the measure \(  d\sigma(z) := (1-|z|)d\nu(z) \) is finite and then convergence to zero along \( F \) of the limit inferior of \( (1-|z|)g(\nu,\D;z) \) and of \( g(\sigma,\D;z) \) are equivalent, see \cite[Section~3, Lemma]{Lueck86}. It is also pointed out in  \cite[Equation (2.5)]{Lueck86} that the leftmost limit in \eqref{luecking} can be equivalently replaced by \( \cp_\D( F\cap \{|z-\xi|<\epsilon\}) = \infty \) for every \( \epsilon>0 \) (note that instead of the Greenian capacity $\cp_\D(E)$ that we use,  \cite{Lueck86} employs the hyperbolic capacity  $\exp\{-1/\cp_\D(E)\}$). In Lemma~\ref{lem:thin-sets} below, we derive a useful consequence of \eqref{luecking}. With the notation of this lemma, we stress that  a stronger conclusion in fact holds  quasi everywhere, namely $U_\epsilon$ is thin at quasi every point of $T$ (this can be deduced from general properties of balayage covered in Section~\ref{ssec_bal}). The interest of Lemma~\ref{lem:thin-sets} lies with the fact  that its conclusion holds at \emph{every} point of $T$.

\begin{lemma}
\label{lem:thin-sets}
Let \( \sigma \) be a finite measure in \( D \), and for \( \epsilon>0 \) set  \( U_\epsilon := \{z\in D: g(\sigma,D;z) > \epsilon \} \). Let \( \xi \in \overline U_\epsilon \cap T \) and \( \phi:\D\to D \) be a conformal map such that \( \phi(1) = \xi \). Then there exists a closed set \( R_\epsilon \subset[0,1]\) such that \( R_\epsilon\cap(1-\delta,1) \) is non-polar for any \( \delta\in(0,1) \), and for each \( r\in R_\epsilon \) one has \( U_\epsilon \cap \phi(\{z\in\D:|1-z|=1-r\}) =\varnothing \).
\end{lemma}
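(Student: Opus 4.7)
The idea is to transfer everything to the unit disk via \(\phi\), apply \eqref{luecking} to get qualitative thinness of \(\tilde U_\epsilon:=\phi^{-1}(U_\epsilon)\) at \(1\), upgrade it to a rate in \(\delta\) via an explicit boundary-aware comparison between \(\cp_\D\) and \(\cp\), and then project radially. To begin, set \(\tilde\sigma:=\phi^{-1}_*\sigma\); conformal invariance of Green potentials gives \(g(\tilde\sigma,\D;w)=g(\sigma,D;\phi(w))\), so that \(\tilde U_\epsilon\) is the \(\epsilon\)-superlevel set of \(g(\tilde\sigma,\D;\cdot)\), open by lower semicontinuity. Since \(\phi\) extends homeomorphically to \(\overline\D\), the point \(1\) is a limit point of \(\tilde U_\epsilon\). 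With \(C_r:=\{z\in\D:|1-z|=1-r\}\), I define
\[
R_\epsilon:=\{r\in[0,1]:C_r\cap\tilde U_\epsilon=\varnothing\};
\]
this set is closed: if \(r_n\to r\) with \(r_n\in R_\epsilon\) but some \(z\in C_r\cap\tilde U_\epsilon\) existed, then points of \(C_{r_n}\) close to \(z\) would lie in the open set \(\tilde U_\epsilon\) for large \(n\), contradicting \(r_n\in R_\epsilon\).

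\textbf{Thinness plus a quantitative boundary estimate.} Since \(g(\tilde\sigma,\D;\cdot)>\epsilon>0\) on \(\tilde U_\epsilon\), the contrapositive of \eqref{luecking} yields \(\cp_\D(A(\delta))\to 0\) as \(\delta\to 0\), where \(A(\delta):=\tilde U_\epsilon\cap\{|1-z|<\delta\}\). To upgrade this to a rate in \(\delta\), I plan to exploit the explicit form \(g_\D(z,w)=\log|1-z\bar w|-\log|z-w|\) together with the elementary bound \(|1-z\bar w|\leq|1-z|+|z||1-\bar w|\leq 2\delta\), valid when \(|1-z|,|1-w|<\delta\). Integrating the ensuing inequality \(g_\D(z,w)\leq\log(2\delta/|z-w|)\) against the logarithmic equilibrium measure of a compact subset \(K\subset A(\delta)\) will give
\[
\frac1{\cp_\D(K)}\leq\log\frac{2\delta}{\cp(K)},
\]
and letting \(K\) exhaust \(A(\delta)\) from within, left-continuity of both capacities will force
\[
\cp(A(\delta))/\delta\to 0 \qquad \text{as } \delta\to 0.
\]

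\textbf{Finishing with a Lipschitz projection.} The map \(\rho(z):=|1-z|\) is 1-Lipschitz, so by \cite[Theorem~5.3.1]{Ransford} one has \(\cp(\rho(A(\delta)))\leq\cp(A(\delta))\); since the isometry \(s\mapsto 1-s\) preserves logarithmic capacity,
\[
\cp\big((1-\delta,1)\setminus R_\epsilon\big)=\cp\big(\{1-|1-z|:z\in A(\delta)\}\big)\leq\cp(A(\delta)).
\]
I then pick \(\delta_*>0\) with \(\cp(A(\delta))<\delta/4\) for \(\delta\in(0,\delta_*)\), which is possible by the previous step. For such \(\delta\), if \(R_\epsilon\cap(1-\delta,1)\) were polar, then the logarithmic equilibrium measure of \([1-\delta,1]\), having finite energy, would not charge it and would therefore be supported in \((1-\delta,1)\setminus R_\epsilon\), yielding \(\cp((1-\delta,1)\setminus R_\epsilon)\geq\cp([1-\delta,1])=\delta/4\) and contradicting the displayed inequality. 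For \(\delta\in[\delta_*,1)\), the inclusion \(R_\epsilon\cap(1-\delta,1)\supset R_\epsilon\cap(1-\delta_*/2,1)\) transfers non-polarity. The hard part is the quantitative step in the second paragraph: \eqref{luecking} is purely qualitative, and without the sharp comparison \(\cp_\D(K)\geq 1/\log(2\delta/\cp(K))\) for sets sitting within distance \(\delta\) of the boundary point, one cannot rule out the asymptotic scenario \(\cp(A(\delta))\gtrsim\delta\) that would defeat the final projection argument.
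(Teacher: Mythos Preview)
Your proof is correct and takes a genuinely different route from the paper's. Both arguments reduce to $D=\D$ and first establish $\cp_\D(A(\delta))\to 0$ (you cite \eqref{luecking}; the paper reproves this from Luecking's lemma). From there the paths diverge. The paper stays entirely with Greenian capacity: it proves the pointwise kernel inequality $g_\D(\mathsf T z,\mathsf T w)\geq g_\D(z,w)$ for the circular projection $\mathsf T z=1-|1-z|$ via a page of algebra, lifts equilibrium measures through $\mathsf T$ to deduce $\cp_\D(\mathsf T A(\delta))\to 0$, and gets the contradiction from $\cp_\D((1-\delta,1))=\infty$. You instead switch to logarithmic capacity: the elementary bound $|1-z\bar w|\leq 2\delta$ gives $g_\D(z,w)\leq\log(2\delta/|z-w|)$ on $A(\delta)$, which after integrating against the logarithmic equilibrium measure yields the rate $\cp(A(\delta))=o(\delta)$; the $1$-Lipschitz map $z\mapsto|1-z|$ then handles the projection, with the contradiction coming from $\cp([1-\delta,1])=\delta/4$. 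Your approach is shorter and sidesteps the kernel inequality entirely; the paper's has the minor advantage of staying purely Greenian and actually delivering the stronger intermediate statement $\cp_\D(\mathsf T A(\delta))\to 0$.

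One cosmetic point: \eqref{luecking} is stated for relatively closed $F$, while $\tilde U_\epsilon$ is open. Apply it instead to the closed superlevel set $\{z\in\D:g(\tilde\sigma,\D;z)\geq\epsilon\}\supset\tilde U_\epsilon$, on which the liminf is still at least $\epsilon$, and then pass to the subset.
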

\begin{proof}
  Suppose initially that \( D=\D \). Without loss of generality, we can assume that \( \xi=1 \). First, we shall show that
\begin{equation}
\label{lueck1}
\lim_{\delta\to0} \cp_\D ( U_\epsilon \cap D_\delta )  = 0,
\end{equation}
where \( D_\delta:= \{z\in\D:|z-1|<\delta\} \). For $\zeta\in\D$, let \( S(\zeta) := \{z\in\D:g_\D(z,\zeta)\geq \log 2\} \). It was shown in \cite[Section~3, Lemma]{Lueck86} that there exists \( \delta_0=\delta(\epsilon)>0 \) for which
\[
  \int_{\D\setminus S(\zeta)} g_\D(z,\zeta) d\sigma(z) \leq \epsilon/2, \quad |\zeta|>1-\delta_0.
\]
In particular, this inequality holds for \( \zeta\in D_{\delta_0} \). Hence, for any compact subset \( F\subset U_\epsilon \cap D_{\delta_0}\), it holds when \(\zeta\in F\) that
\begin{equation}
  \label{demajpL}
h(\zeta) := \int_{S(\zeta)} g_\D(z,\zeta) d\sigma(z) = g(\sigma,\D;\zeta) - \int_{\D\setminus S(\zeta)} g_\D(z,\zeta) d\sigma(z) \textcolor{blue}{>} \epsilon/2.
\end{equation}
Assume to the contrary that the limit in \eqref{lueck1} is larger that \( 2\eta>0 \) (the limit must exist as
  $\cp_\D ( U_\epsilon \cap D_\delta )$ decreases with \( \delta \)). Since \( U_\epsilon \) is an open set, we get from \eqref{GreenCap} that for any \( \delta>0 \) there exists a compact set \( F_\delta \subset U_\epsilon \cap D_\delta \) for which \( \cp_\D(F_\delta)\geq \eta \). This entails that there exist a sequence \( \delta_n\to0 \) and disjoint compact sets \( F_n \subset U_\epsilon \cap D_{\delta_n} \) with \( \cp_\D(F_n)\geq \eta \). Let \( \nu_n \) be the Green equilibrium distribution on \( F_n \) and \( F_n^* := \{z\in S(\zeta):\text{for some } \zeta\in F_n\} \). In view of \eqref{demajpL},
\[
\epsilon/2 \leq \int_{F_n} h(\zeta) d\nu_n(\zeta) \leq \frac1\eta \sigma(F_n^*) \to 0,
\]
where the second inequality and the fact that $\lim_n\sigma(F_n^*) \to 0$ can be established as in the proof of \cite[Theorem~1]{Lueck86} (compare to p. 486 of that reference). This contradiction proves \eqref{lueck1}.

Let \( \mathsf T:D_1 \to (0,1) \) be defined by \( z \mapsto \mathsf Tz:=1-|1-z| \) and put $V_\epsilon:=U_\epsilon\cap D_1$. Denoting by $ \mathsf TV_\epsilon$ the set $\{\mathsf T\zeta: \,\zeta\in V_\epsilon\}$, we claim that
\begin{equation}
\label{lueck2}
\lim_{\delta\to0} \cp_\D( \mathsf TV_\epsilon \cap D_\delta) = 0.
\end{equation}
Before proving \eqref{lueck2}, let us show why it implies the lemma. For this, consider \( R_\epsilon:=[0,1]\setminus \mathsf TV_\epsilon \), which is a closed set. If the conclusion of the lemma were not true, there  would exist \( \delta_0>0 \) such that \( R_\epsilon\cap(1-\delta_0,1) \) is polar. By definition of \( \mathsf T \) this would imply that \( \cp_\D(\mathsf TV_\epsilon\cap D_\delta) = \cp_\D((1-\delta,1)) = \infty \) for any \( \delta<\delta_0 \) (the last equality follows at once from the definition of the Greenian capacity), which contradicts \eqref{lueck2}. 

We are now left to demonstrate \eqref{lueck2}. Assume to the contrary that it does not hold, i.e., there exists \( \eta>0 \) such that for any \( \delta>0 \) there is a compact set \( F_\delta \subset \mathsf TV_\epsilon \cap D_\delta \) for which \( \cp_\D(F_\delta)\geq \eta \). The previous inequality means that there exists a probability measure \( \mu_\delta \) supported on \( F_\delta \) such that
\[
\int\int g_\D(x,y)d\mu_\delta(x)d\mu_\delta(y) \leq \frac1\eta.
\]
Since \( U_\epsilon \) is open, so is \( V_\epsilon \) and one easily sees that each \( z\in V_\epsilon\cap D_\delta \) has a neighborhood, say \( O_z \), whose closure is contained in \( V_\epsilon\cap D_\delta \) and whose circular projection \( \mathsf TO_z \) is an open subinterval of \( (0,1) \). These subintervals form an open cover of \( F_\delta \), which necessarily has a finite subcover, say \( \mathsf TO_{z_1},\ldots,\mathsf TO_{z_N} \). The closure \( K_\delta \) of \( O_{z_1} \cup \cdots \cup O_{z_N} \) is a compact subset of \( V_\epsilon\cap D_\delta \), and clearly \( F_\delta\subset \mathsf T K_\delta \). In particular, there exists a probability measure \( \nu_\delta \) on \( K_\delta \) such that \( \nu_\delta \mathsf T^{-1}=\mu_\delta \), see for example \cite[Theorem~A.4.4]{Ransford}. Then, by Fubini's theorem, it holds that
\[
\frac1\eta \geq \int\int g_\D(x,y)d\mu_\delta(x)\mu_\delta(y) = \int\int g_\D(\mathsf Tz,\mathsf Tw)d\nu_\delta(z)d\nu_\delta(w),
\]
and if we can show that \( g_\D(\mathsf Tz,\mathsf Tw) \geq g_\D(z,w) \) then we will deduce from the above estimate that
\[ 
\cp_\D( U_\epsilon \cap D_\delta)=\cp_\D( V_\epsilon \cap D_\delta) \geq \cp_\D(K_\delta) \geq \eta>0  \quad \text{for  any}\  \delta>0 ,
\]
which of course contradicts \eqref{lueck1}. Hence, the proof has been reduced to the verification of \( g_\D(\mathsf Tz,\mathsf Tw) \geq g_\D(z,w) \) for $z,w\in D_1$, that we now carry out. Since \( g_\D(z,w) = \log |(1-z\bar w)/(z-w)| \) and \(\mathsf T\) is real-valued, we need to show that
\begin{equation}
\label{lueck3}
E := |1-\mathsf Tz{\mathsf Tw} |^2 |z-w|^2 - |1-z\bar w|^2|\mathsf Tz-\mathsf Tw|^2\geq0.
\end{equation}
Set \( a\xi:=1-z \) and \( b\eta:=1-w \), where \( a,b\in(0,1) \) and \(|\xi|=|\eta|=1\) with \( \re\xi,\re\eta \in (0,1)\). Then
\begin{eqnarray*}
E &=& |a+b-ab|^2|a\xi-b\eta|^2 - |a\xi + b\bar\eta - ab\xi\bar \eta|^2 |a-b|^2 \\
&=& (S+2abU)(T-2abV) - (S+2ab W)(T-2ab)
\end{eqnarray*}
where \( S:=a^2+b^2+(ab)^2 \), \( T:= a^2+b^2 \), \( U := 1-a-b \), \( V:= \re(\xi\bar\eta) \), and \( W := \re( \xi \eta - a \eta - b \xi)\). Therefore,
\begin{eqnarray*}
E &=& 2ab\big( S(1-V) + T(U-W) + 2ab(W-UV) \big) \\
&=& 2ab \big( (S+2abU)(1-V) + (T-2ab)(U-W) \big) \\
&=& 2ab \big( (a+b-ab)^2(1-V) + (a-b)^2(U-W) \big).
\end{eqnarray*}
Because $V\leq1$, the above expression can  be estimated from below as
\begin{eqnarray*}
E &\geq& 2ab (a-b)^2 ( 1 - V + U - W) \\
&=& 2ab(a-b)^2 \big( 2 - \re(\xi\bar\eta + \xi\eta) - a(1-\re\eta) - b(1-\re\xi) \big),
\end{eqnarray*}
and since  \( 1-\re\eta\), \(1-\re\xi\), \(1-a\) as well as \(1-b\) are all positive, it therefore holds that
\begin{eqnarray*}
E &\geq& 2ab (a-b)^2 \big( \re\eta + \re\xi  - \re(\xi\bar\eta + \xi\eta) \big) \\
&=& 2ab(a-b)^2 \big( \re\eta + \re\xi  - 2\re\eta\re\xi \big) \\
& = & 2ab(a-b)^2 \big( \re\eta(1-\re\xi) + \re\xi(1-\re\eta) \big).
\end{eqnarray*}
As \( \re\xi,\re\eta \in [0,1]\), this establishes \eqref{lueck3} and completes the proof of the lemma when \( D=\D \).

Finally, it remains to reduce the case of a general domain \( D \) to the one of the unit disk. Using \cite[Theorem~A.4.4]{Ransford} once more, let \( \nu \) be a finite measure in \( \overline\D \) such that \( \nu\phi^{-1} = \sigma \). Then
\begin{align*}
g(\sigma,D;\phi(z)) & = \int g_D(\phi(z),\zeta) d\sigma(\zeta) = \int g_D(\phi(z),\phi(w)) d\nu(w) \\
& = \int g_\D(z,w) d\nu(w) = g(\nu,\D;z), \quad z\in\D,
\end{align*}
by conformal equivalence of Green functions. Since \( \phi:\overline\D\to\overline D \) is a bijection, the superlevel set
\( \{z\in\D:g(\nu,\D;z)>\epsilon\} \)  is equal to \( \phi^{-1}(U_\epsilon) \), from which the desired result follows.
\end{proof}

\subsection{Balayage}
\label{ssec_bal}

Let $v$ be a non-negative superharmonic function on $\Omega$ and $E$ be a subset of $\Omega$. The \emph{balayage function} (or \emph{regularized reduction}) of $v$ relative to $E$, denoted by $\mathcal{B}_v^E$, is the lower semi-continuous regularization  of
\begin{equation}
\label{defbalf}
\inf\big\{u|~u\text{ is superharmonic and positive in }\Omega,~u\geq v\text{ on }E\big\},
\end{equation}
see \cite[Section~5.3]{Helms} for an account on $\R^n$ that carries over to $\Omega$ without change; in fact,  $\mathcal{B}_v^E$ coincides with the infimum in \eqref{defbalf} except perhaps on a polar set where lower semi-continuous regularization may modify the value. The balayage function $\mathcal{B}_v^E$ is superharmonic in $\Omega$, harmonic in $\Omega\setminus\overline E$, and equal to $v$ on $b(E)$ \cite[Section~VIII.1]{Brelot}. Clearly,  $\mathcal{B}_v^E\leq v$ everywhere, since $v$ qualifies as one of the functions $u$ in \eqref{defbalf}. The balayage function $\mathcal{B}_v^E$ does not change if \( E \) gets replaced by \( b(E) \) or by \( \clos_\mathsf{f}(E) \); in fact, it remains invariant if $E$ is altered by a polar set.

When $E$ is compact, the \hyperref[sdp]{Strong Domination Principle} and  properties of the Green equilibrium potential $g(\mu_{\Omega,E},\Omega;\cdot)$ imply that $\mathcal{B}_1^E=\cp_{\Omega}(E)g(\mu_{\Omega,E},\Omega;\cdot)$. Thus, it follows from the left continuity of $\mathcal{B}_1^E$ with respect to $E$, see \cite[Section VI.10 e)]{Brelot}, and the monotone convergence theorem that the outer Greenian capacity of an arbitrary set $E\subset\Omega$ is the mass of $\Delta\mathcal{B}_1^E$ (in fact, this is the way the outer capacity is defined in \cite[Section VIII.4]{Brelot}). From this and \cite[Theorem~VIII.12]{Brelot}, we deduce in particular that $E$ is polar if and only if $\cp_{\Omega}(E)=0$, justifying a claim made in Section~\ref{ssec_fine}.

If $v$ is the Green potential of a positive Borel measure $\sigma$, then $\mathcal{B}_v^E$ is a Green potential as well \cite[Section~VI.11]{Brelot} and the measure $\sigma^E$ such that $\mathcal{B}_v^E=g(\sigma^E,\Omega;\cdot)$ is called the \emph{balayage} of $\sigma$ relative to $E$. The measure $\sigma^E$ is characterized as the unique measure satisfying
\begin{equation}
\label{char_balayage}
\sigma^E\big(\Omega\setminus b(E)\big) =0 \qandq g(\sigma,\Omega;z) = g\big(\sigma^E,\Omega;z\big), \quad z\in b(E),
\end{equation}
see \cite[Theorem~VIII.3]{Brelot}. From \eqref{char_balayage}, one deduces at once that
\begin{equation}
\label{transit}
(\sigma^E)^F=\sigma^F,\qquad F\subset E\subset\Omega.
\end{equation}
 Moreover, it holds by \cite[Section~VI.12, Equation~(13)]{Brelot} that
\begin{equation}
\label{preservemass}
\sigma^E(B) = \int \delta_x^E(B)d\sigma(x),\qquad B\quad\mathrm{Borel},
\end{equation}
while it  follows from \cite[Section~VI.12, Equation~(9)]{Brelot} and Fubini's theorem that
\begin{equation}
\label{GreenHarmonicMeasure}
g\big(\sigma^E,\Omega;z\big) = \int g(\sigma,\Omega;x)d\delta_z^E(x).
\end{equation}
Since $g(\sigma^E,\Omega;\cdot)\leq g(\sigma,\Omega;\cdot)$ for any Radon measure $\sigma$ by the just discussed properties of balayage,  it follows from Fubini's theorem that $\int\nu d\sigma^E\leq\int\nu d\sigma$ for any Green potential $\nu$. Since any non-negative superharmonic function $\nu$ is an increasing limit of potentials, see \cite[Lemma~1.1]{Const67}, monotone convergence yields that $\int\nu d\sigma^E\leq\int\nu d\sigma$ remains valid for such functions as well, see also \cite[Section~I.3]{Fuglede}.  In particular, the mass of $\sigma^E$ cannot exceed the mass of $\sigma$.

The \emph{fine support} of a Radon measure $\sigma$, denoted by $\supp_\mathsf{f}\,\sigma$ when it exists, is the smallest finely closed carrier of $\sigma$. A sufficient condition for its existence is that $\sigma$ does not charge polar sets, in which case $\supp_\mathsf{f}\,\sigma$ is its own base, see \cite[Theorem~VII.12]{Brelot}. If $\sigma$ is admissible, meaning that $g(\sigma,\Omega;\cdot)\not\equiv+\infty$, see Section~\ref{ssec_pot}, and if $\sigma(F)=0$ for some  polar set $F$, then $\sigma^E(F)=0$  for any $E\subset\Omega$ \cite[Theorem~1]{Fug71} (as usual, $\sigma(F)$ means the outer $\sigma$-measure of $F$ when the latter is not Borel). In particular, if \( \sigma \) is admissible and $\sigma(b(E))=0$, then $\sigma^E$ does not charge polar sets since it is carried by $b(E)$. Thus, $\supp_\mathsf{f}\,\sigma^E$ exists in this case. An important special case is handled by the  following lemma, the first  item of which follows from the preceding discussion.

\begin{lemma} \cite[Corollary~1 to Theorem~4]{Fug71} \cite[Corollaries~2 \&~3 to Theorem~12.7]{Fuglede}
\label{supfin}
\begin{itemize}
\item[(i)] If $W$ is finely open and either $z\in W$ or $z\in i(\Omega\setminus W)$, then $\delta_z^{\Omega\setminus W}$ does not charge polar sets.
\item[(ii)] Let $V$ be a regular finely open set, $z\in V$, and $V_z$ the fine  component of $V$ containing $z$. Then $V_z$ is regular, and $\Omega\setminus V_z$ is largest among all the bases $B$ such that $\delta_z^B = \delta_z^{\Omega\setminus V}$. Moreover, the fine support of $\delta_z^{\Omega\setminus V}$ exists and
\begin{equation}
\label{baldfi}
\supp_\mathsf{f}\,\delta_z^{\Omega\setminus V}=\partial_\mathsf{f} V_z\subset\partial_\mathsf{f} V.
\end{equation}
\item[(iii)]
Let $U$ be a fine domain and $z\in U$ or $z\in i(\Omega\setminus U)$. Then the fine support of $\delta_z^{\Omega\setminus U}$ exists and
\begin{equation}
\label{detfs}
\supp_\mathsf{f}\,\delta_z^{\Omega\setminus U}=b(\partial_\mathsf{f} U)=b(\Omega\setminus U)\cap \partial_\mathsf{f} U.
\end{equation}
\end{itemize}
\end{lemma}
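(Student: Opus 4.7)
My plan is to establish the three assertions in the order (i), (ii), (iii), exploiting the fine-topological nature of balayage together with Fuglede's principle that the balayage of an admissible measure charging no polar set still charges no polar set.

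For (i), the key initial observation is that in both hypotheses $z\notin b(\Omega\setminus W)$: when $z\in W$, the fine openness of $W$ gives that $\Omega\setminus W$ is thin at $z$; when $z\in i(\Omega\setminus W)$, the point $z$ is by definition a finely isolated point of $\Omega\setminus W$, so $\Omega\setminus W$ is again thin at $z$ (a finely isolated point carries no mass of the base). Therefore $\delta_z^{\Omega\setminus W}$ is carried by $b(\Omega\setminus W)$, which excludes $z$. I would then invoke the general statement quoted before the lemma (with reference to \cite{Fug71}): if $\sigma$ is admissible and $\sigma(F)=0$ for a polar set $F$, then $\sigma^E(F)=0$ for any $E$. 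Apply this to $\sigma=\delta_z$, which is admissible (since $g_\Omega(\cdot,z)<+\infty$ quasi everywhere) and charges only the singleton $\{z\}$; since a Borel polar set $F$ either contains $z$---in which case $\delta_z^{\Omega\setminus W}$ does not charge $F$ because it is supported off $z$---or it does not, in which case $\sigma(F)=0$ and Fuglede's principle applies. A minor care must be taken about non-Borel polar sets, but this is handled by the fact that such sets are contained in Borel ($G_\delta$) polar sets.

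For (ii), the regularity of $V_z$ is the first hurdle. I would show that $\partial_{\mathsf{f}}V_z \subseteq b(\Omega\setminus V_z)$ by using the fact that the fine components of $V$ are finely open and pairwise disjoint; any point on $\partial_{\mathsf{f}}V_z$ lies either on $\partial_{\mathsf{f}}V$ (where $\Omega\setminus V\subseteq \Omega\setminus V_z$ is non-thin by regularity of $V$) or in $V\setminus V_z$ (where $V\setminus V_z$ itself is finely open, so $\Omega\setminus V_z$ is non-thin there because it contains a fine neighborhood). Next, to show that $\Omega\setminus V_z$ is the largest base $B$ with $\delta_z^B=\delta_z^{\Omega\setminus V}$, the idea is that balayage from $z$ only ``sees'' the fine component of $V$ containing $z$: more precisely, the Perron-type characterization of $\delta_z^E$ through extremal superharmonic functions equal to $1$ on $E$ shows that removing from $E$ any part lying in a fine component of $V$ disjoint from $V_z$ does not alter the balayage, because such parts can be locally shielded by superharmonic functions that are $1$ on them and $0$ near $z$. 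The maximality then follows because any strictly larger base would meet $V_z$ (as $\Omega\setminus V_z$ is already a base by what precedes and Lemma \ref{distregt}), contradicting $\delta_z^B=\delta_z^{\Omega\setminus V}$. Finally, the identification $\supp_{\mathsf{f}}\,\delta_z^{\Omega\setminus V}=\partial_{\mathsf{f}} V_z$ comes from combining (i) (so that the fine support exists) with the regularity of $V_z$: the balayage is carried by $b(\Omega\setminus V_z)=\partial_{\mathsf{f}} V_z$, and it charges every fine neighborhood of any point therein by a standard ``harmonic measure is positive on fine neighborhoods of regular boundary points'' argument.

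For (iii), since $U$ is a fine domain it has a unique fine component, namely itself. If $z\in U$, apply (ii) directly to $V=U$ to get $\supp_{\mathsf{f}}\,\delta_z^{\Omega\setminus U}=\partial_{\mathsf{f}} U$, and use (i) to conclude that this fine support is its own base, yielding $b(\partial_{\mathsf{f}} U)$. If instead $z\in i(\Omega\setminus U)$, reduce to the previous case by noting that $z$ is finely isolated in $\Omega\setminus U$, so $V:=U\cup\{z\}$ is still finely open with $U$ as its unique fine component (other than $\{z\}$), and apply (ii) to this $V$. The equality $b(\partial_{\mathsf{f}}U)=b(\Omega\setminus U)\cap\partial_{\mathsf{f}}U$ is a direct consequence of \eqref{expfb}. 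The main obstacle throughout is the argument for the maximality statement and the precise identification of the fine support in (ii); all other steps are bookkeeping based on the fine-topological framework and the already quoted theorems of Brelot and Fuglede.
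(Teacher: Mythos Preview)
The paper does not supply its own proof of this lemma: it is stated with direct citation to Fuglede's results, and the only in-text remark is that item (i) ``follows from the preceding discussion'' (namely, the paragraph on admissible measures and their balayages just before the lemma). Your argument for (i) is exactly that discussion spelled out, so on this item you match the paper.

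For (ii) and (iii), since the paper defers entirely to \cite{Fug71} and \cite{Fuglede}, there is nothing to compare against except Fuglede's original arguments. Your outline for (ii) is along the right lines---regularity of $V_z$ does follow cleanly from Lemma~\ref{distregt} applied to the disjoint finely open sets $V_z$ and $V\setminus V_z$---and you correctly flag the maximality claim as the delicate step.

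There is, however, a genuine gap in your reduction of (iii) to (ii). Item (ii) requires $V$ to be \emph{regular}, whereas in (iii) the fine domain $U$ is not assumed regular; indeed, the conclusion $\supp_\mathsf{f}\,\delta_z^{\Omega\setminus U}=b(\partial_\mathsf{f} U)$ rather than $\partial_\mathsf{f} U$ signals that $\partial_\mathsf{f}U$ need not be its own base. So ``apply (ii) directly to $V=U$'' is illegitimate, and your follow-up sentence (``use (i) to conclude that this fine support is its own base, yielding $b(\partial_\mathsf{f}U)$'') does not repair it: knowing the fine support is a base does not tell you it equals $b(\partial_\mathsf{f}U)$ unless you already know it is contained in $\partial_\mathsf{f}U$. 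The correct route is to pass to the regularization $V:=U\cup i(\Omega\setminus U)$ via \eqref{Vprime}, observe that $\delta_z^{\Omega\setminus U}=\delta_z^{\Omega\setminus V}$ (balayage depends only on the base), then apply (ii) to $V$ and identify the fine component $V_z$. The same correction is needed when $z\in i(\Omega\setminus U)$: the set $U\cup\{z\}$ is indeed a fine domain, but it has no reason to be regular either.
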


Let now $O\subset\Omega$ be (Euclidean) open, $z\in O$, and $O_z$ the connected component of $O$ containing $z$. If we let $V$ be the regular finely open set obtained by adjoining to $O$ the polar set $i(\Omega\setminus O)$ and $V_z$ the fine component containing $z$, then we get from \cite[Theorem~6]{Fug71} (see discussion there) that  $O_z=V_z\setminus i(\Omega\setminus O)$. Thus, since the balayage function remains the same if the set relative to which it is defined is altered by a polar set, Lemma \ref{supfin} (ii) implies that
\[
\delta_z^{\Omega\setminus O} = \delta_z^{\Omega\setminus V} = \delta_z^{\Omega\setminus V_z} = \delta_z^{\Omega\setminus O_z}
\]
and the latter is carried by the regular points of $\partial O_z$. Moreover, if $O$ has compact closure in $\Omega$, then $\delta_z^{\Omega\setminus O}$ is a probability measure, and for $h$ a harmonic function in \( O \)  with continuous extension to $\overline{O}$:
\begin{equation}
\label{balcont1}
h(z)=\int h\,d\delta_z^{\Omega\setminus O},\qquad z\in O.
\end{equation}
Indeed, \eqref{balcont1} follows from \cite[Section~VI.12, application~1]{Brelot} since $h$ is the Perron-Wiener-Brelot solution of the Dirichlet problem\footnote{In fact, the right-hand side of \eqref{balcont1} is the  Perron-Wiener-Brelot solution of the Dirichlet  problem on $O$ with boundary data $h$  as soon as the latter is summable against $\delta_z^{\Omega\setminus O}$ for one (and then any) $z$ in each component of $O$.  } in $O$ with boundary data $h_{\mathcal{b}\partial O}$; see \cite[Section~VI.6, item $\gamma)$]{Brelot}. Equality in \eqref{balcont1} shows that the measure \( \delta_z^{\Omega\setminus O} \) does not depend on $\Omega$, provided that the latter is hyperbolic and compactly contains \( \overline O \). It is called the  \emph{harmonic measure} for \( O\) (at $z$).  More general versions of \eqref{balcont1}, involving the fine Dirichlet problem and cases where $\overline{O}$ is non-compact, are stated  in Theorem~\ref{thm_fineDirichlet} and Lemma~\ref{lem:mass-balayage} further below.

When $\Omega\subset \C$ and $\overline{O}$ is compact in $\Omega$, it follows from \cite[Chapter II, Theorem 5.1]{SaffTotik} that
\begin{equation}
\label{balogh}
g(\sigma,O,z)=V^{\sigma}(z)-V^{\sigma^{\Omega\setminus O}}(z),\qquad z\in\Omega,
\end{equation}
where $V^\sigma$ is the logarithmic potential of $\sigma$ and the left-hand side is interpreted as $0$ for $z\in b(\Omega\setminus O)$. More general versions when $\infty\in\Omega$ may be found in \cite{SaffTotik}. If moreover $O$ is a domain with $K\subset O$ a non-polar compact set such that $\Omega\setminus K$ regular and $O\setminus K$ is non-thin at every point of $K$, then 
\begin{equation}
\label{fsuppeqm}
\supp_\mathsf{f}\,\mu_{O,K}=K.
\end{equation}
Indeed, $K$ is its own base and $K=\partial_{\mathsf{f}}(O\setminus K)$ by assumption, while $\mu_{O,K}$ is the balayage onto $K$ of the equilibrium measure on the plate $\partial O$ of the condenser $(\partial O,K)$ \cite[Chapter VIII, Theorem 2.6]{SaffTotik}. Thus, \eqref{fsuppeqm} follows from \eqref{preservemass} and \eqref{detfs}.

\subsection{Green Potentials in \( D \) and on \( \RS \)}

In this subsection, we connect Green functions and potentials on the domain $D$ and surface $\RS$ defined in Section \ref{laclasse}. First, let us show that
\begin{equation}
\label{rapFG}
g_D (x,y) = \sum_{z\in p^{-1}(x)} m(z)g_{\RS}(z,w),
\end{equation}
where \( m(z) \) is the ramification order of $\RS$ at \( z \) and \( w \) is an arbitrary  element of the fiber \( p^{-1}(y) \). To check \eqref{rapFG}, note that for fixed $y\notin p(\mathbf{rp}(\RS))$ and $w\in \RS$ with $p(w)=y$, the right-hand side is well defined and harmonic as a function of $x\in D\setminus (p(\mathbf{rp}(\RS)) \cup \{y\})$. Thus, it is harmonic for $x\in D\setminus \{y\}$ by the \hyperref[rt]{Removability Theorem} and the continuity of Green functions off the diagonal. Moreover, the right-hand side clearly has a logarithmic singularity at $y$, and since $\lim_{z\to\partial \RS}g_{\RS}(z,w)=0$ by the regularity of $\RS$ in $\RS_*$, its largest harmonic minorant is zero. This proves  \eqref{rapFG} when $y\notin p(\mathbf{rp}(\RS))$, and the general case follows by continuity of Green functions off the diagonal. Consequently, if \( \nu \) is a Radon measure on \( \RS \) and $p_*(\nu)$ denotes its pushforward under $p$ (the measure on $D$ such that $p_*(\nu)(B)=\nu(p^{-1}(B))$ for a Borel set $B$), integrating \eqref{rapFG} against $p_*(\nu)$ with respect to $y$ and changing variables yields
\begin{equation}
\label{push-down}
g(p_*(\nu),D;x) = \sum_{z\in p^{-1}(x)} m(z)g(\nu,\RS;z).
\end{equation}

In the other direction, for a Radon measure $\sigma$ on $D$, let us define $\widehat{\sigma}$  by
\begin{equation}
\label{lifted-measureA}
\widehat{\sigma}(B) := \int_{p(B)}\sum_{z\in p^{-1}(x)\cap B}m(z)d\sigma(x), \quad B\subset \RS,\quad B\mathrm{\ Borel}.
\end{equation}
As $p(B)$ is Borel when $B$ is Borel, one easily checks that \eqref{lifted-measureA} defines a Radon measure on $\RS$. In fact, one can verify that $\widehat{\sigma}=\sigma^*+\sum_{z\in\mathbf{rp}(\RS)}m(z)\sigma(\{p(z)\})\delta_z$, where $\sigma^*$ is the pullback measure resulting from Carathéodory's construction as applied to the map $B\mapsto\sigma(p(B)\setminus p(\mathbf{rp}(\RS)))$ defined on Borel subsets of $\RS$, see \cite[Theorem 2.10.10]{Federer}.

Partitioning $D\setminus p(\mathbf{rp}(\RS))$ into countably many Borel sets $B_k$ such that $p: p^{-1}(B_k)\to B_k$ induces a homeomorphism on each connected component of $p^{-1}(B_k)$, and invoking the \hyperref[rt]{Removability Theorem} to proceed by superharmonicity from the case where $\zeta\notin \mathbf{rp}(\RS)$, one deduces from \eqref{lifted-measureA} and \eqref{rapFG} that
\begin{equation}
\label{remontp}
g\left(\widehat{\sigma},\RS;\zeta\right) = g(\sigma,D;p(\zeta)), \quad \zeta\in\RS.
\end{equation}

As a consequence of definition \eqref{lifted-measureA}, we claim that if a sequence $\{\mu_n\}$ of finite positive measures supported on a fixed compact set $K\subset D$ converges weak$^*$ to $\mu$ on $D$, then the sequence $\{\widehat{\mu}_n\}$ converges weak$^*$ to \( \widehat\mu \) on $\RS$. Indeed, the total mass of $\mu_n$ is  necessarily bounded independently of $n$  by some $C>0$ (this follows from the Banach-Steinhaus principle) and therefore, in view of \eqref{lifted-measureA}, the total mass of $\widehat{\mu}_n$ is  bounded by $MC$, where $M$ is the number of sheets of  $\RS$. Hence, an arbitrary subsequence of $\{{\widehat\mu}_n\}$ has a subsequence, say $\{{\widehat\mu}_{n_k}\}$, that converges weak$^*$ on $\RS$ to some finite measure, say $s$. It follows from the \hyperref[PDLEG]{Lower Envelope Theorem} that $\liminf_k g(\widehat{\mu}_{n_k},\RS;z)=g(s,\RS;z)$ for quasi every $z\in\RS$. Similarly, \eqref{remontp} and the \hyperref[PDLEG]{Lower Envelope Theorem}, applied this time to $\{\mu_{n_k}\}$, yield that $\liminf_{k} g(\widehat{\mu}_{n_k},\RS;z) = g(\mu,D;p(z)) = g(\widehat{\mu},\RS;z)$ for quasi every $z\in\RS$. Thus, $g(s,\RS;\cdot)=g(\widehat{\mu},\RS;\cdot)$ quasi everywhere on \( \RS \), and the claim follows by taking Laplacians on both sides of this equality. 

In the previous construction, $\RS$ may of course be replaced by another saturated connected bordered surface \( \mathcal S\subset \RS_* \) with bounded projection such that \( \overline\RS\subset \mathcal S \). Therefore,
\[
\Bigl(\mu_n\overset{w*}{\to}\mu \quad \text{in} \quad \overline D\Bigr) \quad \Rightarrow \quad \Bigl(\widehat{\mu}_n\overset{w*}{\to}\widehat{\mu} \quad \text{in} \quad \overline \RS\Bigr), 
\]
because \( \{\mu_n \}\) also converges weak$^*$ in \( p(\mathcal S)\supset \overline D \) whence the measures $\widehat{\mu}_n$ converge weak$^*$ on \( \mathcal S \), while having their supports contained in \( \overline \RS \). The ``hat measure'' constructed in \eqref{lifted-measureA} is instrumental both in the proof of Lemma \ref{lem8a} and of the following technical result, needed in  the paper.

\begin{lemma}
\label{saturated}
Let  \( \sigma \) be a measure in \( D \). Given \( E \subset D \), set \( \widehat E:=p^{-1}(E) \). Then it holds that
\[
g\big( \sigma^E,D;p(z) \big) = \int g( \sigma,D;p(x)) d\delta_z^{\widehat E}(x).
\]
\end{lemma}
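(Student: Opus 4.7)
The plan is to transport the identity to the Riemann surface $\RS$ via the lifting $\sigma\mapsto\widehat\sigma$ defined in \eqref{lifted-measureA}, where it reduces to the generic balayage identity \eqref{GreenHarmonicMeasure}. More precisely, applying \eqref{GreenHarmonicMeasure} on $\RS$ with the measure $\widehat\sigma$ and the set $\widehat E=p^{-1}(E)$ yields
\[
g\big(\widehat\sigma^{\widehat E},\RS;z\big)=\int g(\widehat\sigma,\RS;x)\,d\delta_z^{\widehat E}(x)=\int g(\sigma,D;p(x))\,d\delta_z^{\widehat E}(x),
\]
where the second equality is \eqref{remontp}. This already identifies the right-hand side of the lemma. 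On the other hand, the left-hand side equals $g\big(\widehat{\sigma^E},\RS;z\big)$, again by \eqref{remontp}. Hence the whole statement is equivalent to the identity of measures
\begin{equation}\label{balcommutes}
\widehat{\sigma^E}=\widehat\sigma^{\widehat E}\qquad\text{on }\RS,
\end{equation}
which I now propose to establish via the uniqueness clause in the characterization \eqref{char_balayage} of balayage.

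To verify \eqref{balcommutes}, I first need to compare the bases $b(\widehat E)$ and $p^{-1}(b(E))$. I claim they coincide, and I will prove the two inclusions separately. The inclusion $b(\widehat E)\subseteq p^{-1}(b(E))$ is easy: if $\zeta\in b(\widehat E)$, then the pullback $v\circ p$ of any superharmonic function $v$ on $D$ witnessing thinness of $E$ at $p(\zeta)$ would also witness thinness of $\widehat E$ at $\zeta$, a contradiction. The reverse inclusion $p^{-1}(b(E))\subseteq b(\widehat E)$ is where the only real subtlety lies: given a superharmonic function $\tilde v$ on $\RS$ with $\liminf_{\widehat E\setminus\{\zeta\}\ni\eta\to\zeta}\tilde v(\eta)>\tilde v(\zeta)$, I will construct, on a small disk $V=p(U)$ around $p(\zeta)$, the symmetrization
\[
\pi_U(x):=\sum_{\eta\in p^{-1}(x)\cap U}m(\eta)\tilde v(\eta),\qquad x\in V,
\]
which is superharmonic on $V$ by the computation already carried out in \eqref{defetagev} (proof of Lemma~\ref{lem:fineopen}). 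Since $U$ can be taken so small that preimages in $U$ of points of $V\setminus\{p(\zeta)\}$ all cluster to $\zeta$ as $x\to p(\zeta)$, the hypothesis on $\tilde v$ transfers to the bound $\liminf_{E\setminus\{p(\zeta)\}\ni x\to p(\zeta)}\pi_U(x)\geq m(\zeta)\cdot \liminf_{\widehat E\setminus\{\zeta\}\ni\eta\to\zeta}\tilde v(\eta)>m(\zeta)\tilde v(\zeta)=\pi_U(p(\zeta))$, witnessing thinness of $E$ at $p(\zeta)$. This step, and in particular the bookkeeping of multiplicities at a ramification point, is the main obstacle I foresee.

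With the identity $b(\widehat E)=p^{-1}(b(E))$ in hand, \eqref{balcommutes} falls out of the characterization \eqref{char_balayage}. Indeed, $\widehat\sigma^{\widehat E}$ is by definition carried by $b(\widehat E)$, and $\widehat{\sigma^E}$ is carried by $p^{-1}(\supp\sigma^E)\subseteq p^{-1}(b(E))=b(\widehat E)$; moreover, for $\zeta\in b(\widehat E)$ one has $p(\zeta)\in b(E)$, whence
\[
g\big(\widehat{\sigma^E},\RS;\zeta\big)=g(\sigma^E,D;p(\zeta))=g(\sigma,D;p(\zeta))=g(\widehat\sigma,\RS;\zeta)
\]
by \eqref{remontp} applied twice and the defining property $g(\sigma^E,D;\cdot)=g(\sigma,D;\cdot)$ on $b(E)$. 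The uniqueness part of \eqref{char_balayage} then forces $\widehat{\sigma^E}=\widehat\sigma^{\widehat E}$, which combined with the opening display yields the asserted equality. Everything else is bookkeeping, so once the thinness transfer through the covering map is in place the lemma follows promptly.
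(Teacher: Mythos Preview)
Your proof is correct but takes a genuinely different route from the paper's. The paper works directly with the definition of balayage as a regularized infimum of superharmonic majorants: it shows $g(\sigma^E,D;p(\cdot))\geq g(\widehat\sigma^{\widehat E},\RS;\cdot)$ by observing that if $u$ is superharmonic on $D$ with $u\geq g(\sigma,D;\cdot)$ on $E$ then $u\circ p$ competes in the infimum defining $\widehat\sigma^{\widehat E}$, and the reverse inequality by the symmetrization $\frac1M\sum_{\zeta\in p^{-1}(z)}m(\zeta)v(\zeta)$ pushed down to $D$ (with an appeal to the Strong Domination Principle to handle the polar exceptional set). Your approach instead isolates the clean measure identity $\widehat{\sigma^E}=\widehat\sigma^{\widehat E}$ and invokes the uniqueness clause in \eqref{char_balayage}, for which you first establish $b(\widehat E)=p^{-1}(b(E))$. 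The symmetrization you use to transfer thinness downward is exactly the one from the proof of Lemma~\ref{lem:fineopen}; in fact, $b(\widehat E)=p^{-1}(b(E))$ could be read off directly from the fine openness and fine continuity of $p$ stated there (a fine neighborhood witnessing thinness of $\widehat E$ at $\zeta$ pushes down via $p$ to one witnessing thinness of $E$ at $p(\zeta)$, and conversely pulls back). What your route buys is the byproduct identities $b(p^{-1}(E))=p^{-1}(b(E))$ and $\widehat{\sigma^E}=\widehat\sigma^{\widehat E}$, which articulate that lifting commutes with balayage for saturated sets; the paper's route is slightly more self-contained and avoids the base computation, at the cost of manipulating the infimum definition and needing the domination principle.
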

\begin{proof}
If $u$ is superharmonic on $D$, it is obvious from \eqref{remontp} that
\[
\Bigl(u(z)\geq g( \sigma,D;z),  \quad z\in E \Bigr)\quad \Rightarrow  \quad \Bigl(u(p(\xi))\geq g(\widehat\sigma,\RS;\xi),  \quad \xi\in \widehat E \Bigr).
\]
As $u\circ p$ is superharmonic on \( \RS \), this and the definition of balayage imply that
\begin{equation}
\label{satur2}
g\big( \sigma^E,D;p(\zeta)\big) \geq g\big(\widehat\sigma^{\widehat E},\RS;\zeta\big), \quad \zeta\in \RS.
\end{equation}
Conversely, averaging over $\zeta\in p^{-1}(\{z\})$, equation \eqref{remontp} yields that
\begin{equation}
\label{satur1}
g( \sigma,D;z) = \frac 1M\sum_{\zeta\in p^{-1}(\{z\})}m(\zeta) g(\widehat\sigma,\RS;\zeta),\quad z\in D,
\end{equation}
with \( M \) being the total number of sheets of \( \RS \), entailing when $v$ is superharmonic on $\RS$ that
\[
\Bigl(v(\zeta) \geq g(\widehat\sigma,\RS;\zeta), \quad \zeta\in\widehat{E}\Bigr) \quad \Rightarrow \quad  \Bigl(\frac1M\sum_{\zeta\in p^{-1}(z)}m(\zeta)v(\zeta) \geq g( \sigma,D;z), \quad z\in D\Bigr).
\]
Now, the function $z\mapsto \sum_{\zeta\in p^{-1}(z)}m(\zeta)v(\zeta)$ is well-defined and superharmonic on $D\setminus p(\mathbf{rp}(\RS))$ and therefore on the whole domain \( D \) by the \hyperref[rt]{Removability Theorem}. Thus,  by the definition of balayage, we obtain when $v$ is superharmonic on $\RS$ that  
\[
\Bigl(v(\zeta) \geq g(\widehat\sigma,\RS;\zeta), \quad \zeta\in\widehat{E}\Bigr) \quad \Rightarrow \quad  \Bigl(\frac1M\sum_{\zeta\in p^{-1}(z)}m(\zeta)v(\zeta) \geq g( \sigma^E,D;z), \quad z\in D\Bigr),
\]
and taking the infimum over $v$ before taking the lower semi-continuous regularization gives us, by virtue of the \hyperref[sdp]{Strong Domination Principle}, that
\[
\frac1M\sum_{\zeta\in p^{-1}(z)}m(\zeta)g(\widehat{\sigma}^{\widehat{E}},\RS;\zeta) \geq g( \sigma^E,D;z), \quad z\in D.
\]
Combining the above estimate with \eqref{satur2}, we deduce that
\[
g\big( \sigma^E,D;p(z)\big) = g\big(\widehat\sigma^{\widehat E},\RS;z\big), \quad z\in \RS,
\]
and the conclusion now follows from \eqref{GreenHarmonicMeasure} and \eqref{remontp}.
\end{proof}

\subsection{Dirichlet problem}

The Dirichlet problem on a domain consists in finding a harmonic function in that domain with given boundary data. In the fine Dirichlet problem, one looks for a finely harmonic function on a fine domain to meet prescribed boundary data. A real-valued function \( h \) on a fine domain  $V$ is \emph{finely harmonic} if it is finely continuous, and if the fine topology on $V$ has a basis comprised of finely open sets $E$ with $\clos_\mathsf{f}(E)\subset V$ such that
\[
h(z)=\int h d\delta_z^{\Omega\setminus E} \quad \text{for every} \quad z\in E
\]
(in particular $h$ must be integrable with respect to  \( \delta_z^{\Omega\setminus E} \) for all $z\in E$ and each $E$); one  may even assume that $E$ is regular and has compact closure (with respect to the Euclidean topology) in $V$, see \cite[Sections~8 \&~14]{Fuglede}. Note that a function harmonic in a domain is finely harmonic on any fine subdomain, see \cite[Theorem~8.7]{Fuglede}.

If $V$ is a regular finely open set (recall that it means
$\Omega\setminus V$ is its own base), then $b(\partial_\mathsf{f}V)=\partial_\mathsf{f}V$ by \eqref{expfb} whence the result below is a special case of \cite[Theorem~14.1]{Fuglede} and its proof.

\begin{theorem}
\label{thm_fineDirichlet}
Let $V\subset\Omega$ be a finely open set such that $\Omega\setminus V$ is non-thin at every point of itself, i.e., such that $V$ is regular. If $\psi$ is a finely continuous function on $\partial_\mathsf{f}V$, majorized in absolute value there by a finite semi-bounded potential on $\Omega$, say \( g \), then
\begin{equation}
\label{fineDirichlet}
h_\psi(z) := \int \psi d\delta_z^{\Omega\setminus V} = \int \psi d\delta_z^{\partial_\mathsf{f} V},  \quad z\in V,
\end{equation}
is the unique finely continuous extension of $\psi$ to $\clos_{\mathsf{f}}(V)$ that is finely harmonic  in \( V \) and is majorized in absolute value there by a semi-bounded potential. In fact, it holds that $|h_\psi|\leq g$ on $\clos_{\mathsf{f}}(V)$.
\end{theorem}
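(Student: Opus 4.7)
The plan is to follow the argument of \cite[Theorem~14.1]{Fuglede}, taking advantage of the regularity hypothesis to avoid some of the delicate considerations that appear when $V$ is not regular. First I would decompose $\psi=\psi^+-\psi^-$ with $0\leq \psi^\pm\leq g$ and argue the statement for non-negative $\psi$. To see that $h_\psi$ is well defined, write $g$ as an increasing pointwise limit of locally bounded potentials $g_n$ (which is possible by definition of semi-boundedness). By monotone convergence and \eqref{GreenHarmonicMeasure},
$$\int g\,d\delta_z^{\Omega\setminus V}=\sup_n\int g_n\,d\delta_z^{\Omega\setminus V}=\sup_n \mathcal{B}_{g_n}^{\Omega\setminus V}(z)\leq \sup_n g_n(z)=g(z)<\infty,$$
and thus $\psi$ is $\delta_z^{\Omega\setminus V}$-integrable and $|h_\psi(z)|\leq g(z)$ for $z\in V$. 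The second equality in \eqref{fineDirichlet} is immediate from Lemma~\ref{supfin}(ii), which identifies $\supp_\mathsf{f}\,\delta_z^{\Omega\setminus V}\subset\partial_\mathsf{f}V$ and says that this measure does not charge polar sets.

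Next I would show that $h_\psi$ is finely harmonic on $V$. Fix $z_0\in V$ and pick a fine neighborhood basis at $z_0$ consisting of \emph{regular} finely open sets $E$ with $\clos_\mathsf{f}(E)\subset V$; such sets exist because any finely open neighborhood of $z_0$ contained in $V$ may be shrunk slightly and, if necessary, have a polar set removed to become regular (this uses that $V$ is regular so that $V\setminus E$ remains close to its base near $\partial_\mathsf{f}E$). For such an $E$, we have $F:=\Omega\setminus V\subset E':=\Omega\setminus E$, hence by the transitivity of balayage \eqref{transit}, $\delta_{z_0}^F=(\delta_{z_0}^{E'})^F$. Combining with \eqref{preservemass} and Fubini's theorem,
$$h_\psi(z_0)=\int\psi\,d\delta_{z_0}^{\Omega\setminus V}=\int\!\!\!\int\psi\,d\delta_x^{\Omega\setminus V}\,d\delta_{z_0}^{\Omega\setminus E}(x)=\int h_\psi(x)\,d\delta_{z_0}^{\Omega\setminus E}(x),$$
which is the mean-value property defining fine harmonicity.

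For the boundary behavior at $\zeta\in\partial_\mathsf{f}V$, regularity gives $\zeta\in b(\Omega\setminus V)$, so the \hyperref[sdp]{Strong Domination Principle} combined with \eqref{char_balayage} applied to Green potentials whose balayage identifies with $\delta_\zeta^{\Omega\setminus V}$ shows that $\int u\,d\delta_\zeta^{\Omega\setminus V}=u(\zeta)$ for every superharmonic $u\geq0$ finite at $\zeta$ (in particular for $g$ and $g\pm\psi$). Using fine continuity of $\psi$ near $\zeta$ and the inequality $|\psi-\psi(\zeta)|\leq \varepsilon+2g\cdot\mathbf{1}_{W}$ on $\partial_\mathsf{f}V$, valid on a suitable fine neighborhood $W^c$ of $\zeta$ with $W$ having arbitrarily small ``balayage mass'' $\int\mathbf{1}_W\,d\delta_z^{\Omega\setminus V}$ as $z\to\zeta$ finely, one concludes that $\mathrm{fine}\lim_{V\ni z\to\zeta}h_\psi(z)=\psi(\zeta)$. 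Uniqueness is then immediate: if $\tilde h$ is another such extension, $h_\psi-\tilde h$ is finely harmonic in $V$, is majorized in absolute value by a (sum of two) semi-bounded potential, and has finely zero boundary values on $\partial_\mathsf{f}V$; the fine-boundary version of the \hyperref[gmp]{Generalized Minimum Principle} (valid for functions dominated by a semi-bounded potential) then forces $h_\psi=\tilde h$. The bound $|h_\psi|\leq g$ on all of $\clos_\mathsf{f}(V)$ combines the estimate already obtained on $V$ with the hypothesis $|\psi|\leq g$ on $\partial_\mathsf{f}V$.

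The main obstacle will be the fine boundary step, because controlling $h_\psi$ near $\partial_\mathsf{f}V$ hinges on having a quantitative version of the fact that the balayage measures $\delta_z^{\Omega\setminus V}$ depend finely continuously on $z$ at regular boundary points. This is precisely where the regularity hypothesis ($\Omega\setminus V$ being its own base) is used in an essential way, via the identity $\delta_\zeta^{\Omega\setminus V}=\delta_\zeta$ for $\zeta\in\partial_\mathsf{f}V$, and it is also what allows us to identify $h_g$ with $g$ itself and so bootstrap fine boundary convergence from the particular case $\psi=g$ to the general semi-bounded-dominated $\psi$.
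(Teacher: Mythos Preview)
The paper does not actually prove this theorem: it simply observes that when $V$ is regular one has $b(\partial_\mathsf{f}V)=\partial_\mathsf{f}V$ by \eqref{expfb}, and then cites the result as a special case of \cite[Theorem~14.1]{Fuglede} and its proof. Your proposal is precisely an outline of that Fuglede argument, so in spirit you are doing exactly what the paper does---deferring to Fuglede---only you are sketching the mechanics rather than citing.

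Your outline is essentially sound, but two points deserve tightening if you want a self-contained proof. First, fine harmonicity requires both the mean-value identity and fine continuity of $h_\psi$ on $V$; you establish the former via transitivity of balayage, but fine continuity of $z\mapsto\int\psi\,d\delta_z^{\Omega\setminus V}$ on $V$ is not addressed and is itself a nontrivial input from \cite{Fuglede}. Second, your justification for the existence of a basis of \emph{regular} finely open sets $E$ with $\clos_\mathsf{f}(E)\subset V$ (``shrink slightly and remove a polar set'') is imprecise; this is a genuine theorem in fine potential theory, and the regularity of $V$ plays no role in producing it. The boundary-limit step you flag as the main obstacle is indeed where the regularity hypothesis does the work, through $\delta_\zeta^{\Omega\setminus V}=\delta_\zeta$ for $\zeta\in b(\Omega\setminus V)=\partial_\mathsf{f}V$, and your bootstrap idea (reduce to $\psi=g$ via $|\psi-\psi(\zeta)|\leq\varepsilon+2g\mathbf{1}_W$) is the right one, though making the ``small balayage mass of $W$'' precise again leans on fine-continuity properties of balayage that are proved in \cite{Fuglede}.
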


The lemma below addresses the question as to when constant functions solve the fine Dirichlet problem on $V\subset\Omega$ or, equivalently by \eqref{fineDirichlet}, as to when the balayage of \( \delta_z \) out of \( V \), \( z\in V \), has unit mass. We recall that an overline, as in $\overline{V}$, or a $\partial$ sign, as in $\partial\Omega$, refer respectively to the closure and boundary with respect to the Euclidean topology induced by the ambient Riemanian manifold ($\RS_*$ or $\C$). In contrast, fine closures and fine boundaries as in \(\partial_\mathsf{f} V\)  and $\clos_{\mathsf{f}}(V)$ refer to the fine topology on $\Omega$; thus, \( \partial \Omega \) is ``invisible'' from the point of view of fine topology in \( \Omega\), and if $V\subset \Omega$ then $\partial V\cap \partial\Omega$ is disjoint from \(\partial_\mathsf{f} V\) as the latter is included in $\Omega$.

\begin{lemma}
\label{lem:mass-balayage}
Let $V$ be a proper nonempty regular fine domain in $\Omega$, which itself is regular within the ambient Riemann surface (\( \RS_* \) or \( \C \)). Then it holds for \( z\in V \) that
\begin{equation}
\label{mass-balayage}
\int d\delta_z^{\Omega\setminus V} \left\{
\begin{array}{ll}
= 1 & \text{if }\overline V \cap \partial\Omega =\varnothing, \medskip \\
<1  & \text{if } \overline{ \partial_\mathsf{f} V } \cap \partial\Omega=\varnothing \text{ and } \overline{V}\cap\partial \Omega \neq \varnothing.
\end{array}
\right.
\end{equation}
Moreover, if either condition on the right-hand side of \eqref{mass-balayage} holds, then for any harmonic function $h$  on \( \Omega \) one has
\begin{equation}
\label{reproduction}
h(z) = \int hd\delta_z^{\Omega\setminus V},\qquad z\in V,
\end{equation}
provided that $|h|$ is majorized on $V$ by a semi-bounded potential in $\Omega$ when \( \overline V \cap\partial\Omega \neq \varnothing \). 
\end{lemma}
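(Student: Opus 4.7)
The approach is to apply Theorem \ref{thm_fineDirichlet} with constant boundary data $\psi\equiv 1$ on $\partial_{\mathsf{f}}V$, and then in each case of \eqref{mass-balayage} decide whether the constant function $1$ itself qualifies as an admissible fine solution. In both cases $\overline{\partial_{\mathsf{f}}V}$ is compact in $\Omega$ (since $\partial_{\mathsf{f}}V\subset\overline V$ is compact in the first case, and the inclusion $\overline{\partial_{\mathsf{f}}V}\subset\Omega$ is by hypothesis in the second), so I pick a compact regular set $K\subset\Omega$ whose interior contains $\overline V$ in the first case and $\overline{\partial_{\mathsf{f}}V}$ in the second. Using the regularity of $\Omega$, the rescaled Green equilibrium potential $g_K:=2\cp_\Omega(K)\,g(\mu_{\Omega,K},\Omega;\cdot)$ is then a continuous (hence finite and semi-bounded) Green potential identically equal to $2$ on $K$ and extending continuously by $0$ to $\partial\Omega$, see the remark after \eqref{regGB}. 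Theorem \ref{thm_fineDirichlet} now identifies $h_1(z):=\int d\delta_z^{\Omega\setminus V}$ with the unique finely continuous extension of $1$ to $\clos_{\mathsf{f}}V$ that is finely harmonic on $V$ and majorized in absolute value there by a semi-bounded potential, together with the pointwise bound $h_1\leq g_K$ on $\clos_{\mathsf{f}}V$.

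In the first case $\overline V\cap\partial\Omega=\varnothing$, the constant function $1$ satisfies $1\leq g_K$ on the compact set $\clos_{\mathsf{f}}V\subset\overline V\subset K$, and it is trivially finely continuous and finely harmonic on $V$ with boundary value $1$; the uniqueness part of Theorem \ref{thm_fineDirichlet} therefore forces $h_1\equiv 1$ on $V$, proving the upper line of \eqref{mass-balayage}. In the second case I first observe $h_1(z)\leq 1$ everywhere on $V$, because the balayage of the Dirac mass $\delta_z$ does not increase total mass, see the discussion after \eqref{GreenHarmonicMeasure}. Picking $\zeta\in\overline V\cap\partial\Omega$ and a sequence $V\ni z_n\to\zeta$, the continuity of $g_K$ and its vanishing at $\partial\Omega$ yield $h_1(z_n)\leq g_K(z_n)\to 0$, so $h_1\not\equiv 1$. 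The non-negative finely harmonic function $1-h_1$ on the fine domain $V$ is therefore not identically zero, and by the strong minimum principle for finely harmonic functions on fine domains (cf.\ \cite{Fuglede}) it must be strictly positive throughout $V$, giving $h_1<1$ on $V$ as claimed.

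For \eqref{reproduction}, let $h$ be harmonic on $\Omega$, so $h$ is finely continuous on $\clos_{\mathsf{f}}V$ and finely harmonic on $V$. In the first case $|h|$ is bounded by some $M$ on the compact set $\overline V$ and is thus majorized on $\clos_{\mathsf{f}}V$ by the finite semi-bounded potential $(M/2)g_K$; in the second case the hypothesis of the lemma directly supplies a semi-bounded majorant of $|h|$ on $V$, which combined with a multiple of $g_K$ dominates $|h|$ on $\clos_{\mathsf{f}}V$. Since both $h$ and $h_\psi(z):=\int h\,d\delta_z^{\Omega\setminus V}$ then qualify as admissible finely harmonic extensions of $\psi:=h_{\mathcal{b}\partial_{\mathsf{f}}V}$ majorized by a semi-bounded potential, the uniqueness in Theorem \ref{thm_fineDirichlet} forces $h\equiv h_\psi$ on $V$, which is \eqref{reproduction}. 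The principal subtlety lies in the strict inequality for the second case, where one must combine the continuous vanishing of a dominating potential at $\partial\Omega$ (afforded by the regularity of $\Omega$) with the strong minimum principle for finely harmonic functions on the finely connected set $V$.
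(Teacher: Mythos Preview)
Your argument is correct, and for the first case of \eqref{mass-balayage} as well as for \eqref{reproduction} it is essentially the paper's own proof: both construct a continuous Green equilibrium potential $g_K$ on a suitable compact $K\subset\Omega$ to serve as the dominating semi-bounded potential in Theorem~\ref{thm_fineDirichlet}, and then invoke uniqueness.

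Where you differ is in the strict inequality (the lower line of \eqref{mass-balayage}). The paper first shows that $V\setminus K$ (with $K=\overline{\partial_\mathsf{f}V}$) is Euclidean open, singles out a component $U$ touching $\partial\Omega$ on which $h_{\mathbf 1}\leq g_K<1$ by the classical maximum principle, and then propagates $h_{\mathbf 1}<1$ from $U$ to all of $V$ via a two-step balayage identity through an auxiliary closed disk $F\subset U$ (using \eqref{transit}, \eqref{preservemass} and Lemma~\ref{supfin}). Your route is shorter: from $h_{\mathbf 1}\leq g_K$ on $V$ and the continuous vanishing of $g_K$ at $\partial\Omega$ you get $h_{\mathbf 1}\not\equiv 1$, and then the fine strong minimum principle for $1-h_{\mathbf 1}$ on the fine domain $V$ gives $h_{\mathbf 1}<1$ everywhere at once. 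This is cleaner, but it imports the zero-or-positive dichotomy for non-negative finely harmonic functions on fine domains from Fuglede's monograph, a result the paper does not state in its appendix; the paper's version, though longer, stays entirely within the balayage machinery developed there. Either way the conclusion is the same, and your use of the regularity of $\Omega$ to let $g_K$ tend to $0$ at the boundary is exactly the decisive ingredient.
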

\begin{proof} 
  Let \( K\subset \Omega \) be non-polar and compact. Set, for brevity, \( g_K :=\cp_\Omega(K) g(\mu_{\Omega,K},\Omega;\cdot) \), where  \( \mu_{\Omega,K} \) indicates, as in Section~\ref{ssec_cap}, the Green equilibrium distribution on \( K \). Since \( \mu_{\Omega,K} \) has finite energy, \( g_K \) is semi-bounded.  As \( \partial\Omega \) is regular in the ambient Riemann surface, \( g_K \) extends continuously by zero to \( \partial \Omega \),  see Section~\ref{ssec_reg}. Since \(\cp_\Omega(K)>0\),  it holds that \( g_K\leq 1 \) in \( \Omega \), see the paragraph after \eqref{GreenCap}, and \( g_K = 1 \) on \( b(K) \) because \( g_K = \mathcal B_1^K \), see Section~\ref{ssec_bal}.  Moreover, \( g_K<1 \) in each connected component \( U \) of \( \Omega\setminus K \) such that \( \partial U \cap \partial \Omega \neq \varnothing \) by the maximum principle for harmonic functions.

  When $\overline V$ is compactly included in $\Omega$, we may put $K:=\overline V$ in what precedes, and then $\cp_\Omega(K)>0$ as otherwise $V$ itself would be polar and therefore empty, since it is finely open. The infimum of \( g_K \) on \( K \) is attained by lower semi-continuity, and it is strictly positive because nonzero Green potentials are never zero. Therefore, if $h$ is harmonic on $\Omega$, the potential $c g_K$ majorizes $|h|$ on $K$ for sufficiently large $c>0$. The uniqueness part of Theorem~\ref{thm_fineDirichlet} now implies that $h_{\mathcal{b}V}$ is the solution of the fine Dirichlet problem with boundary data $h_{\mathcal b\partial_\mathsf{f}V}$. Hence, \eqref{reproduction} is just \eqref{fineDirichlet} while the upper equality in \eqref{mass-balayage} follows by taking $h\equiv1$. 

  Assume next that \(\overline V \cap \partial\Omega \neq\varnothing\) and \(\overline{ \partial_\mathsf{f} V } \cap \partial\Omega = \varnothing\). Then $K:=\overline{\partial_\mathsf{f} V}$ is a compact subset of $\Omega$ which is non-polar, for if $\partial_\mathsf{f} V$ were polar, then either $V$ or $\Omega\setminus V$ would be polar \cite[Theorem~2]{Fug71} and  $V$ would be either empty or irregular, a contradiction. Note that a subdomain of \( \Omega\setminus K \) is also a fine domain \cite[Theorem~2]{Fug71} and thus, if it contains both a point in $V$ and a point in $\Omega\setminus V$, then it must contain a point in $\partial_\mathsf{f} V$ which is impossible by the definition of \( K \). Hence, $V\setminus K$ is  Euclidean open.

Let \( U \) be a connected component of $V\setminus K$ such that $\partial U\cap\partial\Omega\neq\varnothing$; it exists because \(\overline V \cap \partial\Omega \neq\varnothing\).  Since $\partial_\mathsf{f} V=b(\partial_\mathsf{f} V)\subseteq b(K)$ by assumption, \( g_K = 1 \) on \( \partial_\mathsf{f} V \) (see the beginning of the proof). Hence, as $g_K$ is a semi-bounded potential on $\Omega$,  it follows from Theorem~\ref{thm_fineDirichlet} that $h_{\bf 1}\leq g_K \) in \( V \), where $h_{\bf 1}(z)$ is the solution of the fine Dirichlet problem on $V$ with boundary data identically $1$ on $\partial_\mathsf{f} V$, see \eqref{fineDirichlet}. In particular, \( h_{\bf 1}\leq g_K<1 \) in \( U \) by the maximum principle for harmonic functions. 

Let \( F \subset U\) be a closed disk. Observe that  $V\setminus F$ is a fine domain as otherwise $F$ would finely disconnect $U$ whereas $U\setminus F$ is a domain and therefore also a fine domain. Put $\delta_z^* := \delta_z^{(\Omega\setminus V)\cup F}$,  \(z\in V\setminus F\), and observe that \(\delta_{z\mathcal b\partial_\mathsf{f} F}^*\) is  a non-trivial measure by \eqref{detfs} and since \( \partial_\mathsf{f}((\Omega\setminus V)\cup F)=\partial_\mathsf{f}V\cup \partial_\mathsf{f}F \), where the union is disjoint and \( b(\partial_\mathsf{f}F)=\partial_\mathsf{f}F=\partial F \). We now get from \eqref{transit} and \eqref{detfs}, applied to the fine domain $V\setminus F$, that
\begin{equation}
  \label{labaldec}
  \delta_z^{\Omega\setminus V} = \big(\delta^*_z\big)^{\Omega\setminus V} =  \big(\delta_{z\mathcal b\partial_\mathsf{f} V}^*\big)^{\Omega\setminus V} + \big(\delta_{z\mathcal b\partial_\mathsf{f} F}^*\big)^{\Omega\setminus V} =  \delta_{z\mathcal b\partial_\mathsf{f} V}^* + \big(\delta_{z\mathcal b\partial_\mathsf{f} F}^*\big)^{\Omega\setminus V},
\end{equation}
where we observe that the balayage out of \( V \) does not change measures supported on \( b(\partial_\mathsf{f}V) = \partial_\mathsf{f}V\) by \eqref{char_balayage}. Since $h_{\bf 1}<1$ on $\partial_\mathsf{f} F$ as \( \partial_\mathsf{f}F\subset U \), we get from \eqref{preservemass} and \eqref{baldfi} that
\[
0<\big(\delta_{z\mathcal b\partial_\mathsf{f} F}^*\big)^{\Omega\setminus V}(\partial_\mathsf{f} V) = \int \delta_x^{\Omega\setminus V}(\partial_\mathsf{f} V) d\delta_{z\mathcal b\partial_\mathsf{f} F}^*(x) = \int h_{\bf 1}(x) d\delta_{z\mathcal b\partial_\mathsf{f} F}^*(x) < \delta_{z\mathcal b\partial_\mathsf{f} F}^*(\partial_\mathsf{f} F)
\]
which implies, in view of \eqref{labaldec}, that \( h_{\bf 1}(z)=\delta_z^{\Omega\setminus V}(\partial_\mathsf{f} V)<\delta_z^*((\Omega\setminus V)\cup F)\leq 1\) as claimed.

Finally, let $h$ be a harmonic function on $\Omega$ which is majorized on $V$ in absolute value by a semi-bounded potential. Because $h$ is also finely harmonic on $V$ and finely continuous on $\clos_\mathsf{f}(V)$, we deduce from Theorem~\ref{thm_fineDirichlet} that it is the solution of the fine Dirichlet problem on $V$ with boundary data $h_{\mathcal b\partial_\mathsf{f} V}$ and that \eqref{reproduction} holds.
\end{proof}

\begin{lemma}
\label{casparta}
Let   $V\subset\RS$ be a proper regular fine domain such that $\overline{ \partial_\mathsf{f} V } \cap \partial\RS=\varnothing$ and $\overline{V}\cap\partial \RS \neq \varnothing$ (the lower assumption on the right-hand side of \eqref{mass-balayage} when \( \Omega=\RS\)). Let further  \( h \) be a  harmonic function in $\RS$ such that \(\lim_{z\to\xi}h(z)=0 \) for every  $\xi\in \overline{V}\cap\partial\RS$. Then identity \eqref{reproduction} holds.
\end{lemma}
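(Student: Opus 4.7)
The plan is to reduce the claim to the upper case of Lemma~\ref{lem:mass-balayage} by approximating $V$ from inside by fine subdomains whose closures avoid $\partial\RS$, and then pass to the limit using the boundary behavior of $h$ to kill the correction term. Fix $z_0\in V$. By Lemma~\ref{supfin}(iii), the regularity of $V$, and the characterization \eqref{char_balayage}, one has $\delta_{z_0}^{\RS\setminus V}=\delta_{z_0}^{\partial_\mathsf{f}V}$, a positive measure of total mass at most $1$ concentrated on the compact set $\partial_\mathsf{f}V\subset\RS$ (compactness from $\overline{\partial_\mathsf{f}V}\cap\partial\RS=\varnothing$), so the right-hand side of \eqref{reproduction} is well-defined. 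Fix a smooth $\phi:\RS_*\to[0,\infty)$ positive on $\RS$ and vanishing precisely on $\partial\RS$ (obtained via smooth partitions of unity as in Section~\ref{ssec:notation}), and for a sequence $t_n\downarrow 0$ of regular values of $\phi$ set $V_n:=V\cap\{\phi>t_n\}$; this is finely open with $\overline{V_n}\subset\{\phi\geq t_n\}\subset\RS$ compact. Let $W_n$ denote the fine component of the regularized set $V_n\cup i(\RS\setminus V_n)$ containing $z_0$; by Lemma~\ref{supfin}(ii), $W_n$ is a regular fine domain, $\delta_{z_0}^{\RS\setminus W_n}=\delta_{z_0}^{\RS\setminus V_n}$, and since $\overline{W_n}\subset\RS$ the upper case of Lemma~\ref{lem:mass-balayage} gives $h(z_0)=\int h\,d\delta_{z_0}^{\RS\setminus W_n}$.

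Next I invoke a transitivity decomposition of balayage. Since $\RS\setminus V\subset\RS\setminus W_n$ and $\RS\setminus V$ is its own base (by regularity of $V$), transitivity yields $\delta_{z_0}^{\RS\setminus V}=(\delta_{z_0}^{\RS\setminus W_n})^{\RS\setminus V}$. Combined with \eqref{preservemass} and the identity $\delta_w^{\RS\setminus V}=\delta_w$ for $w\in\RS\setminus V$, this reads
\[
\delta_{z_0}^{\RS\setminus V}=\delta_{z_0}^{\RS\setminus W_n}\big|_{\RS\setminus V}+\int_{V\setminus W_n}\delta_w^{\RS\setminus V}\,d\delta_{z_0}^{\RS\setminus W_n}(w).
\]
Integrating $h$ against this identity and subtracting from the previous display, the common integrals over $\RS\setminus V$ cancel, leaving
\[
h(z_0)-\int h\,d\delta_{z_0}^{\RS\setminus V}=\int_{V\setminus W_n}\!\Bigl[h(w)-\int h\,d\delta_w^{\RS\setminus V}\Bigr]d\delta_{z_0}^{\RS\setminus W_n}(w).
\]

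It remains to show that this right-hand side tends to $0$ as $n\to\infty$. The measure $\delta_{z_0}^{\RS\setminus W_n}$ has total mass $1$ by the upper case of Lemma~\ref{lem:mass-balayage}, and its support inside $V\setminus W_n$ is contained in $\{\phi\leq t_n\}\cap V$: indeed, any fine boundary point of $W_n$ in $V$ that were in $V_n\setminus W_n$ would lie in another fine component of $V_n$, contradicting the fact that two distinct fine components of a finely open set are mutually finely closed. As $n\to\infty$, $\{\phi\leq t_n\}\cap\overline V$ shrinks to $\overline V\cap\partial\RS$, so by the assumed boundary behavior of $h$ and compactness of $\overline V\cap\partial\RS$ we have $|h(w)|\to 0$ uniformly on that support. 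For the remaining piece of the integrand, the bound $\bigl|\int h\,d\delta_w^{\RS\setminus V}\bigr|\leq\|h\|_{\partial_\mathsf{f}V}\cdot|\delta_w^{\RS\setminus V}|$ reduces the matter to proving $|\delta_w^{\RS\setminus V}|\to 0$ as $w\to\overline V\cap\partial\RS$. Since $\delta_w^{\RS\setminus V}=\delta_w^{\partial_\mathsf{f}V}$ for $w\in V$, the total mass $|\delta_w^{\RS\setminus V}|$ coincides there with the Green potential $\cp_\RS(\partial_\mathsf{f}V)\,g(\mu_{\RS,\partial_\mathsf{f}V},\RS;w)$ of a measure supported in the compact set $\partial_\mathsf{f}V\subset\RS$ (the degenerate case where $\partial_\mathsf{f}V$ is polar is immediate, as then $\delta_w^{\RS\setminus V}\equiv 0$ and $h\equiv 0$ on $V$ by the regularity of $\partial\RS$), and such a potential extends continuously by $0$ to $\partial\RS$, so it tends to $0$ uniformly as $w\to\overline V\cap\partial\RS$. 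The main obstacle is the fine-topological bookkeeping needed to guarantee the inclusion $\partial_\mathsf{f}W_n\cap V\subset\{\phi\leq t_n\}$ and to reconcile the passage to the fine component $W_n$ with the regularity hypothesis of Lemma~\ref{lem:mass-balayage}; both are handled by Lemma~\ref{supfin}(ii) and the mutual fine-disjointness of fine components.
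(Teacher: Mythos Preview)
Your argument is correct and takes a genuinely different route from the paper's. The paper proceeds by exhibiting a semi-bounded potential on $\RS$ that majorizes $|h|$ on $V$ and then invoking the last clause of Lemma~\ref{lem:mass-balayage}: it first shows that $\overline V\cap\partial\RS$ is a union of full boundary components, reduces via conformal mapping to $D=\D$, uses Schwarz reflection to get $|h(z)|\leq C(1-|p(z)|)$ near those components, and finally bounds $1-|p(z)|$ by the explicit equilibrium potential $-\log\max\{r,|p(z)|\}$. You instead exhaust $V$ from within by regular fine subdomains $W_n$ with $\overline{W_n}\subset\RS$, apply the \emph{upper} case of \eqref{mass-balayage}/\eqref{reproduction} on each $W_n$, and then use balayage transitivity \eqref{transit}/\eqref{preservemass} together with the boundary hypothesis on $h$ and the decay of $|\delta_w^{\RS\setminus V}|$ near $\partial\RS$ to kill the remainder. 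Your approach is intrinsic to $\RS$ (no reduction to the disk, no reflection) and makes no use of the specific structure of $\partial\RS$; the paper's approach is more concrete and yields an explicit majorant. Both ultimately lean on the same fact that a Green potential of a compactly supported measure vanishes continuously on $\partial\RS$.

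One small imprecision worth flagging: you identify $|\delta_w^{\RS\setminus V}|$ with $\cp_\RS(\partial_\mathsf{f}V)\,g(\mu_{\RS,\partial_\mathsf{f}V},\RS;w)$, but $\partial_\mathsf{f}V$ is only finely closed, not necessarily Euclidean compact, so the equilibrium measure in the sense of Section~\ref{ssec_cap} is not directly defined for it. The fix is immediate: either bound $\mathcal B_1^{\partial_\mathsf{f}V}\leq\mathcal B_1^{\overline{\partial_\mathsf{f}V}}=\cp_\RS(\overline{\partial_\mathsf{f}V})\,g(\mu_{\RS,\overline{\partial_\mathsf{f}V}},\RS;\cdot)$, or simply note that $\mathcal B_1^{\partial_\mathsf{f}V}$ is the Green potential of a measure supported in the compact set $\overline{\partial_\mathsf{f}V}\subset\RS$ and hence tends to $0$ at $\partial\RS$ by the discussion after \eqref{regGB}. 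Also, your regularization step $V_n\cup i(\RS\setminus V_n)$ is in fact vacuous here: since $\RS\setminus V$ is its own base and $\{\phi\leq t_n\}$ is non-thin at every point of itself (the level set $\{\phi=t_n\}$ being a smooth curve), $\RS\setminus V_n$ already has no finely isolated points, so $W_n$ is simply the fine component of $V_n$ containing $z_0$.
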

\begin{proof}
In view of Lemma \ref{lem:mass-balayage}, it is enough to show that $h$ is majorized on $V$ by a semi-bounded potential. Note, as in the proof of Lemma \ref{lem:mass-balayage}, that $\partial_\mathsf{f} V$ is non-polar. Let us show that $\partial\RS\cap\overline{V}$ consists of a union of connected components of $\partial\RS$. Indeed, any such component $\Gamma$ is a 1-dimensional compact topological submanifold  of $\RS_*$, and as such it has a tubular neighborhood $N$ that may be chosen so thin that $N\cap \overline{ \partial_\mathsf{f} V }=\varnothing$. Then,  if $\zeta_1, \zeta_2\in\Gamma$  and  $\zeta_1\in \overline{V}$ while $\zeta_2\notin \overline{V}$, we can find  $z_1\in N\cap  V$ close to $\zeta_1$ and $z_2\in N\cap \Omega\setminus V$ close to $\zeta_2$. The points $z_1$, $z_2$ can be joined by a smooth arc contained in $N$. However, such an arc is finely connected \cite[Theorem~7]{Fug71}, but cannot meet $\overline{ \partial_\mathsf{f} V }$ by construction, a contradiction that proves our claim.

Assume first that $D=\D$ is the unit disk. Any function \( u \) harmonic in an annular region \( \{r<|z|<1 \} \) that extends continuously to \( \T \) by zero can be harmonically extended to \( \{ r<|z|<1/r \} \) by reflection, i.e., by setting \( u(z) := -u(1/\bar z) \) for \( z\in\{1<|z|<1/r \} \). Due to the smoothness of this extension it necessarily holds that \( |u(z)| \leq C_\rho (1-|z|) \) for \( r<\rho\leq |z| \leq 1 \). As \( h \) is harmonic on \( \RS \), this principle used around each of the finitely many connected components of \( \overline V\cap \partial \RS \) yields that $|h(z)|\leq C (1-|p(z)|)$ for $z\in V$ and some constant \( C>0 \). On the other hand, the function \( g_r(z) := -\log\max\{ r,|z| \} \) is a continuous (thus bounded and therefore semi-bounded) potential in \( \D \) for any \( r\in(0,1) \) (this is the Green equilibrium potential of \( \{|z|\leq r\} \)). It can be readily verified that \( g_r(z) \geq (1-|z|) \) in \( \D \) when \( r\leq e^{-1} \). Thus, \( |h(z)|\leq C g_r(p(z)) \), \( z\in V \), for any such \( r \). As \( g_r(p(z)) \) is a (bounded) potential on \( \RS \) by \eqref{remontp}, the claim of the corollary follows.

In the general case, let \( \phi:\D\to D \) be a conformal map. Recall that $\phi$ extends to a homeomorphism from \( \overline \D \) onto \( \overline D \). Clearly,  $\phi$ is also a homeomorphism for the fine topology since $v$ is superharmonic (resp. harmonic) on $D$ if and only if so is $v\circ\phi$  on $\D$. Moreover, $g$ is a bounded potential on $D$ if and only if $g\circ\phi$ is such a potential on $\D$. Hence, the result just proven on the disk carries over to $D$ by conformal mapping. 
\end{proof}

\small

\bibliographystyle{plain}
\bibliography{optimal}

\end{document}